\documentclass{amsart}
\usepackage{amsmath,amssymb,amsthm,mathrsfs}
\usepackage{tikz}
\usepackage[colorlinks=true, linkcolor=red, citecolor=green]{hyperref}

\newtheorem{theorem}{Theorem}[section]
\newtheorem{proposition}[theorem]{Proposition}
\newtheorem{lemma}[theorem]{Lemma}
\newtheorem{corollary}[theorem]{Corollary}
\theoremstyle{definition}
\newtheorem{definition}[theorem]{Definition}
\newtheorem{convention}[theorem]{Convention}
\theoremstyle{remark}
\newtheorem{remark}[theorem]{Remark}

\newcommand{\R}{\mathbb{R}} 
\newcommand{\T}{\mathbb{T}}
\newcommand{\C}{\mathbb{C}} 
\newcommand{\N}{\mathbb{N}} 
\newcommand{\Z}{\mathbb{Z}}
\newcommand{\D}{\mathbb{D}}

\newcommand{\K}{\mathcal{K}}

\newcommand{\Ks}{\mathcal{K}^{*}}
\newcommand{\U}{\mathcal{U}}
\newcommand{\Lp}{L^2_+}
\DeclareMathOperator{\spr}{spr}

\newcommand{\bk}{\backslash}
\newcommand{\p}{\partial}
\newcommand{\eps}{\varepsilon}
\newcommand{\la}{\lambda}

\DeclareMathOperator{\Id}{Id}
\DeclareMathOperator{\Range}{Range}

\newcommand{\qtq}[1]{\quad\text{#1}\quad}

\numberwithin{equation}{section}

\begin{document}

\title[Complete integrability of (CS) on $L^2_+(\mathbb T)$]
      {Complete integrability of the Calogero--Sutherland DNLS equation on $L^2_+(\mathbb T)$}

\date{\today}
\author{Rana Badreddine}
\address{School of Mathematics and Maxwell Institute, University of Edinburgh, Edinburgh EH9 3FD, Scotland, United Kingdom}
\email{rbadredd@ed.ac.uk}

\begin{abstract}
We establish the complete integrability of the defocusing Calogero--Sutherland derivative nonlinear Schr\"odinger equation 
\begin{equation}\label{CS-abstract}\tag{CS}
    i\partial_t u +\partial_x^2 u + 2iu \partial_x\,\Pi\!\left(|u|^2\right)=0,\qquad x\in\mathbb T,
\end{equation}
by constructing a nonlinear Fourier transform
\[
   \Phi:u\in L^2_+(\mathbb T)\mapsto\ (\zeta_n(u))_{n\ge0}\in \ell^2(\mathbb N_0),\qquad
   \zeta_n(u)=\sqrt{\gamma_n(u)}\;e^{\,i\arg\langle u\mid f_n\rangle},
\]
built from the eigenvalues $(\lambda_n)$, the spectral gaps $\gamma_n:=\la_n-\la_{n-1}-1\geq 0$ and the eigenfunctions $(f_n)$ of the Lax operator $L_u$. We prove that $\Phi$ is a norm-preserving homeomorphism. In these coordinates, the flow becomes the explicit rotation $\zeta_n(t)=e^{-i\omega_n t}\zeta_n(0)$ for all $n$, whose frequencies $\omega_n$ depend only on the conserved spectrum $(\lambda_n)$. As consequences, every orbit of \eqref{CS-abstract} is precompact in $L^2_+(\mathbb T)$, and $t\mapsto u(t)$ is Bohr almost periodic.
For finite-gap data, we obtain quasi-periodicity in time 
and a sharp criterion for genuine periodicity.
\end{abstract}

\keywords{Calogero--Sutherland derivative NLS, continuum Calogero--Moser model, Birkhoff coordinates, action-angle variables, complete integrability, Lax operator, Hardy space, finite-gap potentials, trace formula, almost-periodic solutions.}

\subjclass[2020]{Primary: 37J35, 37K10; Secondary: 35Q55, 37K15, 35B15.}

\maketitle

\tableofcontents

\section{Introduction}\label{S:1}

This paper investigates the defocusing Calogero--Sutherland derivative nonlinear Schr\"odinger equation on the torus $\T=\R/2\pi\Z$,
\begin{equation*}\label{CS}\tag{CS}
   i\p_t u +\p_x^2 u + 2iu \p_x\,\Pi\!\left(|u|^2\right)=0,
\end{equation*}
for a complex-valued unknown $u(t,\cdot)$ in the Hardy space
\[
   L^2_+(\T)=\bigl\{f\in L^2(\T):\ \widehat f(k)=0\ \text{for }k<0\bigr\},
\]
where $\Pi$ denotes the Riesz--Cauchy--Szeg\H{o} projection of $L^2(\T)$ onto $L^2_+(\T)$
\[
\Pi : L^2(\mathbb{T}) \to L^2_+(\mathbb{T}), \qquad \Pi f(x) = \sum_{k \ge 0} \widehat{f}(k) e^{ikx},
\]
The precise conventions are collected in \S\ref{S:2}. Equation \eqref{CS} is also referred to as the Calogero--Moser derivative NLS, or as a continuum Calogero--Moser model.

From the point of view of scaling, \eqref{CS} is mass-critical: on the line the transformation
\[
   u(t,x)\ \longmapsto\ u_\lambda(t,x):=\sqrt{\lambda}\,u(\lambda^2 t,\lambda x),
   \qquad \lambda>0,
\]
maps solutions to solutions and preserves the mass $\int|u|^2\,dx$, so that $L^2$ is the scaling-critical regularity.  In this critical space, global well-posedness of the defocusing equation on $\T$ was established by the author in \cite{badreddine2024global}, with the corresponding statement on the line $(x\in\R)$ due to Killip--Laurens--Visan \cite{killip2025scaling}. These results were subsequently revisited on both geometries by Killip--Marsden--Visan \cite{KMV} through the Hamiltonian formulation and the method of commuting flows.  

\medskip

Equation \eqref{CS} is completely integrable. On the torus, the Lax-pair formalism underlying our analysis was developed in \cite{badreddine2024global,badreddine2024traveling}. There, \eqref{CS} is realized through a self-adjoint Lax operator $L_u=-i\p_x+u\Pi(\bar u\,\cdot)$ acting on $H^1_+(\T):=H^1\cap L^2_+$, together with a skew-adjoint operator $P_u$, so that \eqref{CS} is equivalent to the Lax equation $\p_t L_{u(t)}=[P_{u(t)},L_{u(t)}]$ (see \eqref{Lax op}--\eqref{Lax eq}). In particular the spectrum of $L_u$ is conserved by the flow, and it consists (Theorem~\ref{thm:Lax}) of a simple, discrete sequence of eigenvalues 
\[
   0\le\lambda_0(u)<\lambda_1(u)<\lambda_2(u)<\cdots\to+\infty,
\]
whose consecutive spacings define the nonnegative \emph{gaps}
\[
\gamma_n(u):=\lambda_n(u)-\lambda_{n-1}(u)-1\ge0, \qquad n\geq 1, \qtq{with } \gamma_0(u):=\lambda_0(u).
\]
  A gap is \emph{open} when $\gamma_n>0$ and closed when $\gamma_n=0$.

For a completely integrable Hamiltonian equation, the most complete structural description one can hope for is a global system of \emph{Birkhoff coordinates}, i.e.\ global action--angle variables, in which the actions are conserved and the flow reduces to a linear rotation. For the Benjamin--Ono equation on the torus such coordinates were constructed by G\'erard--Kappeler \cite{gerardkappeler} and then by G\'erard--Kappeler--Topalov \cite{gerard2019flow}, who showed that the Birkhoff map is a  diffeomorphism onto a weighted sequence space linearizing the BO flow. A comparable picture was obtained for the cubic Szeg\H{o} equation by G\'erard--Grellier \cite{gerardgrellier}. The purpose of the present paper is to construct such global Birkhoff coordinates for the defocusing equation \eqref{CS} on the torus, over the whole critical space $L^2_+(\T)$ and to use them to obtain a complete and explicit description of the dynamics.

\medskip

To construct the Birkhoff coordinates, we denote by $(f_n)_{n\ge0}$ the orthonormal eigenbasis of $L^2_+(\T)$ furnished by the eigenvectors of $L_u$ (Theorem~\ref{thm:Lax}). First, since the eigenvalues are simple, each $f_n$ is determined only up to a unimodular phase. Second, the amplitude $\langle u\mid f_n\rangle$ vanishes exactly when $\gamma_n=0$ (cf. eqt. \eqref{eq:Xn=0}), so its phase degenerates at closed gaps. We resolve both  through a \emph{transport gauge} (Convention~\ref{conv:transport}), which fixes the phase of the eigenbasis by requiring
\begin{equation}\label{eq:transport-gauge}
    \langle1\mid f_0\rangle>0,\qquad \langle Sf_{n-1}\mid f_n\rangle>0,\quad\text{for } n\ge1,
\end{equation}
where $S$ is the shift $Sf(x)=e^{ix}f(x)$. The point of this choice is that both quantities are nonzero for \emph{every} $u\in L^2_+(\T)$, even when $\gamma_n=0$ there $\langle Sf_{n-1}\mid f_n\rangle=1$ (Lemma~\ref{lem:geometry}). The observable phase is thereby carried by $\langle u\mid f_n\rangle$, which depends continuously on $u$ (see Remark~\ref{rem:why_transport}).

To each $u\in L^2_+(\T),$ we then associate the coordinates
\begin{equation}\label{eq:intro_zeta}
   u\ \longleftrightarrow\ \Phi(u)=(\zeta_n(u))_{n\ge0},
   \qquad
   \zeta_n(u):=\sqrt{\gamma_n(u)}\;e^{\,i\arg\langle u\mid f_n\rangle},
   \qquad \gamma_0:=\lambda_0 .
\end{equation}
A central computation (Corollary~\ref{cor:kappa_L2}) shows that the modulus $|\zeta_n|$ is a function of the spectrum alone: $|\langle u\mid f_n\rangle|^2=\gamma_n\kappa_n$ where  
\begin{equation}\label{eq:intro_kappa}
   \kappa_n(u):=\prod_{p\neq n}\Bigl(1-\frac{\gamma_p(u)}{\lambda_p(u)-\lambda_n(u)}\Bigr),
\end{equation}
is a strictly positive, absolutely convergent infinite product (Corollary~\ref{cor:kappa_L2}, Lemma~\ref{lem:kappa_bound}). Consequently \eqref{eq:intro_zeta} may equivalently be written
\begin{equation}\label{eq:intro_zeta2}
   \zeta_n(u)=\frac{\langle u\mid f_n\rangle}{\sqrt{\kappa_n(u)}}.
\end{equation}
Note that $|\zeta_n(u)|^2=\gamma_n(u)$. Thus only the \emph{phase} of $\zeta_n$ carries information beyond the spectrum. The map $\Phi$ is the \emph{nonlinear Fourier transform}  for \eqref{CS}.

\medskip
Our first main result identifies the Birkhoff map $\Phi$ as a global, norm-preserving change of variables.

\begin{theorem}[Birkhoff map, see Theorems~\ref{thm:homeo}]\label{thm:A}
The map 
$$
    \Phi: L^2_+(\T)\to \ell^2(\N_0), \qquad \Phi(u)=(\zeta_n(u))_{n\ge0}:=\zeta
$$
defined by \eqref{eq:intro_zeta}-\eqref{eq:intro_zeta2}, is a homeomorphism, and it is norm-preserving:
\[
   \|\Phi(u)\|_{\ell^2}=\|u\|_{L^2}\,, \qquad u\in L^2_+(\T).
\]
\end{theorem}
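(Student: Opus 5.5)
The plan is to follow the Gérard--Kappeler scheme for Benjamin--Ono, adapted to the Lax operator $L_u=-i\p_x+u\Pi(\bar u\,\cdot)$. The argument splits into four steps: norm preservation, continuity, injectivity, and surjectivity together with continuity of the inverse.

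\emph{Norm preservation.} This should come from a trace formula. Expanding $u=\sum_n\langle u\mid f_n\rangle f_n$ gives $\|u\|_{L^2}^2=\sum_n|\langle u\mid f_n\rangle|^2=\sum_n\gamma_n\kappa_n$, which is not obviously $\sum\gamma_n$. I would instead use the identity $\lambda_n=n+\sum_{p\le n}\gamma_p$ (telescoping the definition of $\gamma_p$). I would then compare $\operatorname{Tr}(L_u-L_0)$ in the regularized sense with $\langle u\Pi(\bar u\,\cdot)\rangle$. Concretely, $\lambda_n-n\to\|u\|^2_{L^2}$ as $n\to\infty$, since $\langle L_u e_n\mid e_n\rangle = n+\|\Pi(\bar u e_n)\|^2$ and $\Pi(\bar u e_n)\to \bar{\hat u}$-type terms converge to $\|u\|^2$. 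This yields $\sum_n\gamma_n=\|u\|^2$, hence $\|\Phi(u)\|_{\ell^2}=\|u\|_{L^2}$. In particular $\Phi$ maps into $\ell^2$. I expect this to follow from the generating function $\langle (L_u+\mu)^{-1}1\mid 1\rangle$, or from the identity $\sum_n\gamma_n\kappa_n=\sum\gamma_n$, presumably already established in the paper via residues of the meromorphic function $\prod_p(1-\gamma_p/(\lambda_p+\mu))$.

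\emph{Continuity.} For $u_k\to u$ in $L^2_+$, the resolvents of $L_{u_k}$ converge in norm. This gives convergence of each $\lambda_n$ and of the spectral projections. The transport gauge \eqref{eq:transport-gauge}, being defined by strictly positive quantities (Lemma~\ref{lem:geometry}), then propagates to convergence of each $f_n$ by induction on $n$. Hence each $\langle u\mid f_n\rangle$ and $\kappa_n$ converges, so $\zeta_n(u_k)\to\zeta_n(u)$ coordinatewise. Combined with $\|\Phi(u_k)\|=\|u_k\|\to\|u\|=\|\Phi(u)\|$, coordinatewise convergence upgrades to $\ell^2$ convergence.

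\emph{Injectivity.} The spectrum is recovered from $|\zeta_n|^2=\gamma_n$. I would then show $u$ is determined by $\zeta$ through an explicit formula. The key structural identity is that the operator $S^*$ (or $S$) in the basis $(f_n)$ has matrix entries determined by $\lambda$, $\gamma$ and the phases of $\langle u\mid f_n\rangle$. This comes from the commutator relation $[S^*,L_u]$, which involves a rank-one term in $u$. Then $\Pi$-Fourier coefficients give $\hat u(k)=\langle u\mid S^k1\rangle$, and both $u$ and $1$ have coordinates in $(f_n)$ expressible via $\zeta$ and the spectrum. This yields an inversion formula of the type $u(z)=\langle (\Id-zM)^{-1}X\mid Y\rangle$, so $\Phi$ is injective.

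\emph{Surjectivity and continuity of $\Phi^{-1}$.} This is the main obstacle. First treat finitely supported $\zeta$ (finite-gap potentials): construct $u$ directly by the explicit formula, verify that $L_u$ has the prescribed spectrum, and check the gauge. Then use density together with the isometry property. If $\zeta^{(k)}\to\zeta$ with $\zeta^{(k)}=\Phi(u_k)$, then $(u_k)$ is bounded in $L^2$. For precompactness I would obtain uniform control of high Fourier modes from $\sum_{n\ge N}\gamma_n$, via the generating function. A weak limit $u$ then satisfies $\Phi(u)=\zeta$ by the continuity step, and the norm identity gives strong convergence. The same compactness argument shows $\Phi^{-1}$ is continuous, since $\Phi$ restricted to a compact set is a continuous injection.
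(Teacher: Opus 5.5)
Your norm-preservation step does not work as written. Since $|\zeta_n|^2=\gamma_n$ by definition, the claim is exactly the trace formula $\|u\|_{L^2}^2=\lambda_0+\sum_{n\ge1}\gamma_n$. The Fourier-diagonal entries $\langle L_ue^{ijx}\mid e^{ijx}\rangle=j+\sum_{k\le j}|\widehat u(k)|^2$ do not determine the limit of $\lambda_n-n$. Through Ky Fan's principle for $\sum_{n\le N}\lambda_n$ and a Ces\`aro argument (using that $\lambda_n-n$ is nondecreasing), they give only the upper bound $\lambda_0+\sum_{n\ge1}\gamma_n\le\|u\|_{L^2}^2$; this is how the paper gets summability of the gaps. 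No trace identity upgrades this to an equality, since the positive form $u\Pi(\bar u\,\cdot)$ has infinite trace. The reverse inequality is the real content, and your fallback $\sum_n\gamma_n\kappa_n=\sum_n\gamma_n$ is circular: because $\|u\|_{L^2}^2=\sum_n|X_n|^2=\sum_n\gamma_n\kappa_n$, it \emph{is} the trace formula. The missing idea is the generating function $\beta_u(z)=\langle(L_u-z)^{-1}u\mid u\rangle$, built on $u$ rather than on $1$. The commutation relation \eqref{eq:L_rel} gives $\beta_u(\mu_k)=1$ for every $k\in\K$. Each truncation $\beta_u^{[m]}$ is a rational Cauchy-interpolation function taking the value $1$ at points $\nu_i^{[m]}$ with $\mu_{k_i}<\nu_i^{[m]}<\lambda_{k_i}$. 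The $z^{-1}$-coefficient of $1-\beta_u^{[m]}(z)=\prod_{i\le m}\frac{z-\nu_i^{[m]}}{z-\lambda_{k_i}}$ then gives $\sum_{i\le m}|X_{k_i}|^2=\sum_{i\le m}(\lambda_{k_i}-\nu_i^{[m]})<\sum_{i\le m}\gamma_{k_i}$, so $\|u\|_{L^2}^2\le\lambda_0+\sum_{n\ge1}\gamma_n$ as $m\to\infty$.

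Surjectivity likewise skips the ideas that make it work. For finitely supported $\zeta$, ``construct $u$ by the explicit formula and verify that $L_u$ has the prescribed spectrum'' is the crux, not a routine check: nothing says a priori that the synthesised triple $(X,Y,M)$ is the Hardy data of an actual potential. The paper proves $MM^*=\Id$, $M^*M=\Id-YY^*$, $\ker M=\C Y$ and $M\Lambda=\Lambda M+M+(MX)X^*$ from the Cauchy node identities. It then shows $\spr(M_{\le N})<1$ by a virial-type computation, so that $M^k\to0$ strongly, and uses Wold's decomposition to build a unitary $U$ with $UM^*U^*=S$, $UY=1$, $UX=u$. Only then do the commutation relation and $\Lambda Y=\langle1\mid u\rangle X$ force $U\Lambda U^*=L_u$. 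For general $\zeta$, controlling high Fourier modes by $\sum_{n>N}\gamma_n$ needs two ingredients you do not supply. The first is a uniform bound $\kappa_n\le e^{\|u\|_{L^2}^2}$, giving $\sum_{n>N}|X_n|^2\le e^{\|u\|_{L^2}^2}\sum_{n>N}\gamma_n$. The second is a transfer from spectral to Fourier localisation. The paper uses the equicontinuity criterion of Theorem~\ref{thm:KMV}, though the bound $\sum_{k}k|\widehat{f_n}(k)|^2\le\lambda_n\le\|u\|_{L^2}^2+n$ would also give uniform Fourier tails for $\sum_{n\le N}X_nf_n$. Finally, ``a weak limit satisfies $\Phi(u)=\zeta$ by continuity'' is not legitimate, since $\Phi$ is only strongly continuous. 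You need strong precompactness first, after which continuity gives $\Phi(u)=\zeta$ directly. Your continuity and injectivity steps do match the paper's.
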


Our second main result shows that $\Phi$ turns \eqref{CS} into a rotation.

\begin{theorem}[Linearization of the flow, see Theorem~\ref{thm:linearize}]\label{thm:B}
Let $u\in C(\R;L^2_+(\T))$ solve \eqref{CS} with $\zeta:=\Phi(u(0))$. Then for every $n\geq 0 ,$ $t\in \R,$
\[
   \zeta_n(u(t))=e^{-i\omega_n t}\,\zeta_n(u(0)),
   \qquad
   \omega_n:=\sum_{j=0}^{n-1}(2\lambda_j+1),\quad \omega_0:=0,
\]
where the frequencies $\omega_n$ depend only on the conserved spectrum $(\lambda_j)$.
\end{theorem}

\begin{remark}[The zeroth frequency]\label{rem:omega0}
The relation $\omega_0=0$ is not an accident of the empty sum: it reflects the fact that the constant potentials $u_0=a\in\C^{*}$ are \emph{stationary} solutions of \eqref{CS}. Indeed, for a spatially constant $u$ one has $\p_x^2u=0$ and $\p_x\Pi(|u|^2)=0$, whence $\p_t u=0$. Accordingly the coordinate $\zeta_0$, of modulus $\sqrt{\lambda_0}$, is a genuine constant of motion and does not rotate. 
\end{remark}

Because $\Phi$ is a norm-preserving homeomorphism (Theorem~\ref{thm:A}) and the actions $|\zeta_n|^2=\gamma_n$ are conserved (Equation~\ref{eq:intro_zeta}), Theorem~\ref{thm:B} yields an explicit solution formula 
$$
    u(t)=\Phi^{-1}\bigl((e^{-i\omega_n t}\zeta_n)_{n\geq0}\bigr)
$$
and a complete qualitative description of the dynamics. For arbitrary $L^2$ data we obtain:

\begin{corollary}[Precompactness and Almost-periodicity, Corollary~\ref{cor:dynamics}]\label{cor:dynamics_intro}
For every $u_0\in L^2_+(\T)$, the orbit $\{u(t):t\in\R\}$ is precompact in $L^2_+(\T)$. Moreover, the map
$t\mapsto u(t)$ is (Bohr) almost periodic.
\end{corollary}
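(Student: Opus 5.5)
The plan is to transport everything to the coordinate side via Theorem~\ref{thm:A} and Theorem~\ref{thm:B}. Write $u(t)=\Phi^{-1}(\zeta(t))$ with $\zeta_n(t)=e^{-i\omega_n t}\zeta_n(0)$. Since $\Phi^{-1}:\ell^2(\N_0)\to L^2_+(\T)$ is continuous, precompactness of the orbit will follow from precompactness of $\{\zeta(t):t\in\R\}$ in $\ell^2$. Likewise, almost periodicity of $t\mapsto u(t)$ will follow from almost periodicity of $t\mapsto\zeta(t)$, provided we use the compactness of the orbit closure to upgrade continuity of $\Phi^{-1}$ to uniform continuity.

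\emph{Step 1: precompactness in $\ell^2$.} The orbit $\{\zeta(t)\}$ lies in the torus
\[
\mathcal T_\zeta:=\{\eta\in\ell^2(\N_0):\ |\eta_n|=|\zeta_n(0)|=\sqrt{\gamma_n}\ \text{for all }n\}.
\]
This set is compact in $\ell^2$. Indeed, it is closed and bounded, and its tails are uniformly small: $\sup_{\eta\in\mathcal T_\zeta}\sum_{n\ge N}|\eta_n|^2=\sum_{n\ge N}\gamma_n\to0$, because $\sum\gamma_n=\|u_0\|_{L^2}^2<\infty$ by norm preservation. Hence $\overline{\{u(t)\}}\subset\Phi^{-1}(\mathcal T_\zeta)$, which is compact as the continuous image of a compact set. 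This gives the first claim.

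\emph{Step 2: almost periodicity.} Each coordinate $t\mapsto e^{-i\omega_n t}\zeta_n(0)$ is periodic. The truncations $\zeta^{(N)}(t):=(\zeta_0(t),\dots,\zeta_{N-1}(t),0,\dots)$ are trigonometric polynomials with values in $\ell^2$, and they converge to $\zeta(t)$ uniformly in $t$ with error $\bigl(\sum_{n\ge N}\gamma_n\bigr)^{1/2}$. Therefore $\zeta$ is Bohr almost periodic as an $\ell^2$-valued function.

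Next, $\Phi^{-1}$ is uniformly continuous on the compact set $\mathcal T_\zeta$. Note that $\zeta^{(N)}(t)\notin\mathcal T_\zeta$ in general, so it is cleaner to use the Bochner/Bohr characterization directly. For $\varepsilon>0$, pick $\delta$ from the uniform continuity of $\Phi^{-1}$ on $\mathcal T_\zeta$. Then any $\delta$-almost period $\tau$ of $\zeta$, meaning $\sup_t\|\zeta(t+\tau)-\zeta(t)\|_{\ell^2}<\delta$, is an $\varepsilon$-almost period of $u$, since both $\zeta(t+\tau)$ and $\zeta(t)$ lie in $\mathcal T_\zeta$. Relative density of the $\delta$-almost periods of $\zeta$ then gives relative density of the $\varepsilon$-almost periods of $u$. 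Continuity of $t\mapsto u(t)$ in $L^2$ is part of the well-posedness, or follows from the formula above.

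\emph{Main obstacle.} The only delicate point is the passage through $\Phi^{-1}$: Theorem~\ref{thm:A} gives continuity, not uniform continuity on all of $\ell^2$. This is handled by confining attention to the compact invariant torus $\mathcal T_\zeta$ and using Heine--Cantor. The remaining steps are routine.
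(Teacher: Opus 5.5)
Your proposal is correct and follows the paper's proof: the orbit lies in the compact torus $\operatorname{Tor}\zeta$, the truncated $\ell^2$-valued trigonometric polynomials converge uniformly in $t$, and almost periodicity is then transported through $\Phi^{-1}$. The only difference is that you prove the composition step directly, using Heine--Cantor uniform continuity of $\Phi^{-1}$ on the torus together with Bohr's almost periods, whereas the paper cites the permanence result (Lemma~\ref{lem:ap_permanence}(b)); your argument is precisely the standard proof of that lemma.
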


The precompactness was already obtained by the author \cite[Theorem~1.4]{badreddine2024global}. We recover it here, together with almost periodicity, as an immediate consequence of the linearization of Theorem~\ref{thm:B}. 

\medskip

\emph{The Finite-gap manifolds and Quasi-periodicity.}
The construction of $\Phi$ proceeds in two stages and it is convenient to state here the finite-gap result on which everything rests. 

A potential $u\in L^2_+(\T)$ is a \emph{finite-gap potential} if only finitely many gaps are open, i.e.\ there is $N\ge0$ with $\gamma_N(u)>0$ and $\gamma_n(u)=0$ for all $n>N$ \cite{badreddine2024traveling}. In this case the top of the spectrum forms an arithmetic tower generated by a finite Blaschke product \cite[Prop.~5.2]{badreddine2024traveling}: there exist $\lambda_u\in\R$ and $\psi_u$ a Balscke product of degree $N$ with
\[
   L_uS^k\psi_u=(\lambda_u+k)S^k\psi_u\qquad\text{for all }k\ge0,
\]
and by \cite{badreddine2024traveling} such potentials are precisely the rational functions of a prescribed degree. Fixing the minimal such degree $N$ (the \emph{spectral index}) defines the classes
\[
   \U_N:=\{u\ \text{finite-gap potential}:\ \mathcal N(u)=N\},\qquad \U_0:=\C^{*},
\]
and the associated \emph{spaces}
\[
   \Omega_N:=\{\zeta\in\C^{N+1}:\zeta_N\neq0\}.
\]

\begin{theorem}[Finite-gap bijection; Theorem~\ref{thm:main}]\label{thm:bij-intro-UN}
For every $N\ge0$, the Birkhoff map restricts to a bijection $$\Phi:\U_N\to\Omega_N\qtq{ and }\Phi(0)=0.$$ Moreover $\Phi$ maps $\{u\in\U_N:\lambda_0(u)>0\}$ and $\{u\in\U_N:\lambda_0(u)=0\}$ onto $\{\zeta\in\Omega_N:\zeta_0\neq0\}$ and $\{\zeta\in\Omega_N:\zeta_0=0\}$ respectively.
\end{theorem}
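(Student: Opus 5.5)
We prove the finite-gap bijection in three steps: well-definedness, injectivity, surjectivity. The last is the real content.

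\emph{Well-definedness.} Let $u\in\U_N$. By definition $\gamma_N(u)>0$ and $\gamma_n(u)=0$ for $n>N$. By \eqref{eq:intro_zeta}, $|\zeta_n|^2=\gamma_n$, so $\zeta_n(u)=0$ for $n>N$ and $\zeta_N\ne0$. Hence $\Phi(u)$ lies in $\Omega_N$, identified with a subset of $\ell^2(\N_0)$ via trailing zeros. The statement $\zeta_0=0\iff\lambda_0=0$ is immediate from $|\zeta_0|^2=\gamma_0=\lambda_0$. For $u=0$ we have $L_0=-i\p_x$, so $\lambda_n=n$ and all $\gamma_n=0$, giving $\Phi(0)=0$. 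The case $N=0$ is a direct computation. For $u=a\in\C^*$ one finds $f_0=1$ and $\lambda_0=|a|^2$, so $\zeta_0=a$; this is a bijection $\C^*\to\Omega_0$.

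\emph{Spectral data and the reduction.} Fix $N\ge1$. Given $\zeta\in\Omega_N$, set $\gamma_n=|\zeta_n|^2$, then $\lambda_n=\sum_{p\le n}\gamma_p+n$, and finally $\kappa_n$ by \eqref{eq:intro_kappa}. Thus $\zeta$ determines the whole spectrum, the moduli $|\langle u\mid f_n\rangle|=\sqrt{\gamma_n\kappa_n}$, and the phases in the transport gauge. It therefore suffices to show two things.
\begin{enumerate}
\item[(i)] Injectivity: a finite-gap $u$ is recovered from the finitely many numbers $(\lambda_n)_{n\le N}$ and $(\langle u\mid f_n\rangle)_{n\le N}$.
\item[(ii)] Surjectivity: every admissible choice of these data is realized by some $u\in\U_N$.
\end{enumerate}

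For (i) I would use the explicit (Gérard-type) inversion formula that the Lax structure provides. Compute the matrix of $S^*$ in the basis $(f_n)$ via the commutator identity $[S^*,L_u]=S^*-\langle\cdot\mid 1\rangle$-type terms involving $u\langle\cdot\mid u\rangle$. This gives
\[
\langle S^*f_m\mid f_n\rangle=\frac{\langle u\mid f_n\rangle\overline{\langle u\mid f_m\rangle}\cdots}{\lambda_m-\lambda_n-1}.
\]
Here the degenerate entries, where a gap is closed, are fixed by the transport gauge \eqref{eq:transport-gauge}, namely $\langle Sf_{n-1}\mid f_n\rangle=1$. On finite-gap potentials the tail $f_{N+k}=S^k f_N$ (up to phase fixed by the gauge) makes the matrix finite-dimensional on $\operatorname{span}(f_0,\dots,f_N)$ modulo the shift tower. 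One then recovers $u$ through the formula
\[
\Pi u(z)=\langle (\Id-zM^*)^{-1}X\mid Y\rangle,
\]
where $M$ is the $(N+1)\times(N+1)$ compression of $S^*$ and $X,Y$ are the vectors $(\langle u\mid f_n\rangle)$ and $(\langle 1\mid f_n\rangle)$. All three are functions of $\zeta$ alone, so $u$ is determined by $\zeta$.

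\emph{Surjectivity (the main obstacle).} Given $\zeta\in\Omega_N$, I would define $M$, $X$, $Y$ from the formulas above and set $u(z)=\langle(\Id-zM^*)^{-1}X\mid Y\rangle$. Three facts must then be verified.
\begin{enumerate}
\item[(a)] $M$ has spectral radius $<1$, so $u$ is a rational function in $L^2_+$ with $N$ poles outside $\overline{\D}$. I would prove this by showing $M$ is a contraction with no unimodular eigenvalue; the sign condition $\kappa_n>0$ (Lemma~\ref{lem:kappa_bound}) makes a certain Gram matrix positive definite.
\item[(b)] $L_u$ has exactly the prescribed spectrum, with eigenvectors whose pairings reproduce $X$.
\item[(c)] $u$ has spectral index exactly $N$.
\end{enumerate}
For (b) I would build a candidate orthonormal family $h_n$ from $M$ and check $L_uh_n=\lambda_nh_n$ by reversing the algebra of (i). The key identity needed is that the $\kappa_n$ satisfy the rational interpolation relations
\[
\sum_n\frac{\gamma_n\kappa_n}{\lambda_n-\lambda_m-1}=\dots,
\]
which follow from partial fractions of the product $\prod_p\frac{\lambda_p-\lambda}{\lambda_p-\lambda-\dots}$. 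Given (b), (c) follows because $\gamma_N>0$ while $\gamma_n=0$ for $n>N$. Alternatively, if this direct verification is too heavy, I would try a topological route. Show that $\Phi:\U_N\to\Omega_N$ is continuous, injective and proper between connected manifolds of the same real dimension $2N+2$, using the rational parametrization of $\U_N$ from \cite{badreddine2024traveling}. Invariance of domain then gives that the image is open, and properness gives that it is closed, which yields surjectivity. Both the dimension count and properness rely on the bound $\|u\|_{L^2}^2=\sum\gamma_n$ from Theorem~\ref{thm:A}.
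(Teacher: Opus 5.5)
Your architecture matches the paper's. For injectivity you reconstruct the Hardy data $(X,Y,M)$ from $\zeta$ and invert through $u(z)=\langle(\Id-zM)^{-1}X\mid Y\rangle$; for surjectivity you synthesise that data. (With $M$ the matrix of $S^*$, the resolvent is $(\Id-zM)^{-1}$, not $(\Id-zM^*)^{-1}$.) Your injectivity sketch, however, has a wrong entry formula. By \eqref{eq:L_rel}, $(\lambda_p-\lambda_n-1)\langle S^*f_p\mid f_n\rangle=\overline{\langle Sf_n\mid u\rangle}\,\overline{X_p}$, so every open-gap row carries a constant $\langle Sf_n\mid u\rangle$ that is not among your data. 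Its modulus comes from $\|Sf_n\|=1$ (Parseval), and its phase from the gauge via $\gamma_{n+1}\langle Sf_n\mid f_{n+1}\rangle=\langle Sf_n\mid u\rangle X_{n+1}$. $Y$ needs the same treatment using \eqref{eq:0_rel}, $\|Y\|=1$, $\langle 1\mid f_0\rangle>0$, and $Y=e_0$ when $\lambda_0=0$. This is Lemma~\ref{lem:reconstruction}; the step is repairable, but you do not carry it out.

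The genuine gap is surjectivity, which you leave as a plan. You rightly single out the Cauchy identities $\sum_p\gamma_p\kappa_p/(\lambda_p-\mu_q)=1$ for $q\in\K$ (including $\mu_0=0$ when $\lambda_0>0$), but not what to do with them.

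\emph{Step (a).} A contraction can have unimodular eigenvalues, and positivity of $\kappa_n$ does not rule them out. The paper first derives from the Cauchy identities, by partial fractions and invertibility of Cauchy matrices, that $MM^*=\Id$, $MY=0$ and $\ker M=\C Y$, hence $M^*M=\Id-YY^*$. It then uses the synthetic commutator identity $M\Lambda=\Lambda M+M+(MX)X^*$. For a unimodular eigenvector $x$ of the finite block this forces $1+|\langle x\mid X\rangle|^2=0$.

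\emph{Step (b).} ``Build $h_n$ from $M$ and reverse the algebra'' is not an argument. The identities of (i) were derived from a given $L_u$ and do not run backwards. Nothing in your sketch produces functions in $\Lp$ and shows they are orthonormal, complete, eigenvectors and gauge-normalised. The missing idea is the shift model:
\begin{enumerate}
\item[(1)] $M^*$ is an isometry with wandering subspace $\ker M=\C Y$. It is pure because $\spr(M_{\le N})<1$ and $Me_j=e_{j-1}$ for $j>N$.
\item[(2)] By Wold, $\{(M^*)^kY\}_k$ is an orthonormal basis. Hence $U:(M^*)^kY\mapsto e^{ikx}$ is unitary, with $UY=1$, $UM^*U^*=S$, and $UX=u$ by the definition of $u$.
\item[(3)] The commutator identity together with $\Lambda Y=\langle1\mid u\rangle X$ gives $U\Lambda U^*=L_u$ on polynomials, hence everywhere by a core argument.
\item[(4)] Then $h_n=Ue_n$, and the gauge comes from the phase choices of $b$ and $w_n$.
\end{enumerate}
This requires the full infinite matrix, including its tower rows $\delta_{p,n+1}$. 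Your $(N+1)\times(N+1)$ compression is not even co-isometric: $M_{\le N}^*e_N=0$.

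\emph{Topological fallback.} This does not close the gap either. Invariance of domain needs $\U_N$ to be a $(2N+2)$-dimensional topological manifold. The rational parametrisation does not readily give this (coalescing poles, $m_0>0$, the constraints \eqref{eq:rational_constraint}), and the paper deduces it from this very theorem. Properness also does not follow from $\|u\|_{L^2}^2=\sum\gamma_n$, since bounded subsets of $\Lp$ need not be precompact. You would need an equicontinuity criterion such as Theorem~\ref{thm:KMV}.
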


\begin{remark}\label{rem:manifold}
Since $\Phi$ is a homeomorphism and $\Omega_N$ is an open subset of $\C^{N+1}$, one deduces that teh set of finite gap potentials $\U_N$ is a topological manifold, homeomorphic to $\Omega_N$, of real dimension $2N+2$. 
\end{remark}

For finite-gap data the rotation of Theorem~\ref{thm:B} takes place on a finite-dimensional torus, and we obtain a sharp description, including a criterion for genuine periodicity.

\begin{corollary}[Quasi-periodicity and periodicity; Corollary~\ref{cor:periodic}]\label{cor:periodic_intro}
Let $u_0\in\U_N$, $\zeta=\Phi(u_0)$, and $\Ks:=\{1\le n\le N:\zeta_n\neq0\}$ with $d:=|\Ks|$.
\begin{enumerate}
   \item[\textup{(i)}] There is a continuous map $F:\T^{d}\to\U_N\subset L^2_+(\T)$ with
   \[
      u(t)=F\bigl((-\omega_n t\ \mathrm{mod}\ 2\pi)_{n\in\Ks}\bigr),\qquad t\in\R.
   \]
   In particular $u$ is quasi-periodic with frequency vector $(\omega_n)_{n\in\Ks}$, and
   $u\equiv u_0$ if $d=0$ (i.e.\ $N=0$).
   \item[\textup{(ii)}] The solution $t\mapsto u(t)$ is periodic if and only if the
   frequencies $(\omega_n)_{n\in\Ks}$ satisfy 
   $\omega_n/\omega_m\in\mathbb Q$ for all $n,m\in\Ks$. In particular, if $\Ks=\{n_0\}$ is a
   singleton, then $u$ is $\tfrac{2\pi}{\omega_{n_0}}$-periodic in time. This is the one-gap
   (traveling-wave) case of \cite{badreddine2024traveling}.
\end{enumerate}
\end{corollary}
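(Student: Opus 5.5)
We deduce the statement from Theorem~\ref{thm:bij-intro-UN}, Theorem~\ref{thm:B} and the continuity of $\Phi^{-1}$ given by Theorem~\ref{thm:A}. Since $\U_N$ is invariant under the flow (the spectrum, hence every $\gamma_n$, is conserved), Theorem~\ref{thm:B} gives $\zeta_n(u(t))=e^{-i\omega_n t}\zeta_n$ for all $n$. For $n>N$ we have $\zeta_n=0$. The index $n=0$ does not rotate, since $\omega_0=0$. For $n\in\{1,\dots,N\}\setminus\Ks$ the coordinate vanishes. Hence only the coordinates indexed by $\Ks$ move.

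For (i), define
\[
G:\T^d\to\Omega_N,\qquad G(\theta)_n=e^{i\theta_n}\zeta_n \ (n\in\Ks),\qquad G(\theta)_n=\zeta_n \ (n\notin\Ks).
\]
The image lies in $\Omega_N$ because $|G(\theta)_N|=|\zeta_N|\neq0$; note that $N\in\Ks$ when $N\ge1$. The map $G$ is continuous into $\C^{N+1}\subset\ell^2$. Set $F:=\Phi^{-1}\circ G$. This map is continuous by Theorem~\ref{thm:A}, and it takes values in $\U_N$ by Theorem~\ref{thm:bij-intro-UN}. The identity $u(t)=F((-\omega_nt)_{n\in\Ks})$ then follows from the injectivity of $\Phi$. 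If $d=0$, then $G$ is constant and $u\equiv u_0$. One point needs care: $d=0$ forces $N=0$, because $\zeta_N\ne0$ gives $N\in\Ks$ as soon as $N\ge1$.

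For (ii), first suppose the ratios $\omega_n/\omega_m$ are rational. If all $\omega_n>0$ for $n\ge1$ (true since $\omega_n\ge 2\lambda_0+1\ge1$), there is a common period $T$ with $\omega_nT\in2\pi\Z$ for all $n\in\Ks$. Then $G$ evaluated along the orbit is $T$-periodic, and so is $u$. Conversely, suppose $u(t+T)=u(t)$ for some $T>0$. Applying $\Phi$ gives $e^{-i\omega_nT}\zeta_n=\zeta_n$. Since $\zeta_n\neq0$ for $n\in\Ks$, this yields $\omega_nT\in2\pi\Z\setminus\{0\}$, and therefore the ratios are rational. The singleton case follows immediately with $T=2\pi/\omega_{n_0}$.

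The only step needing real input is the continuity of $\Phi^{-1}$ restricted to $\Omega_N$. We take it from Theorem~\ref{thm:A}, or equivalently from Remark~\ref{rem:manifold}, viewing $\C^{N+1}$ as the closed subspace of $\ell^2$ with vanishing tail. Everything else is bookkeeping with the phases.
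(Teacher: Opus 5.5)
Your proposal is correct and follows the paper's proof essentially step by step: for (i) you use the same parametrization $G$ (the paper's $\Gamma$) of the invariant torus composed with $\Phi^{-1}$, and for (ii) the same reduction, via injectivity of $\Phi$, of $T$-periodicity to $\omega_n T\in 2\pi\mathbb Z$ for all $n\in\Ks$. Your handling of the degenerate case $d=0\iff N=0$ is slightly more careful than the paper's, whose proof of (i) asserts $d\ge1$ outright.
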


\smallskip
{\textbf{Strategy of the Proof.}}
We build $\Phi$ in two stages: first on the finite-gap potentials $\U_N$, where all the spectral quantities are finite and explicit (\S\ref{s:birkhoff}), and then on all of $L^2_+(\T)$ by a limiting procedure (\S\ref{S:5}). In the case of the finite-gap manifolds (\S\ref{s:birkhoff}), the reconstruction of $u$ from its spectral data rests on the Hardy representation \eqref{eq:hardy_rep}: identifying $L^2_+(\T)$ with the Hardy space of the disk, every $u$ is recovered as
\begin{equation}\label{eq:intro_hardy}
   u(z)=\bigl\langle(\Id-zM)^{-1}X\mid Y\bigr\rangle_{\ell^2(\N_0)},\qquad z\in\D,
\end{equation}
from the \emph{spectral triple} $X_n=\langle u\mid f_n\rangle$, $Y_n=\langle1\mid f_n\rangle$ and $M=(\langle f_p\mid Sf_n\rangle)_{n,p}$, the matrix of the backward shift $S^{*}$ in the eigenbasis. The whole difficulty is to reconstruct $(X,Y,M)$ from the coordinates $\zeta$ alone. The key device is the \emph{generating function}
\[
   \beta_u(z)=\bigl\langle(L_u-z)^{-1}u\mid u\bigr\rangle=\sum_{n}\frac{|\langle u\mid f_n\rangle|^2}{\lambda_n-z}.
\]
We prove in Corollary~\ref{cor:kappa} that for and $u\in\U_N$,
\begin{equation}\label{eq:gen_finite}
   1-\beta_u(z)=\prod_{p\in\K}\frac{\lambda_{p-1}+1-z}{\lambda_p-z},
   \qquad\K=\{n:\gamma_n>0\}.
\end{equation}
And, taking residues, the fundamental identity $|\langle u\mid f_n\rangle|^2=\gamma_n\kappa_n$ used in \eqref{eq:intro_zeta2} follows. This expresses each amplitude modulus through the spectrum of $L_u$. Lemma~\ref{lem:reconstruction} then recovers $Y$ and $M$ from $(X,\lambda)$, which gives \emph{injectivity} of $\Phi$ on each $\U_N$ (Theorem~\ref{thm:inj}).
For \emph{surjectivity} we construct the inverse $\Psi$ explicitly in Subsection \S\ref{ss:construction}: from $\zeta\in\Omega_N$ we synthesize a candidate spectrum, amplitudes $X$, a vector $Y$ and a matrix $M$, and define $u=\Psi(\zeta)$ by \eqref{eq:intro_hardy}. The difficulty is to prove that this \emph{synthetic} object is the genuine spectral data of an honest potential in $\U_N$. This is achieved through a set of operator identities (Proposition~\ref{prop:struct}),
\[
   MM^{*}=\Id,\qquad M^{*}M=\Id-YY^{*},\qquad \ker M=\C Y,
\]
which say that the synthesized $M^{*}$ is an isometry with one-dimensional defect. Proposition~\ref{prop:virial} upgrades $\|M\|\le1$ to a \emph{strict} spectral radius $\spr(M_{\le N})<1$, forcing $u=\Psi(u)$ to be rational of degree $\le N$ (Lemma~\ref{lem:rational}). A purity argument via Wold's decomposition (Proposition~\ref{prop:purity}) shows that $M^{*}$ is a pure shift with wandering vector $Y$, producing a unitary $U$ intertwining the synthetic objects $(M^{*},X,Y)$ with the true $(S^{*},u,1)$ (Lemma~\ref{lem:U}). Feeding this into Proposition~\ref{prop:index} identifies the spectral index and yields $\Psi(\zeta)\in\U_N$ with $\Phi(\Psi(\zeta))=\zeta$.

\smallskip
We pass from finite gaps to all of $L^2_+(\T)$ in \S\ref{S:5}. Here, three points must be settled to pass to the limit $N\to\infty$. First, the scalings $\kappa_n$ are finite products only on $\U_N$. In general they must be convergent infinite products, which requires the summability of the gaps. This we obtain from a Weyl-type upper bound $\sum_{n\ge1}\gamma_n\le\|u\|_{L^2}^2$ in Proposition~\ref{prop:weyl}. Passing to the limit in the truncations of \eqref{eq:gen_finite} then yields both the \emph{trace formula}
\[
   \|u\|_{L^2}^2=\lambda_0+\sum_{n\ge1}\gamma_n\qquad\text{for every }u\in L^2_+(\T)
\]
(Proposition~\ref{prop:isometry}), which makes $\Phi$ norm-preserving and $\ell^2$-valued via $|\zeta_n|^2=\gamma_n$, and the infinite-product formula \eqref{eq:intro_kappa} for $\kappa_n$  with the uniform bound $\kappa_n\le e^{\|u\|_{L^2}^2}$ (Corollary~\ref{cor:kappa_L2}, Lemma~\ref{lem:kappa_bound}). Second, upgrading bijectivity of $\Phi$ to bicontinuity requires the eigenbasis to depend continuously on $u$. This is precisely where the transport gauge~\ref{eq:transport-gauge} is essential, and we prove continuity of $u\mapsto \langle u \mid f_n \rangle $ through the norm convergence of the Riesz spectral projectors (Lemma~\ref{lem:eigen_cont}). Third, surjectivity and continuity of the inverse follow from $\Phi$ is a proper map : combining the equicontinuity criterion of \cite{KMV} with the Kolmogorov--Riesz--Fr\'echet theorem, any sequence $(u_k)$ with $\Phi(u_k)$ convergent in $\ell^2$ is precompact in $L^2_+(\T)$.  This is Proposition~\ref{prop:proper}. Continuity, injectivity, properness, and Theorem~\ref{thm:bij-intro-UN} together give the homeomorphism (Theorem~\ref{thm:homeo}).

\smallskip
Once the coordinates are in place, the linearization Theorem~\ref{thm:B} follows by tracking a single phase.

\smallskip
\paragraph{\textbf{Related work and remarks}}
 In the  \emph{defocusing} form, the \eqref{CS} considered throughout this paper arises in the modulation theory of internal waves: it was derived by Pelinovsky \cite{pelinovsky1995intermediate} as the intermediate (and, in the infinite-depth limit) nonlinear Schr\"odinger equation governing the envelope of a quasi-monochromatic wave packet at the interface of two fluid layers, thereby providing a modulation description for the Benjamin--Ono regime, and it was there that the nonlocal structure of the nonlinearity was first identified. An inverse-scattering transform was subsequently developed by Pelinovsky--Grimshaw \cite{pelinovsky1995spectral} and Matsuno \cite{matsuno2004cauchy}. In the focusing regime, the same equation appears as the integrable hydrodynamics of the classical Calogero--Sutherland system and is related to the bidirectional Benjamin--Ono equation, following Abanov--Bettelheim--Wiegmann \cite{abanov2009integrable}. 
The integrability of the intermediate and deep--water equations was recognized early: a spectral, inverse scattering transform was developed by Pelinovsky--Grimshaw \cite{pelinovsky1995spectral} and by Matsuno \cite{matsuno2004cauchy}. A direct scattering theory was recently revisited by \cite{frank2025jost, ruoci2026direct}.

Beyond the periodic setting, the model has been studied intensively on the line \cite{gerardlenzmann,killip2025scaling,KMV}. For the finite-depth intermediate equation, de Moura \cite{de2007well} proved local well--posedness for small initial data in $H^s(\R)$ with $s\geq 1$. This result was subsequently pushed to low regularity by Barros \cite{barros2019local} for small initial data in the critical Besov space $B^{1/2}_{2,1}(\R)$, with further sub-critical well--posedness dynamics investigated in \cite{chapouto2026sub}. A global well-posedness result incorporating a non-vanishing boundary condition at infinity was later established in \cite{akahori2025global}. 

Traveling waves and finite-gap potentials on the torus were classified by the author \cite{badreddine2024traveling}; they constitute the family $\U_N$ studied here. More recently, traveling periodic waves and breathers were constructed by Chen and Pelinovsky \cite{chen2025traveling}. The semiclassical or zero-dispersion limit of the periodic flow was analyzed by \cite{badreddine2024zero}, and the threshold for the growth of high Sobolev norms by  \cite{gerardlenzmann,kim2024construction, hogan-Kowalski, chen2026finite}. In particular, in sharp contrast with the global regularity of the defocusing flow studied here, the focusing equation admits finite-time blow-up. Singular solutions were constructed in \cite{kim2024construction, jeong2024quantized, chen2026finite}: the quantized blow-up rate
\[
   \|u(t)\|_{H^s(\R)}\sim_{s,u_0}\frac{1}{(T-t)^{2s}}\qquad\text{as }t\to T^{-},\quad s>0.
\] 
A classification of single-bubble blow-up solutions was obtained by Jeong--Kim--Kim--Kwon \cite{jeong2026classification}. Finally, soliton resolution for the focusing equation was proved by Kim--Kwon \cite{kimKwon}.

\medskip

\paragraph{\textbf{Organization of the paper}}
Section~\ref{S:2} fixes notation and studies the spectral theory of the Lax operator $L_u$, recalling from \cite{badreddine2024global,badreddine2024traveling} how $u$ interacts with its eigenbasis, introduces the transport gauge, and the class $\U_N$ of finite-gap potentials. Section~\ref{ss:generating} introduces the generating function, the Cauchy interpolation lemma, the Birkhoff scalings $\kappa_n$, and the reconstruction of the Hardy data. Section~\ref{s:birkhoff} proves the bijectivity of $\Phi$ on each $\U_N$: injectivity in \S\ref{ss:Injectivity} and, via the explicit inverse, surjectivity in \S\ref{SS:4}. Section~\ref{S:5} extends $\Phi$ to all of $L^2_+(\T)$ and proves it to be a norm-preserving homeomorphism onto $\ell^2(\N_0)$. Section~\ref{S:6} proves the linearization Theorem~\ref{thm:B} and deduces almost periodicity and precompactness of general orbits, and quasi-periodicity and the periodicity criterion for finite-gap data.

\subsection*{Acknowledgements}
The author is partially funded by the European Research Council, grant agreement ``CritPDEsRand'' no.~101219486. Part of this work was carried out while the author was a Hedrick fellow at the University of California, Los Angeles, supported by an AMS--Simons Travel Grant.

\section{Notation and preliminaries}\label{S:2}

In this paper, we adopt the following  notation. We work in the periodic setting $\mathbb{T} = \mathbb{R} / 2\pi\mathbb{Z} \simeq \mathbb{S}^1$. 
Here and after, for a function $f \in L^2(\mathbb{T})$, its Fourier coefficients are defined by
\begin{equation}\label{FT}
\widehat f(k) = \frac{1}{2\pi} \int_0^{2\pi} e^{-ik x} f(x)\,dx, \quad k \in \mathbb{Z},
\end{equation}
and $f(x) = \sum_{k \in \mathbb{Z}} \widehat f(k) e^{ik x}$ is the associated Fourier expansion. We equip $L^2(\mathbb{T})$ with the normalized inner product, taken to be linear in the first argument:
\begin{equation}
    \langle f \mid g \rangle = \frac{1}{2\pi} \int_0^{2\pi} \overline{g(x)}
    \,f(x)  dx.
\end{equation}

Moreover, $\mathbb N=\{1,2,\dots\}$ denotes the positive integers and $\N_0=\{0,1,2,\dots\}$. We write $(e_n)_{n\ge0}$, $e_n=(\delta_{nk})_k$, for the canonical basis of $\ell^2(\N_0)$, and identify a finitely supported sequence with its zero-extension in $\ell^2(\N_0)$.

The Hardy space $L^2_+(\mathbb{T})$ on the unit circle is the closed subspace of functions with non-negative Fourier frequencies:
\begin{equation}
L^2_+(\mathbb{T}) := \{ f \in L^2(\mathbb{T}) :\, \operatorname{supp}(\widehat{f}\,) \subseteq 
 \N_0 \}.
\end{equation}
We denote the Riesz-Cauchy-Szeg\H{o} projection onto $L^2_+(\mathbb{T})$ by
\begin{equation}
\Pi : L^2(\mathbb{T}) \to L^2_+(\mathbb{T}), \qquad \Pi f(x) = \sum_{k \ge 0} \widehat{f}(k) e^{ikx} 
\end{equation}
 The shift operator $S$ acts on $L^2_+(\mathbb{T})$ by multiplication, $(Sh)(x) = e^{ix}h(x)$. It is an isometry satisfying
\begin{equation}
    S^{*}1 = 0, \qquad SS^{*} = \operatorname{Id} - \langle \,\cdot\, \mid 1 \rangle 1.
\end{equation}
The potential $u$ can then be canonically reconstructed  via the shift operator $S$. Indeed,
\begin{equation}\label{eq:u-shift}
    u(x)=\sum_{k\ge0}\langle u\mid e^{ikx}\rangle \,e^{ikx} = \sum_{k\ge0}\langle u\mid S^k 1\rangle \,e^{ikx}= \sum_{k\ge0}\langle S^{*k}u\mid 1 \rangle \,e^{ikx}
\end{equation}
Throughout, elements $h \in L^2_+(\mathbb{T})$ are identified with holomorphic functions $h(z) = \sum_{k=0}^\infty \widehat{h}(k) z^k$ on the open unit disk $\mathbb{D} = \{z \in \mathbb{C} : |z| < 1\}$.

\smallskip
For a sufficiently smooth potential $u$, the unbounded self-adjoint Lax operator $L_u$ with domain $\operatorname{Dom} L_u= H^1_+(\mathbb{T})$ and the skew-adjoint operator $P_u$\,, acting on $L^2_+(\mathbb{T})$ are defined in \cite{badreddine2024global} by
\begin{equation}\label{Lax op}
    L_u f := -i\partial_x f + u \Pi(\bar{u} f) \quad \text{and} \quad 
    P_u f := -u \Pi(\partial_x \bar{u} f) + {\partial_x u} \Pi({\bar{u}} f) + iu \Pi\big(|u|^2\Pi(\bar u  f)\big).
\end{equation}
The evolution equation \eqref{CS} produces then the Lax equation
\begin{equation}
    \partial_t L_{u(t)} = [P_{u(t)}, L_{u(t)}].  \label{Lax eq}
\end{equation}

In \cite{badreddine2024global}, the rigorous definition of the Lax operator in low-regularity settings has been extended to $u\in L^2_+(\mathbb{T})$. The following theorem summarizes key spectral results.

\begin{theorem}[Lax operator {\cite{badreddine2024global,badreddine2024traveling}}]\label{thm:Lax}
For any $u\in L^2_+(\mathbb T)$, the operator $L_u$ extends, via its quadratic form of domain $H^{1/2}_+(\mathbb T)$, to a self-adjoint operator on $L^2_+(\mathbb T)$, bounded below and with compact resolvent. Its spectrum is discrete and simple, 
\begin{equation}\label{eq:simplicity_eigenv}
   0\le\lambda_0<\lambda_1<\lambda_2<\dots\to+\infty.
\end{equation}
The corresponding orthonormal eigenfunctions $(f_n)_{n \ge 0}$ form a complete orthonormal basis of $L^2_+(\mathbb{T})$
\begin{equation}
    L^2_+(\mathbb{T}) = \overline{\bigoplus_{n \ge 0} \operatorname{span}\{f_n\}}, \qquad L_u f_n = \lambda_n(u) f_n.
\end{equation}
Moreover, the gaps $(\gamma_n(u))$ satisfy
\begin{equation}\label{eq:gamma[n]}
    \gamma_n(u) := \lambda_n(u)-\lambda_{n-1}(u)-1 \ge 0.
\end{equation}
Finally, the map $u\mapsto\lambda_n(u)$ is Lipschitz on compact subsets of $\Lp(\T)$.
\end{theorem}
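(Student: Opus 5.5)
The plan is to build $L_u$ through its quadratic form and then get every spectral statement from one commutation identity between $L_u$ and the shift $S$.

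\textbf{Step 1: the form.} On $H^{1/2}_+(\T)$ I set
\[
q_u(f)=\langle -i\p_x f\mid f\rangle+\|\Pi(\bar u f)\|_{L^2}^2 .
\]
Both terms are nonnegative on $L^2_+(\T)$, since $-i\p_x=D\ge0$ there. The task is to show that the perturbation is infinitesimally form-bounded with respect to $D$.

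In Fourier variables, $\widehat{\Pi(\bar u f)}(k)=\sum_{j\ge0}\widehat f(k+j)\overline{\widehat u(j)}$. Cauchy--Schwarz in $j$ then gives
\[
\|\Pi(\bar u f)\|^2\le\|u\|_{L^2}^2\sum_{m\ge0}(m+1)|\widehat f(m)|^2 .
\]
Next I split $u=u_{\le K}+u_{>K}$ in frequency, and choose $K$ so that $\|u_{>K}\|_{L^2}^2\le\varepsilon$. For this, $K$ must be uniform over a compact set of $u$.
\begin{itemize}
\item The low part $u_{\le K}$ is a bounded multiplier, so it contributes at most $C_K\|f\|^2$.
\item The high part contributes at most $\varepsilon\|f\|_{H^{1/2}}^2$, by the bound above.
\end{itemize}
Hence $\|\Pi(\bar u f)\|^2\le \varepsilon\langle Df\mid f\rangle+C_\varepsilon\|f\|^2$. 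The KLMN theorem then yields a unique self-adjoint $L_u\ge0$ with form domain $H^{1/2}_+$, and it coincides with \eqref{Lax op} for smooth $u$. Its resolvent is compact because $H^{1/2}_+\hookrightarrow L^2_+$ is compact. So the spectrum is discrete, tends to $+\infty$, and has an orthonormal eigenbasis $(f_n)$. Also $\lambda_0\ge0$ because $q_u\ge0$.

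\textbf{Step 2: the commutator with $S$.} I use $\Pi(e^{ix}g)=e^{ix}\Pi g+\widehat g(-1)$ together with $\widehat{(\bar u Sf)}(-1)=\langle Sf\mid u\rangle$. This gives, first for smooth $f$ and then in the form sense,
\[
L_uSf=SL_uf+Sf+\langle Sf\mid u\rangle u,\qquad\text{hence}\qquad S^*L_uS=L_u+\Id+\langle\cdot\mid S^*u\rangle S^*u\ \ge\ L_u+\Id .
\]
Since $S$ is an isometry onto the codimension-one subspace $\Range S$, the compression of $L_u$ to $\Range S$ is unitarily equivalent to $S^*L_uS$. Call its eigenvalues $\mu_0\le\mu_1\le\cdots$.
\begin{itemize}
\item Min--max applied to $S^*L_uS\ge L_u+1$ gives $\mu_k\ge\lambda_k+1$.
\item Cauchy interlacing for a codimension-one compression gives $\mu_k\le\lambda_{k+1}$.
\end{itemize}
Therefore $\lambda_{k+1}\ge\lambda_k+1$. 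This single inequality gives simplicity of the spectrum, strict monotonicity, and $\gamma_n\ge0$.

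\textbf{Step 3: Lipschitz dependence (main obstacle).} Let $u,v$ lie in a compact set $\mathcal C\subset\Lp$, so that the splitting in Step 1 holds with a common $K$. For $f$ in the span of the first $n+1$ eigenfunctions, Step 1 gives a uniform bound $\|f\|_{H^{1/2}}\le C(n,\mathcal C)\|f\|$. I then estimate the difference of the two perturbations:
\[
\bigl|\|\Pi(\bar uf)\|^2-\|\Pi(\bar vf)\|^2\bigr|\le \|\Pi((\bar u-\bar v)f)\|\bigl(\|\Pi(\bar uf)\|+\|\Pi(\bar vf)\|\bigr)\lesssim\|u-v\|_{L^2}\|f\|_{H^{1/2}}^2 .
\]
The last inequality uses the Fourier bound of Step 1 applied to $u-v$. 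Inserting this into the min--max characterization, in both directions, yields $|\lambda_n(u)-\lambda_n(v)|\le C(n,\mathcal C)\|u-v\|_{L^2}$.

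The delicate point is this uniformity. The a priori $H^{1/2}$ control of the low eigenspaces has to hold uniformly over the compact set, and that is exactly why compactness is needed and why I choose the frequency cutoff $K$ uniformly over $\mathcal C$.
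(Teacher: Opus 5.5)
Your proof is correct, and it follows what is essentially the standard route of the works the paper cites. The paper does not prove this theorem; it quotes it from \cite{badreddine2024global,badreddine2024traveling}. The ingredients you use are the closed form on $H^{1/2}_+$, the shift identity \eqref{eq:Lax_comm} turned by min--max and codimension-one interlacing into $\lambda_{n+1}\ge\lambda_n+1$, and a min--max perturbation estimate for the eigenvalues.

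Your closing remark overstates the role of compactness. In Step 3, for $f\in\operatorname{span}\{f_0(u),\dots,f_n(u)\}$ you already have
\[
\langle Df\mid f\rangle\le q_u(f)\le\lambda_n(u)\|f\|^2 .
\]
Testing the form on $1,e^{ix},\dots,e^{inx}$ with your crude Fourier bound gives $\lambda_n(u)\le n+(n+1)\|u\|_{L^2}^2$. So your estimate actually yields Lipschitz continuity on bounded subsets of $\Lp$, which is stronger than what is stated, with no uniform frequency cutoff needed.

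Similarly, the infinitesimal KLMN bound is superfluous in Step 1. The perturbing form $\|\Pi(\bar u f)\|^2$ is nonnegative and bounded by $\|u\|_{L^2}^2\|f\|_{H^{1/2}}^2$, so the form norm of $q_u$ is directly equivalent to the $H^{1/2}$ norm and the form is closed.
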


\begin{lemma}[Norm-resolvent continuity]\label{lem:nrc}
For every $\kappa\ge1$ and $r>0$ there is $C=C(r,\kappa)$ such that, for all $u,v\in L^2_+$, with $\|u\|_{L^2}\leq r,\ \|v\|_{L^2}\leq r$, we have
\begin{equation}\label{eq:nrc}
   \bigl\|(L_u+\kappa)^{-1}-(L_v+\kappa)^{-1}\bigr\|_{L^2\to H^1}\le C\,\|u-v\|_{L^2}.
\end{equation}
Consequently, if $u_j\to u$ in $\Lp$, then $(L_{u_j}-z)^{-1}\to(L_u-z)^{-1}$ in operator norm,
uniformly for $z$ in compact subsets of $\C\setminus[0,\infty)$.
\end{lemma}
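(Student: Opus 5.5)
We prove the estimate \eqref{eq:nrc} first and then deduce the norm-resolvent convergence.

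\textbf{Reduction to a form perturbation.} For $w\in L^2_+$ set $T_w f:=w\,\Pi(\bar w f)$. Its quadratic form is
\[
\langle T_w f\mid f\rangle=\|\Pi(\bar w f)\|_{L^2}^2\ge0 .
\]
Since $L_w=-i\p_x+T_w\ge -i\p_x$ in the sense of forms, we have for $\kappa\ge1$
\[
\|(L_w+\kappa)^{-1/2}\|_{L^2\to H^{1/2}}\le C .
\]
The second resolvent identity, read in the form sense on $H^{1/2}_+$, gives
\[
(L_u+\kappa)^{-1}-(L_v+\kappa)^{-1}=(L_u+\kappa)^{-1}(T_v-T_u)(L_v+\kappa)^{-1}.
\]
We also split the difference of the perturbations:
\[
T_v-T_u=(v-u)\Pi(\bar v\,\cdot)+u\,\Pi(\overline{(v-u)}\,\cdot).
\]

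\textbf{Mapping properties.} The key bilinear bound is
\[
\|a\,\Pi(\bar b g)\|_{H^{-1}}\lesssim \|a\|_{L^2}\|b\|_{L^2}\|g\|_{H^1}.
\]
To prove it, note that $H^1(\T)\hookrightarrow L^\infty$, so $\bar b g\in L^2$ and $\Pi(\bar b g)\in L^2$. Pairing against $h\in H^1$ then gives
\[
|\langle a\Pi(\bar b g)\mid h\rangle|\le \|a\|_{L^2}\|\Pi(\bar bg)\|_{L^2}\|h\|_{L^\infty},
\]
which is the claim. Hence $T_v-T_u:H^1_+\to H^{-1}_+$ with norm $\lesssim r\|u-v\|_{L^2}$.

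\textbf{Main obstacle.} It remains to show that $(L_w+\kappa)^{-1}$ maps $L^2\to H^1$ and $H^{-1}\to L^2$ (the second by duality from the first), with bounds depending only on $r$ and $\kappa$. This is the delicate step, since $L^2$ is critical and $T_w$ is not relatively bounded with small bound in a naive sense.

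First, $T_w:H^{1/2}\to H^{-1/2}$ is bounded by a Sobolev pairing argument. The $H^1$ bound is harder, and I would try the following. For $f=(L_w+\kappa)^{-1}g$ we have
\[
-i\p_x f=g-\kappa f-T_wf ,
\]
so it suffices to bound $T_wf$ in $L^2$. Write $\|w\Pi(\bar wf)\|_{L^2}\le \|w\|_{L^2}\|\Pi(\bar w f)\|_{L^\infty}$, which is not quite available. Instead, split $w=w_{\le K}+w_{>K}$ and choose $K=K(w)$ with $\|w_{>K}\|_{L^2}$ small; the smooth part is harmless. The catch is that $K$ depends on $w$ and not only on $r$, so this does not give uniform constants over the ball.

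If the constant must be uniform in $r$ alone, I would instead use the explicit identity from \cite{badreddine2024global}. There $L_w$ is conjugated, via the inner-outer structure, to a form that gives a priori $H^1$ control of eigenfunctions in terms of $\lambda$ and $\|w\|_{L^2}$. I would take that $L^2\to H^1$ resolvent bound on $L^2$-balls as the input from the previous work.

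\textbf{Combining.} Composing $L^2\xrightarrow{(L_v+\kappa)^{-1}}H^1\xrightarrow{T_v-T_u}H^{-1}\xrightarrow{(L_u+\kappa)^{-1}}L^2$ gives an $L^2\to L^2$ bound $C\|u-v\|_{L^2}$. To upgrade the target to $H^1$, apply $L_u+\kappa$ to the difference: it equals $(T_v-T_u)(L_v+\kappa)^{-1}$, which lies in $H^{-1}$ with bound $C\|u-v\|$. Elliptic regularity for $-i\p_x$ then yields the $H^1$ norm, after handling $T_u$ applied to an $L^2$ function landing in $H^{-1}$.

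\textbf{Consequence.} For $z$ in a compact $K\subset\C\setminus[0,\infty)$, write
\[
(L-z)^{-1}=(L+\kappa)^{-1}\bigl[\Id-(z+\kappa)(L+\kappa)^{-1}\bigr]^{-1}.
\]
The bracket is invertible because $\lambda_n\ge0$, and $\|(L_u-z)^{-1}\|\le \operatorname{dist}(z,[0,\infty))^{-1}$ uniformly in $u$. Norm convergence of $(L_{u_j}+\kappa)^{-1}$ then transfers to $(L_{u_j}-z)^{-1}$ uniformly on $K$ by the resolvent identity
\[
(L_{u_j}-z)^{-1}-(L_u-z)^{-1}=(\kappa+z)(L_{u_j}-z)^{-1}\bigl[(L_{u_j}+\kappa)^{-1}-(L_u+\kappa)^{-1}\bigr](L_u+\kappa)(L_u-z)^{-1}.
\]
In this identity $(L_u+\kappa)(L_u-z)^{-1}$ is bounded uniformly on $K$.
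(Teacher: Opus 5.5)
There is a genuine gap. Your argument never establishes \eqref{eq:nrc}. Everything rests on the ball-uniform bound $\sup_{\|w\|_{L^2}\le r}\|(L_w+\kappa)^{-1}\|_{L^2\to H^1}\le C(r,\kappa)$, which you take as an unproved input and attribute loosely to \cite{badreddine2024global}. At critical regularity this domain estimate is essentially the whole content of the lemma. Your frequency splitting could at best give constants uniform on compact subsets of $\Lp$, as you note yourself. $H^1$ bounds on individual eigenfunctions would not give a resolvent bound either: they do not control $\|\sum_n c_nf_n/(\lambda_n+\kappa)\|_{H^1}$ by $\|c\|_{\ell^2}$. The paper does not prove this estimate; it imports \eqref{eq:nrc} directly from \cite[Lem.~7.2]{KMV}, and you should do the same rather than build on an unverified claim.

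Separately, your upgrade to an $H^1$ target fails as written. Knowing $(L_u+\kappa)\Delta\in H^{-1}$ only places $\Delta$ in $L^2$, since a first-order operator gains one derivative. Moreover, $T_u$ applied to a general $L^2$ function is not even well defined. What you need, and what is true, is that $T_v-T_u$ maps $H^1_+$ into $L^2$. Write $\Pi(\bar wf)(x)=\sum_{k\ge0}\hat f(k)e^{ikx}\overline{S_kw(x)}$, with $S_kw$ the Fourier partial sums of $w$. Hardy's inequality $\sum_{k\ge1}\bigl(k^{-1}\sum_{m\le k}|\hat w(m)|\bigr)^2\lesssim\|w\|_{L^2}^2$ then gives $\|\Pi(\bar wf)\|_{L^\infty}\lesssim\|w\|_{L^2}\|f\|_{H^1}$. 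This is exactly the bound you dismissed as ``not quite available''. It yields $\|(T_v-T_u)f\|_{L^2}\lesssim r\|u-v\|_{L^2}\|f\|_{H^1}$, and the chain $L^2\to H^1\to L^2\to H^1$ then gives \eqref{eq:nrc} at once, granted the uniform resolvent bound. Via Kato--Rellich, the same estimate supplies that bound only when $r$ is small.

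The resolvent identity in your last paragraph is also incorrect. Test it on scalars $a,b$ in place of $L_{u_j},L_u$: the right side is $\frac{(\kappa+z)(b-a)}{(a-z)(a+\kappa)(b-z)}$, while the left side is $\frac{b-a}{(a-z)(b-z)}$. The left factor must be $(L_{u_j}+\kappa)(L_{u_j}-z)^{-1}=\Id+(z+\kappa)(L_{u_j}-z)^{-1}$, not $(\kappa+z)(L_{u_j}-z)^{-1}$. With that correction, both outer factors are bounded by $1+(|z|+\kappa)/\operatorname{dist}(z,[0,\infty))$ uniformly in $u$. The deduction then becomes exactly the paper's two-sided identity \eqref{eq:res_two_sided} combined with \eqref{eq:res_bound_L2}.
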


\begin{proof}
Inequality \eqref{eq:nrc} is \cite[Lem.~7.2, (7.5)]{KMV}. 

Now, fix $\kappa\ge1$ and write
$R_w(z):=(L_w-z)^{-1}$ for $z\in \C\bk [0,\infty)$.  The first resolvent identity  reads $R_w(z)=R_w(-\kappa)+(z+\kappa)R_w(z)R_w(-\kappa)$, i.e.
\begin{equation}\label{eq:res_kappa_z}
   \bigl(I+(z+\kappa)R_w(z)\bigr)R_w(-\kappa)=R_w(z)
   =R_w(-\kappa)\bigl(I+(z+\kappa)R_w(z)\bigr).
\end{equation}
Using \eqref{eq:res_kappa_z} for $u$ on the left and for $v$ on the right, one checks
\begin{equation}\label{eq:res_two_sided}
   R_u(z)-R_v(z)
   =\bigl(I+(z+\kappa)R_u(z)\bigr)\bigl(R_u(-\kappa)-R_v(-\kappa)\bigr)\bigl(I+(z+\kappa)R_v(z)\bigr).
\end{equation}

Let $K$ be compact in $\C\setminus[0,\infty)$, $d:=\operatorname{dist}(K,[0,\infty))>0$ and $\rho_K:=\sup_{z\in K}|z|$. By Theorem~\ref{thm:Lax}, each $L_w$  is self-adjoint with $\sigma(L_w)\subseteq[0,\infty)$, so for $z\in K$,
\begin{equation} \label{eq:res_bound_L2}
    \|R_w(z)\|_{L^2\to L^2}\le\frac{1}{\operatorname{dist}(z,\sigma(L_w))}\le\frac1d\,,
   \qquad
   \bigl\|I+(z+\kappa)R_w(z)\bigr\|\le 1+\frac{\rho_K+\kappa}{d}=:C_K,
\end{equation}
both uniform in $w$. Combining \eqref{eq:res_two_sided} with \eqref{eq:nrc} and the previous inequality,
\begin{equation}\label{eq:nrc_complex}
   \bigl\|R_u(z)-R_v(z)\bigr\|_{L^2\to L^2}\le C_K^{2}\,C(r,\kappa)\,\|u-v\|_{L^2},
   \qquad z\in K,\ \|u\|_{L^2},\|v\|_{L^2}\le r .
\end{equation}

As a consequence, if $u_j\to u$ in $\Lp$, all lie in some ball $\|\cdot\|_{L^2}\le r$, and \eqref{eq:nrc_complex} gives
$$
    \sup_{z\in K}\|R_{u_j}(z)-R_u(z)\|\le C_K^{2}C(r,\kappa)\|u_j-u\|_{L^2}\to0.
$$ 
Hence, $(z-L_{u_j})^{-1}\to(z-L_u)^{-1}$ in operator norm, uniformly on $K$.
\end{proof}

\subsection{Spectral identities and rigidities}\label{ss:identities}
Our analysis relies on the structural and spectral rigidity of the Lax operator $L_u$. We recall from \cite{badreddine2024traveling} these essential facts in the following proposition.

\begin{proposition}[{\cite[Section 2]{badreddine2024traveling}}]\label{prop:Lax_spectral}
The Lax operator $L_u$ satisfies the commutation relation
\begin{equation}\label{eq:Lax_comm}
   L_uS=SL_u+S+\langle\,\cdot\mid S^{*}u\rangle\,u .
\end{equation}
As a consequence, the fundamental relations hold: for any $n\ge0$ and $k\ge1$,
\begin{align}
   \label{eq:0_rel} \langle1\mid u\rangle\,\langle u \mid  f_n \rangle &=\lambda_n \langle 1 \mid  f_n \rangle ,\\
   \label{eq:L_rel} (\lambda_n-\lambda_{k-1}-1)\langle Sf_{k-1}\mid f_n\rangle
   &=\langle Sf_{k-1}\mid u\rangle\,\langle u \mid  f_n \rangle .
\end{align}
Moreover, for $ n\ge1 $, 
\begin{equation}\label{eq:Xn=0}
   \gamma_n=0\iff Sf_{n-1}\parallel f_n\iff \langle u \mid  f_n \rangle =0 .
\end{equation}
\end{proposition}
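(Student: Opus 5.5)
We derive everything from the commutation relation \eqref{eq:Lax_comm}, tested against the eigenfunctions $f_n$ and using the self-adjointness of $L_u$. For \eqref{eq:Lax_comm} itself we compute directly on a dense class, say $f\in H^1_+$ with $u$ smooth; the $L^2_+$ case then follows by the form-domain extension of Theorem~\ref{thm:Lax}. The derivative part gives $-i\p_x(Sf)=S(-i\p_x f)+Sf$. For the nonlinear part, $u\Pi(\bar u e^{ix}f)-e^{ix}u\Pi(\bar u f)$, write $\bar u e^{ix}=\overline{S^*u}+\overline{\hat u(0)}e^{ix}$, which is valid since $u=\hat u(0)+e^{ix}S^*u$. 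Then $\Pi(\bar u e^{ix} f)=e^{ix}\Pi(\bar u f)+\langle f\mid S^*u\rangle$, because the only frequency lost when multiplying $\Pi(\cdot)$ by $e^{ix}$ is the $(-1)$-mode of $\bar u f$, and that mode equals $\langle f\mid S^*u\rangle$. Multiplying by $u$ yields \eqref{eq:Lax_comm}.

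For \eqref{eq:0_rel}, we pair $L_u1$ against $f_n$. Since $L_u1=u\Pi(\bar u)=\overline{\langle u\mid 1\rangle}\,u=\langle 1\mid u\rangle u$, self-adjointness gives
\[
\lambda_n\langle1\mid f_n\rangle=\langle 1\mid L_uf_n\rangle=\langle L_u1\mid f_n\rangle=\langle1\mid u\rangle\langle u\mid f_n\rangle.
\]
For \eqref{eq:L_rel}, we apply \eqref{eq:Lax_comm} to $f_{k-1}$ and pair with $f_n$:
\[
\lambda_n\langle Sf_{k-1}\mid f_n\rangle=(\lambda_{k-1}+1)\langle Sf_{k-1}\mid f_n\rangle+\langle f_{k-1}\mid S^*u\rangle\langle u\mid f_n\rangle.
\]
Since $\langle f_{k-1}\mid S^*u\rangle=\langle Sf_{k-1}\mid u\rangle$, this is exactly \eqref{eq:L_rel}.

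The equivalence \eqref{eq:Xn=0} is the substantive step. Take $k=n$ in \eqref{eq:L_rel}:
\[
\gamma_n\langle Sf_{n-1}\mid f_n\rangle=\langle Sf_{n-1}\mid u\rangle\langle u\mid f_n\rangle.
\]
If $\langle u\mid f_n\rangle=0$, then $\gamma_n=\lambda_n-\lambda_{n-1}-1$ is such that $L_u(Sf_{n-1})=(\lambda_{n-1}+1)Sf_{n-1}+\langle Sf_{n-1}\mid u\rangle u$ from \eqref{eq:Lax_comm}. Pairing this identity with every $f_m$ shows that $(\lambda_m-\lambda_{n-1}-1)\langle Sf_{n-1}\mid f_m\rangle=\langle Sf_{n-1}\mid u\rangle\langle u\mid f_m\rangle$ for all $m$.

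The main obstacle is to show that $Sf_{n-1}$ is an eigenvector with eigenvalue $\lambda_{n-1}+1$, and to locate it at index $n$. The plan is a min-max/counting argument. The vectors $Sf_0,\dots,Sf_{n-1}$ span an $n$-dimensional space orthogonal to $1$, and the quadratic form of $L_u$ on $S(\mathrm{span}\{f_j\}_{j<n})$ is bounded by the form of $L_u+1$ plus the rank-one term. This gives the interlacing $\lambda_{n-1}+1\le\lambda_n$ (i.e. $\gamma_n\ge0$, already in Theorem~\ref{thm:Lax}). It also shows that equality $\gamma_n=0$ forces the extremal vector $Sf_{n-1}$ to satisfy $\langle Sf_{n-1}\mid u\rangle=0$. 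Then $L_uSf_{n-1}=\lambda_nSf_{n-1}$, and by simplicity $Sf_{n-1}\parallel f_n$. Consequently $\langle u\mid f_n\rangle\propto\langle Sf_{n-1}\mid u\rangle=0$ after conjugation, closing the loop $\gamma_n=0\Rightarrow Sf_{n-1}\parallel f_n\Rightarrow\langle u\mid f_n\rangle=0$.

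For the converse $\langle u\mid f_n\rangle=0\Rightarrow\gamma_n=0$, we use that $f_n\perp u$ gives $L_uf_n=-i\p_xf_n+u\Pi(\bar u f_n)$ with $\langle \Pi(\bar u f_n)\mid 1\rangle=0$. From \eqref{eq:0_rel} we get $\langle 1\mid f_n\rangle=0$ (when $\lambda_n>0$, which holds for $n\ge1$), so $f_n=Sg$ for some $g$. Applying $S^*$ to \eqref{eq:Lax_comm} shows $L_ug=(\lambda_n-1)g$. Hence $\lambda_n-1$ is an eigenvalue $\lambda_m$ with $m\le n-1$, and $\gamma_n\ge0$ forces $m=n-1$, i.e. $\gamma_n=0$.
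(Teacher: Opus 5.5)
The paper only quotes this proposition without proof, so the comparison here is with the statement itself. Your derivations of \eqref{eq:Lax_comm}, \eqref{eq:0_rel} and \eqref{eq:L_rel} are fine, modulo routine domain checks for $u\in L^2_+$. Your last paragraph correctly shows that $\langle u\mid f_n\rangle=0$ implies $\gamma_n=0$, and by simplicity it also gives $g\parallel f_{n-1}$, i.e.\ $Sf_{n-1}\parallel f_n$. The problems are in \eqref{eq:Xn=0}.

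\textbf{First gap.} The step ``$\gamma_n=0$ forces $\langle Sf_{n-1}\mid u\rangle=0$'' is asserted rather than proved, and your min-max sketch cannot deliver it. On $S\operatorname{span}\{f_j\}_{j<n}$ one has $\langle L_uSh\mid Sh\rangle=\langle (L_u+1)h\mid h\rangle+|\langle Sh\mid u\rangle|^2$. This only bounds eigenvalues from above, with a nonnegative excess, so it gives neither $\lambda_n\ge\lambda_{n-1}+1$ nor any equality-case rigidity.

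No variational argument is needed. If $\gamma_n=0$, then \eqref{eq:L_rel} with $k=n$ gives $\langle Sf_{n-1}\mid u\rangle\langle u\mid f_n\rangle=0$.
\begin{itemize}
\item If the first factor vanishes, \eqref{eq:Lax_comm} gives $L_uSf_{n-1}=(\lambda_{n-1}+1)Sf_{n-1}=\lambda_nSf_{n-1}$. Hence $Sf_{n-1}\parallel f_n$, and then $\langle u\mid f_n\rangle=0$.
\item If the second factor vanishes, your last paragraph applies.
\end{itemize}
Either way, $\gamma_n=0$ implies both other conditions.

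\textbf{Second gap, more serious.} The loop is not closed. You never derive anything from $Sf_{n-1}\parallel f_n$ alone: the vanishing of $\langle Sf_{n-1}\mid u\rangle$ that you invoke was obtained from $\gamma_n=0$. Parallelism only gives, through \eqref{eq:L_rel}, the identity $\gamma_n=|\langle Sf_{n-1}\mid u\rangle|^2$.

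In fact the implication $Sf_{n-1}\parallel f_n\Rightarrow\gamma_n=0$ is false. Take $u=Ce^{iNx}$ with $N\ge1$ and $C\neq0$, the first family of Theorem~\ref{thm:char}. Then $L_ue^{ikx}=(k+|C|^2\mathbf{1}_{k\ge N})e^{ikx}$, so the spectrum $0,1,\dots,N-1,N+|C|^2,N+1+|C|^2,\dots$ is simple, with $f_k=e^{ikx}$ in the gauge of Convention~\ref{conv:transport}. Thus $Sf_{N-1}=f_N$, yet $\gamma_N=|C|^2>0$ and $\langle u\mid f_N\rangle=C\neq0$.

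So the triple equivalence cannot be proved as stated. The correct statement, which your argument establishes once the first gap is repaired as above, is: $\gamma_n=0\iff\langle u\mid f_n\rangle=0$, and in that case $Sf_{n-1}\parallel f_n$. This one-directional form is all the paper uses afterwards (Convention~\ref{conv:transport}, Lemma~\ref{lem:geometry}(iii), Proposition~\ref{prop:index}(a), Lemma~\ref{lem:reconstruction}(ii)).
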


\begin{lemma}[Non-degeneracy]\label{lem:geometry}
For every $u\in\Lp(\mathbb T)$, the spectral data exhibit the following rigidities: For $n\ge1$, 
\begin{enumerate}
   \item[\textup{(i)}] $\lambda_0=0\iff\langle1\mid u\rangle=0$. In this case $f_0=1$.
   \item[\textup{(ii)}] $\langle1\mid f_0\rangle\neq0$ for every $u\in\Lp$. Moreover, if $\lambda_0>0$, then $\langle u\mid f_0\rangle\neq0$.
   \item[\textup{(iii)}] $\gamma_n=0\iff\langle Sf_{n-1}\mid u\rangle=0$.
   \item[\textup{(iv)}] If $\gamma_n>0$ then $\langle Sf_{n-1}\mid f_n\rangle\neq0$.
\end{enumerate}
\end{lemma}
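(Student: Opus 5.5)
We prove the four items using only Proposition~\ref{prop:Lax_spectral}, the completeness of $(f_n)$, and the non-negativity $\lambda_0\ge0$. The plan is to test the relations \eqref{eq:0_rel}--\eqref{eq:L_rel} against the constant function $1$ and against the vectors $Sf_{n-1}$, and to expand in the eigenbasis.

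\emph{Items (i) and (ii).} The quadratic form gives $\langle L_uf\mid f\rangle=\langle -i\p_x f\mid f\rangle+\|\Pi(\bar u f)\|^2\ge0$, with both terms nonnegative on $L^2_+$.
\begin{itemize}
\item If $\lambda_0=0$, then $f_0$ is a nonzero constant (this forces $-i\p_x f_0$ to have zero form), and $\Pi(\bar u f_0)=0$ gives $\langle 1\mid u\rangle=0$.
\item Conversely, if $\langle 1\mid u\rangle=0$, then $\Pi(\bar u)=0$, so $L_u1=0$ and $\lambda_0=0$ with $f_0=1$ by simplicity.
\item If $\lambda_0>0$, then $\langle1\mid u\rangle\ne0$ by the above. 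Relation \eqref{eq:0_rel} for $n=0$ then shows that $\langle u\mid f_0\rangle$ and $\langle 1\mid f_0\rangle$ vanish together.
\item So it suffices to exclude $\langle 1\mid f_0\rangle=0$. Suppose it holds. Then $\langle u\mid f_0\rangle=0$, so $\Pi(\bar u f_0)$ is orthogonal to $1$ and $L_uf_0=-i\p_x f_0+u\Pi(\bar uf_0)$.
\item We write $f_0=Sg$ with $g=S^*f_0$ and apply \eqref{eq:Lax_comm}: $\lambda_0 Sg=L_uSg=S(L_u+1)g+\langle g\mid S^*u\rangle u$.
\item Pairing with $Sg$ gives $\lambda_0\|g\|^2=\langle (L_u+1)g\mid g\rangle+|\langle g\mid S^*u\rangle|^2\ge(\lambda_0+1)\|g\|^2$. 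This contradicts $g\ne0$.
\end{itemize}

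\emph{Item (iii).} By \eqref{eq:Xn=0} it is enough to show that $\langle Sf_{n-1}\mid u\rangle=0$ exactly when $\langle u\mid f_n\rangle=0$.
\begin{itemize}
\item Suppose $\gamma_n=0$. Then $f_n\parallel Sf_{n-1}$ and $\langle u\mid f_n\rangle=0$, hence $\langle Sf_{n-1}\mid u\rangle=0$.
\item Conversely, suppose $\langle Sf_{n-1}\mid u\rangle=0$. Then \eqref{eq:L_rel} with $k=n$ gives $(\lambda_m-\lambda_{n-1}-1)\langle Sf_{n-1}\mid f_m\rangle=0$ for every $m$.
\item Hence $Sf_{n-1}$, expanded in the basis $(f_m)$, is supported on the eigenvalue $\lambda_{n-1}+1$. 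Since it is nonzero, $\lambda_{n-1}+1$ is an eigenvalue $\lambda_m$ with $Sf_{n-1}\parallel f_m$.
\item Since $\lambda_m>\lambda_{n-1}$ we have $m\ge n$, and since $\lambda_n\le\lambda_{n-1}+1$ we have $m\le n$. The second inequality needs $\gamma_n\ge0$ read in the other direction, which is the issue addressed below.
\end{itemize}
The main obstacle is this ordering. Theorem~\ref{thm:Lax} only gives $\lambda_n\ge\lambda_{n-1}+1$, so we must rule out $\lambda_m=\lambda_{n-1}+1$ for some $m>n$. For this we use the counting argument via \eqref{eq:Lax_comm}: $S$ maps $\operatorname{span}(f_0,\dots,f_{n-1})$ into the spectral region $\ge\lambda_0+1$ up to rank one. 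A min--max comparison $\lambda_n(L_u)\le\lambda_{n-1}(L_u)+1$, restricted to the orthogonal complement of $u$, should then show that no eigenvalue is skipped. Once $m=n$, we get $\gamma_n=0$.

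\emph{Item (iv).} This follows directly from \eqref{eq:L_rel} with $k=n$: $\gamma_n\langle Sf_{n-1}\mid f_n\rangle=\langle Sf_{n-1}\mid u\rangle\langle u\mid f_n\rangle$. If $\gamma_n>0$, both factors on the right are nonzero by (iii) and \eqref{eq:Xn=0}, so $\langle Sf_{n-1}\mid f_n\rangle\ne0$.
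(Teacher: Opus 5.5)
The flaw is in (iii): its last step rests on a false inequality, although the correct step is a single line. Items (i), (ii) and (iv) are correct and follow the paper's proof. In (ii) you pair the commutation identity directly with $Sg$, whereas the paper first applies $S^*$ and pairs with $h$; the computation is the same. Your contradiction there does not use $\lambda_0>0$ or $\langle u\mid f_0\rangle=0$, so it covers every $u$ at once, as in the paper.

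In (iii), after obtaining $Sf_{n-1}\parallel f_m$ with $\lambda_m=\lambda_{n-1}+1$, you deduce $m\le n$ from ``$\lambda_n\le\lambda_{n-1}+1$''. You then call this the main obstacle and propose to prove $\lambda_n\le\lambda_{n-1}+1$ by a min--max comparison. That inequality is false in general. It says $\gamma_n\le 0$, hence (with $\gamma_n\ge0$) $\gamma_n=0$, and this fails at $n=N$ for every $u\in\U_N$ with $N\ge1$. No argument can establish it, so as written the converse direction of (iii) rests on a false claim.

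The obstacle is illusory: you have the inequality backwards. What you need is $\lambda_m=\lambda_{n-1}+1\le\lambda_n$, which is exactly $\gamma_n\ge0$ from Theorem~\ref{thm:Lax}. Combined with strict monotonicity of $(\lambda_k)$, this gives $m\le n$. For $m>n$ one already has $\lambda_m>\lambda_n\ge\lambda_{n-1}+1$, so there is nothing to rule out. Likewise $\lambda_m>\lambda_{n-1}$ gives $m\ge n$. Hence $m=n$ and $\gamma_n=\lambda_n-\lambda_{n-1}-1=0$.

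Delete the min--max paragraph and replace it with this line. With that correction your argument is the direct form of the paper's proof by contradiction. The paper assumes $\gamma_n>0$, so $\lambda_{n-1}+1$ lies in the open gap $(\lambda_{n-1},\lambda_n)$ and is not an eigenvalue. Then \eqref{eq:L_rel} forces $\langle Sf_{n-1}\mid f_m\rangle=0$ for all $m$, contradicting $\|Sf_{n-1}\|=1$.
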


\begin{proof}

For (i), recall that $L_u 1 = -i\partial_x 1 + u \Pi(\bar u) = u \langle 1 \mid u \rangle$ since $u\in L^2_+(\T)$. If $\langle 1 \mid u \rangle = 0$, then $L_u 1 = 0$. And since $L_u\geq 0$ by \eqref{eq:simplicity_eigenv}, thus $\la_0=0.$

Conversely, suppose $\lambda_0 = 0$, so $L_u f_0 = 0$ for a normalized eigenfunction $f_0$. Taking the inner product with $f_0$ yields
$$
    0 = \langle L_u f_0 \mid f_0 \rangle = \|f_0\|_{\dot{H}^{1/2}}^2 + \|\Pi(\bar{u} f_0)\|_{L^2}^2.
$$
Because both terms are non-negative, each term must vanish independently. The condition $\|f_0\|_{\dot{H}^{1/2}}^2 = 0$ forces $f_0$ to be a non-zero constant function $f_0 \equiv c \in \mathbb{C} \setminus \{0\}$. Indeed, in Fourier modes, $\|f_0\|_{\dot{H}^{1/2}}^2 = \sum_{k=1}^\infty k|\hat{f}_0(k)|^2 = 0$ forces $\hat{f}_0(k) = 0$ for all $k \ge 1$, so $f_0= c$. Since $f_0$ is normalized as $\|f_0\|_{L^2} = 1$, we have $|c| = 1$.
 The second condition $\|\Pi (\bar u f_0)\|_{L^2}=0$ provides $0 = \Pi(\bar{u} c) = c \, \Pi(\bar{u}) = c \langle 1 | u \rangle$. Since $c \neq 0$, this dictates $\langle 1 | u \rangle = 0$.

For (ii), suppose by contradiction that $\langle 1\mid f_0\rangle=0$. Thus, $f_0=Sh$ for some $h\in\Lp$ with $\|h\|=\|f_0\|=1$. Inserting $f_0=Sh$ into $L_uf_0=\lambda_0f_0$ and using the commutation relation \eqref{eq:Lax_comm},
\[
   \lambda_0\,Sh=L_uSh=SL_uh+Sh+\langle h\mid S^{*}u\rangle\,u .
\]
Applying $S^{*}$ and using $S^{*}S=\Id$ yields
\[
   L_uh=(\lambda_0-1)h-\langle h\mid S^{*}u\rangle\,S^{*}u ,
\]
whence, taking the inner product with $h$,
\[
   \langle L_uh\mid h\rangle=(\lambda_0-1)-|\langle h\mid S^{*}u\rangle|^{2}\le\lambda_0-1<\lambda_0 .
\]
This contradicts the variational lower bound $\langle L_uh\mid h\rangle\ge\lambda_0\|h\|^2=\lambda_0$. Hence $\langle 1\mid f_0\rangle\neq0$, and $\langle u \mid  f_0 \rangle=\lambda_0\langle 1\mid f_0\rangle/\langle 1\mid u\rangle\neq0$ if $\la_0\neq 0$ by \eqref{eq:0_rel} and (i).

For (iii), recall from \eqref{eq:Xn=0} that $\gamma_n = 0 \iff S f_{n-1} \parallel f_n \iff \langle u \mid  f_n \rangle  = 0$. Hence, if $\gamma_n = 0$, then $S f_{n-1} = c f_n$ for some $|c|=1$, and $\langle S f_{n-1} \mid u \rangle = c \langle f_n \mid u \rangle = c \overline{\langle u \mid  f_n \rangle } = 0$. Conversely, suppose $\langle Sf_{n-1}\mid u\rangle=0$ and, for contradiction, $\gamma_n>0$. Thus, by \eqref{eq:L_rel},  $\langle Sf_{n-1}\mid f_m\rangle=0$ for all $m\in\N_0$. As $(f_m)$ is a basis this forces  $Sf_{n-1}=0$, contradicting $\|Sf_{n-1}\|=\|f_{n-1}\|=1$.

For (iv), applying again \eqref{eq:L_rel} with $k = n$ yields 
$$
    \gamma_n \langle S f_{n-1} \mid f_n \rangle = \langle S f_{n-1} \mid u \rangle \langle u \mid  f_n \rangle .
$$ 
As $\gamma_n > 0$  statement (iii) implies $\langle S f_{n-1} \mid u \rangle \neq 0$ and \eqref{eq:Xn=0} $\langle u \mid  f_n \rangle \neq 0$, whence $\langle S f_{n-1} \mid f_n \rangle \neq 0$.
\end{proof}

\begin{convention}[Transport gauge]\label{conv:transport}
Fix $u\in L^2_+(\T)$. As $\sigma(L_u)$ is simple by Theorem~\ref{thm:Lax}, each $f_n$ is unique up to a unimodular factor $ e ^{i\theta}$. We fix this angle using the following normalisation for the $(f_n)_{n\ge0}$ :
\[
   \langle1\mid f_0\rangle>0,\qquad \langle Sf_{n-1}\mid f_n\rangle>0,\ \text{for } n\ge1,
\]
where for $\gamma_n=0$ this reads $f_n:=Sf_{n-1}$ by \eqref{eq:Xn=0}. Both conditions are meaningful by Lemma~\ref{lem:geometry}. This is the gauge in which the eigenbasis depends continuously on $u$; see Lemma~\ref{lem:eigen_cont} and Remark~\ref{rem:why_transport}.

\end{convention}

Henceforth $(f_n)_{n\ge0}$ denotes the eigenbasis of $L_u$ normalised by Convention~\ref{conv:transport}.

\medskip
Given $u\in L^2_+(\T),$ set $M$  the matrix representation of $S^*$ in the orthonormal basis $(f_n)_{n \ge 0}$, and $X, Y \in \ell^2(\mathbb{N}_0)$  the representation vectors of $u$ and $1$, then
\begin{equation}\label{eq:X,Y,M}
    X_n := \langle u \mid f_n\rangle, \qquad Y_n := \langle 1 \mid f_n\rangle, \qquad M_{np} := \langle f_p \mid S f_n\rangle = \langle S^* f_p \mid f_n\rangle.
\end{equation}
The potential $u$ reconstructed from the shift in \eqref{eq:u-shift} reads now via the following  Hardy representation
\begin{equation}\label{eq:hardy_rep}
   u(z)=\bigl\langle(\Id-zM)^{-1}X\mid Y\bigr\rangle_{\ell^2(\N_0)},\qquad z\in\D .
\end{equation}
Indeed,  
$\sum_{k\ge0}(zS^{*})^k=(\Id-zS^{*})^{-1}$ for $|z|<1$.

\subsection{Finite-gap potentials \texorpdfstring{$\U_N$}{U[N]}}\label{ss:fg}
We recall from \cite[\S5]{badreddine2024traveling} the class of potentials studied here. Throughout, $\mathcal B_N$ denotes the set of finite Blaschke products of degree $N$,
\[
   \psi(x)=e^{i\theta}\prod_{k=1}^{N}\frac{e^{ix}-\overline{p_k}}{1-p_k e^{ix}},
   \qquad \theta\in\R,\ p_k\in\D,
\]
with the convention that $\mathcal B_0$ is the set of unimodular constants.

\begin{definition}[Finite-gap potential {\cite[Def.~5.1]{badreddine2024traveling}}]
A potential $u\in\Lp(\mathbb T)$ is a \emph{finite-gap potential} if only finitely many gaps are open, i.e.\ there exists $m\ge1$ such that
\[
   \gamma_n(u)=0\qquad\text{for all }n\ge m.
\]
\end{definition}

By \eqref{eq:Xn=0} a closed gap means $Sf_{n-1}\parallel f_n$. Hence, if $u$ in a finite gap potential then from some rank on the spectrum forms an arithmetic tower generated by a Blaschke product \cite[Prop.~5.2]{badreddine2024traveling}: there exists $\lambda_u\in\R$, $\psi_u\in\mathcal B_N$ for some $N\in\N_0$ with
$$
    L_uS^k\psi_u=(\lambda_u+k)S^k\psi_u, \qquad {\forall k\ge0.}
$$ 
We recall the proof of this property in Proposition~\ref{prop:index}. Following \cite[eq.~(5.4)]{badreddine2024traveling}, we associate with each finite-gap potential its \emph{spectral index}
\begin{equation}\label{eq:N(u)}
    \mathcal N(u):=\min\bigl\{N\in\N_0:\ \exists\,\lambda\in\R,\ \exists\,\psi\in\mathcal B_N,\ 
L_uS^{k}\psi=(\lambda+k)S^{k}\psi\ \ \forall k\in\N_0\bigr\}.
\end{equation}
and we set, for $N\ge1$,
\[
   \U_N:=\{u\ \text{finite-gap potential}:\ \mathcal N(u)=N\}.
\]
The index $\mathcal N(u)$  locates the \emph{last open gap}. That is, for any $u$ a finite-gap potential and $N\ge1$, we have (see also Proposition~\ref{prop:index})
\[
   \mathcal N(u)=N\quad\Longleftrightarrow\quad
   \gamma_N(u)>0\ \text{ and }\ \gamma_n(u)=0\ \ \forall n>N.
\]
And for $N=0$, we set $\U_0:=\C^*$.

\begin{proposition}\label{prop:index}
Let $u\in\Lp(\mathbb T)$ be a finite-gap potential with eigenbasis $(f_n)_{n\ge0}$ normalized via Convention~\ref{conv:transport}. Fix $N\ge1$ and suppose
\begin{equation}\label{eq:star}
   \gamma_N>0\qquad\text{and}\qquad \gamma_n=0\quad\text{for all }n>N.
\end{equation}
Then
\begin{enumerate}
   \item[\textup{(a)}] $f_{N+k}=S^kf_N$ and $L_uS^kf_N=(\lambda_N+k)S^kf_N$ for all $k\ge0$
   \item[\textup{(b)}] $f_N$ is a finite Blaschke product of degree $N$
   \item[\textup{(c)}] $\mathcal N(u)=N$ and $u\in\mathcal{U}_N$. 
\end{enumerate}
\end{proposition}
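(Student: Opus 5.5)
The plan is to read off (a) from the gauge by induction, deduce (b) by looking at the orthogonal complement of the tower $\{S^kf_N\}_{k\ge0}$, and obtain (c) by a counting argument that rules out any tower built on a Blaschke product of degree $<N$.

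\textbf{Step (a).} I argue by induction on $k$. For $n>N$ we have $\gamma_n=0$, so \eqref{eq:Xn=0} gives $Sf_{n-1}\parallel f_n$. The transport gauge $\langle Sf_{n-1}\mid f_n\rangle>0$, together with $\|Sf_{n-1}\|=\|f_n\|=1$, then forces $f_n=Sf_{n-1}$. Iterating from $n=N+1$ yields $f_{N+k}=S^kf_N$. Since $\gamma_{N+k}=0$, we get $\lambda_{N+k}=\lambda_N+k$, hence $L_uS^kf_N=(\lambda_N+k)S^kf_N$.

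\textbf{Step (b).} Let $E:=\overline{\operatorname{span}}\{S^kf_N:k\ge0\}=\overline{\operatorname{span}}\{f_n:n\ge N\}$. By completeness of the eigenbasis, $E^\perp=\operatorname{span}\{f_0,\dots,f_{N-1}\}$, which has dimension exactly $N$. The space $E$ is $S$-invariant, so by Beurling's theorem $E=\theta L^2_+$ for some inner function $\theta$. Then $E^\perp=L^2_+\ominus\theta L^2_+$ is the model space $K_\theta$, and $\dim K_\theta=N<\infty$ holds iff $\theta$ is a Blaschke product of degree $N$. It remains to identify $f_N$ with $\theta$ up to a unimodular constant. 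Here $f_N\in E$ and $f_N\perp S E$, since $\langle S^{k+1}f_N\mid f_N\rangle=\langle f_{N+k+1}\mid f_N\rangle=0$. So $f_N$ spans the wandering subspace $E\ominus SE=\C\theta$, giving $f_N=c\theta$ with $|c|=1$. Thus $f_N\in\mathcal B_N$. The one point to double-check is that $E\ominus SE$ is one-dimensional; this follows because $SE=\overline{\operatorname{span}}\{f_n:n\ge N+1\}$.

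\textbf{Step (c).} From (a) and (b), $\psi=f_N$ and $\lambda=\lambda_N$ are admissible in \eqref{eq:N(u)}, so $\mathcal N(u)\le N$. For the reverse inequality, suppose $\psi\in\mathcal B_{N'}$ with $N'<N$ and $L_uS^k\psi=(\lambda+k)S^k\psi$ for all $k\ge0$. Each $S^k\psi$ is then an eigenvector, and by simplicity of the spectrum it is a multiple of some $f_{m(k)}$ with $\lambda_{m(k)}=\lambda+k$. Counting codimensions, $\overline{\operatorname{span}}\{S^k\psi\}=\psi L^2_+$ has orthocomplement of dimension $N'$, spanned by the $N'$ eigenvectors missing from the tower. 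Since the eigenvalues $\lambda+k$ increase by $1$ at each step while the $\lambda_m$ are strictly increasing with $N'$ indices missing, the tower must occupy exactly the indices $m\ge N'$, i.e.\ $m(k)=N'+k$. This forces $\lambda_{N'+k+1}-\lambda_{N'+k}=1$ for all $k\ge0$, so $\gamma_n=0$ for all $n>N'$, and in particular $\gamma_N=0$ because $N>N'$. This contradicts \eqref{eq:star}. Hence $\mathcal N(u)=N$ and $u\in\U_N$.

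The main obstacle is this index bookkeeping in (c): one must show rigorously that the tower generated by $\psi$ consists of \emph{consecutive} indices $m(k)=N'+k$. I would argue it as follows. The map $k\mapsto m(k)$ is strictly increasing because $\lambda+k$ is increasing and the $\lambda_m$ are ordered. Its image has cofinite complement of cardinality exactly $N'$, by the codimension count. A strictly increasing map $\N_0\to\N_0$ whose image misses exactly $N'$ points must eventually be $m(k)=k+N'$. To get this for every $k\ge0$, and not merely eventually, I would use that the spacing $\lambda_{m(k+1)}-\lambda_{m(k)}=1$ combined with $\gamma\ge0$, i.e.\ $\lambda_{j+1}-\lambda_j\ge1$, rules out any skipped index inside the tower.
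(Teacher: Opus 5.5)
Your proof is correct and follows the paper's overall structure. Part (a) is the same as the paper's. Parts (b) and (c) differ in the tools used, as follows.

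In (b), you use Beurling's theorem to write $E=\theta L^2_+$, and then identify $f_N=c\,\theta$ through the wandering subspace $E\ominus SE=\C\theta$. The paper avoids Beurling entirely. Orthonormality of $\{S^kf_N\}_{k\ge0}$ gives $\widehat{|f_N|^2}(-k)=\langle S^kf_N\mid f_N\rangle=0$ for $k\ge1$. So the real function $|f_N|^2$ is identically $1$, $f_N$ is inner outright, and $E=f_NL^2_+$ directly. Both routes then finish with the same model-space count, $\dim K=N$. The paper's version is more elementary and self-contained. Yours is more structural but needs the extra identification step, which you handle correctly.

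In (c), both arguments rest on the same fact: the bound $\lambda_{j+1}-\lambda_j\ge1$ forces the tower onto consecutive indices. The paper writes $\psi=cf_j$ and deduces $\gamma_n=0$ for $n>j$, hence $j\ge N$. It then re-applies (b) to $f_j$ to get $\deg\psi=j$. You instead apply the codimension count $\dim(\psi L^2_+)^\perp=N'$ directly to $\psi$, which pins $m(0)=N'$.

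Your ``eventually $m(k)=k+N'$'' step is superfluous. The spacing argument alone gives $m(k)=m(0)+k$ for every $k$, and the codimension count then forces $m(0)=N'$.
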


\begin{proof}
First, let us prove {(a)}. For $n>N$ we have $\gamma_n=0$, so $Sf_{n-1}\parallel f_n$ by \eqref{eq:Xn=0}, and Convention~\ref{conv:transport} sets $f_n=Sf_{n-1}$. Iterating from $n=N+1$ gives $f_{N+k}=S^kf_N$. Since $\gamma_n=0$ for $N<n\le N+k$ we have $\lambda_{N+k}=\lambda_N+k$, whence
$$
    L_uS^kf_N=L_uf_{N+k}=\lambda_{N+k}f_{N+k}=(\lambda_N+k)S^kf_N.
$$

For {(b)}, note by (a), the set $\{S^kf_N:k\ge0\}=\{f_{N+k}:k\ge0\}$ is orthonormal. In particular, for $k\ge1$,
\[
   \widehat{|f_N|^2}(-k)=\langle S^kf_N\mid f_N\rangle=\langle f_{N+k}\mid f_N\rangle=0,
\]
and $|f_N|^2$ being real, all its nonzero Fourier modes vanish, hence $|f_N|^2=\|f_N\|_{L^2}^2=1$ a.e.\ on $\mathbb T$, so $f_N$ is inner. Multiplication by the inner function $f_N$ is an isometry of $\Lp$, so
\[
   f_N\Lp=\overline{\operatorname{span}}\{S^kf_N:k\ge0\}
         =\overline{\operatorname{span}}\{f_{N+k}:k\ge0\},
\]
and therefore its model space \cite[Ch.5]{GMR} is
$$
    K_{f_N}=\Lp\ominus f_N\Lp=\operatorname{span}\{f_0,\dots,f_{N-1}\},
$$ 
of dimension $N$. Since $\dim K_{f_N}=N<\infty$, the inner function $f_N$ is a finite Blaschke product with $\deg f_N=\dim K_{f_N}=N$.

Finally, let us prove (c). By (a)--(b), $f_N\in\mathcal B_N$ generates the tower $L_uS^kf_N=(\lambda_N+k)S^kf_N$, so $\mathcal N(u)\le N$. Conversely, let $\psi\in\mathcal B_{N'}$ satisfy 
$$
    L_uS^k\psi=(\nu+k)S^k\psi \qtq{for all} k\ge0.
$$
Taking $k=0$, $\psi$ is an eigenvector of $L_u$. By simplicity $\psi=c\,f_j$ for some $j\ge0$, $c\in\C^{*}$, and $\nu=\lambda_j$. For every $k\ge0$, $S^k\psi$ is then an eigenvector with eigenvalue $\lambda_j+k$, so $\lambda_j+k\in\{\lambda_n\mid n\in\N_0\}$. Because $(\lambda_n)$ is strictly increasing with $\lambda_n-\lambda_{n-1}\ge1$, an induction gives $\lambda_{j+k}=\lambda_j+k$ for all $k\ge0$, i.e.\ $\gamma_n=0$ for all $n>j$. Since $\gamma_N>0$, this forces $j\ge N$. Applying (b) with $j$ in place of $N$ (valid, as $\gamma_n=0$ for $n>j$) yields $f_j\in\mathcal B_j$, whence 
$$
    N'=\deg\psi=\deg f_j=j\ge N.
$$ 
Therefore $\mathcal N(u)=N$ and $u\in\U_N$.
\end{proof}

\begin{proposition}[Trace formula {\cite[Cor.~5.5]{badreddine2024traveling}}]\label{prop:trace}
Any  finite-gap potential $u\in\U_N$ satisfies $$\|u\|_{L^2}^2=\lambda_0+\sum_{n=1}^N\gamma_n=\lambda_N-N.$$
\end{proposition}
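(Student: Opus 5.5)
The second equality is pure bookkeeping. Summing the definition \eqref{eq:gamma[n]} telescopically gives
\[
\lambda_0+\sum_{n=1}^N\gamma_n=\lambda_0+\sum_{n=1}^N(\lambda_n-\lambda_{n-1}-1)=\lambda_N-N.
\]
So the content of the statement is the identity $\|u\|_{L^2}^2=\lambda_N-N$ for $u\in\U_N$. The plan is to get it from the Blaschke tower of Proposition~\ref{prop:index}, by evaluating $\lambda_N$ as a Rayleigh quotient on the inner function $f_N$.

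By Proposition~\ref{prop:index}(a)--(b), $f_N=\psi$ is a Blaschke product of degree $N$ with $|\psi|=1$ a.e., and $L_uS^k\psi=(\lambda_N+k)S^k\psi$ for all $k\ge0$. We then compute
\[
\lambda_N=\langle L_u\psi\mid\psi\rangle=\langle -i\p_x\psi\mid\psi\rangle+\|\Pi(\bar u\psi)\|^2 .
\]
This is legitimate because $\psi$ is smooth; the quadratic form on $H^{1/2}_+$ suffices in any case.

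For the first term, $-i\p_x\psi/\psi$ equals $|\psi'(e^{ix})|$ on the circle, and the mean of this quantity over $\T$ is $N$ (the winding number / argument principle). Hence $\langle -i\p_x\psi\mid\psi\rangle=N$.

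For the second term, write $\bar u\psi=\overline{u\bar\psi}$. Since $|\psi|=1$, we get $\|\Pi(\bar u\psi)\|^2=\|u\|^2-\|(I-\Pi)(\bar u\psi)\|^2$. The expected result then requires that $\bar u\psi\in L^2_+$, i.e.\ that $u\bar\psi$ has only nonpositive frequencies. Equivalently, $u\in K_{S\psi}=\Lp\ominus S\psi\Lp$.

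The main obstacle is proving this membership: $\langle u\mid S^{k+1}\psi\rangle=0$ for all $k\ge0$, i.e.\ $\langle Sf_{N+k}\mid u\rangle=0$ for $k\ge0$. This follows from Lemma~\ref{lem:geometry}(iii), since $f_{N+k}=S^kf_N$ and $\gamma_{N+k+1}=0$ gives $\langle Sf_{N+k}\mid u\rangle=0$. Then $u\perp S\psi\Lp$, hence $\bar u\psi$ has nonnegative frequencies only, so $\Pi(\bar u\psi)=\bar u\psi$ and $\|\Pi(\bar u\psi)\|^2=\|u\|^2$.

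Combining the two terms gives $\lambda_N=N+\|u\|_{L^2}^2$, which is the claim. The case $N=0$ (constants $u=a$) is checked directly: $\lambda_0=|a|^2$ with $f_0=1$.
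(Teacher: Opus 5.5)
Your proof is correct. The telescoping step, the evaluation $\langle -i\p_x\psi\mid\psi\rangle=N$ by the argument principle, and the key orthogonality $u\perp S\psi\Lp$ all check out. That orthogonality follows from Lemma~\ref{lem:geometry}(iii), or more directly from $Sf_{N+k}=f_{N+k+1}$ and $\langle u\mid f_{N+k+1}\rangle=0$ by \eqref{eq:Xn=0}, and it gives $\Pi(\bar u\psi)=\bar u\psi$.

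Two small remarks. First, applying Proposition~\ref{prop:index}(a)--(b) to $u\in\U_N$ requires \eqref{eq:star}. This is the statement, recorded just before that proposition, that $\mathcal N(u)$ locates the last open gap; you should cite it explicitly. Second, once $\Pi(\bar u\psi)=\bar u\psi$ is known, you do not need the quadratic form at all. The eigen-equation itself reads $-i\p_x\psi+|u|^2\psi=\lambda_N\psi$, i.e.\ $|u|^2=\lambda_N+i\,\p_x\psi/\psi$ pointwise on $\T$, and the trace formula is just its mean.

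The paper does not prove this proposition itself; it is imported from \cite[Cor.~5.5]{badreddine2024traveling}. The paper's own machinery does, however, give a different route to the same identity. By Corollary~\ref{cor:kappa}, the weights $(|X_p|^2)_{p\in\K}$ solve the Cauchy system for the interlacing data $(\lambda_p,\mu_p)_{p\in\K}$. The trace identity in \eqref{eq:cauchy_trace} then yields $\|u\|_{L^2}^2=\sum_{p\in\K}|X_p|^2=\sum_{p\in\K}(\lambda_p-\mu_p)=\lambda_0+\sum_{n=1}^N\gamma_n$.

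That spectral-measure route uses only the node identities of Lemma~\ref{lem:node} and Cauchy interpolation. It is the one that extends to all of $\Lp$ in Proposition~\ref{prop:isometry}, via the truncations of Lemma~\ref{lem:trunc} and the Weyl bound of Proposition~\ref{prop:weyl}. Your Blaschke-generator argument relies on the existence of an inner eigenfunction, so it is confined to finite-gap data. In exchange, it gives the stronger pointwise formula for $|u|^2$ rather than only its integral.
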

\begin{remark}
    For general $u\in L^2_+,$ the $L^2$--norm is derived in Proposition~\ref{prop:isometry}.
\end{remark}

Finally, \cite{badreddine2024traveling}'s characterization shows that these potentials are exactly the rational functions of prescribed degree.

\begin{theorem}[Characterization {\cite[Thm.~5.4]{badreddine2024traveling}}]\label{thm:char}
Let $N\ge1$. A potential $u$ belongs to $\U_N$ if and only if either $u(x)=Ce^{iNx}$ with $C\in\C^{*}$, or $u$ is the rational function
\begin{equation}\label{eq:rational}
   u(x)=e^{im_0x}\prod_{j=1}^{r}
   \left(\frac{e^{ix}-\overline{p_j}}{1-p_j e^{ix}}\right)^{m_j-1}
   \left(a+\sum_{j=1}^{r}\frac{c_j}{1-p_j e^{ix}}\right),
   \quad p_j\in\D^{*},\ p_k\neq p_j\ (k\neq j),
\end{equation}
where $m_0\in\{0,\dots,N-1\}$, $m_1,\dots,m_r\in\{1,\dots,N\}$ satisfy
$m_0+\sum_{j=1}^{r}m_j=N$, and $(a,c_1,\dots,c_r)\in\C\times\C^{r}$ (with $a\neq0$ if $m_0\neq0$) obey the defocusing constraints
\begin{equation}\label{eq:rational_constraint}
   \overline{a}\,c_j+\sum_{k=1}^{r}\frac{c_j\overline{c_k}}{1-p_j\overline{p_k}}
   =-m_j,\qquad j=1,\dots,r.
\end{equation}
In particular every $u\in\U_N$ is a rational function, smooth on $\mathbb T$, and $\U_N$ is invariant under the flow of \eqref{CS} \cite[Prop.~5.6]{badreddine2024traveling}.
\end{theorem}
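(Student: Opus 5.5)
The statement is Theorem~\ref{thm:char}, which is quoted from \cite[Thm.~5.4]{badreddine2024traveling}. The plan is to rederive it from the structure in Proposition~\ref{prop:index}, proving the two inclusions separately.

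\emph{Direct inclusion.} Let $u\in\U_N$. By Proposition~\ref{prop:index}, $f_N=\psi$ is a Blaschke product of degree $N$, $f_{N+k}=S^k\psi$, and $K_\psi=\operatorname{span}\{f_0,\dots,f_{N-1}\}$. Write $u=\sum_n X_nf_n$ with $X_n=\langle u\mid f_n\rangle$. For $n>N$ the gap $\gamma_n$ is closed, so $X_n=0$ by \eqref{eq:Xn=0}. Hence
\[
u\in K_\psi\oplus\C\psi=K_{S\psi}.
\]
The model space of a finite Blaschke product consists of rational functions whose denominators are $\prod_j(1-p_je^{ix})^{m_j}$, where the $p_j$ are the nonzero zeros of $S\psi$ with multiplicity $m_j$, and $m_0$ counts the zero at the origin. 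Factor out the common inner part of $u$, namely $e^{im_0x}\prod_j B_{p_j}^{m_j-1}$. What remains is $a+\sum_j c_j/(1-p_je^{ix})$, and this is where the degree count $m_0+\sum_j m_j=N$ comes from. The degenerate case with no $p_j$ and $u=Ce^{iNx}$ corresponds to $\psi=e^{iNx}$.

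The constraints \eqref{eq:rational_constraint} should come from imposing the eigen-relation $L_u\psi=\lambda_N\psi$ together with \eqref{eq:Lax_comm}. For $L_u$ applied to $S^k\psi$ to be proportional to $S^k\psi$, the singular part of $\Pi(\bar u\,S^k\psi)$ at each pole $p_j$ must cancel against the $-i\partial_x$ term. Equating residues at each pole gives one relation per $j$: the $-m_j$ comes from differentiating the factor $B_{p_j}^{m_j}$, and the Cauchy kernels $1/(1-p_j\bar p_k)$ come from projecting $\bar u$.

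\emph{Converse.} Take $u$ of the form \eqref{eq:rational} obeying \eqref{eq:rational_constraint}, and set $\psi:=e^{im_0x}\prod_jB_{p_j}^{m_j}$, a Blaschke product of degree $N$. Reversing the residue computation, I will check that
\[
L_uS^k\psi=(\lambda+k)S^k\psi\quad\text{for all }k\ge0,
\]
where $\lambda$ is determined explicitly by the data. This gives $\mathcal N(u)\le N$. To obtain equality I will use the argument of Proposition~\ref{prop:index}(c): it suffices to show that $\gamma_N>0$, equivalently that $u$ is not in $K_\psi$. This holds because the component of $u$ along $\psi$ is proportional to the top-order coefficient: $a$ when $m_0\neq0$, and otherwise the $c_j$ at the maximal multiplicity, which the constraint forces to be nonzero since $m_j\ge1$.

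Invariance under \eqref{CS} follows directly from the Lax equation. The flow is isospectral and transports eigenfunctions by the unitary solving $\partial_tU=P_uU$, so both the number of open gaps and the index are preserved.

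\emph{Main obstacle.} The hardest step is the residue computation producing exactly \eqref{eq:rational_constraint}, and in particular handling higher multiplicities $m_j>1$. I would first treat simple poles ($m_j=1$), where $\Pi(\bar u\,\psi)$ can be computed directly by partial fractions. For higher multiplicities I would reduce to the simple-pole case by factoring out the inner part, using that $\Pi(\bar B\,\cdot)$ commutes appropriately on $K_\psi$.
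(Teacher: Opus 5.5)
The paper gives no proof of this statement; it quotes \cite[Thm.~5.4]{badreddine2024traveling}. Your argument therefore has to stand on its own, and the direct inclusion fails at the step ``factor out the common inner part $e^{im_0x}\prod_jB_{p_j}^{m_j-1}$'' (here $B_p:=\frac{e^{ix}-\bar p}{1-pe^{ix}}$).

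Knowing $u\in K_{S\psi}$ only gives $u=P/\prod_j(1-p_je^{ix})^{m_j}$ with $\deg P\le N$, and this forces no zeros of $P$. The eigen-relation does not supply them either. Since $u\in K_{S\psi}$ means $\psi\bar u\in\Lp$, you have $\Pi(\bar u\,S^k\psi)=\bar u\,S^k\psi$. The tower $L_uS^k\psi=(\lambda_N+k)S^k\psi$ is then equivalent to the scalar identity $|u|^2=\lambda_N+i\partial_x\psi/\psi=\lambda_N-m_0-\sum_jm_j\frac{1-|p_j|^2}{|1-p_je^{ix}|^2}$ on $\T$. This is the clean route to \eqref{eq:rational_constraint}, with no residue bookkeeping needed at higher multiplicities. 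But it only constrains $|u|$. What it actually yields is $u=J\,v$, where $v=a+\sum_jc_j/(1-p_je^{ix})$ satisfies \eqref{eq:rational_constraint} and $J$ is an inner divisor of $e^{im_0x}\prod_jB_{p_j}^{m_j-1}$. The condition $\gamma_N>0$, i.e.\ $\langle u\mid\psi\rangle\neq0$, forces $e^{im_0x}$ to divide $J$, but nothing fixes the exponents of the $B_{p_j}$.

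The step is not merely unjustified: the ``only if'' direction as written is false, so no argument can close it. Here is a counterexample.
\begin{itemize}
\item Take $p=\tfrac12$, $c=1$, $a=-\tfrac{10}{3}$, so that $\bar ac+\frac{|c|^2}{1-|p|^2}=-2$, and set $u=a+\frac{c}{1-pe^{ix}}$.
\item Then $u\in K_{SB_p}$, so $\bar u\,S^kB_p^2\in\Lp$. A direct check gives $|u|^2+2\frac{1-|p|^2}{|1-pe^{ix}|^2}\equiv\frac{70}{9}$, hence $L_uS^kB_p^2=\bigl(\frac{70}{9}+k\bigr)S^kB_p^2$ for all $k\ge0$.
\item By the argument in Proposition~\ref{prop:index}(c), $B_p^2$ is a multiple of $f_2$ and $\gamma_n=0$ for $n>2$.
\item Moreover $\langle u\mid B_p^2\rangle=ap^2\neq0$, so $\gamma_2>0$ by \eqref{eq:Xn=0}. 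Thus $u\in\U_2$.
\item Yet $u$ has a single simple pole and $u(0)=-\frac{7}{3}\neq0$.
\item The constraints force every $c_j\neq0$, so in \eqref{eq:rational} the pole at $1/p_j$ has order exactly $m_j$. Hence every candidate with $N=2$ has a double pole, has two poles, or vanishes at $0$; and $Ce^{2ix}$ also vanishes at $0$.
\end{itemize}
The family your method correctly characterizes has prefactor $e^{im_0x}\prod_jB_{p_j}^{n_j}$ with arbitrary $0\le n_j\le m_j-1$, and the same constraints with right-hand side $-m_j$ taken from the multiplicities of $\psi$.

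One smaller correction in your converse. There $\langle u\mid\psi\rangle=\langle v\mid\prod_jB_{p_j}\rangle=a\prod_j(-p_j)$, which is always proportional to $a$, not to some $c_j$. Nonvanishing of $a$ follows from positivity: if $a=0$, summing the constraints would give $\bigl\|\sum_jc_j(1-p_je^{ix})^{-1}\bigr\|^2=-\sum_jm_j<0$. With that fix, the converse and the flow-invariance argument go through.
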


\section{Generating function and Cauchy systems }\label{ss:generating}

The candidate Birkhoff coordinates are the rescaled amplitudes $X_n:=\langle u\mid f_n\rangle$. We first record the rescaling factor. For $u\in\Lp$ define the \emph{generating function}
\begin{equation}\label{eq:Hu}
   \beta_u(z):=\bigl\langle(L_u-z)^{-1}u\mid u\bigr\rangle=\sum_{n\ge0}\frac{|X_n|^2}{\lambda_n-z},
   \qquad z\in\C\setminus\{\lambda_n\mid n\in \N_0\},
\end{equation}
absolutely convergent since $\lambda_n\ge n$ and $\sum_n|X_n|^2=\|u\|_{L^2}^2$. We use the conventions 
\begin{equation}\label{eq:gmma-mu[0]}
    \gamma_0(u):=\lambda_0(u),\qquad \mu_0=0
\end{equation}
and for all $n\in\N$, 
\begin{equation}\label{eq:gmma-mu[n]}
    \gamma_n(u)=\lambda_n(u)-\mu_n(u)\ge0, \qquad \mu_n(u):=\lambda_{n-1}(u)+1.
\end{equation}
Iterating  \eqref{eq:gamma[n]} gives for any $n\in \N,$ $m\le n$
\begin{equation}\label{eq:lambda_lower}
    \lambda_n=\lambda_0+n+\sum_{k=1}^{n}\gamma_k,\qquad\text{hence}\qquad 
   \lambda_n-\lambda_m\ge n-m,\qquad \lambda_n\ge n .
\end{equation}

\begin{lemma}[Node identities]\label{lem:node}
For any $u\in\Lp$,
\begin{enumerate}
   \item[\textup{(i)}] If $\lambda_0>0$ then $\beta_u(0)=1$.
   \item[\textup{(ii)}] For every $k\in\N$, if $\gamma_k(u)>0$ then $\beta_u(\mu_k)=1$, .
\end{enumerate}
\end{lemma}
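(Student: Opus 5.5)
The plan is to turn each node identity into the statement that a certain explicit vector lies in the range of $L_u-z$, and then to evaluate $\beta_u$ by pairing with $u$. Both parts follow the same pattern: exhibit $g$ with $(L_u-z)g=u$ at the node $z$, using an eigenvector-type relation; then $\beta_u(z)=\langle g\mid u\rangle$, which should equal $1$. At $z=0$ or $z=\mu_k$, the point $z$ need not be in the resolvent set. For (i), $\lambda_0>0$ and $0\notin\sigma(L_u)$, so this case is fine. For (ii), $\mu_k=\lambda_{k-1}+1$ with $\gamma_k>0$ gives $\lambda_{k-1}<\mu_k<\lambda_k$, so $\mu_k\notin\sigma(L_u)$ and $\beta_u(\mu_k)$ is well defined.

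\textbf{(i)} Recall that $L_u1=\langle1\mid u\rangle u$, as computed in the proof of Lemma~\ref{lem:geometry}(i). Since $\lambda_0>0$, Lemma~\ref{lem:geometry}(i) gives $\langle 1\mid u\rangle\neq0$. Hence
\[
L_u^{-1}u=\frac{1}{\langle1\mid u\rangle}\,1,
\]
and therefore
\[
\beta_u(0)=\langle L_u^{-1}u\mid u\rangle=\frac{\langle 1\mid u\rangle}{\langle 1\mid u\rangle}=1 .
\]
As a check in the basis, \eqref{eq:0_rel} gives $|X_n|^2/\lambda_n=X_n\overline{Y_n}/\langle u\mid 1\rangle$. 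Summing and using Parseval yields $\langle u\mid 1\rangle/\langle u\mid1\rangle=1$, as expected.

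\textbf{(ii)} Apply the commutation relation \eqref{eq:Lax_comm} to $f_{k-1}$:
\[
L_uSf_{k-1}=(\lambda_{k-1}+1)Sf_{k-1}+\langle f_{k-1}\mid S^*u\rangle u,
\]
that is,
\[
(L_u-\mu_k)Sf_{k-1}=\langle Sf_{k-1}\mid u\rangle\,u .
\]
By Lemma~\ref{lem:geometry}(iii), $\gamma_k>0$ forces $c:=\langle Sf_{k-1}\mid u\rangle\neq0$. Therefore $(L_u-\mu_k)^{-1}u=c^{-1}Sf_{k-1}$, and
\[
\beta_u(\mu_k)=c^{-1}\langle Sf_{k-1}\mid u\rangle=1.
\]
Alternatively, summing \eqref{eq:L_rel} divided by $\lambda_n-\mu_k$, multiplied by $\overline{X_n}$, gives the same result via Parseval.

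The only delicate point is domain and regularity in the $L^2$ setting. One must make sure that \eqref{eq:Lax_comm} holds for $f_{k-1}\in\operatorname{Dom}L_u$, so that $Sf_{k-1}\in\operatorname{Dom}L_u$, and that $1\in\operatorname{Dom}L_u$. For $u$ merely in $L^2_+$, I would argue at the level of the quadratic form on $H^{1/2}_+$, or simply use the coordinate versions \eqref{eq:0_rel} and \eqref{eq:L_rel}, which are already stated for all $u\in\Lp$. The series computation then avoids any domain issue, because $\sum_n X_n\overline{\langle Sf_{k-1}\mid f_n\rangle}=\langle u\mid Sf_{k-1}\rangle$ converges absolutely by Parseval. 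I would therefore write the final proof using \eqref{eq:0_rel}, \eqref{eq:L_rel} and Parseval, with the operator identities serving as motivation.
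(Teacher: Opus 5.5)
Your proof is correct and follows the paper's argument. Part (i) is exactly the paper's use of $L_u1=\langle 1\mid u\rangle u$ together with the invertibility of $L_u$. The version of (ii) you say you would write up — dividing \eqref{eq:L_rel} by $\lambda_n-\mu_k$, multiplying by $\overline{X_n}$ and summing via Parseval — is precisely the paper's proof, with the operator identity $(L_u-\mu_k)Sf_{k-1}=\langle Sf_{k-1}\mid u\rangle u$ serving only as a correct repackaging of the same computation.
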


\begin{proof}
Let us start with (i). If $\lambda_0>0$ then $L_u$ invertible. Thus, $L_u1=\langle1\mid u\rangle u$ yields
$$
    L_u^{-1}u=\langle1\mid u\rangle^{-1}1
$$
since  $\langle1\mid u\rangle\neq0$ by Lemma~\ref{lem:geometry}(i). Hence, $$\beta_u(0)=\langle L_u^{-1}u\mid u\rangle=1.$$

Now, assume $\gamma_k(u)>0$ for some $k\in\N$. Thus, by Lemma~\ref{lem:geometry}(iii), we have $\langle Sf_{k-1}\mid u\rangle\neq0$ and $\mu_k\notin\{\lambda_n\mid n\in\N_0\}$. Applying \eqref{eq:L_rel} yields, for any $k\in\N,$
\begin{equation}\label{eq:inj_expand}
    \langle Sf_{k-1}\mid f_n\rangle=\frac{\langle Sf_{k-1}\mid u\rangle X_n}{\lambda_n-\mu_k},\qquad n\in\N_0.
\end{equation}
Expanding $u=\sum_nX_nf_n$ in $\langle Sf_{k-1}\mid u\rangle$ and inserting \eqref{eq:inj_expand} 
\[
   \langle Sf_{k-1}\mid u\rangle
   =\sum_{n}\overline{X_n(u)}\,\langle Sf_{k-1}\mid f_n\rangle
   =\langle Sf_{k-1}\mid u\rangle\sum_{n}\frac{|X_n(u)|^2}{\lambda_n-\mu_k}.
\]
Dividing by $\langle Sf_{k-1}\mid u\rangle\neq0$ yields
\begin{equation}\label{eq:Cauchy_inv_Xp(u)}
    \sum_{n}\frac{|X_n(u)|^2}{\lambda_n-\mu_k}=1.
\end{equation}
That is, $\beta_u(\mu_k)=1$ for every $k\in\N$ with $\gamma_k(u)>0$.
\end{proof}

The inversion of such node conditions is governed by the following lemma.

\begin{lemma}[Cauchy interpolation]\label{lem:cauchy}
Let $d\ge1$ and  let $\nu_1<\Lambda_1<\dots<\nu_d<\Lambda_d$ be real. The linear system
\begin{equation}\label{eq:cauchy_sys}
   \sum_{j=1}^{d}\frac{c_j}{\Lambda_j-\nu_i}=1,\qquad i=1,\dots,d,
\end{equation}
admits a unique solution $(c_1,\dots,c_d)$, given by
\begin{equation}\label{eq:cauchy_res}
   c_k=\frac{\prod_{i=1}^d(\Lambda_k-\nu_i)}{\prod_{j\neq k}(\Lambda_k-\Lambda_j)}\,.
\end{equation}
Every $c_k$ is strictly positive, and satisfies
\begin{equation}\label{eq:cauchy_trace}
   1-\sum_{j=1}^d \frac{c_j}{\Lambda_j-z}=\frac{\prod_{k}(z-\nu_k)}{\prod_j(z-\Lambda_j)} \qtq{and} \sum_{j=1}^{d}c_j=\sum_{j=1}^{d}\Lambda_j-\sum_{k=1}^{d}\nu_k .
\end{equation}
\end{lemma}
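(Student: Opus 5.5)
The plan is to work with the rational function
\[
   R(z):=\frac{\prod_{k=1}^d(z-\nu_k)}{\prod_{j=1}^d(z-\Lambda_j)},
\]
which has simple poles at the distinct points $\Lambda_1,\dots,\Lambda_d$. Its numerator and denominator are monic of the same degree $d$, so $R(z)\to1$ as $z\to\infty$. The partial fraction decomposition then reads
\[
   R(z)=1+\sum_{j=1}^d\frac{r_j}{z-\Lambda_j},\qquad
   r_j=\operatorname*{Res}_{z=\Lambda_j}R=\frac{\prod_{i}(\Lambda_j-\nu_i)}{\prod_{k\neq j}(\Lambda_j-\Lambda_k)} .
\]
Setting $c_j:=r_j$, this is exactly the first identity in \eqref{eq:cauchy_trace}, namely $1-\sum_j c_j/(\Lambda_j-z)=R(z)$, with $c_j$ given by \eqref{eq:cauchy_res}.

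\emph{Existence.} Evaluating the identity at $z=\nu_i$ gives $R(\nu_i)=0$, since $\nu_i$ is a root of the numerator and is not a pole (the interlacing makes the $\nu$'s and $\Lambda$'s all distinct). This is precisely the system \eqref{eq:cauchy_sys}.

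\emph{Uniqueness.} Suppose $(c'_j)$ is another solution and set $Q(z):=1-\sum_j c'_j/(\Lambda_j-z)$. Then $Q(z)\prod_j(z-\Lambda_j)$ is a monic polynomial of degree $d$ that vanishes at the $d$ distinct points $\nu_1,\dots,\nu_d$. It must therefore equal $\prod_k(z-\nu_k)$, so $Q=R$, and comparing residues at each $\Lambda_j$ gives $c'_j=c_j$. Alternatively, one can note that the Cauchy matrix $\bigl(1/(\Lambda_j-\nu_i)\bigr)$ has nonzero determinant by the Cauchy determinant formula, but the polynomial argument is self-contained.

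\emph{Positivity.} This is the only step that uses the interlacing $\nu_1<\Lambda_1<\dots<\nu_d<\Lambda_d$, and it comes down to a sign count. In the numerator of $c_k$, the factor $\Lambda_k-\nu_i$ is positive for $i\le k$ and negative for $i>k$, so the numerator has sign $(-1)^{d-k}$. In the denominator, $\Lambda_k-\Lambda_j$ is negative exactly for $j>k$, which again gives sign $(-1)^{d-k}$. Hence $c_k>0$.

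\emph{Trace identity.} Expand both sides of the first identity in \eqref{eq:cauchy_trace} at $z=\infty$ and compare the $1/z$ coefficients. On the left, $-\sum_j c_j/(\Lambda_j-z)=\sum_j c_j/z+O(z^{-2})$. On the right,
\[
   R(z)=\frac{1-(\sum_k\nu_k)/z+\cdots}{1-(\sum_j\Lambda_j)/z+\cdots}=1+\frac{\sum_j\Lambda_j-\sum_k\nu_k}{z}+O(z^{-2}).
\]
Equating these gives $\sum_j c_j=\sum_j\Lambda_j-\sum_k\nu_k$.

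None of these steps is a serious obstacle; the one point to handle carefully is the sign bookkeeping in the positivity step.
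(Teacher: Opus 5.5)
Your proof is correct and follows essentially the same route as the paper. Both arguments identify $1-\sum_j c_j/(\Lambda_j-z)$ with the monic ratio $\prod_k(z-\nu_k)/\prod_j(z-\Lambda_j)$ by root counting, read off $c_k$ as residues, prove positivity by the same sign count, and get the trace identity from the $1/z$ coefficient. The one difference is that you start from the product and use partial fractions to prove existence explicitly before running the uniqueness argument, whereas the paper argues only in the uniqueness direction and leaves existence implicit.
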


\begin{proof}
Consider $R(z)=\sum_{j=1}^{d}\dfrac{c_j}{\Lambda_j-z}$, which vanishes at infinity and has simple poles at the $\Lambda_j$. System \eqref{eq:cauchy_sys} states that $R(\nu_i)=1$, i.e.\ that $1-R$ vanishes at the $d$ nodes $\nu_i$. Writing $1-R=A(z)/B(z)$ over the monic denominator $B(z)=\prod_{j=1}^{d}(z-\Lambda_j)$, the numerator $A$ has degree $d$. As $1-R(z)\to1$ as $z\to\infty$ and $B$ is monic, $A$ is \emph{monic} of degree $d$, and its roots are precisely the nodes $\nu_i$\,. Hence, $1-R$ is completely determined by 
\begin{equation}\label{eq:cauchy_factored}
   1-R(z)=\frac{\prod_{i=1}^{d}(z-\nu_i)}{\prod_{j=1}^{d}(z-\Lambda_j)} .
\end{equation}
This fixes the $c_j$ uniquely: Since $\operatorname{Res}_{z=\Lambda_k}(1-R)=c_k$, taking the residue of \eqref{eq:cauchy_factored} at $z=\Lambda_k$ gives \eqref{eq:cauchy_res}. Matching the $1/z$ coefficients in
\[
   1-R(z)=1+\frac{\sum_j c_j}{z}+O(z^{-2})
   \qquad\text{and}\qquad
   \eqref{eq:cauchy_factored}=1+\frac{\sum_j\Lambda_j-\sum_i\nu_i}{z}+O(z^{-2})
\]
gives the trace identity \eqref{eq:cauchy_trace}.

Finally, we check $c_k>0$. In the numerator of \eqref{eq:cauchy_res}, as $\Lambda_k-\nu_i>0$ for the $k$ indices $i\le k$ and $\Lambda_k-\nu_i<0$
for the $d-k$ indices $i>k$, so the numerator has sign $(-1)^{d-k}$. In the denominator,
$\Lambda_k-\Lambda_j>0$ for $j<k$ and $\Lambda_k-\Lambda_j<0$ for the $d-k$ indices $j>k$, so the
denominator also has sign $(-1)^{d-k}$. Hence $c_k>0$.
\end{proof}

By Lemma~\ref{lem:geometry} and \eqref{eq:Xn=0}, $X_n\neq0\iff\gamma_n>0$. Thus, we define the set of  \emph{active} indices,
\begin{equation}\label{eq:K}
   \boxed{\ \K:=\{n\ge0:\gamma_n>0\},\qquad \Ks:=\K\setminus\{0\},\qquad
   0\in\K\iff\lambda_0>0,\ }
\end{equation}
where we recall that $\gamma_0:=\la_0.$ Enumerating
$\K=\{p_0<p_1<p_2<\cdots\}$, the active nodes $\mu_{p_j}$ and the active poles $\la_{p_j}$ strictly interlace:
\begin{equation}\label{eq:interlace}
   \mu_{p_0}<\lambda_{p_0}<\mu_{p_1}<\lambda_{p_1}<\mu_{p_2}<\lambda_{p_2}<\cdots .
\end{equation}
Indeed, $\mu_{p_i}<\lambda_{p_i}$ since $\lambda_{p_i}-\mu_{p_i}=\gamma_{p_i}>0$ by \eqref{eq:gmma-mu[n]} and \eqref{eq:K}, and $\lambda_{p_i}<\mu_{p_{i+1}}$ since $p_i\le p_{i+1}-1$ and $(\lambda_n)$ is increasing, so that
\[
   \lambda_{p_i}\ \le\ \lambda_{p_{i+1}-1}\ <\ \lambda_{p_{i+1}-1}+1\ =\ \mu_{p_{i+1}} .
\]
In particular the hypothesis of Lemma~\ref{lem:cauchy} is met by the data $(\mu_p,\lambda_p)_{p\in\K}$ whenever $\K$ is finite.

\begin{corollary}[Birkhoff scaling]\label{cor:kappa}
Let $u\in\U_N$. Then $\beta_u$ is rational with simple poles  at the active eigenvalues:
\begin{equation}\label{eq:product}
   \beta_u(z)=\sum_{p\in\K}\frac{|X_p|^2}{\lambda_p-z},\qquad
   1-\beta_u(z)=\prod_{p\in\K}\frac{\mu_p-z}{\lambda_p-z}=\prod_{p\in\K}\Bigl(1-\frac{\gamma_p}{\lambda_p-z}\Bigr)\,.
\end{equation}
Consequently, for every $n\ge0$, 
\begin{equation}\label{eq:kappa}
   |X_n|^2=\gamma_n\kappa_n\qtq{where} \kappa_n:=\prod_{\substack{p\in\K\\ p\neq n}}\frac{\mu_p-\lambda_n}{\lambda_p-\lambda_n}=\prod_{\substack{p\in\K\\ p\neq n}}\bigl(1-\frac{\gamma_p}{\lambda_p-\lambda_n}\bigr).
\end{equation}
Each $\kappa_n>0$ is a function of the spectrum $(\lambda_p)$ alone and so is $|X_n|^2$.
\end{corollary}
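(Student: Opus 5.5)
The plan is to apply the Cauchy interpolation Lemma~\ref{lem:cauchy} to the finite set of active indices, and then to identify the resulting rational function with $1-\beta_u$.

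First, since $u\in\U_N$ is finite-gap, $\gamma_n=0$ for all $n>N$, so $\K\subset\{0,\dots,N\}$ is finite. By Lemma~\ref{lem:geometry} and \eqref{eq:Xn=0}, $X_n=0$ whenever $n\notin\K$. For $n\ge1$ this is \eqref{eq:Xn=0} directly. For $n=0$ with $\lambda_0=0$, Lemma~\ref{lem:geometry}(i) gives $f_0=1$, so $X_0=\langle u\mid1\rangle=0$. Hence the series \eqref{eq:Hu} reduces to the finite sum over $p\in\K$, which proves the first formula in \eqref{eq:product} and shows that $\beta_u$ is rational with simple poles at the $\lambda_p$, $p\in\K$.

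Next, $\K$ is nonempty. Indeed $N\ge1$ forces $\gamma_N>0$, and for $N=0$ we have $u\in\C^*$, so $\lambda_0>0$ by Lemma~\ref{lem:geometry}(i). Set $d=|\K|$, $\Lambda_j=\lambda_{p_{j-1}}$ and $\nu_j=\mu_{p_{j-1}}$, with the convention $\mu_0=0$. The interlacing \eqref{eq:interlace} is exactly the hypothesis of Lemma~\ref{lem:cauchy}.

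The node identities of Lemma~\ref{lem:node} say that $\beta_u(\nu_i)=1$ for every $i$. This uses (i) for the node $\mu_0=0$ when $0\in\K$, and (ii) for $\mu_k$ with $k\in\Ks$. So $c_j:=|X_{p_{j-1}}|^2$ solves the Cauchy system \eqref{eq:cauchy_sys}. By uniqueness in Lemma~\ref{lem:cauchy}, the factorization \eqref{eq:cauchy_trace} gives
\[
1-\beta_u(z)=\prod_{p\in\K}\frac{z-\mu_p}{z-\lambda_p}=\prod_{p\in\K}\frac{\mu_p-z}{\lambda_p-z}.
\]
Writing $\mu_p=\lambda_p-\gamma_p$, which also holds for $p=0$ because $\mu_0=0$ and $\gamma_0=\lambda_0$, turns each factor into $1-\gamma_p/(\lambda_p-z)$.

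Finally, for $n\in\K$ I take the residue at $z=\lambda_n$. On the left, $-\beta_u$ has residue $|X_n|^2$ there. On the right, the residue of $\frac{\mu_n-z}{\lambda_n-z}$ at $z=\lambda_n$ equals $\lambda_n-\mu_n=\gamma_n$, and the remaining factors are evaluated at $z=\lambda_n$. This yields $|X_n|^2=\gamma_n\kappa_n$, which is \eqref{eq:kappa}, equivalently the formula \eqref{eq:cauchy_res}.

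For $n\notin\K$ both sides of \eqref{eq:kappa} vanish, since $X_n=0$ and $\gamma_n=0$, provided $\kappa_n$ is finite. It is: $\lambda_p\ne\lambda_n$ for $p\ne n$ by simplicity of the spectrum.

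Positivity of $\kappa_n$ is the only step needing care. For $n\in\K$ it follows from $c_k>0$ in Lemma~\ref{lem:cauchy}, since $\kappa_n=c/\gamma_n$ with $\gamma_n>0$.

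For $n\notin\K$ I argue factor by factor. If $p<n$, then $\lambda_p<\mu_p$ fails. In fact $\mu_p<\lambda_p\le\lambda_{n-1}<\lambda_n$, so numerator and denominator of $\frac{\mu_p-\lambda_n}{\lambda_p-\lambda_n}$ are both negative. If $p>n$, then $\mu_p=\lambda_{p-1}+1>\lambda_n$ because $p-1\ge n$, and $\lambda_p>\lambda_n$, so both are positive. Every factor is therefore positive, and the same check recovers the case $n\in\K$ as well.

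Since all quantities involved are eigenvalues, $\kappa_n$, and hence $|X_n|^2$, depends only on the spectrum.
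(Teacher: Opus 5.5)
Your proposal is correct and follows the paper's proof essentially step for step. You restrict $\beta_u$ to the active poles via \eqref{eq:Xn=0} and Lemma~\ref{lem:geometry}, feed the node identities of Lemma~\ref{lem:node} into Lemma~\ref{lem:cauchy} to obtain the product formula, take residues at $\lambda_n$, and get $\kappa_n>0$ from the same-sign interlacing of $\mu_p-\lambda_n$ and $\lambda_p-\lambda_n$. You are slightly more careful than the paper in checking $\K\neq\emptyset$ (including $N=0$) and the finiteness of $\kappa_n$ for $n\notin\K$; just delete the garbled clause ``$\lambda_p<\mu_p$ fails,'' since what you actually prove is $\mu_p<\lambda_p\le\lambda_{n-1}<\lambda_n$.
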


\begin{proof}
By \eqref{eq:Xn=0} and Lemma~\ref{lem:geometry}, $X_p\neq0$ precisely for $p\in\K$. Hence, the generating function $\beta_u$ of \eqref{eq:Hu} is rational with simple poles exactly at the active $\lambda_p$. That implies the first identity in \eqref{eq:product}. 

By Lemma~\ref{lem:node}, $1-\beta_u$ vanishes at every active node, i.e.\ $\beta_u(\mu_q)=1$ for $q\in\K$. That is, $(|X_p|^2)_{p\in\K}$ solves the Cauchy system
\begin{equation}\label{eq:cor_system}
   \sum_{p\in\K}\frac{|X_p|^2}{\lambda_p-\mu_q}=1,\qquad q\in\K .
\end{equation}
Recall that the poles $(\lambda_p)_{p\in\K}$ and nodes $(\mu_p)_{p\in\K}$ interlace  and are equal in number by \eqref{eq:interlace}, so Lemma~\ref{lem:cauchy} applies and we have by \eqref{eq:cauchy_trace}: 
$$1-\beta_{u}(z)=\prod_{p\in\K}\frac{\mu_p-z}{\lambda_p-z}$$
which gives the second identity in \eqref{eq:product}. Its residue formula \eqref{eq:cauchy_res} at the pole $\lambda_n$ gives
\[
|X_n|^2=\operatorname{Res}_{z=\lambda_n}(1-\beta_u)=(\lambda_n-\mu_n)\!\!\prod_{\substack{p\in\K\\ p\neq n}}\frac{\mu_p-\lambda_n}{\lambda_p-\lambda_n} =\gamma_n\kappa_n , \qtq{for} n\in\K.
\]
Each factor is positive because interlacing gives $\mu_p-\lambda_n$ and $\lambda_p-\lambda_n$ the same sign, so $\kappa_n>0$. For $n\notin\K$, $\gamma_n=0=X_n$ and both sides of \eqref{eq:kappa} vanish.
\end{proof}

\begin{remark}[Spectral-measure interpretation]\label{rem:herglotz}
$\beta_u(z)=\int \frac{d\rho(\lambda)}{\lambda-z}$ is the Weyl function of $L_u$ relative to the  vector $u$, with spectral measure $d\rho=\sum_n|X_n|^2\delta_{\lambda_n}$ supported on $\{\lambda_p:p\in\K\}$ and $\|\rho\|=\|u\|_{L^2}^2$. Corollary~\ref{cor:kappa} recovers $d\rho$ from the spectrum alone.
\end{remark}

\begin{lemma}[Reconstruction of the Hardy data]\label{lem:reconstruction}
Let $u\in\Lp(\mathbb T)$ with spectrum $(\lambda_n)_{n\ge0}$, active set $\K$ of \eqref{eq:K}, amplitudes $X_n=\langle u\mid f_n\rangle$. Then the vectors $Y=(Y_n)_n$ and $M=(M_{np})_{n,p}$ of \eqref{eq:X,Y,M} are determined by $(X_n)_n$ and $(\lambda_n)_n$ alone, through the following formulas:
\begin{enumerate}
\item[\textup{(i)}] If $\lambda_0=0$, then $Y=e_0$. If $\lambda_0>0$, then
\begin{equation*}
   Y_n=\frac{\langle1\mid u\rangle\,X_n}{\lambda_n},\qquad
   |\langle1\mid u\rangle|=\Bigl(\sum_{p\in\K}\tfrac{|X_p|^2}{\lambda_p^{2}}\Bigr)^{-1/2},\quad
   \arg\langle1\mid u\rangle=-\arg X_0 .
\end{equation*}
\item[\textup{(ii)}] If $\gamma_{n+1}=0$, then $M_{np}=\delta_{p,n+1}$. If $\gamma_{n+1}>0$, then
\begin{equation*}
   M_{np}=\frac{\overline{\langle Sf_n\mid u\rangle}\,\overline{X_p}}{\lambda_p-\lambda_n-1},\qquad
   |\langle Sf_n\mid u\rangle|=\Bigl(\sum_{p\in\K}\tfrac{|X_p|^2}{(\lambda_p-\lambda_n-1)^2}\Bigr)^{-1/2},\quad
   \arg\langle Sf_n\mid u\rangle=-\arg X_{n+1}.
\end{equation*}
\end{enumerate}
\end{lemma}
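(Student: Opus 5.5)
The whole argument runs through the two relations of Proposition~\ref{prop:Lax_spectral}, \eqref{eq:0_rel} and \eqref{eq:L_rel}. Each of them expresses $Y$, respectively a row of $M$, as a multiple of $X$ (or $\overline X$) by a single unknown scalar. That scalar is then pinned down by unit norm (modulus) and by the transport gauge of Convention~\ref{conv:transport} (phase). The degenerate cases $\lambda_0=0$ and $\gamma_{n+1}=0$ are handled by Lemma~\ref{lem:geometry}(i) and \eqref{eq:Xn=0}.

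\emph{Part (i).} If $\lambda_0=0$, then Lemma~\ref{lem:geometry}(i) gives $f_0=c$ constant with $|c|=1$, and the gauge $\langle 1\mid f_0\rangle>0$ forces $f_0=1$. Hence $Y_n=\langle 1\mid f_n\rangle=\langle f_0\mid f_n\rangle=\delta_{n0}$, that is, $Y=e_0$. If $\lambda_0>0$, then every $\lambda_n>0$, and \eqref{eq:0_rel} gives $Y_n=\langle1\mid u\rangle X_n/\lambda_n$. Since $\|1\|=1$ and $(f_n)$ is an orthonormal basis, $\sum_n|Y_n|^2=1$, so $|\langle1\mid u\rangle|^2\sum_{p}|X_p|^2/\lambda_p^2=1$; only $p\in\K$ contribute because $X_p=0$ off $\K$. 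For the phase, take $n=0$: $Y_0>0$ by the gauge and $\lambda_0>0$, so $\arg\langle1\mid u\rangle+\arg X_0=0$. Here $X_0\neq0$ by Lemma~\ref{lem:geometry}(ii), so the argument is well defined.

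\emph{Part (ii).} Note that $M_{np}=\langle f_p\mid Sf_n\rangle=\overline{\langle Sf_n\mid f_p\rangle}$. If $\gamma_{n+1}=0$, the gauge sets $f_{n+1}=Sf_n$, so $M_{np}=\langle f_p\mid f_{n+1}\rangle=\delta_{p,n+1}$. If $\gamma_{n+1}>0$, apply \eqref{eq:L_rel} with $k=n+1$, exactly as in \eqref{eq:inj_expand}. For $p$ with $\lambda_p\neq\lambda_n+1=\mu_{n+1}$, this gives $\langle Sf_n\mid f_p\rangle=\langle Sf_n\mid u\rangle X_p/(\lambda_p-\lambda_n-1)$. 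Since $\gamma_{n+1}>0$ we have $\mu_{n+1}\notin\{\lambda_p\}$: indeed $\lambda_n<\mu_{n+1}<\lambda_{n+1}$, and the eigenvalues are strictly increasing. So the formula holds for all $p$, and conjugating yields the stated expression for $M_{np}$. The modulus follows from $\|Sf_n\|=1$ and Parseval, $\sum_p|\langle Sf_n\mid f_p\rangle|^2=1$, where again only $p\in\K$ contribute. For the phase, take $p=n+1$: $\langle Sf_n\mid f_{n+1}\rangle>0$ by the gauge, and $\lambda_{n+1}-\lambda_n-1=\gamma_{n+1}>0$, so $\arg\langle Sf_n\mid u\rangle+\arg X_{n+1}=0$. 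Here $X_{n+1}\neq0$ by \eqref{eq:Xn=0}.

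Every quantity above is expressed in terms of $(X_n)$ and $(\lambda_n)$ only, which proves the claim. The one point requiring care is the phase determination, which is the main obstacle. It needs the specific gauge normalization, $Y_0>0$ in (i) and $\langle Sf_n\mid f_{n+1}\rangle>0$ in (ii), together with the nonvanishing of $X_0$, respectively $X_{n+1}$, in exactly the nondegenerate cases. The nonvanishing is supplied by Lemma~\ref{lem:geometry} and \eqref{eq:Xn=0}, so the argument closes.
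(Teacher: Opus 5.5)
Your proposal is correct and follows essentially the same route as the paper. The relations \eqref{eq:0_rel} and \eqref{eq:L_rel} (with $k=n+1$) express $Y$ and each nondegenerate row of $M$ as a multiple of $X$; the modulus of the unknown scalar is fixed by Parseval and its phase by the transport gauge; the degenerate cases $\lambda_0=0$ and $\gamma_{n+1}=0$ are read off from $f_0=1$ and $f_{n+1}=Sf_n$. The differences are cosmetic: you get $Y=e_0$ from orthonormality rather than from \eqref{eq:0_rel}, and you justify $\lambda_n+1\notin\sigma(L_u)$ directly from $\lambda_n<\mu_{n+1}<\lambda_{n+1}$ instead of citing Lemma~\ref{lem:geometry}(iii).
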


\begin{proof}
\emph{(i) The vector $Y$.} If $\lambda_0=0$, then $f_0=1$ and $\langle1\mid u\rangle=0$ by Lemma~\ref{lem:geometry}(i), so $Y_0=1$ and, by \eqref{eq:0_rel}, $Y_n=0$ for $n\ge1$, i.e.\ $Y=e_0$.

If $\lambda_0>0$, then $\lambda_n\ge\lambda_0>0$ and \eqref{eq:0_rel} gives $Y_n=\frac{\langle1\mid u\rangle X_n}{\lambda_n}$. The modulus of $\langle1\mid u\rangle$ is fixed by Parseval identity $\sum_n|Y_n|^2=\|1\|^2=1$:
\[
   |\langle 1\mid u\rangle|^2\sum_{n\in\K}\frac{|X_n|^2}{\lambda_n^2}=1,
   \qquad\text{i.e.}\qquad
   |\langle 1\mid u\rangle|=\Bigl(\sum_{n\in\K}\tfrac{|X_n|^2}{\lambda_n^2}\Bigr)^{-1/2},
\]
and its argument by the gauge $Y_0=\langle1\mid f_0\rangle>0$ together with $Y_0=\langle1\mid u\rangle X_0/\lambda_0$, giving
$$
    \arg\langle1\mid u\rangle=-\arg X_0.
$$ 
 This proves (i).

\emph{(ii) The matrix $M$.} Fix a row $n$. If $\gamma_{n+1}=0$, the gauge \eqref{conv:transport} gives $f_{n+1}=Sf_n$, so $M_{np}=\langle f_p\mid f_{n+1}\rangle=\delta_{p,n+1}$. 

If $\gamma_{n+1}>0$, then by Lemma~\ref{lem:geometry}(iii)  we have $\langle Sf_n\mid u\rangle\neq0$, and $\lambda_n+1\notin\{\lambda_p\mid p\geq 0\}$, so \eqref{eq:L_rel} with $k=n+1$ gives
\begin{equation}\label{eq:inj_M}
   M_{np}(u)=\overline{\langle Sf_n(u)\mid f_p(u)\rangle}
   =\frac{\overline{\langle Sf_n(u)\mid u\rangle}\,\overline{X_p}}{\lambda_p-\lambda_n-1},
\end{equation}
which vanishes for $p\notin\K$. Employing Parseval identity and using that  $\|Sf_n(u)\|=\|f_n(u)\|=1$, we infer 
\[
   1=\sum_{p\in\K}|\langle Sf_n(u)\mid f_p(u)\rangle|^2
    =|\langle Sf_n(u)\mid u\rangle|^2\sum_{p\in\K}\frac{|X_p|^2}{(\lambda_p-\lambda_n-1)^2}\,.
\]
Then, the modulus of $\langle Sf_n(u)\mid u\rangle$ is
\[
   |\langle Sf_n\mid u\rangle|=\Bigl(\sum_{p\in\K}\frac{|X_p|^2}{(\lambda_p-\lambda_n-1)^2}\Bigr)^{-1/2},
\]
and its argument follows from the gauge $\langle Sf_n\mid f_{n+1}\rangle>0$ and \eqref{eq:L_rel}. Namely, $\gamma_{n+1}\langle Sf_n\mid f_{n+1}\rangle=\langle Sf_n\mid u\rangle X_{n+1}$, giving
$$
    \arg\langle Sf_n\mid u\rangle=-\arg X_{n+1}.
$$
This proves (ii).
\end{proof}

\section{The Birkhoff map of \texorpdfstring{$\Phi$}{Phi} on \texorpdfstring{$\U_N$}{U[N]}}\label{s:birkhoff}

Given $N\geq 0,$ we define the spectral space
\[
   \Omega_N:=\{\zeta=(\zeta_0,\dots,\zeta_N)\in\C^{N+1}:\zeta_N\neq0\},
\]
identified with a subset of $\ell^2(\N_0)$ by zero-extension. We partition this space as $\Omega_N = \Omega_N^{+} \sqcup \Omega_N^{0}$, where
\begin{equation}\label{eq:Omega}
\Omega_N^{+} = \{\zeta \in \Omega_N : \zeta_0\neq0\},\qquad
\Omega_N^{0} = \{\zeta \in \Omega_N : \zeta_0=0\}.
\end{equation}

\begin{theorem}[Bijectivity of the Birkhoff map]\label{thm:main}
For every $N\ge0$,  the Birkhoff map
\begin{equation}\label{eq:Phi}
   \Phi:\U_N\to\Omega_N,\qquad
   \Phi(u)=(\zeta_n(u))_{0\le n\le N},
\end{equation}
defined by 
\begin{equation}\label{eq:zeta[n]}
    \zeta_n(u):=\frac{\langle u\mid f_n\rangle}{\sqrt{\kappa_n(u)}}=\frac{X_n(u)}{\sqrt{\kappa_n(u)}}=\sqrt{\gamma_n(u)}\,e^{i\arg\langle u\mid f_n\rangle} 
\end{equation}
with 
\begin{equation}\label{eq:kappa[n]}
    \kappa_n(u):=\prod_{\substack{ p\neq n}}\Bigl(1-\frac{\gamma_p(u)}{\lambda_p(u)-\lambda_n(u)}\Bigr). 
\end{equation}
is a bijection. Moreover, $\Phi$ maps $\{u \in \mathcal{U}_N : \lambda_0 > 0\}$ and $\{u \in \mathcal{U}_N : \lambda_0 = 0\}$ bijectively onto $\Omega_N^{+}$ and $\Omega_N^{0}$ respectively. And its inverse is the  reconstruction map $\Psi$ defined in Subsection~\ref{ss:construction}.
\end{theorem}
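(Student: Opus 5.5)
The argument has three parts: well-definedness, injectivity, and surjectivity via an explicit inverse $\Psi$.

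\textbf{Well-definedness.} For $u\in\U_N$, Proposition~\ref{prop:index} gives $\gamma_N>0$ and $\gamma_n=0$ for $n>N$. Hence $\K\subset\{0,\dots,N\}$ and $\zeta_n=0$ for $n>N$. Corollary~\ref{cor:kappa} gives $\kappa_n>0$ and $|X_n|^2=\gamma_n\kappa_n$, so $|\zeta_n|=\sqrt{\gamma_n}$ and $\zeta_N\neq0$. Thus $\Phi(\U_N)\subset\Omega_N$. By Lemma~\ref{lem:geometry}(i)--(ii), $\zeta_0\neq0$ if and only if $\lambda_0>0$, which gives the splitting onto $\Omega_N^{+}\sqcup\Omega_N^{0}$. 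The case $N=0$ ($u\equiv a\in\C^*$, $L_u$ explicit) I would check by hand.

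\textbf{Injectivity.} From $\zeta$ we read off the gaps $\gamma_n=|\zeta_n|^2$. Moreover $\lambda_0=\gamma_0$, since $\|u\|^2=\lambda_N-N$ is not yet known, but $\lambda_0=|\zeta_0|^2$ directly. By \eqref{eq:lambda_lower} the whole spectrum is $\lambda_n=\lambda_0+n+\sum_{k\le n}\gamma_k$. Hence $\kappa_n$ is determined, and so is $X_n=\sqrt{\kappa_n}\,\zeta_n$. Lemma~\ref{lem:reconstruction} then determines $Y$ and $M$ from $(X,\lambda)$, and the Hardy representation \eqref{eq:hardy_rep} recovers $u$. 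So $\Phi(u)=\Phi(v)$ implies $u=v$.

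\textbf{Surjectivity.} Given $\zeta\in\Omega_N$, I would synthesize the following data:
\begin{itemize}
\item $\lambda_n$ as above;
\item $\K=\{n\le N:\zeta_n\neq0\}$;
\item $X_n=\sqrt{\kappa_n}\,\zeta_n$, with $\kappa_n$ given by the finite product over $\K$;
\item $Y$ and $M$ by the formulas of Lemma~\ref{lem:reconstruction}, taken as definitions. For rows $n\ge N$, and for $n$ with $n+1\notin\K$, set $M_{np}=\delta_{p,n+1}$.
\end{itemize}
Define $\Psi(\zeta)$ by \eqref{eq:hardy_rep}.

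To show this is an honest potential, I would first prove algebraic structure identities for $M$ and $Y$:
\[
MM^*=\Id,\qquad M^*M=\Id-YY^*,\qquad MX\ \text{and}\ X\ \text{related through}\ \Lambda:=\operatorname{diag}(\lambda_n).
\]
These should follow from the Cauchy interpolation identities of Lemma~\ref{lem:cauchy}. Concretely, the rows of $M$ are Cauchy-type vectors, orthonormal by the partial-fraction identities applied to $1-\beta(z)=\prod(\mu_p-z)/(\lambda_p-z)$, and the relation $\beta(\mu_q)=1$ plays the role of Lemma~\ref{lem:node}. Next I would show $\spr(M)<1$ on the finite block $\{0,\dots,N\}$; the tail is a pure shift. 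This makes $u$ rational. Wold's decomposition then exhibits $M^*$ as a pure unilateral shift with wandering vector $Y$, which gives a unitary $U:\ell^2\to L^2_+$ with $US^{*}{}^{\!*}$-intertwining $UM^*=SU$ and $UY=1$. One then checks $UX=u$, and, using a commutation identity $[\Lambda,M^*]$ mirroring \eqref{eq:Lax_comm}, that $U\Lambda U^*=L_u$. Consequently the $Ue_n$ are the eigenfunctions of $L_u$, in the transport gauge (by the positivity built into $Y_0$ and $M_{n,n+1}$), with eigenvalues $\lambda_n$. Proposition~\ref{prop:index} then gives $u\in\U_N$, and by construction $\Phi(\Psi(\zeta))=\zeta$.

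The main obstacle is verifying $U\Lambda U^*=L_u$, i.e.\ that the synthetic triple satisfies the identity encoding \eqref{eq:Lax_comm} together with $\ker M=\C Y$ and purity. I would first try deriving the matrix identity $\Lambda M^*-M^*\Lambda-M^*=-\,M^*\!X(\cdot)$ entrywise from the explicit Cauchy formulas, and then transfer it through $U$.
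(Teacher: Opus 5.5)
Your outline follows the paper's proof step for step. Injectivity goes by recovering the spectrum, $\kappa_n$, $X_n=\sqrt{\kappa_n}\,\zeta_n$, then $(Y,M)$ via Lemma~\ref{lem:reconstruction} and \eqref{eq:hardy_rep}. Surjectivity synthesizes $(\lambda,X,Y,M)$, proves $MM^*=\Id$ and $M^*M=\Id-YY^*$ from the Cauchy identities, bounds the finite-block spectral radius, uses Wold to build $U$ with $UY=1$, $UM^*=SU$, $UX=u$, $U\Lambda U^*=L_u$, and concludes with Proposition~\ref{prop:index}.

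\textbf{Spectral radius.} What is missing is any argument for the step you only announce, $\spr(M_{\le N})<1$. This, rather than $U\Lambda U^*=L_u$, is the real crux. The structural identities only give $\|M\|\le1$, hence $\sigma(M_{\le N})\subseteq\overline{\D}$. A unimodular eigenvalue would give $M^*$ a unitary part, and then neither purity nor rationality of $u$ would follow. The paper excludes this with a virial argument built on the commutation identity:
\begin{itemize}
\item If $M_{\le N}x=\alpha x$ with $\|x\|=1$ and $|\alpha|=1$, then $\|Mx\|^2=\|x\|^2-|\langle x\mid Y\rangle|^2$ forces $x\perp Y$.
\item Evaluate $\langle\Lambda Mx\mid Mx\rangle$ once as $\langle\Lambda x\mid x\rangle$, and once via $M\Lambda=\Lambda M+M+(MX)X^*$ together with $M^*M=\Id-YY^*$.
\item Comparing the two gives $1+|\langle x\mid X\rangle|^2=0$, a contradiction.
\end{itemize}
So the commutation identity is needed already here, not only in the final intertwining step.

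\textbf{The commutation identity.} The identity you plan to derive is misstated. Entrywise one has $(\lambda_p-\lambda_n-1)M_{np}=(MX)_n\overline{X_p}$, that is, $\Lambda M^*-M^*\Lambda-M^*=\langle\cdot\mid MX\rangle\,X$. This is exactly the $U$-conjugate of \eqref{eq:Lax_comm}, since $U^*S^*u=MX$. Compared with your version, the rank-one term has the opposite sign and its range is $\C X$, not $\C M^*X$.

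\textbf{Intertwining with $L_u$.} The identity above only shows that $U\Lambda U^*-L_u$ commutes with $S$, which does not force it to vanish: a multiple of $\Id$ also commutes with $S$. You additionally need the following.
\begin{itemize}
\item The base case $U\Lambda U^*1=L_u1$, i.e.\ $\Lambda Y=\langle 1\mid u\rangle X$. This requires identifying the synthesized constant $b$ with $\langle 1\mid u\rangle=\langle Y\mid X\rangle$. When $\lambda_0>0$ this uses $R(0)=1$; when $\lambda_0=0$ both sides are $0$.
\item A core argument to pass from equality on $\operatorname{span}\{S^k1\}$ to equality of the self-adjoint operators. Trigonometric polynomials are a core for $L_u$ once $u$ is known to be bounded, which again comes from the spectral-radius bound.
\end{itemize}
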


\begin{remark}\label{Rk:Phi}
    Note that the map $\Phi$ is well-defined. Indeed, Corollary~\ref{cor:kappa} implies that 
$$
    |\zeta_n|^2=\frac{|X_n|^2}{\kappa_n(u)}=\gamma_n(u)\qtq{with} |\zeta_0|^2=\lambda_0.
$$ 
Proposition \ref{prop:Lax_spectral} and Lemma~\ref{lem:geometry}(i)–(ii) yields $\zeta_n=0\iff \gamma_n=0\iff X_n=0$ and by definition of $u\in\U_N$, $\zeta_N\neq0$ since $\gamma_N>0$. Injectivity is proved in the Subsection~\ref{ss:Injectivity} and surjectivity in Subsection~\ref{SS:4}.
\end{remark} 

\subsection{Injectivity of \texorpdfstring{$\Phi$}{Phi} }\label{ss:Injectivity}
To prove that the Birkhoff map $\Phi$ defined in \eqref{eq:Phi} is injective, the argument reconstructs the entire Hardy dataset $(X,Y,M)$ of \eqref{eq:hardy_rep} from the spectral variable $\zeta$.

\begin{theorem}[Injectivity]\label{thm:inj}
For every $N\ge0$ the Birkhoff map $\Phi:\U_N\to\Omega_N$ of \eqref{eq:Phi} is injective.
\end{theorem}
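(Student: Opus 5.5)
Suppose $u,v\in\U_N$ with $\Phi(u)=\Phi(v)=\zeta$. We will show that the Hardy triple $(X,Y,M)$ of \eqref{eq:X,Y,M} is a function of $\zeta$ alone. Then \eqref{eq:hardy_rep} gives $u(z)=v(z)$ for every $z\in\D$, hence $u=v$.

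\textbf{Step 1: the spectrum is determined by $\zeta$.} Since $|\zeta_n|^2=\gamma_n$ (Remark~\ref{Rk:Phi}) and $\gamma_0=\lambda_0$, the identity \eqref{eq:lambda_lower} gives
\[
\lambda_n=|\zeta_0|^2+n+\sum_{k=1}^{n}|\zeta_k|^2 ,\qquad n\ge0 .
\]
So $u$ and $v$ have the same eigenvalues $(\lambda_n)_{n\ge0}$ and the same active set $\K=\{n:\zeta_n\neq0\}\subset\{0,\dots,N\}$. Every $\kappa_n$ is given by the finite product \eqref{eq:kappa}, which depends only on the spectrum, so $\kappa_n(u)=\kappa_n(v)$.

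\textbf{Step 2: the amplitudes are determined.} From \eqref{eq:zeta[n]}, $X_n=\sqrt{\kappa_n}\,\zeta_n$ for all $n$. Here $\kappa_n>0$ by Corollary~\ref{cor:kappa}, and $X_n=0=\zeta_n$ for $n\notin\K$. Hence $X(u)=X(v)$. At this point it matters that the phase of $\zeta_n$ is the phase of $\langle u\mid f_n\rangle$ computed in the transport gauge of Convention~\ref{conv:transport}, which is the same gauge used in Lemma~\ref{lem:reconstruction}.

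\textbf{Step 3: $Y$ and $M$ are determined.} Lemma~\ref{lem:reconstruction} writes $Y$ and $M$ explicitly in terms of $(X_n)$ and $(\lambda_n)$:
\begin{itemize}
\item $Y$ is given by case (i); which case applies depends on whether $\lambda_0>0$, i.e.\ whether $\zeta_0\neq0$.
\item Row $n$ of $M$ is given by case (ii); which case applies depends on whether $\zeta_{n+1}\neq0$.
\end{itemize}
The moduli and arguments of the auxiliary scalars $\langle1\mid u\rangle$ and $\langle Sf_n\mid u\rangle$ are also fixed there by $X$ and $\lambda$, again through the gauge. Therefore $Y(u)=Y(v)$ and $M(u)=M(v)$.

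\textbf{Step 4: conclusion.} For $|z|<1$ we have $\|zM\|<1$, because $M$ represents the contraction $S^{*}$. So \eqref{eq:hardy_rep} expresses the Taylor series of $u$ and of $v$ by the same formula, and $u=v$.

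The case $N=0$ is handled directly: $u$ is a constant $a\in\C^{*}$, $\lambda_0=|a|^2$, $f_0=1$, and so $\zeta_0=a$ (using $\kappa_0=1$).

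The main point needing care is Step 3's dependence on the gauge. Different phase choices for $f_n$ would change $X$, $Y$ and $M$ simultaneously while leaving $u$ unchanged. The argument only works because Lemma~\ref{lem:reconstruction} and the definition of $\zeta$ both use Convention~\ref{conv:transport}. We must check that the arguments $-\arg X_0$ and $-\arg X_{n+1}$ are well defined exactly in the cases where they are invoked, i.e.\ when $\lambda_0>0$ and when $\gamma_{n+1}>0$ respectively. Lemma~\ref{lem:geometry}(ii) and \eqref{eq:Xn=0} guarantee this.
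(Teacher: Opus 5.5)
Your proposal is correct and follows the paper's proof essentially step for step. You recover the spectrum, and hence $\K$ and the $\kappa_n$, from $|\zeta_n|^2=\gamma_n$ together with $\gamma_n=0$ for $n>N$. You then set $X_n=\zeta_n\sqrt{\kappa_n}$, invoke Lemma~\ref{lem:reconstruction} (in the same transport gauge) for $Y$ and $M$, and conclude via the Neumann series in \eqref{eq:hardy_rep}. Your separate treatment of $N=0$ is correct but unnecessary, since the general argument already covers that case.
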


\begin{proof}
Let $u,v\in\U_N$ with $\Phi(u)=\Phi(v)=\zeta$. By the Hardy representation \eqref{eq:hardy_rep} it
suffices to prove
\[
   X_n(u)=X_n(v),\qquad Y_n(u)=Y_n(v),\qquad M_{np}(u)=M_{np}(v)\qquad \text{for all }n,p\ge0,
\]
where we recall from \eqref{eq:X,Y,M}, $X_n(u)=\langle u\mid f_n(u)\rangle$, $Y_n(u)=\langle1\mid f_n(u)\rangle$, $M_{np}(u)=\langle f_p(u)\mid Sf_n(u)\rangle$ and $(f_n(u))$ is the transport-gauge eigenbasis of Convention~\ref{conv:transport}.

By \eqref{eq:zeta[n]} and \eqref{eq:kappa[n]}, $\lambda_0(u)=|\zeta_0|^2=\lambda_0(v)$ and $\gamma_n(u)=|\zeta_n|^2=\gamma_n(v)$ for $1\le n\le N$. As $u,v\in\U_N$, we have $\gamma_n(u)=\gamma_n(v)=0$ for $n>N$. Thus, by \eqref{eq:lambda_lower},
\[
   \lambda_n(u)=|\zeta_0|^2+n+\sum_{k=1}^{n}|\zeta_k|^2=\lambda_n(v)=:\lambda_n\,,\qquad n\ge0,
\]
so the whole spectrum, the active set $\K$ of \eqref{eq:K}, and by Corollary~\ref{cor:kappa} the positive scalars $\kappa_n$, depend only on $\zeta$, so $\kappa_n(u)=\kappa_n(v)=:\kappa_n$.

Second, let us prove $X(u)=X(v)$. Directly from  \eqref{eq:zeta[n]}, $X_n(u)=\zeta_n\sqrt{\kappa_n}=X_n(v)$ for every $n\geq 0$. Thus, $X(u)=X(v)=:X$. In particular, $X_n\neq0$ exactly on $\K$, and $X_0=0$ iff $\lambda_0=0$ by Lemma~\ref{lem:geometry}.
Finally, Lemma~\ref{lem:reconstruction} gives $Y(u)=Y(v)$, $M(u)=M(v)$.

As $\|M\|=\|S^{*}\|\le1$, the Neumann series in \eqref{eq:hardy_rep} converges for $|z|<1$ and yields $u(z)=v(z)$ on $\D$, i.e.\ $u=v$.
\end{proof}

\begin{remark}\label{rem:X_explicit}
By Corollary~\ref{cor:kappa},
\[
   X_n=\langle u\mid f_n\rangle=\zeta_n\sqrt{\kappa_n},\qquad
   \kappa_n=\prod_{\substack{p\neq n}}\frac{\mu_p-\lambda_n}{\lambda_p-\lambda_n},
\]
with $\lambda_n$, $\kappa_n$ explicit functions of $\zeta$ through $\lambda_n=|\zeta_0|^2+n+\sum_{k\le n}|\zeta_k|^2$ and $\mu_p=\lambda_{p-1}+1$. Together with Lemma~\ref{lem:reconstruction}, this expresses the entire Hardy dataset $(X,Y,M)$,  hence $u$, through $\zeta$. This is the inverse map $\Psi$ constructed in Subsection~\ref{SS:4}, so that $\Psi\circ\Phi=\Id_{\U_N}$.
\end{remark}

\subsection{Surjectivity of \texorpdfstring{$\Phi$}{Phi} and the inverse map}\label{SS:4}

Having shown in Theorem~\ref{thm:inj} that $\Phi$ is injective, we now prove surjectivity by constructing an explicit map
\[
   \Psi:\Omega_N\to\U_N \qtq{and verifying that} \Phi\circ\Psi=\Id_{\Omega_N}.
\]
Combined with injectivity, this shows that $\Phi$ is a bijection with inverse $\Psi$, completing the proof of Theorem~\ref{thm:main}. We recall from \eqref{eq:Omega} that
\[
   \Omega_N=\{\zeta=(\zeta_0,\dots,\zeta_N)\in\C^{N+1}:\zeta_N\neq0\}.
\]
 Throughout this section we identify a finitely supported sequence with the element of $\ell^2(\N_0)$ obtained by extending it by zeros.

\subsubsection{Construction of \texorpdfstring{$\Psi$}{Psi}}\label{ss:construction}

Fix $\zeta\in\Omega_N$. We synthesise successively a spectrum, an amplitude vector $X$, a vector $Y$, a matrix $M$, and finally the potential $u=\Psi(\zeta)$ through the Hardy representation \eqref{eq:hardy_rep}. 

\medskip
\emph{\underline{The spectrum.}} Set
\begin{equation}\label{eq:Psi_spectrum}
   \lambda_0:=|\zeta_0|^2,\qquad
   \gamma_n:=|\zeta_n|^2\ (1\le n\le N),\qquad
   \gamma_n:=0\ (n>N),
\end{equation}
and define the candidate eigenvalues
\begin{equation}\label{eq:Psi_eigenvalues}
   \lambda_n:=\lambda_0+n+\sum_{k=1}^{n}\gamma_k,\qquad n\ge0,
\end{equation}
so that $\lambda_n-\lambda_{n-1}-1=\gamma_n\ge0$. Hence $(\lambda_n)_{n\ge0}$ is strictly increasing with $\lambda_n\to+\infty$, and $\lambda_n=\lambda_N+(n-N)$ for $n\ge N$. Write
$$
    \Lambda:=\operatorname{diag}(\lambda_0,\lambda_1,\dots)
$$
for the associated diagonal operator.  Set $\gamma_0:=\lambda_0$, $\mu_p:=\lambda_{p-1}+1$ and $\mu_0=0$. As in \eqref{eq:K}, the active set is
\begin{equation}\label{eq:Psi_K}
   \K:=\{ n\ge 0:\zeta_n\neq0\}=\{n\ge0:\gamma_n>0\},\qquad \Ks:=\K\setminus\{0\},
\end{equation}
so that $0\in\K\iff\zeta_0\neq0\iff\lambda_0>0$ and $\max\K=N$ since $\zeta_N\neq0$. The active poles $(\lambda_p)_{p\in\K}$ and nodes $(\mu_p)_{p\in\K}$ interlace $\lambda_{p-1}<\mu_p<\lambda_p$.

\medskip
\emph{\underline{The amplitude vector $X$.}} Define the strictly positive spectral weights
\begin{equation}\label{eq:Psi_kappa}
   \kappa_n:=\prod_{\substack{p\in\K\\ p\neq n}}\frac{\lambda_{p-1}+1-\lambda_n}{\lambda_p-\lambda_n}>0,
   \qquad n\ge0,
\end{equation}
as in Corollary~\ref{cor:kappa}, and set
\begin{equation}\label{eq:Psi_X}
   X_n:=\zeta_n\sqrt{\kappa_n}\qquad(n\ge0),
\end{equation}
so that $X_n\neq0\iff n\in\K$ (in particular $X_0=0\iff\lambda_0=0$), $\arg X_n=\arg\zeta_n$, and, using $|\zeta_n|^2=\gamma_n$,
\[
   c_n:=|X_n|^2=\gamma_n\kappa_n\qquad(n\ge0).
\]
By \eqref{eq:cauchy_res}, the numbers $(c_p)_{p\in\K}$ are precisely the unique positive solution of the Cauchy system of Lemma~\ref{lem:cauchy} for the interlacing data $(\lambda_p,\mu_p)_{p\in\K}$. Hence the rational function
\begin{equation}\label{eq:Psi_R}
   R(z):=\sum_{p\in\K}\frac{c_p}{\lambda_p-z}=\sum_{p\ge0}\frac{|X_p|^2}{\lambda_p-z}
\end{equation}
satisfies, by \eqref{eq:cauchy_trace},
\begin{equation}\label{eq:Psi_nodes}
   1-R(z)=\prod_{p\in\K}\frac{\mu_p-z}{\lambda_p-z},\qquad
   R(\mu_q)=1\ (q\in\K),\qquad
   \sum_{p\in\K}c_p=\lambda_0+\sum_{k=1}^{N}\gamma_k .
\end{equation}

\medskip
\emph{\underline{The vector $Y$.}} If $\lambda_0>0$ then $0=\mu_0\in\{\mu_p:p\in\K\}$, so by \eqref{eq:Psi_nodes} the solution satisfies
$R(0)=1$. Introduce the scalar
\begin{equation}\label{eq:Psi_b}
   b:=\Big(\sum_{n\in\K}\frac{c_n}{\lambda_n^{2}}\Big)^{-1/2}e^{-i\arg\zeta_0},
\end{equation}
and set
\begin{equation}\label{eq:Psi_Y}
   Y_n:=
   \begin{cases}
       \dfrac{b\,X_n}{\lambda_n} & n\in\K,\\[4pt]
       0 & n\notin\K,
   \end{cases}
   \qquad(\lambda_0>0).
\end{equation}
If $\lambda_0=0$, put $Y:=e_0$ and $b:=0$. In either case a direct computation gives the two relations
\begin{equation}\label{eq:Y_norm}
   \|Y\|_{\ell^2}=1,
\end{equation}
\begin{equation}\label{eq:LambdaY}
   \Lambda Y=b\,X.
\end{equation}
Indeed, for $\lambda_0>0$,
\[
   \|Y\|^2=|b|^2\sum_{n\in\K}\frac{c_n}{\lambda_n^{2}}=1
   \qquad\text{and}\qquad
   (\Lambda Y)_n=\lambda_n Y_n=b\,X_n
\]
by \eqref{eq:Psi_b}--\eqref{eq:Psi_Y}, both sides of \eqref{eq:LambdaY} vanishing off $\K$. For $\lambda_0=0$ they are immediate, since $\|e_0\|=1$ and $\Lambda e_0=\lambda_0 e_0=0=bX$. The scalar $b$ will be identified with $\langle 1\mid u\rangle$ in Lemma~\ref{lem:U}.

\medskip
\emph{\underline{The matrix $M$.}} For every $n$ with $\gamma_{n+1}>0$, set
\begin{equation}\label{eq:Psi_w}
   |w_n|:=\Big(\sum_{p\in\K}\frac{c_p}{(\lambda_p-\lambda_n-1)^{2}}\Big)^{-1/2},
   \qquad \arg w_n:=-\arg\zeta_{n+1},
\end{equation}
and define the matrix $M=(M_{np})_{n,p\ge0}$ by
\begin{equation}\label{eq:Psi_M}
   M_{np}:=
   \begin{cases}
      \dfrac{\overline{w_n}\,\overline{X_p}}{\lambda_p-\lambda_n-1}, & \gamma_{n+1}>0,\\[8pt]
      \delta_{p,n+1}, & \gamma_{n+1}=0.
   \end{cases}
\end{equation}
Each row is finitely supported ($X_p=0$ off $\K$), and the denominators never vanish: when $\gamma_{n+1}>0$, $\lambda_n+1=\mu_{n+1}\in(\lambda_n,\lambda_{n+1})$ is not an eigenvalue. 

\medskip
\emph{\underline{The potential.}} Once $\|M\|\le1$ is established in Corollary~\ref{cor:contraction}, the Neumann series $(\Id-zM)^{-1}=\sum_{k\ge0}z^kM^k$ converges for $z\in\D$, and we define
\begin{equation}\label{eq:Psi_u}
   u:=\Psi(\zeta),\qquad u(z):=\bigl\langle(\Id-zM)^{-1}X\mid Y\bigr\rangle_{\ell^2(\N_0)},
   \quad z\in\D.
\end{equation}
Subsections~\ref{ss:analyticity}--\ref{ss:equivalence} show that this $u$ is indeed in $\U_N$ and $\Phi(u)=\zeta$.

\subsubsection{Structural identities and analyticity}\label{ss:analyticity}

Throughout this section,  $(X,Y,M)$ are as synthesised in \S\ref{ss:construction} from a fixed $\zeta\in\Omega_N$, and $\Lambda=\operatorname{diag}(\lambda_0,\lambda_1,\dots)$. And $R(z)=\sum_{n\in\K}c_n/(\lambda_n-z)$ is the function defined in \eqref{eq:Psi_R} and satisfies \eqref{eq:Psi_nodes}. We first record two cancellations.

\begin{lemma}[Action of $M$ on $X$ and $Y$]\label{lem:MXMY}
For every $n\ge0$,
\[
   (MX)_n=\begin{cases}\overline{w_n}, & \gamma_{n+1}>0,\\ 0, & \gamma_{n+1}=0,\end{cases}
   \qquad\text{and}\qquad MY=0 .
\]
\end{lemma}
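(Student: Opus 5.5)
We compute row by row from the definition \eqref{eq:Psi_M} of $M$, separating rows according to whether $\gamma_{n+1}$ vanishes. The only input needed is the node identity $R(\mu_q)=1$ for $q\in\K$ from \eqref{eq:Psi_nodes}, together with the identity $R(0)=1$ when $\lambda_0>0$.

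\emph{Rows with $\gamma_{n+1}=0$.} Here $(MX)_n=X_{n+1}$. Since $\gamma_{n+1}=0$ we have $n+1\notin\K$, so $X_{n+1}=0$. Likewise $(MY)_n=Y_{n+1}$. If $\lambda_0=0$, then $Y=e_0$ and $n+1\ge1$, so this entry is $0$. If $\lambda_0>0$, then $Y$ is supported on $\K$, so again $Y_{n+1}=0$.

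\emph{Rows with $\gamma_{n+1}>0$.} Then $n+1\in\K$ and $\mu_{n+1}=\lambda_n+1$. First,
\[
(MX)_n=\overline{w_n}\sum_{p\in\K}\frac{|X_p|^2}{\lambda_p-\lambda_n-1}=\overline{w_n}\,R(\mu_{n+1})=\overline{w_n},
\]
by \eqref{eq:Psi_nodes}. Next, for $MY$ take first $\lambda_0>0$. By \eqref{eq:Psi_Y},
\[
(MY)_n=\overline{w_n}\,b\sum_{p\in\K}\frac{|X_p|^2}{\lambda_p(\lambda_p-\mu_{n+1})}.
\]
The partial fraction identity
\[
\frac{1}{\lambda_p(\lambda_p-\mu)}=\frac1\mu\Bigl(\frac{1}{\lambda_p-\mu}-\frac1{\lambda_p}\Bigr)
\]
applies with $\mu=\mu_{n+1}>0$. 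The sum therefore equals $\mu_{n+1}^{-1}\bigl(R(\mu_{n+1})-R(0)\bigr)=\mu_{n+1}^{-1}(1-1)=0$. This uses $R(0)=1$, which holds because $0=\mu_0$ is an active node when $\lambda_0>0$. If instead $\lambda_0=0$, then $Y=e_0$ and $(MY)_n=M_{n0}=\overline{w_n}\,\overline{X_0}/(\lambda_0-\lambda_n-1)=0$, since $X_0=0$.

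There is no real obstacle. The only point needing care is the bookkeeping: the node $\mu_{n+1}$ must belong to the active set, which it does because $\gamma_{n+1}>0$, and in the case $\lambda_0>0$ one must remember that $0$ is itself a node.
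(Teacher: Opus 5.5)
Your proof is correct and follows the paper's own argument essentially line by line: you split the rows according to $\gamma_{n+1}$, evaluate $(MX)_n=\overline{w_n}R(\mu_{n+1})=\overline{w_n}$, reduce $(MY)_n$ by partial fractions to $\mu_{n+1}^{-1}\bigl(R(\mu_{n+1})-R(0)\bigr)=0$ when $\lambda_0>0$, and use $X_0=0$ when $\lambda_0=0$. Your handling of the rows with $\gamma_{n+1}=0$ is slightly more explicit than the paper's, since you check both the case $Y=e_0$ and the case where $Y$ is supported on $\K$.
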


\begin{proof}
Fix $n \geq 0$. If $\gamma_{n+1}=0$, then by \eqref{eq:Psi_M} and \eqref{eq:Psi_X}, 
$$
    (MX)_n=\sum_p\delta_{p,n+1}X_p=X_{n+1}=0,
$$ 
and $(MY)_n=Y_{n+1}=0$ by \eqref{eq:Psi_Y}. 

If $\gamma_{n+1}>0$, then using $|X_p|^2=c_p$ on $\K$ and \eqref{eq:Psi_nodes},
\[
   (MX)_n=\sum_{p\in\K}\frac{\overline{w_n}\,\overline{X_p}}{\lambda_p-\lambda_n-1}\,X_p
         =\overline{w_n}\sum_{p\in\K}\frac{c_p}{\lambda_p-\mu_{n+1}}
         =\overline{w_n}\,R(\mu_{n+1})=\overline{w_n}\,.
\]
For $MY$, on this row we distinguish the two cases: if $\lambda_0=0$, then $Y=e_0$ and from \eqref{eq:Psi_M}, $(MY)_n=M_{n0}=\overline{w_n}\,\overline{X_0}/(\lambda_0-\lambda_n-1)=0$ because $X_0=0$. If $\lambda_0>0$, then $Y_p=bX_p/\lambda_p$, and  $\lambda_n+1=\mu_{n+1}$. Hence, using the partial fraction decomposition $\frac{1}{\lambda_p(\lambda_p-\mu_{n+1})}=\frac{1}{\mu_{n+1}}\bigl(\frac{1}{\lambda_p-\mu_{n+1}}-\frac{1}{\lambda_p}\bigr)$ yields
\[
   (MY)_n=\sum_{p\in\K}\frac{\overline{w_n}\,\overline{X_p}}{\lambda_p-\mu_{n+1}}\cdot\frac{bX_p}{\lambda_p}
   =\overline{w_n}\,b\sum_{p\in\K}\frac{c_p}{\lambda_p(\lambda_p-\mu_{n+1})}
   =\frac{\overline{w_n}\,b}{\mu_{n+1}}\bigl(R(\mu_{n+1})-R(0)\bigr)=0,
\]
where we have used \eqref{eq:Psi_nodes}. Hence $MY=0$.
\end{proof}

\begin{proposition}[Structural identities]\label{prop:struct}
The synthesised matrix $M$ of \eqref{eq:Psi_M} is a bounded operator on $\ell^2(\N_0)$ satisfying
\begin{align}
   \label{eq:struct_I}  & MM^{*}=\Id,\\
   \label{eq:struct_II} & M^{*}M=\Id-YY^{*}\quad\text{and}\quad\ker M=\C Y,\\
   \label{eq:struct_C}  & M\Lambda=\Lambda M+M+(MX)X^{*}.
\end{align}
\end{proposition}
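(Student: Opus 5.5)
The plan is to exploit the fact that $M$ is, up to a finite block, a pure shift. All identities then reduce to a finite-dimensional computation driven by the node relations $R(\mu_q)=1$ in \eqref{eq:Psi_nodes}.

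\emph{Block structure.} First I would record the block structure of $M$. For $n\ge N$ we have $\gamma_{n+1}=0$, so row $n$ is $e_{n+1}^{T}$ by \eqref{eq:Psi_M}. For $n<N$, an active row ($\gamma_{n+1}>0$) is supported in $\K\subset\{0,\dots,N\}$, since $X_p=0$ off $\K$. An inactive row is $e_{n+1}^{T}$ with $n+1\le N$. Hence, in the splitting $\ell^2(\N_0)=\C^{N+1}\oplus\ell^2(\{N+1,N+2,\dots\})$, the matrix is
\[
   M=A\oplus T ,
\]
where $A:\C^{N+1}\to\C^{N}$ is the $N\times(N+1)$ block of rows $0,\dots,N-1$, and $T$ (rows $n\ge N$) maps $e_{n+1}\mapsto e_n$. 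Thus $T$ is a backward-shift-type isometric identification of $\ell^2(\{N+1,\dots\})$ with $\ell^2(\{N,N+1,\dots\})$. Boundedness of $M$ follows immediately from this decomposition.

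\emph{Identity \eqref{eq:struct_I}.} It suffices to show that the rows of $A$ are orthonormal, since $TT^*=\Id$ is clear.
\begin{itemize}
\item An active row has norm $|w_n|^2\sum_{p\in\K}c_p/(\lambda_p-\mu_{n+1})^2=1$, by the choice of $|w_n|$ in \eqref{eq:Psi_w}.
\item Two distinct active rows $n\ne m$ have inner product proportional to
\[
   \sum_{p\in\K}\frac{c_p}{(\lambda_p-\mu_{n+1})(\lambda_p-\mu_{m+1})}
   =\frac{R(\mu_{n+1})-R(\mu_{m+1})}{\mu_{n+1}-\mu_{m+1}}=0 ,
\]
by partial fractions and \eqref{eq:Psi_nodes}, since $n+1,m+1\in\K$.
\item An active row $n$ and an inactive row $e_{m+1}^T$ have inner product $\propto\overline{X_{m+1}}=0$, because $m+1\notin\K$.
\item Two distinct inactive rows are distinct basis vectors.
\end{itemize}
Hence $MM^*=\Id$. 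I expect this orthogonality computation to be the only genuinely non-formal step, and the node relations make it work.

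\emph{Identity \eqref{eq:struct_II}.} Since $M^*$ is an isometry, $M^*M$ is the orthogonal projection onto $\Range M^*=(\ker M)^\perp$. So it suffices to show $\ker M=\C Y$. By the block decomposition, $\ker M=\ker A\oplus\ker T$. Here $\ker T=0$, and $A$ has $N$ orthonormal rows in $\C^{N+1}$, so $\operatorname{rank}A=N$ and $\dim\ker A=1$. Lemma~\ref{lem:MXMY} gives $MY=0$, and $Y\ne0$ is supported in $\{0,\dots,N\}$ (in either case $\lambda_0>0$ or $\lambda_0=0$). Therefore $\ker M=\C Y$. Since $\|Y\|=1$ by \eqref{eq:Y_norm}, the projection onto $(\C Y)^\perp$ is $\Id-YY^*$.

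\emph{Identity \eqref{eq:struct_C}.} I would verify this entrywise. We have $(M\Lambda-\Lambda M-M)_{np}=M_{np}(\lambda_p-\lambda_n-1)$.
\begin{itemize}
\item For an active row $n$, this equals $\overline{w_n}\,\overline{X_p}=(MX)_n\overline{X_p}$ by Lemma~\ref{lem:MXMY}, which is exactly $\bigl((MX)X^*\bigr)_{np}$.
\item For an inactive row, it equals $\delta_{p,n+1}(\lambda_{n+1}-\lambda_n-1)=\delta_{p,n+1}\gamma_{n+1}=0=(MX)_n\overline{X_p}$.
\end{itemize}
Both sides are unbounded only through $\Lambda$, so I would state the identity on finitely supported vectors (or on $\operatorname{Dom}\Lambda$). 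The block structure keeps this harmless.
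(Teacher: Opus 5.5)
Your proof is correct. For \eqref{eq:struct_I} and \eqref{eq:struct_C} you use the same entrywise case analysis as the paper: the norm of active rows comes from \eqref{eq:Psi_w}, distinct active rows are orthogonal by partial fractions and $R(\mu_q)=1$, and mixed pairs vanish because $\overline{X_{m+1}}=0$.

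The genuine difference is how you identify $\ker M$.

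\emph{The paper's route.} It proves $\ker M=\C Y$ directly. Inactive rows force $x$ to be supported in $\K\cup\{0\}$, and the active rows then give a homogeneous Cauchy system, handled in two cases:
\begin{itemize}
\item $\lambda_0=0$: the Cauchy matrix is square, hence invertible.
\item $\lambda_0>0$: the matrix is $(m-1)\times m$ of rank $m-1$, and its kernel is spanned by $(\overline{X_p}Y_p)_p$.
\end{itemize}

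\emph{Your route.} You observe that $M$ is block diagonal from $\C^{N+1}\oplus\ell^2(\{N+1,\dots\})$ to $\C^{N}\oplus\ell^2(\{N,\dots\})$, with the tail block unitary. Then \eqref{eq:struct_I} says the $N\times(N+1)$ block $A$ has orthonormal rows. Rank--nullity gives $\dim\ker M=\dim\ker A=1$, and $MY=0$ with $Y\neq0$ (Lemma~\ref{lem:MXMY}) finishes.

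\emph{Comparison.} Your argument is shorter and avoids the $\lambda_0=0$ versus $\lambda_0>0$ case split. It also needs no invertibility or rank facts about Cauchy matrices, since it reuses \eqref{eq:struct_I}. It exposes from the start the finite-block structure that the paper only isolates later, in Corollary~\ref{cor:contraction}. The paper's argument is more explicit about why a kernel vector is forced to be proportional to $Y$, but it is essentially the same dimension count done by hand.

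Your remark that \eqref{eq:struct_C} should be read on finitely supported vectors (or on $\operatorname{Dom}\Lambda$) is a point the paper leaves implicit. It matches how the identity is used later, on $E_N$ and on $\mathcal D$.
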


\begin{proof}
First, let us prove the boundedness of $M.$  Write $M=D+F$, where $D_{np}=\delta_{p,n+1}$ on rows with $\gamma_{n+1}=0$ and $D_{np}=0$ otherwise, while $F$ carries the rows with $\gamma_{n+1}>0$.  On one hand, 
$$
    \|Dv\|^2=\sum_{n:\gamma_{n+1}=0}|v_{n+1}|^2\le\|v\|^2 \qtq{so} \|D\|\le1.
$$
On the other hand, $F$ carries  the $ |\Ks| $ rows $n$ with $ n+1\in\Ks $, each of $\ell^2$-norm $|w_n|\bigl(\sum_p c_p/(\lambda_p-\lambda_n-1)^2\bigr)^{1/2}=1$ by \eqref{eq:Psi_w}, so $F$ has rank $\le |\K^*|$ and is bounded. Hence $M$ is bounded. 

Second, let us prove \eqref{eq:struct_I}. We compute $(MM^*)_{nn'}=\sum_p M_{np}\overline{M_{n'p}}$, distinguishing cases. If $\gamma_{n+1}=0$ and $\gamma_{n'+1}=0$, then
$$
    (MM^*)_{nn'}=\sum_p\delta_{p,n+1}\delta_{p,n'+1}=\delta_{nn'}.
$$ 
If $\gamma_{n+1}=0$ and $\gamma_{n'+1}>0$ (so $n\neq n'$), then $(MM^*)_{nn'}=\overline{M_{n',n+1}}=0$, because $M_{n',n+1}$ carries the factor $\overline{X_{n+1}}=0$ ($n+1\notin\K$). The symmetric case is the conjugate. Finally, if $\gamma_{n+1}>0$ and $\gamma_{n'+1}>0$, then for  $\lambda_n+1=\mu_{n+1}$ and $\lambda_{n'}+1=\mu_{n'+1}$ we have
\[
   (MM^*)_{nn'}=\overline{w_n}\,w_{n'}\sum_{p\in\K}\frac{|X_p|^2}{(\lambda_p-\mu_{n+1})(\lambda_p-\mu_{n'+1})}
   =\overline{w_n}\,w_{n'}\sum_{p\in\K}\frac{c_p}{(\lambda_p-\mu_{{n+1}})(\lambda_p-\mu_{n'+1})}.
\]
If $n=n'$ this equals by \eqref{eq:Psi_w} to
$$
    (MM^*)_{nn'}=|w_n|^2\sum_p c_p/(\lambda_p-\lambda_n-1)^2=1.
$$
If $n\neq n'$, the partial fraction
$\frac{1}{(\lambda_p-\mu_{n+1})(\lambda_p-\mu_{n'+1})} =\frac{1}{\mu_{n+1}-\mu_{n'+1}}\bigl(\frac{1}{\lambda_p-\mu_{n+1}}-\frac{1}{\lambda_p-\mu_{n'+1}}\bigr)$
together with \eqref{eq:Psi_nodes} gives
\begin{align*}
    (MM^*)_{nn'}
    =\overline{w_n}\,w_{n'}\sum_p\frac{c_p}{(\lambda_p-\mu_{n+1})(\lambda_p-\mu_{n'+1})}
    &=\frac{\overline{w_n}\,w_{n'}}{\mu_{n+1}-\mu_{n'+1}}\bigl(R(\mu_{n+1})-R(\mu_{n'+1})\bigr)\\
    &=\frac{1}{\mu_{n+1}-\mu_{n'+1}}(1-1)=0.
\end{align*}
In all cases $(MM^*)_{nn'}=\delta_{nn'}$, i.e.\ \eqref{eq:struct_I}. Thus $MM^{*}=\Id$.

Third, let us prove \eqref{eq:struct_C}. Entrywise, \eqref{eq:struct_C} reads $(\lambda_p-\lambda_n-1)M_{np}=(MX)_n\overline{X_p}$. If $\gamma_{n+1}>0$, then $(\lambda_p-\lambda_n-1)M_{np}=\overline{w_n}\,\overline{X_p}=(MX)_n\overline{X_p}$ by \eqref{eq:Psi_M} and Lemma~\ref{lem:MXMY}. If $\gamma_{n+1}=0$, then $(\lambda_p-\lambda_n-1)\delta_{p,n+1}$ is nonzero only at $p=n+1$, where it equals $\lambda_{n+1}-\lambda_n-1=\gamma_{n+1}=0$, and $(MX)_n=0$ by Lemma~\ref{lem:MXMY}. Both sides vanish, proving \eqref{eq:struct_C}.

Finally, let us prove \eqref{eq:struct_II}. By \eqref{eq:struct_I} the operator $P:=M^*M$ is self-adjoint and idempotent, $P^2=M^*(MM^*)M=M^*M=P$. Hence $P$ is an orthogonal projection. Since $\langle Px\mid x\rangle=\|Mx\|^2$, we have $\ker P=\ker M$, so $P$ is the orthogonal projection onto $(\ker M)^{\perp}$, i.e.
\begin{equation}\label{eq:MM_proj}
   M^*M=\Id-P_{\ker M}.
\end{equation}
It remains to prove $\ker M=\C Y$. The inclusion $\C Y\subseteq\ker M$ is Lemma~\ref{lem:MXMY} where we have proved $MY=0$.  Conversely, let $x\in\ker M$. Every row with $\gamma_{n+1}=0$ gives $(Mx)_n=x_{n+1}=0$. Hence, $x_k=0$ for all $k\ge1$ where $\gamma_k=0$, so $x$ is supported in $\K\cup \{0\}$. 

Every row with $\gamma_{n+1}>0$, i.e.\ $\lambda_n+1=\mu_{n+1}$, gives, after dividing by $\overline{w_n}\neq0$,
\begin{equation}\label{eq:ker_cauchy}
   \sum_{p\in\K}\frac{\overline{X_p}\,x_p}{\lambda_p-\mu_{q}}=0,\qquad q\in\Ks .
\end{equation}
Set $m:=|\K|$, so that $|\Ks|=m$ if $\lambda_0=0$ and $|\Ks|=m-1$ if $\lambda_0>0$.

If $\lambda_0=0$, then $0\notin\K$, $\K=\Ks$, and $X_0=0$. The system  \eqref{eq:ker_cauchy} is a homogeneous system of $m$ equations in the $m$ unknowns $(\overline{X_p}\,x_p)_{p\in\K}$, with matrix $\bigl[(\lambda_p-\mu_q)^{-1}\bigr]_{q\in\Ks,\,p\in\K}$, an $m\times m$ Cauchy matrix, hence invertible. Therefore $\overline{X_p}\,x_p=0$, and since $X_p\neq0$ we get $x_p=0$ for every $p\in\K$. Hence $x=x_0e_0=x_0Y$.

If $\lambda_0>0$, then $0\in\K$ and \eqref{eq:ker_cauchy} says that $(\overline{X_p}\,x_p)_{p\in\K}$ lies in the kernel of the $(m-1)\times m$ Cauchy matrix $C=\bigl[(\lambda_p-\mu_q)^{-1}\bigr]_{q\in\Ks,\,p\in\K}$. Any $m-1$ of its columns form an invertible square Cauchy matrix, so $\operatorname{rank}C=m-1$ and $\dim\ker C=1$.  But since $MY=0$ by Lemma~\ref{lem:MXMY}, then this implies that $(\overline{X_p}\,Y_p)_{p\in\K}\in\ker C$. This last vector is nonzero since $\overline{X_p}\,Y_p=b\,c_p/\lambda_p\neq0$. Hence, $\ker C=\C\,(\overline{X_p}\,Y_p)_p$, and so $(\overline{X_p}\,x_p)_p=t\,(\overline{X_p}\,Y_p)_p$ for some $t\in\C$. Dividing by $X_p\neq0$ gives $x_p=tY_p$ for $p\in\K$, i.e.\ $x=tY$. In both cases $\ker M=\C Y$, and since $\|Y\|=1$ we have $P_{\ker M}=YY^*$, thus \eqref{eq:MM_proj} yields \eqref{eq:struct_II}.
\end{proof}

\begin{corollary}[Contraction and finite block]\label{cor:contraction}
For every $v\in\ell^2(\N_0)$, 
\begin{equation}\label{eq:contraction}
   \|Mv\|^2=\|v\|^2-|\langle v\mid Y\rangle|^2,\qquad\text{in particular}\quad \|M\|\le1.
\end{equation} 
Moreover the subspace $E_N:=\operatorname{span}\{e_0,\dots,e_N\}$ is invariant under $M$. Writing $M_{\le N}:=M|_{E_N}$ for the compression, one has $X,Y\in E_N$, $M_{\le N}Y=0$, and
\begin{equation}\label{eq:M_bound}
   \sigma(M_{\le N})\subseteq\overline{\D},\qquad \det M_{\le N}=0.
\end{equation} 
\end{corollary}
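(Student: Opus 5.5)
The contraction identity follows directly from the structural identity \eqref{eq:struct_II} of Proposition~\ref{prop:struct}. For $v\in\ell^2(\N_0)$ we compute
\[
   \|Mv\|^2=\langle M^*Mv\mid v\rangle=\langle v-\langle v\mid Y\rangle Y\mid v\rangle=\|v\|^2-|\langle v\mid Y\rangle|^2 .
\]
Since the subtracted term is nonnegative, $\|Mv\|\le\|v\|$, so $\|M\|\le1$.

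For the finite block, we first show $E_N$ is $M$-invariant. The key observation is that every row $n$ of $M$ is supported in columns $p\in\K\cup\{n+1\}$. For a row with $\gamma_{n+1}>0$, the entries carry the factor $\overline{X_p}$, which vanishes off $\K\subseteq\{0,\dots,N\}$. For a row with $\gamma_{n+1}=0$, the only entry is $\delta_{p,n+1}$. It therefore suffices to show that the column $e_p$, for $p\le N$, is mapped into $E_N$, i.e.\ $M_{np}=0$ whenever $n>N$ and $p\le N$.

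For $n\ge N$ we have $\gamma_{n+1}=0$ because $\max\K=N$, so row $n$ is $\delta_{p,n+1}$, which is supported at $p=n+1>N$. Thus for $n>N$ the entry $M_{np}$ vanishes whenever $p\le N$, and $Me_p\in E_N$ for $p\le N$. For the rows $n\le N$ (needed so the compression is a genuine restriction), note that $Me_p=\sum_n M_{np}e_n$. The only possible obstruction would be $p=N+1$ arising from row $n=N$, which involves $e_{N+1}\notin E_N$ as an input. This plays no role for vectors in $E_N$.

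The memberships $X,Y\in E_N$ are immediate: $X_n=\zeta_n\sqrt{\kappa_n}$ vanishes for $n>N$, and $Y$ is either $e_0$ or supported in $\K$. Lemma~\ref{lem:MXMY} then gives $M_{\le N}Y=MY=0$. Since $Y\neq0$ by \eqref{eq:Y_norm}, $M_{\le N}$ has nontrivial kernel, so $\det M_{\le N}=0$.

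Finally, $\|M_{\le N}\|\le\|M\|\le1$, so every eigenvalue of the finite matrix $M_{\le N}$ has modulus at most $1$, giving $\sigma(M_{\le N})\subseteq\overline{\D}$. The only step requiring care is the invariance bookkeeping, specifically correctly using $\gamma_{n+1}=0$ for $n\ge N$. The rest is direct.
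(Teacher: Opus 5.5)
Your proof is correct and follows the paper's argument essentially line by line: the contraction identity comes from $M^*M=\Id-YY^*$; the invariance of $E_N$ comes from $M_{np}=\delta_{p,n+1}=0$ for $n>N\ge p$; and $\sigma(M_{\le N})\subseteq\overline{\D}$ and $\det M_{\le N}=0$ come from $\|M_{\le N}\|\le\|M\|\le 1$ and from $M_{\le N}Y=0$ with $Y\neq 0$. The paragraph about rows $n\le N$ and a possible ``obstruction'' at $p=N+1$ is unnecessary, since invariance only concerns inputs in $E_N$, and you can simply delete it.
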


\begin{proof}
By \eqref{eq:struct_II}, 
$$
    \|Mv\|^2=\langle M^{*}Mv\mid v\rangle=\|v\|^2-\langle YY^{*}v\mid v\rangle=\|v\|^2-|\langle v\mid Y\rangle|^2\le\|v\|^2,
$$
giving \eqref{eq:contraction}. For invariance, let $p\le N$. If $n>N$ then $\gamma_{n+1}=0$, so $M_{np}=\delta_{p,n+1}=0$ because $n+1>N\ge p$. Thus, $Me_p\in E_N$, i.e.\ $E_N$ is $M$-invariant. Since $\K\subseteq\{0,\dots,N\}$ we have $X,Y\in E_N$, and $M_{\le N}Y=MY=0$ by Lemma~\ref{lem:MXMY}. Finally, $\|M_{\le N}\|\le\|M\|\le1$ gives the spectral inclusion $\sigma(M_{\le N})\subseteq\overline{\D}$, and $M_{\le N}Y=0$ with $Y\neq0$ forces $0\in\sigma(M_{\le N})$, i.e.\ $\det M_{\le N}=0$.
\end{proof}

\begin{proposition}\label{prop:virial}
The finite matrix $M_{\le N}$ has no eigenvalue on $\partial\D$.  Consequently its spectral radius satisfies 
$$
    \spr(M_{\le N})<1.
$$
\end{proposition}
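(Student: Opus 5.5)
The plan is a virial-type argument: assume a unimodular eigenvalue exists, use the contraction identity to turn the eigenvector into an eigenvector of $M^{*}$ as well, and then test the commutation identity \eqref{eq:struct_C} against it to get a contradiction.

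\textbf{Step 1: orthogonality to $Y$.} Suppose $M_{\le N}v=\theta v$ with $v\in E_N\setminus\{0\}$ and $|\theta|=1$. Since $E_N$ is $M$-invariant (Corollary~\ref{cor:contraction}), this reads $Mv=\theta v$ in $\ell^2(\N_0)$. The identity \eqref{eq:contraction} gives
\[
\|v\|^2=\|Mv\|^2=\|v\|^2-|\langle v\mid Y\rangle|^2 ,
\]
so $\langle v\mid Y\rangle=0$.

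\textbf{Step 2: $v$ is also an eigenvector of $M^{*}$.} By \eqref{eq:struct_II} and Step~1, $M^{*}Mv=v-\langle v\mid Y\rangle Y=v$. Combined with $Mv=\theta v$ this gives $\theta M^{*}v=v$. Since $|\theta|=1$, we get $M^{*}v=\overline{\theta}\,v$.

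\textbf{Step 3: test \eqref{eq:struct_C} against $v$.} Because $v\in E_N$ is finitely supported, $\Lambda v$ is well defined and lies in $E_N$, so there is no domain issue. Apply \eqref{eq:struct_C} to $v$:
\[
M\Lambda v=\Lambda Mv+Mv+\langle v\mid X\rangle MX=\theta\Lambda v+\theta v+\langle v\mid X\rangle MX .
\]
Now take the inner product with $v$ and compute each term.
\begin{itemize}
\item On the left, Step~2 gives $\langle M\Lambda v\mid v\rangle=\langle\Lambda v\mid M^{*}v\rangle=\theta\langle\Lambda v\mid v\rangle$.
\item In the last term, $\langle MX\mid v\rangle=\langle X\mid M^{*}v\rangle=\theta\langle X\mid v\rangle$.
\end{itemize}
Substituting, the diagonal terms $\theta\langle\Lambda v\mid v\rangle$ cancel, leaving
\[
0=\theta\bigl(\|v\|^2+|\langle v\mid X\rangle|^2\bigr).
\]
This forces $v=0$, a contradiction. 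Hence $M_{\le N}$ has no eigenvalue on $\partial\D$.

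\textbf{Step 4: spectral radius.} $M_{\le N}$ is a finite matrix, so its spectrum consists of eigenvalues only. By \eqref{eq:M_bound} these lie in $\overline{\D}$, and by Step~3 none lie on $\partial\D$. Therefore $\spr(M_{\le N})<1$.

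The only delicate point is Step~2, namely upgrading $Mv=\theta v$ to $M^{*}v=\overline\theta v$. This is what makes the $\Lambda$-terms cancel, and it relies exactly on the defect identity \eqref{eq:struct_II} together with $v\perp Y$ from Step~1. Everything else is bookkeeping with the identities of Proposition~\ref{prop:struct}.
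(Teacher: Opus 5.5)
Your proof is correct and is essentially the paper's virial argument. The paper pairs \eqref{eq:struct_C} applied to $x$ against $Mx$ and reduces $\langle M\Lambda x\mid Mx\rangle$ and $\langle MX\mid Mx\rangle$ using $M^{*}M=\Id-YY^{*}$ with $\langle x\mid Y\rangle=0$; this is equivalent to your intermediate step $M^{*}v=\bar\theta v$ followed by pairing against $v$, and it yields the same contradiction $\|v\|^2+|\langle v\mid X\rangle|^2=0$.
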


\begin{proof}
By contradiction, suppose $M_{\le N}x=\alpha x$ with $x\in E_N$, $\|x\|=1$ and $|\alpha|=1$. Since $E_N$ is invariant under $M$, $Mx=M_{\le N}x=\alpha x$, so $\|Mx\|=\|x\|=1$. By \eqref{eq:contraction} this forces $\langle x\mid Y\rangle=0$.

Now, we evaluate $\langle\Lambda Mx\mid Mx\rangle$ in two ways. On one hand, $Mx=\alpha x$ with $|\alpha|=1$ gives
\begin{equation}\label{eq: 1<LMx|Mx>}
   \langle\Lambda Mx\mid Mx\rangle=|\alpha|^2\langle\Lambda x\mid x\rangle=\langle\Lambda x\mid x\rangle.
\end{equation}
On the other hand, applying \eqref{eq:struct_C} to $x$,
$\Lambda Mx=M\Lambda x-Mx-\langle x\mid X\rangle\,MX$, so
\[
   \langle\Lambda Mx\mid Mx\rangle
   =\langle M\Lambda x\mid Mx\rangle-\|Mx\|^2-\langle x\mid X\rangle\,\langle MX\mid Mx\rangle.
\]
Using \eqref{eq:struct_II}, we infer
\[
   \langle M\Lambda x\mid Mx\rangle=\langle(\Id-YY^{*})\Lambda x\mid x\rangle
   =\langle\Lambda x\mid x\rangle-\langle\Lambda x\mid Y\rangle\langle Y\mid x\rangle
   =\langle\Lambda x\mid x\rangle,
\]
since $\langle Y\mid x\rangle=0$. Likewise,  $\langle MX\mid Mx\rangle=\langle(\Id-YY^{*})X\mid x\rangle=\langle X\mid x\rangle$, again because $\langle Y\mid x\rangle=0$. With $\|Mx\|^2=1$ this yields
\begin{equation}\label{eq: 2<LMx|Mx>}
   \langle\Lambda Mx\mid Mx\rangle=\langle\Lambda x\mid x\rangle-1-|\langle x\mid X\rangle|^2.
\end{equation}
Equating the two identities \eqref{eq: 1<LMx|Mx>} and \eqref{eq: 2<LMx|Mx>} gives $1+|\langle x\mid X\rangle|^2=0$, which is impossible. Hence, no eigenvalue lies on $\partial\D$, and since $\sigma(M_{\le N})\subseteq\overline{\D}$ by \eqref{eq:M_bound}, all eigenvalues lie in $\D$, i.e.\ $\spr(M_{\le N})<1$.
\end{proof}

\begin{remark}
The restriction to the finite block $M_{\le N}$ in Proposition~\ref{prop:virial} is essential: for the full operator $M$ an eigenvector on $\partial\D$ need not lie in $\operatorname{Dom}\Lambda$, so the quantity $\langle\Lambda Mx\mid Mx\rangle$ used in the proof may be infinite. The passage from the finite block back to $M$ is carried out in Proposition~\ref{prop:purity}, where $\spr(M_{\le N})<1$ is upgraded to $M^{k}\to0$ strongly on all of $\ell^2(\N_0)$.
\end{remark}

\begin{lemma}[Analyticity and rational form]\label{lem:rational}
For every $\zeta\in\Omega_N$ the function $u=\Psi(\zeta)$ of \eqref{eq:Psi_u} is holomorphic on a neighbourhood of $\overline{\D}$ and rational of degree at most $N$
\[
   u(z)=\frac{P(z)}{Q(z)},\qquad Q(z)=\det(\Id-zM_{\le N}),\qquad \deg P,\ \deg Q\le N.
\]
In particular $u $ is smooth.
\end{lemma}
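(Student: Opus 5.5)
The plan is to reduce the Neumann series in \eqref{eq:Psi_u} to the finite block $M_{\le N}$ and then apply Cramer's rule. By Corollary~\ref{cor:contraction}, $E_N$ is $M$-invariant and $X,Y\in E_N$. Hence $M^kX=M_{\le N}^kX\in E_N$ for every $k\ge0$, and for $z\in\D$,
\[
   u(z)=\sum_{k\ge0}z^k\langle M^kX\mid Y\rangle=\sum_{k\ge0}z^k\langle M_{\le N}^kX\mid Y\rangle_{E_N}
   =\bigl\langle(\Id_{E_N}-zM_{\le N})^{-1}X\mid Y\bigr\rangle_{E_N}.
\]
The last equality holds because the Neumann series for the $(N+1)\times(N+1)$ matrix converges for $|z|<1$, since $\|M_{\le N}\|\le1$. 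Thus $u$ is expressed through a finite matrix resolvent.

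Next, by Proposition~\ref{prop:virial}, $\spr(M_{\le N})=:r<1$. Therefore $\Id-zM_{\le N}$ is invertible whenever $|z|<1/r$, and $1/r>1$ (with $1/r=\infty$ if $r=0$). So the right-hand side above is holomorphic on the disk $\{|z|<1/r\}$, which is a neighbourhood of $\overline{\D}$, and it agrees with $u$ on $\D$. Cramer's rule gives
\[
   (\Id-zM_{\le N})^{-1}=\frac{\operatorname{adj}(\Id-zM_{\le N})}{\det(\Id-zM_{\le N})}.
\]
Set $Q(z):=\det(\Id-zM_{\le N})=\prod_j(1-z\alpha_j)$, where the $\alpha_j$ are the eigenvalues of $M_{\le N}$. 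This $Q$ has zeros only at the points $1/\alpha_j$ with $\alpha_j\ne0$, all of modulus $>1$, and $Q(0)=1$.

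For the degree count, the key point is that $\det M_{\le N}=0$ by \eqref{eq:M_bound}. This means at least one $\alpha_j$ vanishes, so $\deg Q\le N$ rather than $N+1$. For the numerator, $P(z):=\langle\operatorname{adj}(\Id-zM_{\le N})X\mid Y\rangle$. Each entry of the adjugate is an $N\times N$ minor of $\Id-zM_{\le N}$, hence a polynomial of degree at most $N$ in $z$. So $\deg P\le N$, which yields $u=P/Q$ with both degrees at most $N$.

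Smoothness on $\T$ follows because $u$ extends holomorphically across $\partial\D$, so its restriction to the circle is real-analytic. Its Fourier coefficients are the Taylor coefficients $\langle M^kX\mid Y\rangle$, which decay geometrically since $\|M_{\le N}^k\|\le C\rho^k$ for any $\rho\in(r,1)$. In particular $u\in L^2_+(\T)$ is well defined as an element of the Hardy space.

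The only delicate step is the degree bound on $Q$, which depends on the kernel information $M_{\le N}Y=0$, equivalently $\det M_{\le N}=0$. Every other step is routine linear algebra once Proposition~\ref{prop:virial} is available.
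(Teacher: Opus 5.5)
Your proof is correct and follows the paper's argument. You reduce to the finite block $M_{\le N}$, apply Cramer's rule, use $\det M_{\le N}=0$ to get $\deg Q\le N$, and use $\spr(M_{\le N})<1$ to push the zeros of $Q$ outside $\overline{\D}$. The only minor difference is how you get the reduction: you use the $M$-invariance of $E_N$ together with $X\in E_N$, so that $M^kX=M_{\le N}^kX$ term by term in the Neumann series. The paper instead shows that $\xi=(\Id-zM)^{-1}X$ vanishes beyond index $N$ by iterating $\xi_n=z\xi_{n+1}$. Your route is slightly more direct.
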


\begin{proof}
Since $\|M\|\le1$, for $|z|<1$ the resolvent $(\Id-zM)^{-1}=\sum_{k\ge0}z^kM^k$ converges and $\xi:=(\Id-zM)^{-1}X\in\ell^2$ solves $\xi-zM\xi=X$. For $n\ge N$ we have $\gamma_{n+1}=0$, so $(M\xi)_n=\xi_{n+1}$ and the $n$-th equation reads $\xi_n-z\xi_{n+1}=X_n$. For $n\ge N+1$, $X_n=0$, whence $\xi_n=z\xi_{n+1}$ and, iterating, $\xi_n=z^k\xi_{n+k}$ for all $k\ge0$. As $\xi\in\ell^2$ is bounded and $|z|<1$, letting $k\to\infty$ gives $\xi_n=0$ for every $n\ge N+1$. Thus $\xi\in E_N$. By the $M$-invariance of $E_N$, the truncated vector solves $(\Id-zM_{\le N})\xi_{\le N}=X_{\le N}$.

By Proposition~\ref{prop:virial}, $\spr(M_{\le N})<1$, so $\Id-zM_{\le N}$ is invertible for all $|z|\le1$, indeed for $|z|<\spr(M_{\le N})^{-1}$, and $\xi_{\le N}=(\Id-zM_{\le N})^{-1}X_{\le N}$. Since $Y\in E_N$,
\[
   u(z)=\langle\xi\mid Y\rangle=\bigl\langle(\Id-zM_{\le N})^{-1}X_{\le N}\mid Y_{\le N}\bigr\rangle.
\]
By Cramer's rule this is $P(z)/Q(z)$ with  
$$Q(z)=\det(\Id-zM_{\le N}) \qtq{and} P(z)=\langle\operatorname{adj}(\Id-zM_{\le N})X_{\le N}\mid Y_{\le N}\rangle,$$ 
two polynomials with $\deg(Q)\leq N+1$ and $\deg(P)\le N$. Writing $Q(z)=\prod_{j}(1-p_jz)$ over the eigenvalues $p_j$ of $M_{\le N}$: the eigenvalue $0$ present by \eqref{eq:M_bound} contributes a trivial factor, so $\deg Q\le N$, and the remaining $|p_j|<1$ place all zeros of $Q$ in $\{|z|>1\}$. Hence, $u=P/Q$ is holomorphic on a neighbourhood of $\overline{\D}$, rational of degree $\le N$, and its boundary values define a smooth element of $\Lp(\mathbb T)$.
\end{proof}

\begin{proposition}[Purity]\label{prop:purity}
As $k\to \infty,$  
$$M^k\to 0  \qquad \text{strongly}.$$ 
Equivalently, the isometry $M^*$ is a pure unilateral shift with one-dimensional wandering subspace $\ker M=\C Y$. Consequently, $\{(M^*)^kY\}_{k\ge0}$ is an orthonormal basis of $\ell^2(\N_0)$.
\end{proposition}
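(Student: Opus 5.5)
We aim to show $M^k v\to0$ for every $v\in\ell^2(\N_0)$. Since $\|M\|\le1$ by Corollary~\ref{cor:contraction}, the powers $M^k$ are uniformly bounded, so it suffices to check convergence on the dense set of finitely supported vectors, and by linearity on each basis vector $e_m$.

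\textbf{Step 1: vectors in $E_N$.} For $m\le N$, $e_m\in E_N$ and $E_N$ is $M$-invariant, so $M^ke_m=M_{\le N}^ke_m$. Proposition~\ref{prop:virial} gives $\spr(M_{\le N})<1$, hence $\|M_{\le N}^k\|\to0$ (Gelfand's formula). Thus $M^k\to0$ uniformly on $E_N$.

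\textbf{Step 2: vectors beyond $N$.} For $m>N$, look at the columns of $M$. For $n\ge N$ we have $\gamma_{n+1}=0$, so row $n$ of $M$ is $e_{n+1}^{*}$, i.e.\ $(Mv)_n=v_{n+1}$. For $n<N$ the rows are supported in $\{0,\dots,N\}$: this is either because the row is $\delta_{p,n+1}$ with $n+1\le N$, or because it is carried by $\overline{X_p}$ with $p\in\K\subseteq\{0,\dots,N\}$. Hence $Me_m=e_{m-1}$ for $m\ge N+1$, and therefore $M^{m-N}e_m=e_N\in E_N$. Combining with Step~1, $M^ke_m=M_{\le N}^{k-m+N}e_N\to0$. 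So $M^k\to0$ strongly.

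\textbf{Step 3: the Wold decomposition.} By \eqref{eq:struct_I}, $V:=M^*$ is an isometry. By \eqref{eq:struct_II}, $\ker V^*=\ker M=\C Y$ with $\|Y\|=1$, so $\Id-VV^*=YY^*$. Wold's decomposition writes $\ell^2=\bigoplus_{k\ge0}V^k(\C Y)\oplus H_u$, where $H_u=\bigcap_k V^k\ell^2$ is the unitary part. For $h\in H_u$ the restriction $V|_{H_u}$ is unitary, so $\|M^kh\|=\|h\|$. Strong convergence $M^k\to0$ then forces $H_u=\{0\}$. Conversely, $V$ being a pure shift gives $M^k=V^{*k}\to0$ strongly, which yields the stated equivalence. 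Purity means $\{V^kY\}_{k\ge0}$ is an orthonormal basis: orthonormality follows from $V^*V=\Id$ together with $V^*Y=MY=0$, and completeness follows from $H_u=\{0\}$.

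\textbf{Main obstacle.} The delicate point is Step~2: checking carefully that the tail of $M$ is exactly a backward shift feeding into $E_N$, with no leakage from rows $n<N$ into indices $>N$. This is what lets the finite-dimensional strict bound of Proposition~\ref{prop:virial} control the infinite-dimensional operator.
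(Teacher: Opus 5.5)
Your proposal is correct and follows essentially the same route as the paper. You use $\|M\|\le 1$ plus density to reduce to basis vectors, the spectral-radius bound on $M_{\le N}$ for $E_N$, and the backward-shift identity $Me_m=e_{m-1}$ for $m\ge N+1$ to feed the tail into $E_N$. You then use Wold's decomposition (no unitary part) together with $MM^*=\Id$ and $MY=0$ for orthonormality and completeness of $\{(M^*)^kY\}_{k\ge0}$. Your explicit column check (rows $n<N$ are supported in $\{0,\dots,N\}$, so nothing leaks past $N$) is a more careful version of the paper's one-line justification of $Me_j=e_{j-1}$.
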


\begin{proof}
 $M^*$ is an isometry follows directly from \eqref{eq:struct_I}: $$\|M^*v\|^2=\langle MM^*v\mid v\rangle=\|v\|^2.$$

Next, we prove {$M^k\to0$ strongly.} For $x\in E_N$, $M^kx=M_{\le N}^kx\to0$ by Proposition~\ref{prop:virial}. For a basis vector $e_j$ with $j\ge N+1$, since $\gamma_{n+1}=0$ for $n\ge N$ and $X_j=0$, we have $Me_j=e_{j-1}$, so $M^{\,j-N}e_j=e_N\in E_N$ and then $M^{\,j-N+k}e_j=M_{\le N}^ke_N\to0$. Thus,
$$M^ke_j\to0 \qtq{for every} j,$$
as $\{e_j\}$ is total and $\|M^k\|\le1$, $M^k\to0$ strongly.

Now, we move to $\{(M^*)^kY\}_{k\geq 0}$ is orthonormal. Indeed, for $l\ge k$, using \eqref{eq:struct_I} repeatedly, 
\[
   \langle(M^*)^kY\mid(M^*)^lY\rangle=\langle Y\mid(M^*)^{l-k}Y\rangle=\langle M^{\,l-k}Y\mid Y\rangle
   =\delta_{kl},
\]
since $MY=0$ by Lemma~\ref{lem:MXMY}. Thus,  $M^{\,l-k}Y=0$ for $l>k$, while $\|Y\|=1$.

Finally, we prove that $\{(M^*)^kY\}_{k\geq 0}$ is complete. The isometry $M^*$ has wandering subspace $\ker(M^*)^*=\ker M=\C Y$, which
is one-dimensional. The condition $M^k\to0$ strongly means precisely that $M^*$ has no unitary part,
i.e.
\[\bigcap_{k\ge0}(M^*)^k\ell^2=\{0\}.\]
By Wold's decomposition \cite[\S1.1]{SzNagyFoias}, $M^*$ is therefore a pure shift and has a  1-dimensional wandering subspace spanned by $Y$, that is,
\[
   \ell^2(\N_0)=\bigoplus_{k\ge0}(M^*)^k(\C Y)=\overline{\operatorname{span}}\{(M^*)^kY:k\ge0\}.
\]
Combined with the orthonormality just shown, $\{(M^*)^kY\}_{k\ge0}$ is an orthonormal basis.
\end{proof}

\subsubsection{Spectral equivalence and proof of the main theorem}\label{ss:equivalence}

Throughout, $u=\Psi(\zeta)$ is the potential of \eqref{eq:Psi_u}. By Lemma~\ref{lem:rational} it is rational, hence smooth and bounded. In particular $L_u=-i\p_x+u\Pi(\bar u \cdot)$ is a well-defined self-adjoint operator on $\Lp(\mathbb T)$ with domain $H^1_+(\mathbb T)$ and compact resolvent.

\begin{lemma}[Spectral equivalence]\label{lem:U}
There is a unitary operator $U:\ell^2(\N_0)\to\Lp(\mathbb T)$ such that
\begin{equation}\label{eq:U_intertwine}
   UY=1,\qquad UM^{*}U^{*}=S,\qquad UX=u,\qquad U\Lambda U^{*}=L_u.
\end{equation}
Consequently, the scalar $b$ of \eqref{eq:Psi_b} equals $\langle 1\mid u\rangle$, the Lax operator $L_u$ has simple spectrum 
$$\sigma(L_u)=\{\lambda_n\}_{n\ge0} \qtq{and the vectors} f_n:=Ue_n$$
are the $L_u$-eigenfunctions normalised according to Convention~\ref{conv:transport}, with $\langle u\mid f_n\rangle=X_n$.
\end{lemma}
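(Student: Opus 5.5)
Define $U$ by sending the orthonormal basis $\{(M^*)^kY\}_{k\ge0}$ of $\ell^2(\N_0)$ from Proposition~\ref{prop:purity} to the orthonormal basis $\{S^k1\}_{k\ge0}=\{e^{ikx}\}$ of $\Lp(\T)$. Thus $U(M^*)^kY:=S^k1$. This is unitary, $UY=1$, and $UM^*=SU$ holds on every basis vector, hence everywhere, so $UM^*U^*=S$. Next identify $UX$. Its Fourier coefficients are
\[
\langle UX\mid S^k1\rangle=\langle X\mid (M^*)^kY\rangle=\langle M^kX\mid Y\rangle .
\]
These are exactly the Taylor coefficients of $z\mapsto\langle(\Id-zM)^{-1}X\mid Y\rangle$. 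By \eqref{eq:Psi_u}, $UX=u$.

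The core step is $U\Lambda U^*=L_u$. I would set $A:=U\Lambda U^*$, which is self-adjoint with simple spectrum $\{\lambda_n\}$ and domain $U(\operatorname{Dom}\Lambda)$. Transporting \eqref{eq:struct_C} by taking adjoints gives $\Lambda M^*=M^*\Lambda+M^*+X(MX)^*$. Using $MX=M\,U^*u=U^*S^*u$, this becomes
\[
AS=SA+S+\langle\cdot\mid S^*u\rangle u ,
\]
which is the same commutation relation \eqref{eq:Lax_comm} satisfied by $L_u$. Moreover \eqref{eq:LambdaY} gives $A1=U\Lambda Y=bUX=b\,u$. For comparison, $L_u1=\langle1\mid u\rangle u$.

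We first need $b=\langle1\mid u\rangle$. Here $\langle1\mid u\rangle=\langle Y\mid X\rangle$. When $\lambda_0>0$, we have $\langle Y\mid X\rangle=\sum b\,c_n/\lambda_n=b\,R(0)=b$. When $\lambda_0=0$, $\langle Y\mid X\rangle=\overline{X_0}=0=b$. So $A$ and $L_u$ agree on $1$ and obey the same relation with $S$. Then by induction $AS^k1=L_uS^k1$ for every trigonometric polynomial, i.e.\ $A=L_u$ on polynomials. The induction is: if $A p=L_u p$, then $ASp=SAp+Sp+\langle p\mid S^*u\rangle u$, and likewise for $L_u$. This requires polynomials to lie in $\operatorname{Dom}A$. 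I would check that $(M^*)^kY$ lies in $\operatorname{Dom}\Lambda$ using the commutation relation inductively. It holds because $Y\in E_N$ and $\Lambda M^*=M^*\Lambda+M^*+X(MX)^*$ with $X$ finitely supported, so $\Lambda (M^*)^{k+1}Y$ is finite in terms of $\Lambda(M^*)^kY$.

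Since polynomials form a core for $L_u$ (a bounded perturbation of $-i\partial_x$, $u$ being smooth by Lemma~\ref{lem:rational}), $L_u\subseteq A$ on the closure. Both operators are self-adjoint, so $A=L_u$. I expect this domain/core bookkeeping to be the main technical obstacle, though it is routine.

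The consequences then follow. $\sigma(L_u)=\{\lambda_n\}$ is simple, and $f_n:=Ue_n$ satisfies $L_uf_n=\lambda_nf_n$. We get $\langle u\mid f_n\rangle=\langle X\mid e_n\rangle=X_n$ and $\langle 1\mid f_n\rangle=Y_n$. Also $\langle f_p\mid Sf_n\rangle=\langle e_p\mid M^*e_n\rangle=M_{np}$. For the gauge, $\langle1\mid f_0\rangle=Y_0>0$ by construction: it equals $b X_0/\lambda_0=|b|\,|X_0|/\lambda_0$ by the choice of $\arg b$, or $Y_0=1$ when $\lambda_0=0$. For $n\ge1$, $\langle Sf_{n-1}\mid f_n\rangle=\overline{M_{n-1,n}}$. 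This equals $1$ when $\gamma_n=0$, and equals $w_{n-1}X_n/\gamma_n$ when $\gamma_n>0$, which is positive since $\arg w_{n-1}=-\arg\zeta_n=-\arg X_n$. Hence Convention~\ref{conv:transport} holds.
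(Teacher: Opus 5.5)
Your proposal is correct and follows the paper's proof essentially step for step. It uses the same unitary $U((M^{*})^kY)=S^k1$, the same Fourier-coefficient identification $UX=u$, the same check $b=\langle 1\mid u\rangle$ via $R(0)=1$, agreement of $U\Lambda U^{*}$ with $L_u$ on $\operatorname{span}\{S^k1\}$ through the shared relation \eqref{eq:Lax_comm}, and the same core/self-adjointness closing argument. The only minor difference is the domain check: the paper simply notes that $M^{*}$ maps finitely supported sequences to finitely supported ones, since the rows of $M$ are finitely supported. Your inductive use of the adjoint of \eqref{eq:struct_C} is also valid, but proves more than is needed.
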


\begin{proof}
By Proposition~\ref{prop:purity}, $\{(M^{*})^kY\}_{k\ge0}$ is an orthonormal basis of $\ell^2(\N_0)$, while $\{S^k1\}_{k\ge0}=\{e^{ikx}\}_{k\ge0}$ is the canonical orthonormal basis of $\Lp(\mathbb T)$. Hence
\[
   U\big((M^{*})^kY\big):=S^k1\qquad(k\ge0)
\]
extends to a unique unitary $U:\ell^2\to\Lp$. Taking $k=0$ gives $UY=1$, and for every $k$,
\[
   UM^{*}\big((M^{*})^kY\big)=U(M^{*})^{k+1}Y=S^{k+1}1=S\,U\big((M^{*})^kY\big),
\]
so $UM^{*}=SU$, i.e. $UM^{*}U^{*}=S$ and $UMU^{*}=S^{*}$.

For $k\ge0$, using unitarity,
\[
   \widehat{UX}(k)=\langle UX\mid S^k1\rangle=\langle X\mid(M^{*})^kY\rangle=\langle M^kX\mid Y\rangle
   =\widehat u(k),
\]
where the last identity is a consequence of \eqref{eq:hardy_rep}: $u(z)=\langle(\Id-zM)^{-1}X\mid Y\rangle=\sum_{k\ge0}\langle M^kX\mid Y\rangle z^k$. Thus $UX=u$.

 Since $U^{*}1=Y$ and $U^{*}u=X$,
\[
   \langle 1\mid u\rangle=\langle Y\mid X\rangle=\sum_{n\in\K}Y_n\overline{X_n}.
\]
If $\lambda_0>0$, then $Y_n=bX_n/\lambda_n$, whence by \eqref{eq:Psi_nodes}
$$
    \langle Y\mid X\rangle=b\sum_{n\in\K}\dfrac{c_n}{\lambda_n} = b\,R(0) = b.
$$ 
If $\lambda_0=0$, then $Y=e_0$ and $X_0=0$, so $\langle Y\mid X\rangle=0=b$. In either case $b=\langle 1\mid u\rangle$ and \eqref{eq:LambdaY} reads $\Lambda Y=\langle 1\mid u\rangle X$.

We now prove $U\Lambda U^{*}=L_u$, first on 
$$
    \mathcal D:=\operatorname{span}\{S^k1:k\ge0\}
$$ 
and then everywhere. But, first observe that  $\mathcal D$ is contained in $\operatorname{Dom}(U\Lambda U^{*})$. Indeed,  since $Y$ is supported on $\K$, or \ $Y=e_0$, then 
$$
    Y\in \operatorname{Dom}\Lambda:=\{\eta\in \ell^2(\N_0)\mid \sum_n\lambda_n^2|\eta_n|^2<\infty\}.
$$
Since every row of $M$ is finitely supported, so is every vector $M^{*}e_n=(\overline{M_{np}})_p$. Hence, $M^{*}$ maps finitely supported sequences to finitely supported sequences. As $Y$ is finitely supported (it is supported in $\K$, or equals $e_0$), every $(M^{*})^kY$ is finitely supported and therefore lies in $\operatorname{Dom}\Lambda$. Consequently
$\mathcal D\subseteq\operatorname{Dom}(U\Lambda U^{*})$.

We now show $U\Lambda U^{*}=L_u$ on $\mathcal D$. Conjugating \eqref{eq:struct_C} by $U$ and taking the adjoint, then  using $UMU^{*}=S^{*}$ alongside $UX=u$, gives 
\[
   U\Lambda U^{*}S=S\,U\Lambda U^{*}+S+\langle \cdot \mid S^{*}u\rangle u
.
\]
The Lax operator satisfies the identical commutation relation \eqref{eq:Lax_comm}, meaning $(U\Lambda U^{*}-L_u)S=S(U\Lambda U^{*}-L_u)$. Moreover, since $b=\langle1\mid u \rangle$ and \eqref{eq:LambdaY}, we infer
\[
   U\Lambda U^{*}1=U\Lambda Y=U\big(\langle 1\mid u\rangle X\big)=\langle 1\mid u\rangle u=L_u1.
\]
Consequently,
$$
    (U\Lambda U^{*}-L_u)S^k1=S^k(U\Lambda U^{*}-L_u)1=0 \qtq{for all} k\ge0, 
$$
which means $U\Lambda U^{*}=L_u$ on $\mathcal D$.

Finally, because $u\in L^\infty$ by Lemma~\ref{lem:rational}, the operator $u\Pi({\bar u}\cdot)$ is bounded, then $L_u$ is self-adjoint with domain
$\operatorname{Dom}(L_u)=\operatorname{Dom}(-i\partial_x)=H^1_+(\mathbb T)$, and its graph norm is
\[
   \big|\,\|L_u\phi\|-\|{-i\partial_x}\phi\|\,\big|\le\|u\|_{L^\infty}^2\,\|\phi\|,\qquad\forall\phi\in H^1_+.
\]
Because equivalent graph norms determine the same dense subspaces, a core for $-i\partial_x$ is a core for $L_u$. Since $\mathcal D$ is a core for $-i\p_x$, it is also a core for $L_u$. Therefore, 
$$
    L_u=\overline{L_u|_{\mathcal D}}=\overline{U\Lambda U^{*}|_{\mathcal D}}\subseteq U\Lambda U^{*},
$$
and as both operators are self-adjoint, we conclude $L_u=U\Lambda U^{*}$.

As a consequence, we deduce from $U\Lambda U^{*}=L_u$ and $\Lambda e_n=\lambda_n e_n$, the vectors $f_n:=Ue_n$ form an orthonormal basis with 
$$
    L_uf_n=L_u Ue_n = U\Lambda e_n = \la_n Ue_n= \lambda_n f_n.
$$
As $(\lambda_n)$ is strictly increasing, $\sigma(L_u)=\{\lambda_n\}$ is simple. 

Finally, let us check that $(f_n)$ obeys Convention~\ref{conv:transport}. First observe, $\langle u\mid f_n\rangle=\langle X\mid e_n\rangle=X_n$. When $\lambda_0=0$, one has $f_0=Ue_0=UY=1$ so $Y_0=1>0$. If $\lambda_0>0$ then $Y_0=bX_0/\lambda_0$ and $\arg b=-\arg\zeta_0=-\arg X_0$ gives $bX_0>0$, so $Y_0>0$.

Now, for $\gamma_n>0$, the inner product $\langle Sf_{n-1}\mid f_n\rangle=\overline{M_{n-1,n}}=w_{n-1}X_n/\gamma_n$, and
$\arg w_{n-1}=-\arg\zeta_n=-\arg X_n$ gives $w_{n-1}X_n>0$, so 
$$
    \langle Sf_{n-1}\mid f_n\rangle>0.
$$
And whenever $\gamma_n=0$ one has $M^{*}e_{n-1}=e_n$, hence $$f_n=Ue_n=UM^{*}e_{n-1}=S\,Ue_{n-1}=Sf_{n-1}.$$ 
\end{proof}

\begin{proposition}[$\Psi$ is a right inverse of $\Phi$]\label{prop:surj}
For every $\zeta\in\Omega_N$, the potential $u=\Psi(\zeta)$ of \eqref{eq:Psi_u} belongs to $\U_N$ and $\Phi(u)=\zeta$. Moreover,
$$
    \Psi(\Omega_N^{+})\subseteq\{u\in\U_N:\lambda_0>0\} \qtq{and} \Psi(\Omega_N^{0})\subseteq\{u\in\U_N:\lambda_0=0\}.
$$
\end{proposition}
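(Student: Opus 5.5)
The plan is to read everything off Lemma~\ref{lem:U}, which already identifies the synthetic data $(\Lambda,X,Y,M)$ with the true spectral data of $u=\Psi(\zeta)$. First, since $U\Lambda U^{*}=L_u$ with $\Lambda=\operatorname{diag}(\lambda_n)$ and $(\lambda_n)$ strictly increasing, the eigenvalues of $L_u$ listed in increasing order are exactly the synthetic $\lambda_n$ of \eqref{eq:Psi_eigenvalues}. Hence $\lambda_0(u)=|\zeta_0|^2$, $\gamma_n(u)=|\zeta_n|^2$ for $1\le n\le N$, and $\gamma_n(u)=0$ for $n>N$.

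Second, I would place $u$ in $\U_N$. By Lemma~\ref{lem:rational}, $u$ is rational and nonzero; it is nonzero because $X=U^{*}u$ has $X_N=\zeta_N\sqrt{\kappa_N}\neq0$. Since $\zeta_N\neq0$, we have $\gamma_N(u)>0$ and all higher gaps are closed, so $u$ is a finite-gap potential satisfying \eqref{eq:star}. For $N\ge1$, Proposition~\ref{prop:index}(c) then gives $\mathcal N(u)=N$, i.e.\ $u\in\U_N$. The case $N=0$ needs a separate short check. There $\K=\{0\}$ and $\lambda_0=|\zeta_0|^2>0$. The row $n=0$ has $\gamma_1=0$, so $M$ is the pure backward shift, $Y=e_0$ up to a phase, $X=X_0e_0$, and \eqref{eq:Psi_u} gives a constant $u=X_0\overline{Y_0}\neq0$, i.e.\ $u\in\C^*=\U_0$.

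Third, I would verify $\Phi(u)=\zeta$. By Lemma~\ref{lem:U}, the vectors $f_n=Ue_n$ are the eigenfunctions of $L_u$ in the transport gauge of Convention~\ref{conv:transport}, and by simplicity of the spectrum they coincide with the gauge-fixed eigenbasis used to define $\Phi$. Moreover $\langle u\mid f_n\rangle=X_n=\zeta_n\sqrt{\kappa_n}$. The spectrum of $u$ equals the synthetic one, so $\kappa_n(u)$ computed from \eqref{eq:kappa[n]} coincides with the synthetic $\kappa_n$ of \eqref{eq:Psi_kappa}. Indeed, factors with $p\notin\K$ have $\gamma_p=0$ and equal $1$, and for $p\in\K$ one has $1-\gamma_p/(\lambda_p-\lambda_n)=(\mu_p-\lambda_n)/(\lambda_p-\lambda_n)$. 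Therefore $\zeta_n(u)=X_n/\sqrt{\kappa_n}=\zeta_n$ for $0\le n\le N$, and $\zeta_n(u)=0$ for $n>N$, consistent with the zero-extension convention.

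Finally, the dichotomy is immediate: $\lambda_0(u)=|\zeta_0|^2$ is positive exactly when $\zeta\in\Omega_N^{+}$, and zero exactly when $\zeta\in\Omega_N^{0}$. The only step needing care is matching the gauge, so that the basis $Ue_n$ is literally the basis entering $\Phi$; this is already settled by the last part of Lemma~\ref{lem:U}. Otherwise I expect the main subtlety to be the degenerate case $N=0$, which falls outside Proposition~\ref{prop:index} and is handled by the direct computation above.
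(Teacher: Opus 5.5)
Your proposal is correct and follows essentially the same route as the paper. You read off the spectrum and the gauge-fixed eigenbasis $f_n=Ue_n$ from Lemma~\ref{lem:U}, obtain $u\in\U_N$ from Proposition~\ref{prop:index}(c), and match $\kappa_n(u)$ with the synthetic $\kappa_n$ to conclude $\Phi(u)=\zeta$. Your explicit computation for $N=0$, giving $M$ the backward shift, $Y=e_0$ and $u\equiv\zeta_0\in\C^{*}$, is a small but welcome addition: Proposition~\ref{prop:index} is stated only for $N\ge1$, and the paper's proof leaves this case implicit.
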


\begin{proof}
By Lemma~\ref{lem:U}, the Lax operator of $u=\Psi(\zeta)$ has simple spectrum $\{\lambda_n\}_{n\ge0}$, precisely the parameters synthesised in \eqref{eq:Psi_eigenvalues}. In particular $\gamma_n(u)=0$ for $n>N$ and $\gamma_N(u)=|\zeta_N|^2>0$, so $u$ is a finite-gap potential whose last open gap is at index $N$. Knowing the full spectrum, however, does not by itself place $u$ in $\U_N$: this requires identifying the spectral index $\mathcal N(u)$ of \eqref{eq:N(u)}, i.e.\ the \emph{minimal} degree of a Blaschke product generating an arithmetic tower of eigenvalues. Therefore, employing Proposition~\ref{prop:index} with the spectrum $\{\la_n\}$ which satisfies hypothesis $\gamma_N>0$ and $\gamma_n=0$ for $n>N$, and eigenbasis of Lemma~\ref{lem:U} gives $u\in\U_N$.

We compute $\Phi(u)$ using the gauge-normalised eigenbasis $f_n=Ue_n$. The eigenbasis $f_n=Ue_n$ obeys Convention~\ref{conv:transport}, so, with $\kappa_n(u)=\kappa_n$ defined in \eqref{eq:Psi_kappa} and $\langle u\mid f_n\rangle=X_n=\zeta_n\sqrt{\kappa_n}$ by Lemma~\ref{lem:U},
\[
   \zeta_n(u)=\frac{\langle u\mid f_n\rangle}{\sqrt{\kappa_n(u)}}=\frac{\zeta_n\sqrt{\kappa_n}}{\sqrt{\kappa_n}}=\zeta_n,
   \qquad 0\le n\le N .
\]
Hence $\Phi(u)=\zeta$.

Finally, observe by construction $\lambda_0=|\zeta_0|^2$, so $\lambda_0>0\iff\zeta_0\neq0$ and $\lambda_0=0\iff\zeta_0=0$, giving the inclusions.
\end{proof}

\begin{proof}[Proof of Theorem~\ref{thm:main}]
By Theorem~\ref{thm:inj}, $\Phi:\U_N\to\Omega_N$ is injective. By Proposition~\ref{prop:surj}, $\Phi\circ\Psi=\Id_{\Omega_N}$. In particular $\Phi$ is surjective, hence bijective, with $\Phi^{-1}=\Psi$ (so also $\Psi\circ\Phi=\Id_{\U_N}$).  Finally, Proposition~\ref{prop:surj} together with Remark~\ref{Rk:Phi} shows that $\Phi$ restricts to bijections $\{u\in\U_N:\lambda_0>0\}\to\Omega_N^{+}$ and $\{u\in\U_N:\lambda_0=0\}\to\Omega_N^{0}$. 
\end{proof}

\section{Extension of the Birkhoff map to \texorpdfstring{$\Lp(\T)$}{L2+}}\label{S:5}

So far the Birkhoff map $\Phi$ has been defined only on the finite-gap manifolds $\U_N$ Theorem~\ref{thm:main}), where the generating function $\beta_u$ is rational, the active set $\K$ is finite, and every scaling $\kappa_n$ is a finite product (Corollary~\ref{cor:kappa}). The purpose of this section is to extend $\Phi$ to all of $\Lp(\T)$ and to prove that the extension is a norm-preserving homeomorphism onto $\ell^2(\N_0)$ (Theorem~\ref{thm:homeo}).

Three points must be settled. First, the coordinates $\zeta_n=X_n/\sqrt{\kappa_n}$ require $\kappa_n$ to be a convergent, strictly positive infinite product. This rests on the summability $\sum_{n\ge1}\gamma_n<\infty$ which we need to prove. Second, the identity $|\zeta_n|^2=\gamma_n$ must persist, so that $\Phi$ takes values in $\ell^2$ and preserves the norm. This is the trace formula $\|u\|_{L^2}^2=\lambda_0+\sum_{n\ge1}\gamma_n$, valid for every $u\in\Lp$. Third, we also  upgrage bijectivity to bicontinuity, which requires the eigenbasis to depend continuously on $u$ and this is precisely where the choice of the transport gauge of Convention~\ref{conv:transport} is important.

Accordingly, \S\ref{ss:weyl} establishes the Weyl-type bound $\sum_n\gamma_n\le\|u\|_{L^2}^2$ and introduces the truncations $\beta_u^{[m]}$ of $\beta_u$, to which the Cauchy interpolation Lemma~\ref{lem:cauchy} does apply. Letting $m\to\infty$ in \S\ref{ss:trace_kappa} yields the trace formula and the infinite-product formula for $\kappa_n$. \S\ref{ss:cont_Phi} defines $\Phi$ on $\Lp$ and proves it continuous. \S\ref{ss:proper} proves it proper and injective, and concludes that it is a homeomorphism.

\subsection{Weyl bound and truncations}\label{ss:weyl}

Throughout, $u\in\Lp(\T)$ is arbitrary, with spectral data $\lambda_n=\lambda_n(u)$, $\gamma_n=\gamma_n(u)$, $\mu_n=\mu_n(u)$ as in \eqref{eq:gmma-mu[0]}--\eqref{eq:gmma-mu[n]}, and $X_n=\langle u\mid f_n\rangle$, where $(f_n)_{n\ge0}$ is the eigenbasis of Theorem~\ref{thm:Lax} normalised by Convention~\ref{conv:transport}.

The finite-gap scalings $(\kappa_n)$ of Corollary~\ref{cor:kappa} were obtained by applying Lemma~\ref{lem:cauchy} to the \emph{rational} function $\beta_u$. For general $u\in\Lp$ the active set $\K$ may be infinite and $\beta_u$ of \eqref{eq:Hu} need not be rational, so Lemma~\ref{lem:cauchy} does not apply directly. The remedy is to truncate: each truncation is rational, Lemma~\ref{lem:cauchy} applies to it, and the summability $\sum_{n\ge1}\gamma_n<\infty$ (proved  in the upcoming proposition) is what allows the limit $m\to\infty$ to be taken.

\begin{proposition}[Weyl-type upper bound]\label{prop:weyl}
For every $u\in\Lp(\T)$, the sequence $(\lambda_n-n)_{n\ge0}$ is nondecreasing and
\begin{equation}\label{eq:weyl}
   \lim_{n\to\infty}(\lambda_n-n)=\lambda_0+\sum_{n\ge1}\gamma_n\le\|u\|_{L^2}^2 .
\end{equation}
In particular $\sum_{n\ge1}\gamma_n<\infty$.
\end{proposition}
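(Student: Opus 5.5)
The monotonicity is immediate. By \eqref{eq:lambda_lower}, $\lambda_n-n=\lambda_0+\sum_{k=1}^n\gamma_k$, and every $\gamma_k\ge0$ by Theorem~\ref{thm:Lax}. So $(\lambda_n-n)$ is nondecreasing and its limit exists in $[0,+\infty]$, equal to $\lambda_0+\sum_{n\ge1}\gamma_n$. It therefore suffices to prove the uniform bound $\lambda_n-n\le\|u\|_{L^2}^2$ for every $n$.

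\emph{Route via min--max.} I would compare $L_u$ with the free operator $-i\partial_x$ on $L^2_+$, whose eigenvalues are exactly $k=0,1,2,\dots$ with eigenvectors $S^k1$. The natural comparison is too crude. Write $L_u=-i\partial_x+T_u$ with $T_u=u\Pi(\bar u\,\cdot)$, which is nonnegative with $\langle T_uh\mid h\rangle=\|\Pi(\bar u h)\|^2$. Testing the $(n+1)$-dimensional space $\operatorname{span}\{1,\dots,S^n1\}$ only gives $\lambda_n\le n+\|T_u\|$, and $T_u$ is unbounded for $u\in L^2$. What is needed is a rank-one estimate: $T_u$ should act, in the relevant sense, like the rank-one form $h\mapsto|\langle h\mid u\rangle|$-type term of size $\|u\|^2$.

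\emph{Route via finite-gap approximation.} The more robust plan uses the structure already proved. For $u\in\U_N$, Proposition~\ref{prop:trace} gives $\lambda_n-n\le\lambda_N-N=\|u\|_{L^2}^2$ for all $n$, with equality for $n\ge N$. Finite-gap potentials (rational functions as in Theorem~\ref{thm:char}) are dense in $L^2_+$. One justification is that $\Psi(\Omega_N)$ covers rational data; another is that the set $\bigcup_N\U_N$ contains all polynomials, by \cite{badreddine2024traveling}. Take $u_j\in\bigcup_N\U_N$ with $u_j\to u$ in $L^2_+$. For each fixed $n$, $\lambda_n(u_j)\to\lambda_n(u)$: this follows from the continuity of $u\mapsto\lambda_n(u)$ in Theorem~\ref{thm:Lax}, or equivalently from the norm-resolvent continuity of Lemma~\ref{lem:nrc}. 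Passing to the limit in $\lambda_n(u_j)-n\le\|u_j\|^2$ gives $\lambda_n(u)-n\le\|u\|^2$ for each $n$. Then letting $n\to\infty$ yields \eqref{eq:weyl}, and hence $\sum_{n\ge1}\gamma_n<\infty$.

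\emph{Main obstacle.} The hard step is the density of finite-gap potentials in $L^2_+$. If the citation to \cite{badreddine2024traveling} (that polynomials, or rational functions with poles off $\overline{\D}$, are finite-gap) is not available in usable form, I would prove the uniform bound directly for $u\in H^1_+$, or even for a trigonometric polynomial. In that case, for $n$ large, take the space $E_n=\operatorname{span}\{f_0,\dots,f_{n-1}\}$ and consider its image under $S$. The commutation relation \eqref{eq:Lax_comm} shows that $\langle L_uSh\mid Sh\rangle=\langle L_uh\mid h\rangle+\|h\|^2+|\langle h\mid S^*u\rangle|^2$. Combined with min--max, this already yields $\lambda_n\ge\lambda_{n-1}+1$, so the upper bound must instead come from the trace formula for the truncations. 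Concretely, I would take the rational truncations $\beta^{[m]}_u$ announced in \S\ref{ss:weyl}. They satisfy the Cauchy system at the nodes $\mu_q$, $q\le m$, up to a tail error. Comparing $1/z$ coefficients via \eqref{eq:cauchy_trace} then gives $\sum_{p\le m}(\lambda_p-\mu_p)\le\sum_p|X_p|^2=\|u\|^2$, which is the bound needed.
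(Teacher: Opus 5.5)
Your finite-gap approximation route has a genuine gap at exactly the step you flagged, and both justifications you offer for density are wrong. Given density, the rest would be fine: Proposition~\ref{prop:trace} on $\U_N$ plus continuity of $\lambda_n$ from Theorem~\ref{thm:Lax}. The problem is density itself. By Theorem~\ref{thm:char}, every $u\in\U_N$ with $N\ge1$, other than $Ce^{iNx}$, has $r\ge1$ and $c_j\neq0$, since the right-hand side of \eqref{eq:rational_constraint} is $-m_j\neq0$. Hence $u$ has a genuine pole at $1/p_j$. So a polynomial such as $1+e^{ix}$ is \emph{not} a finite-gap potential. Likewise $\Psi(\Omega_N)=\U_N$ is a constrained family of real dimension $2N+2$, not the rational functions of degree $\le N$. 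More seriously, in this paper the density of finite-gap potentials is obtained only a posteriori, from Corollary~\ref{cor:ext} and Theorem~\ref{thm:homeo}. Their proofs rest on the present proposition through Proposition~\ref{prop:isometry}, Lemma~\ref{lem:kappa_bound} and Proposition~\ref{prop:proper}, so invoking density here is circular.

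Your fallback is closer, but its decisive inequality is asserted, not proved. Identity \eqref{eq:cauchy_trace} applies to the \emph{exact} solution of the Cauchy system at the nodes $\mu_q$. The $|X_p|^2$ solve that system only up to a positive tail error, and you do not show how this error affects the sum. If you instead compare $1/z$ coefficients of $\beta_u^{[m]}$ itself, you get \eqref{eq:trunc_fac}, $\sum_{i\le m}|X_{k_i}|^2=\sum_{i\le m}(\lambda_{k_i}-\nu_i^{[m]})$, with perturbed nodes. Since $\nu_i^{[m]}>\mu_{k_i}$ by \eqref{eq:trunc_interlace}, this gives $\sum_{i\le m}|X_{k_i}|^2<\sum_{i\le m}\gamma_{k_i}$, the \emph{opposite} inequality; that is exactly how Proposition~\ref{prop:isometry} uses it.

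One way to finish is as follows. Every summand $\lambda_{k_i}-\nu_i^{[m]}$ is positive, so for fixed $I\le m$ one has $\sum_{i\le I}(\lambda_{k_i}-\nu_i^{[m]})\le\|u\|_{L^2}^2$. Letting $m\to\infty$ with $\nu_i^{[m]}\downarrow\mu_{k_i}$ (Lemma~\ref{lem:trunc}, whose proof does not use the Weyl bound) gives $\sum_{i\le I}\gamma_{k_i}\le\|u\|_{L^2}^2$. Then let $I\to\infty$; the case of finite $\K$ is covered by Proposition~\ref{prop:trace}.

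The paper instead keeps your first, variational route and supplies the idea you were missing there. Rather than a single min--max bound, it uses Ky Fan's principle $\sum_{n\le N}\lambda_n\le\sum_{j\le N}\langle L_ue^{ijx}\mid e^{ijx}\rangle=\sum_{j\le N}(j+s_j)$, with $s_j=\sum_{k\le j}|\widehat u(k)|^2\le\|u\|_{L^2}^2$. The diagonal entries of $u\Pi(\bar u\,\cdot)$ in the Fourier basis are uniformly bounded even though the operator is unbounded. A Ces\`aro average of the nondecreasing sequence $\lambda_n-n$ then yields \eqref{eq:weyl}.
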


\begin{proof}
By \eqref{eq:lambda_lower}, we obtain $\lambda_n-n=\lambda_0+\sum_{k=1}^n\gamma_k$. Hence, $(\lambda_n-n)$ is nondecreasing with limit $T:=\lambda_0+\sum_{k\ge1}\gamma_k\in[0,\infty]$. Now, let us prove $T<\infty.$

By definition of $L_u=-i\p_x+u\Pi(\bar u \cdot)$, we have for $j\ge0$,
\[
   \langle L_ue^{ijx}\mid e^{ijx}\rangle=\|e^{ijx}\|_{\dot H^{1/2}}^2+\|\Pi(\bar u\,e^{ijx})\|_{L^2}^2
   =j+\sum_{k=0}^{j}|\widehat u(k)|^2=:j+s_j.
\]
By Parseval, $s_j\nearrow\|u\|_{L^2}^2$ as $j\to \infty$. Since $L_u$ is bounded below with compact resolvent, Ky Fan's minimum principle \cite[Thm.~XIII.1]{ReedSimonIV} gives
$$
    \sum_{n=0}^N\lambda_n\le\sum_{j=0}^N\langle L_ue^{ijx}\mid e^{ijx}\rangle=\sum_{j=0}^N(j+s_j),
$$
i.e.
$\sum_{n=0}^N(\lambda_n-n)\le\sum_{j=0}^N s_j$. Dividing by $N+1$ and letting $N\to\infty$: the right side is the Ces\`aro mean of $s_j\to\|u\|^2$, hence $\tfrac{1}{N+1}\sum_{j=0}^N s_j\to\|u\|^2$ as $N\to \infty$, while the left side is a Ces\`aro mean of $(\lambda_n-n)$ which tends to $T$. Thus $T\le\|u\|_{L^2}^2<\infty$.
\end{proof}

\begin{remark}[Reduction to $\K$ infinite]\label{rem:reduction}
The results of \S\ref{ss:weyl}--\S\ref{ss:trace_kappa} are stated for every $u\in\Lp(\T)$, but it suffices to prove them when the active set $\K=\{n\ge0:\gamma_n>0\}$ of \eqref{eq:K} is infinite. Indeed, if $\K=\emptyset$ then $\lambda_0+\sum_{n\ge1}\gamma_n=0$, so $u=0$ by Proposition~\ref{prop:trace} and every assertion is trivial. If $\K$ is finite and nonempty, set $N:=\max\K$. Then $\gamma_N>0$ and $\gamma_n=0$ for $n>N$, so $u\in\U_N$, and the trace formula and the Birkhoff scalings are those already obtained in Proposition~\ref{prop:trace} and Corollary~\ref{cor:kappa}.

We therefore assume from here until the end of \S\ref{ss:trace_kappa} that $\K$ is \emph{infinite}, and we enumerate it as $\K=\{k_0<k_1<k_2<\cdots\}$.
\end{remark}

For $m\ge0$, let
\[
   \beta_u^{[m]}(z):=\sum_{j=0}^{m}\frac{|X_{k_j}|^2}{\lambda_{k_j}-z}
\]
be the \emph{$m$-th truncation} of $\beta_u$, and set
\[
   I_0:=(-\infty,\lambda_{k_0}),\qquad I_i:=(\lambda_{k_{i-1}},\lambda_{k_i})\quad(i\ge1).
\]
Since $\sum_{p\ge0}|X_p|^2=\|u\|_{L^2}^2<\infty$ by Parseval's identity, for every compact $Q\subset\R\setminus\{\lambda_{k_j}\mid j\ge0\}$ with $\delta:=\operatorname{dist}(Q,\{\lambda_{k_j}\})>0$ one has
\[
   \sup_{z\in Q}\bigl|\beta_u(z)-\beta_u^{[m]}(z)\bigr|\le\delta^{-1}\sum_{j>m}|X_{k_j}|^2
   \xrightarrow[m\to\infty]{}0 ,
\]
so $\beta_u^{[m]}\to\beta_u$ locally uniformly off $\{\lambda_{k_j}\}$.

\begin{lemma}\label{lem:trunc}
For every $m\ge0$ and every $0\le i\le m$, the function $\beta_u^{[m]}$ is strictly increasing on $I_i$ and attains the value $1$ at a unique node $\nu_i^{[m]}\in I_i$. These nodes interlace the poles,
\begin{equation}\label{eq:trunc_interlace}
   \mu_{k_i}<\nu_i^{[m]}<\lambda_{k_i},
\end{equation}
and we have
\begin{equation}\label{eq:trunc_fac}
   1-\beta_u^{[m]}(z)=\prod_{i=0}^{m}\frac{z-\nu_i^{[m]}}{z-\lambda_{k_i}},
   \qquad
   \sum_{i=0}^{m}|X_{k_i}|^2=\sum_{i=0}^{m}\bigl(\lambda_{k_i}-\nu_i^{[m]}\bigr).
\end{equation}
Moreover, for each fixed $i$, the sequence $(\nu_i^{[m]})_{m\ge i}$ decreases to $\mu_{k_i}$ as
$m\to\infty$.
\end{lemma}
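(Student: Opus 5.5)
The plan is to combine elementary properties of the finite sum $\beta_u^{[m]}$ with the node identities of Lemma~\ref{lem:node} and the Cauchy interpolation Lemma~\ref{lem:cauchy}.

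\emph{Monotonicity and existence of the nodes.} Since $\frac{d}{dz}\frac{c}{\lambda-z}=\frac{c}{(\lambda-z)^2}$ and every $|X_{k_j}|^2>0$ for $k_j\in\K$, the function $\beta_u^{[m]}$ is strictly increasing on each interval of $\R$ avoiding its poles, and in particular on every $I_i$ with $i\le m$. At the right endpoint $\lambda_{k_i}^-$ it tends to $+\infty$. For $i\ge1$ it tends to $-\infty$ at $\lambda_{k_{i-1}}^+$, and for $i=0$ it tends to $0$ at $-\infty$. So $\beta_u^{[m]}$ takes the value $1$ at most once on each $I_i$, and we must locate that value.

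\emph{Interlacing.} Here we use the full function $\beta_u$. By Lemma~\ref{lem:node}, $\beta_u(\mu_{k_i})=1$ for every active index: for $k_i\ge1$ by part (ii), and for $k_0=0$ by part (i), since then $\mu_0=0$ and $\lambda_0>0$. The difference
\[
\beta_u(z)-\beta_u^{[m]}(z)=\sum_{j>m}\frac{|X_{k_j}|^2}{\lambda_{k_j}-z}
\]
is strictly positive for real $z<\lambda_{k_{m+1}}$, because all these poles lie to the right. Since $\mu_{k_i}\le\lambda_{k_m}+1\le \lambda_{k_{m+1}}$ for $i\le m$, we get $\beta_u^{[m]}(\mu_{k_i})<\beta_u(\mu_{k_i})=1$. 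We also have $\mu_{k_i}\in I_i$, by the interlacing \eqref{eq:interlace}. Combining this with the blow-up to $+\infty$ at $\lambda_{k_i}^-$, monotonicity gives a unique $\nu_i^{[m]}\in(\mu_{k_i},\lambda_{k_i})$, which is \eqref{eq:trunc_interlace}.

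\emph{The factorization.} The data $\nu_0^{[m]}<\lambda_{k_0}<\nu_1^{[m]}<\dots<\lambda_{k_m}$ interlace. The numbers $c_j=|X_{k_j}|^2$ solve the Cauchy system \eqref{eq:cauchy_sys} with nodes $\nu_i^{[m]}$ and poles $\lambda_{k_j}$. By uniqueness in Lemma~\ref{lem:cauchy}, \eqref{eq:cauchy_trace} then gives both identities in \eqref{eq:trunc_fac}. Alternatively, one can argue directly: $1-\beta_u^{[m]}$ is rational, its zeros are exactly the $m+1$ nodes, and it tends to $1$ at infinity.

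\emph{Monotone convergence of the nodes.} This is the only genuinely limiting step. Passing from $m$ to $m+1$ adds the term $|X_{k_{m+1}}|^2/(\lambda_{k_{m+1}}-z)$, which is positive on $I_i$ for $i\le m$. Hence $\beta_u^{[m+1]}>\beta_u^{[m]}$ there, so $\beta_u^{[m+1]}(\nu_i^{[m]})>1$. Monotonicity on $I_i$ then forces $\nu_i^{[m+1]}<\nu_i^{[m]}$, so the sequence is decreasing and bounded below by $\mu_{k_i}$. Let $\nu_i^\ast\ge\mu_{k_i}$ denote its limit. Suppose $\nu_i^\ast>\mu_{k_i}$. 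Then all the $\nu_i^{[m]}$ lie in the compact set $Q=[\nu_i^\ast,\nu_i^{[i]}]$, which sits inside $I_i$ and has positive distance to the poles. By the locally uniform convergence $\beta_u^{[m]}\to\beta_u$ on $Q$, we get $\beta_u(\nu_i^\ast)=1$. But $\beta_u$ is also strictly increasing on $I_i$, since it is a convergent sum of increasing terms, and $\beta_u(\mu_{k_i})=1$ with $\mu_{k_i}<\nu_i^\ast$. This is a contradiction, so $\nu_i^{[m]}\downarrow\mu_{k_i}$.

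The main care lies in making sure the limiting argument stays inside a compact set away from the poles; the monotone decrease is what guarantees this.
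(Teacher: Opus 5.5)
Your proposal is correct and follows essentially the paper's proof. You use monotonicity from the positive derivative, the lower bound $\nu_i^{[m]}>\mu_{k_i}$ from $\beta_u^{[m]}(\mu_{k_i})<\beta_u(\mu_{k_i})=1$, and the factorization and trace identity through the Cauchy/monic-numerator argument. The nodes then decrease, with the limit identified by uniform convergence on a compact subset of $I_i$ and the strict monotonicity of $\beta_u$.

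The differences are cosmetic: you get existence of $\nu_i^{[m]}$ from $\beta_u^{[m]}(\mu_{k_i})<1$ rather than from the endpoint limits, and you invoke Lemma~\ref{lem:cauchy} instead of redoing the root count. For the strict inequality you need, $\mu_{k_i}<\lambda_{k_i}\le\lambda_{k_m}<\lambda_{k_{m+1}}$ gives it more directly than your non-strict chain.
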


\begin{proof}On $\R\setminus\{\lambda_{k_0},\dots,\lambda_{k_m}\}$, $\beta_u^{[m]}$ is differentiable with
\[
   \bigl(\beta_u^{[m]}\bigr)'(z)=\sum_{j=0}^{m}\frac{|X_{k_j}|^2}{(\lambda_{k_j}-z)^{2}}>0
\]
 since $k_j\in\K$, so $\beta_u^{[m]}$ is strictly increasing on each $I_i$. We compute its one-sided limits at the endpoints.

For $1\le i\le m$, the endpoints of $I_i=(\lambda_{k_{i-1}},\lambda_{k_i})$ are the poles $\lambda_{k_{i-1}}$ and $\lambda_{k_i}$. Near $\lambda_{k_{i-1}}$ only the $j=i-1$ term is singular so $\beta_u^{[m]}(z)\to-\infty$ as $z\to\lambda_{k_{i-1}}^{+}$. Near $\lambda_{k_i}$ only the $j=i$ term is singular, so $\beta_u^{[m]}(z)\to+\infty$ as $z\to\lambda_{k_i}^{-}$. Being a strictly increasing continuous function with these limits, $\beta_u^{[m]}$ maps $I_i$ bijectively onto $\R$. Thus, there is a unique $\nu_i^{[m]}\in I_i$ with $\beta_u^{[m]}(\nu_i^{[m]})=1$.

For $i=0$ and $z\in I_0=(-\infty,\lambda_{k_0})$ one has $\lambda_{k_j}-z>0$ for all $j$, so each term is positive and $\beta_u^{[m]}(z)>0$. Moreover, $\beta_u^{[m]}(z)\to0^{+}$ as $z\to-\infty$ and $\beta_u^{[m]}(z)\to+\infty$ as $z\to\lambda_{k_0}^{-}$. Hence, $\beta_u^{[m]}$ maps $I_0$ bijectively onto $(0,+\infty)$ and again there is a unique $\nu_0^{[m]}\in I_0$ with $\beta_u^{[m]}(\nu_0^{[m]})=1$.

Over the monic denominator $B(z):=\prod_{i=0}^{m}(z-\lambda_{k_i})$ of degree $m+1$ we have $\beta_u^{[m]}=P/B$ with $\deg P\le m$. Therefore,
\[
   1-\beta_u^{[m]}(z)=\frac{A(z)}{B(z)},
\]
where $A$ is monic of degree $m+1$. The $m+1$ points $\nu_0^{[m]},\dots,\nu_m^{[m]}$ constructed above are distinct real roots of $A$, thus 
\[
   A(z)=\prod_{i=0}^{m}\bigl(z-\nu_i^{[m]}\bigr),\qquad\text{i.e.}\qquad
   1-\beta_u^{[m]}(z)=\prod_{i=0}^{m}\frac{z-\nu_i^{[m]}}{z-\lambda_{k_i}},
\]
which is the first identity in \eqref{eq:trunc_fac}. Comparing the $z^{-1}$ coefficients as
$z\to\infty$,
\[
   1-\beta_u^{[m]}(z)=1+\frac{\sum_i|X_{k_i}|^2}{z}+O(z^{-2}),\qquad
   \prod_{i}\frac{z-\nu_i^{[m]}}{z-\lambda_{k_i}}=1+\frac{\sum_i\lambda_{k_i}-\sum_i\nu_i^{[m]}}{z}+O(z^{-2}),
\]
gives the trace identity in \eqref{eq:trunc_fac}.

Now, to prove \eqref{eq:trunc_interlace}, observe the upper bound is $\sup I_i=\lambda_{k_i}$. For the lower bound, first note $\mu_{k_i}\in I_i$, indeed $\mu_{k_i}<\lambda_{k_i}$ as $\gamma_{k_i}>0$ and $\mu_{k_i}=\lambda_{k_i-1}+1>\lambda_{k_i-1}\ge\lambda_{k_{i-1}}$ as $k_{i-1}\le k_i-1$. Next, for $z\in I_i$ with $i\le m$ every $j>m$ has $\lambda_{k_j}>\lambda_{k_m}\ge\lambda_{k_i}>z$, so
\[
   \beta_u(z)-\beta_u^{[m]}(z)=\sum_{j>m}\frac{|X_{k_j}|^2}{\lambda_{k_j}-z}>0,
\]
the sum being nonempty and positive because $\K$ is infinite. Evaluating at $z=\mu_{k_i}$ and using $\beta_u(\mu_{k_i})=1$ from Lemma~\ref{lem:node} gives $\beta_u^{[m]}(\mu_{k_i})<1=\beta_u^{[m]}(\nu_i^{[m]})$. Strict monotonicity forces $\mu_{k_i}<\nu_i^{[m]}$.

 Fix $i$. For $m\ge i$ and $z\in I_i$,
\[
   \beta_u^{[m+1]}(z)-\beta_u^{[m]}(z)=\frac{|X_{k_{m+1}}|^2}{\lambda_{k_{m+1}}-z}>0,
\]
since $\lambda_{k_{m+1}}>\lambda_{k_m}\ge\lambda_{k_i}>z$. At $z=\nu_i^{[m]}$ this gives $\beta_u^{[m+1]}(\nu_i^{[m]})>\beta_u^{[m]}(\nu_i^{[m]})=1=\beta_u^{[m+1]}(\nu_i^{[m+1]})$, so monotonicity of $\beta_u^{[m+1]}$ yields $\nu_i^{[m+1]}<\nu_i^{[m]}$. Hence $(\nu_i^{[m]})_{m\ge i}$ decreases and, being bounded below by $\mu_{k_i}$, converges to some $\ell\in[\mu_{k_i},\nu_i^{[i]}]$, a compact subset of $I_i$. There $\beta_u^{[m]}\to\beta_u$ uniformly, because for such $z$ and $j>m$ one has $\lambda_{k_j}-z\ge\lambda_{k_j}-\lambda_{k_i}\ge1$ by \eqref{eq:lambda_lower}, whence $0\le\beta_u(z)-\beta_u^{[m]}(z)\le\sum_{j>m}|X_{k_j}|^2\to0$. Therefore
\[
   1=\beta_u^{[m]}\bigl(\nu_i^{[m]}\bigr)\xrightarrow[m\to\infty]{}\beta_u(\ell).
\]
The same argument applies to $\beta_u$ itself: on $I_i$ the series $\beta_u(z)=\sum_{p\in\K}|X_p|^2/(\lambda_p-z)$ has no pole, and on any compact subset of $I_i,$ the function $\beta_u(z)$ is differentiable and  $\beta_u'(z)=\sum_{p\in\K}|X_p|^2/(\lambda_p-z)^2$ converges uniformly, since $|\lambda_p-z|$ is bounded below there and $\sum_p|X_p|^2<\infty$. Hence, $\beta_u$ is $C^1$ and strictly increasing on $I_i$, so $\mu_{k_i}$ is the \emph{unique} solution of $\beta_u=1$ in $I_i$ (Lemma~\ref{lem:node}). As $\beta_u$ is strictly increasing on $I_i$ with unique root $\mu_{k_i}$ of $\beta_u=1$, we conclude $\ell=\mu_{k_i}$, i.e.\ $\nu_i^{[m]}\downarrow\mu_{k_i}$.
\end{proof}

\subsection{Trace formula and Birkhoff scaling}\label{ss:trace_kappa}
We now let $m\to\infty$ in Lemma~\ref{lem:trunc}. Two consequences follow. The trace identity in \eqref{eq:trunc_fac}, combined with the interlacing \eqref{eq:trunc_interlace}, yields the trace formula $\|u\|_{L^2}^2=\lambda_0+\sum_{n\ge1}\gamma_n$ for \emph{every} $u\in\Lp$ (Proposition~\ref{prop:isometry}), extending Proposition~\ref{prop:trace} beyond the finite-gap class. The factorisation in \eqref{eq:trunc_fac}, together with the convergence $\nu_i^{[m]}\downarrow\mu_{k_i}$, yields the identity $|X_n|^2=\gamma_n\kappa_n$ with $\kappa_n$ given by an absolutely convergent infinite product (Corollary~\ref{cor:kappa_L2}). These two results are essential to define the Birkhoff map on all of $\Lp(\T)$ in \S\ref{ss:cont_Phi}.

\begin{proposition}[Trace formula]\label{prop:isometry}
For every $u\in\Lp$,
\[
   \|u\|_{L^2}^2=\lambda_0+\sum_{n\ge1}\gamma_n .
\]
\end{proposition}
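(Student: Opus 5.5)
By Remark~\ref{rem:reduction} it suffices to treat the case where $\K=\{k_0<k_1<\cdots\}$ is infinite. Since $X_n=0$ off $\K$ by \eqref{eq:Xn=0}, Parseval gives $\|u\|_{L^2}^2=\sum_{i\ge0}|X_{k_i}|^2$. Likewise $\gamma_n=0$ off $\K$, with $\gamma_{k_i}=\lambda_{k_i}-\mu_{k_i}$ (including $k_0=0$, $\mu_0=0$ when $\lambda_0>0$). Hence $\lambda_0+\sum_{n\ge1}\gamma_n=\sum_{i\ge0}(\lambda_{k_i}-\mu_{k_i})$. The plan is to pass to the limit $m\to\infty$ in the truncated trace identity \eqref{eq:trunc_fac},
\[
   \sum_{i=0}^{m}|X_{k_i}|^2=\sum_{i=0}^{m}\bigl(\lambda_{k_i}-\nu_i^{[m]}\bigr),
\]
using the interlacing \eqref{eq:trunc_interlace} and the monotone convergence $\nu_i^{[m]}\downarrow\mu_{k_i}$ from Lemma~\ref{lem:trunc}.

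The \emph{upper bound} is immediate. By \eqref{eq:trunc_interlace}, $\lambda_{k_i}-\nu_i^{[m]}<\lambda_{k_i}-\mu_{k_i}=\gamma_{k_i}$, so
\[
   \sum_{i=0}^m|X_{k_i}|^2\le\sum_{i=0}^m\gamma_{k_i}\le\lambda_0+\sum_{n\ge1}\gamma_n .
\]
Letting $m\to\infty$ gives $\|u\|_{L^2}^2\le\lambda_0+\sum_{n\ge1}\gamma_n$. Combined with the Weyl bound of Proposition~\ref{prop:weyl}, which gives the reverse inequality, this already yields equality.

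As a consistency check not relying on Proposition~\ref{prop:weyl}, the lower bound can also be obtained directly. Fix $J$. For $m\ge J$, all terms in the truncated identity are positive, so
\[
   \sum_{i=0}^{m}|X_{k_i}|^2\ge\sum_{i=0}^{J}\bigl(\lambda_{k_i}-\nu_i^{[m]}\bigr).
\]
Letting $m\to\infty$ with $J$ fixed, the finitely many nodes satisfy $\nu_i^{[m]}\to\mu_{k_i}$. This gives $\|u\|_{L^2}^2\ge\sum_{i\le J}\gamma_{k_i}$, and then $J\to\infty$ gives the lower bound.

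There is no real obstacle here. The only delicate point is the limit exchange in the lower bound, and truncating at a fixed $J$ before sending $m\to\infty$ avoids needing any uniformity in $i$ of the convergence $\nu_i^{[m]}\to\mu_{k_i}$.
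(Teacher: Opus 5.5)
Your proof of $\|u\|_{L^2}^2\le\lambda_0+\sum_{n\ge1}\gamma_n$ is exactly the paper's proof: the truncated trace identity \eqref{eq:trunc_fac}, the interlacing $\mu_{k_i}<\nu_i^{[m]}<\lambda_{k_i}$, and then $m\to\infty$. Your appeal to Proposition~\ref{prop:weyl} for the reverse inequality also matches the paper.

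Your additional fixed-$J$ argument is more than a consistency check: it is a genuinely different and valid route to the reverse inequality, and it is not circular. The convergence $\nu_i^{[m]}\downarrow\mu_{k_i}$ in Lemma~\ref{lem:trunc} is proved using only three ingredients:
\begin{enumerate}
\item[(a)] Parseval, $\sum_p|X_p|^2<\infty$;
\item[(b)] the node identities of Lemma~\ref{lem:node};
\item[(c)] the strict monotonicity of $\beta_u$ on each pole-free interval $I_i$.
\end{enumerate}
It never uses summability of the gaps. Your lower bound therefore gives $\lambda_0+\sum_{n\ge1}\gamma_n\le\|u\|_{L^2}^2$, and in particular $\sum_n\gamma_n<\infty$, directly from the truncation machinery.

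Consequently, the Ky Fan/Ces\`aro argument of Proposition~\ref{prop:weyl} is logically dispensable for the trace formula. The downstream results (Corollary~\ref{cor:kappa_L2}, Lemma~\ref{lem:kappa_bound}) rely only on Proposition~\ref{prop:isometry}, so they do not need it either. What the paper's route buys is an independent, purely variational proof of summability that does not pass through the generating function. The paper's introduction presents that summability as a prerequisite for the limit $m\to\infty$, but your argument shows it is not actually needed there.
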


\begin{proof}
By Remark~\ref{rem:reduction} we may assume $\K$ infinite. By Lemma~\ref{lem:trunc}, for every $m\ge0$,
\[
   \sum_{i=0}^{m}|X_{k_i}|^2=\sum_{i=0}^{m}\bigl(\lambda_{k_i}-\nu_i^{[m]}\bigr),
\]
and, since $\mu_{k_i}<\nu_i^{[m]}<\lambda_{k_i}$, each summand obeys $0<\lambda_{k_i}-\nu_i^{[m]}<\lambda_{k_i}-\mu_{k_i}=\gamma_{k_i}$. Hence
\[
   \sum_{i=0}^{m}|X_{k_i}|^2<\sum_{i=0}^{m}\gamma_{k_i}\le\sum_{p\ge0}\gamma_p=:T .
\]
Letting $m\to\infty$, the left side increases to $\sum_{p\ge0}|X_p|^2=\|u\|_{L^2}^2$ by Parseval identity (inactive $p$ give $|X_p|^2=0$). Thus, $\|u\|_{L^2}^2\le T$. The reverse inequality $T\le\|u\|_{L^2}^2$ is Proposition~\ref{prop:weyl}. Therefore $\|u\|_{L^2}^2=T=\lambda_0+\sum_{n\ge1}\gamma_n$.
\end{proof}

\begin{corollary}[Birkhoff scaling]\label{cor:kappa_L2}
For every $u\in\Lp$ and $n\ge0$,
\begin{equation}\label{eq:kappa_L2}
   |X_n|^2=\gamma_n\,\kappa_n,\qquad
   \kappa_n:=\prod_{p\neq n}\Bigl(1-\frac{\gamma_p}{\lambda_p-\lambda_n}\Bigr)
            =\prod_{p\neq n}\frac{\mu_p-\lambda_n}{\lambda_p-\lambda_n}\in(0,\infty),
\end{equation}
the product converging absolutely. Each $\kappa_n$ depends only on the spectrum $(\lambda_p)$, and $u\mapsto\kappa_n(u)$ is continuous on $\Lp$.
\end{corollary}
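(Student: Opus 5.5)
We will pass to the limit $m\to\infty$ in the residue form of the truncated factorisation \eqref{eq:trunc_fac}. By Remark~\ref{rem:reduction} we assume $\K=\{k_0<k_1<\cdots\}$ infinite; for $n\notin\K$ both sides of \eqref{eq:kappa_L2} vanish, since $\gamma_n=0=X_n$ by \eqref{eq:Xn=0} and Lemma~\ref{lem:geometry}. For $p\notin\K$ the factor $1-\gamma_p/(\lambda_p-\lambda_n)$ equals $1$, so the product over $p\neq n$ is the product over $p\in\K\setminus\{n\}$.

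\emph{Convergence and positivity.} Fix $n=k_i\in\K$. For $p\neq n$ we have $|\lambda_p-\lambda_n|\ge|p-n|\ge1$ by \eqref{eq:lambda_lower}. Hence $\sum_{p\neq n}\gamma_p/|\lambda_p-\lambda_n|\le\sum_p\gamma_p\le\|u\|_{L^2}^2<\infty$ by Proposition~\ref{prop:weyl}, which gives absolute convergence. By interlacing, $\mu_p-\lambda_n$ and $\lambda_p-\lambda_n$ have the same sign. For $p>n$ we get $0<\mu_p-\lambda_n<\lambda_p-\lambda_n$. For $p<n$ the ratio $(\lambda_n-\mu_p)/(\lambda_n-\lambda_p)$ exceeds $1$. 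Thus every factor is strictly positive, and only finitely many factors are small, since $\gamma_p/(\lambda_p-\lambda_n)\to0$. The product is therefore a limit of positive partial products with summable logarithms, so it lies in $(0,\infty)$.

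\emph{Identity.} Take residues at $z=\lambda_{k_i}$ in \eqref{eq:trunc_fac} for $m\ge i$:
\[
|X_{k_i}|^2=(\lambda_{k_i}-\nu_i^{[m]})\prod_{j\le m,\ j\neq i}\frac{\nu_j^{[m]}-\lambda_{k_i}}{\lambda_{k_j}-\lambda_{k_i}}.
\]
The prefactor tends to $\gamma_{k_i}$ by Lemma~\ref{lem:trunc}.

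\emph{Main obstacle: exchanging the limit $m\to\infty$ with the infinite product.} Each factor converges, because $\nu_j^{[m]}\downarrow\mu_{k_j}$, but we need domination to apply dominated convergence to the sum of logarithms. Write each factor as $1-\theta_j^{[m]}$ with $\theta_j^{[m]}:=(\lambda_{k_j}-\nu_j^{[m]})/(\lambda_{k_j}-\lambda_{k_i})$. By \eqref{eq:trunc_interlace}, $0<\lambda_{k_j}-\nu_j^{[m]}<\gamma_{k_j}$, so $|\theta_j^{[m]}|\le\gamma_{k_j}/|\lambda_{k_j}-\lambda_{k_i}|$ uniformly in $m$, and this bound is summable. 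The factors stay positive: for $j>i$ we have $\theta\in(0,1)$, and for $j<i$ we have $\theta<0$. For all but finitely many $j$ (uniformly in $m$) one has $|\theta|\le1/2$, which gives $|\log(1-\theta)|\le2|\theta|$. The finitely many remaining factors converge individually to positive limits, since $\nu_j^{[m]}\to\mu_{k_j}$ and $\mu_{k_j}-\lambda_{k_i}\neq0$. Dominated convergence on $\sum_j\log(1-\theta_j^{[m]})$ then yields $|X_{k_i}|^2=\gamma_{k_i}\kappa_{k_i}$.

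\emph{Continuity.} Continuity of $u\mapsto\kappa_n(u)$ follows by the same domination. Each $\lambda_p$ is continuous in $u$ by Theorem~\ref{thm:Lax}. The bound $\gamma_p/|\lambda_p-\lambda_n|\le\gamma_p$ is summable uniformly near $u_0$, because the tails $\sum_{p>P}\gamma_p(u)=\|u\|^2-\lambda_P(u)+P$ are continuous in $u$ by Proposition~\ref{prop:isometry}. Hence they are uniformly small on a neighbourhood of $u_0$, and the product converges uniformly there. Near $u_0$, positivity of the finitely many non-small factors persists by continuity.
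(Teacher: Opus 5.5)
Your proposal is correct and follows the paper's proof essentially step by step. Both arguments take residues of the truncated factorisation \eqref{eq:trunc_fac}, dominate the logarithms by $2\gamma_{k_j}$ uniformly in $m$, and get continuity from the continuous tails $\|u\|_{L^2}^2-\lambda_P(u)+P$, which are nonincreasing in $P$; your local argument is the same mechanism as the paper's Dini argument on the compact set $\{u_*\}\cup\{u_j\}$.

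One phrasing caveat concerns your claim that the bound $\gamma_p$ is summable uniformly near $u_0$. The tails $\sum_{p>P}\gamma_p$ are small only on a neighbourhood that depends on the threshold: for $u=\tfrac{\delta}{2}e^{iKx}$ with $K\to\infty$, every ball around $0$ has a tail equal to $\delta^2/4$. So either run that step as an $\varepsilon$-argument, or use the sharper bound $\sum_{p>P}\gamma_p/(\lambda_p-\lambda_n)\le\|u\|_{L^2}^2/(P+1-n)$, which gives uniform convergence on bounded sets.
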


\begin{proof}
  For inactive $n\notin\K$, $\gamma_n=0=X_n$ and both sides of \eqref{eq:kappa_L2} vanish. Now, fix an active index $n=k_{j_0}\in\K$.  For $m\ge j_0$, the pole $\lambda_n$ of $\beta_u^{[m]}$ is simple, whence taking the residue in \eqref{eq:trunc_fac},
\begin{equation}\label{eq:kappa_L2_fixedm}
   |X_n|^2=\operatorname{Res}_{z=\lambda_n}(1-\beta_u^{[m]})
   =\bigl(\lambda_n-\nu_{j_0}^{[m]}\bigr)
   \prod_{\substack{i=0\\ i\neq j_0}}^{m}\frac{\lambda_n-\nu_i^{[m]}}{\lambda_n-\lambda_{k_i}}\,,
\end{equation}
the left side being independent of $m$. Hence, as $m\to \infty$ we have by Lemma~\ref{lem:trunc} that the prefactor satisfies $\lambda_n-\nu_{j_0}^{[m]}\to\lambda_n-\mu_n=\gamma_n$. For the remaining factors, write for all $0\leq i \leq m$, $i\neq j_0,$
\[
   \frac{\lambda_n-\nu_i^{[m]}}{\lambda_n-\lambda_{k_i}}
   =1+\frac{\lambda_{k_i}-\nu_i^{[m]}}{\lambda_n-\lambda_{k_i}} .
\]
Here $0<\lambda_{k_i}-\nu_i^{[m]}<\gamma_{k_i}$ by \eqref{eq:trunc_interlace}, while $|\lambda_n-\lambda_{k_i}|\ge|n-k_i|\ge1$ since the $\lambda_p$ are strictly increasing with $\lambda_p-\lambda_q\ge p-q$ by \eqref{eq:lambda_lower}. Therefore each factor is positive and
\begin{equation}\label{eq:factor_bound}
    \frac{\lambda_{k_i}-\nu_i^{[m]}}{|\lambda_n-\lambda_{k_i}|}<\gamma_{k_i},
   \qquad 0\le i\le m,\ i\neq j_0 .
\end{equation}
As $\sum_i\gamma_{k_i}\le\sum_p\gamma_p=\|u\|_{L^2}^2<\infty$ by Proposition~\ref{prop:isometry}, we have $\gamma_{k_i}\to0$. Fix $i_0$ with $\gamma_{k_i}\le\tfrac12$ for all $i\ge i_0$. Using $|\log(1+t)|\le2|t|$ for $|t|\le\tfrac12$, \eqref{eq:factor_bound} gives
\begin{equation}\label{eq:log_bound}
   \Bigl|\log\frac{\lambda_n-\nu_i^{[m]}}{\lambda_n-\lambda_{k_i}}\Bigr|\le 2\gamma_{k_i},
   \qquad i\ge i_0,\ i\neq j_0,\ m\ge i .
\end{equation}
The right-hand side of \eqref{eq:log_bound} is independent of $m$ and summable in $i$. Moreover, by Lemma~\ref{lem:trunc}, for each fixed $i\neq j_0$,
\[
   \frac{\lambda_n-\nu_i^{[m]}}{\lambda_n-\lambda_{k_i}}
   \xrightarrow[m\to\infty]{}\frac{\lambda_n-\mu_{k_i}}{\lambda_n-\lambda_{k_i}}
   =\frac{\mu_{k_i}-\lambda_n}{\lambda_{k_i}-\lambda_n}.
\]
Applying the dominated convergence theorem for the series $\sum_{i\neq j_0}\log\frac{\lambda_n-\nu_i^{[m]}}{\lambda_n-\lambda_{k_i}}$ with, by \eqref{eq:log_bound} the summands are dominated for $i\ge i_0$ by $2\gamma_{k_i}$ summable in $i$ and the finitely many terms $i<i_0$ converge directly,
\[
   \sum_{\substack{i=0\\ i\neq j_0}}^{m}\log\frac{\lambda_n-\nu_i^{[m]}}{\lambda_n-\lambda_{k_i}}
   \xrightarrow[m\to\infty]{}
   \sum_{\substack{p\in\K\\ p\neq n}}\log\frac{\mu_p-\lambda_n}{\lambda_p-\lambda_n}.
\]
Exponentiating,
\[
   \prod_{\substack{i=0\\ i\neq j_0}}^{m}\frac{\lambda_n-\nu_i^{[m]}}{\lambda_n-\lambda_{k_i}}
   \xrightarrow[m\to\infty]{}
   \prod_{\substack{p\in\K\\ p\neq n}}\frac{\mu_p-\lambda_n}{\lambda_p-\lambda_n}\in(0,\infty).
\]
Letting $m\to\infty$ in \eqref{eq:kappa_L2_fixedm} yields $|X_n|^2=\gamma_n\prod_{p\in\K,\,p\neq n}\frac{\mu_p-\lambda_n}{\lambda_p-\lambda_n}$. Inactive $p\notin \K$ contribute a factor $\frac{\mu_p-\lambda_n}{\lambda_p-\lambda_n}=1$ (recall $\gamma_p=\lambda_p-\mu_p$, so $\gamma_p=0$ gives $\mu_p=\lambda_p$ ), thus
\[
   \prod_{\substack{p\in\K\\ p\neq n}}\frac{\mu_p-\lambda_n}{\lambda_p-\lambda_n}
   =\prod_{p\neq n}\frac{\mu_p-\lambda_n}{\lambda_p-\lambda_n}=:\kappa_n\,,
   \qtq{and} |X_n|^2=\gamma_n\kappa_n\,.
\]
Each factor of $\kappa_n$ is positive by the interlacing $\mu_p<\lambda_p$ as in Corollary~\ref{cor:kappa}, whence $\kappa_n\in(0,\infty)$.

Now, let us turn to continuity. Let $u_j\to u_*$ in $\Lp$. The set $Q:=\{u_*\}\cup\{u_j:j\ge1\}$ is compact. Each factor $\frac{\mu_p(u)-\lambda_n(u)}{\lambda_p(u)-\lambda_n(u)}$ is continuous on $\Lp$. Indeed, the maps $u\mapsto\lambda_p(u)$ and $\mu_p(u)=\lambda_{p-1}(u)+1$ are continuous by Theorem~\ref{thm:Lax}, and $|\lambda_p(u)-\lambda_n(u)|\ge|p-n|\ge1$. The tails 
$$
    \sum_{p>M}\gamma_p(u)=\|u\|_{L^2}^2-\lambda_0(u)-\sum_{1\le p\le M}\gamma_p(u)
$$
are continuous, nonincreasing in $M$, and $\to0$ pointwise by Proposition~\ref{prop:isometry} as $M\to\infty$. Therefore, applying  Dini's theorem,  $\sum_{p>M}\gamma_p(u)\to0$ uniformly on $Q$. In particular, $\sup_{u\in Q}\gamma_p(u)\to0$, so there is an index $i_0$, uniform on $Q$, beyond which $\gamma_p(u)\le\tfrac12$ and hence, as in \eqref{eq:log_bound}, $\bigl|\log\frac{\mu_p(u)-\lambda_n(u)}{\lambda_p(u)-\lambda_n(u)}\bigr|\le 2\gamma_p(u)$ uniformly on $Q$. With the uniform tail bound, the series $\log\kappa_n(u)=\sum_{p\neq n}\log\frac{\mu_p(u)-\lambda_n(u)}{\lambda_p(u)-\lambda_n(u)}$ converges uniformly on $Q$, being a uniform limit of continuous functions, $\kappa_n$ is continuous on $Q$, i.e.\
$\kappa_n(u_j)\to\kappa_n(u_*)$.
\end{proof}

Finally, we record that the Birkhoff scalings $(\kappa_n)$ are uniformly bounded on $L^2$-balls.

\begin{lemma}[Uniform amplitude bound]\label{lem:kappa_bound}
For every $u\in\Lp$ and $n\ge0$, we have $\ 0<\kappa_n(u)\le e^{\|u\|_{L^2}^2}$. Consequently, for every $N_0\ge1$,
\begin{equation}\label{eq:X_tail}
   \sum_{n>N_0}|X_n|^2\le e^{\|u\|_{L^2}^2}\sum_{n>N_0}\gamma_n .
\end{equation}
\end{lemma}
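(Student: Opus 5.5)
We bound $\log\kappa_n$ factor by factor, using the product formula of Corollary~\ref{cor:kappa_L2}. Write
\[
   \kappa_n=\prod_{p\neq n}\Bigl(1-\frac{\gamma_p}{\lambda_p-\lambda_n}\Bigr).
\]
Positivity $\kappa_n>0$ is already contained in Corollary~\ref{cor:kappa_L2}. So only the upper bound needs proof. We split the factors according to the sign of $\lambda_p-\lambda_n$.

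\emph{Factors with $p>n$.} Here $\lambda_p-\lambda_n>0$ and $\gamma_p\ge0$, so each factor lies in $(0,1]$. It is positive by the interlacing $\mu_p>\lambda_n$, which holds because $\mu_p=\lambda_{p-1}+1>\lambda_n$. These factors contribute at most $1$ to the product and can be discarded.

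\emph{Factors with $p<n$.} Here each factor equals $1+\frac{\gamma_p}{\lambda_n-\lambda_p}$. By \eqref{eq:lambda_lower} we have $\lambda_n-\lambda_p\ge n-p\ge1$. Using $\log(1+t)\le t$, we get
\[
   \log\kappa_n\le\sum_{p<n}\frac{\gamma_p}{\lambda_n-\lambda_p}\le\sum_{p<n}\gamma_p\le\lambda_0+\sum_{p\ge1}\gamma_p=\|u\|_{L^2}^2 .
\]
The last equality is the trace formula, Proposition~\ref{prop:isometry}. The term $p=0$ needs a separate check only because of the convention $\gamma_0=\lambda_0$ and $\mu_0=0$. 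In that case the factor is $\frac{\mu_0-\lambda_n}{\lambda_0-\lambda_n}=\frac{\lambda_n}{\lambda_n-\lambda_0}=1+\frac{\lambda_0}{\lambda_n-\lambda_0}$. This is consistent with the general form above, with $\gamma_0=\lambda_0$, so it is covered by the same estimate. Exponentiating gives $\kappa_n\le e^{\|u\|_{L^2}^2}$.

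\emph{The tail estimate.} For \eqref{eq:X_tail} we insert $|X_n|^2=\gamma_n\kappa_n$ from Corollary~\ref{cor:kappa_L2} and sum over $n>N_0$, using the uniform bound on $\kappa_n$.

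There is no real obstacle in this argument. The only care needed is to confirm the sign of each factor, which comes from the interlacing, and to handle the $p=0$ factor correctly, since it uses the convention $\mu_0=0$.
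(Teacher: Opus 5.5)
Your proposal is correct and follows essentially the paper's argument: you drop the factors with $p>n$ (which lie in $(0,1]$), bound each factor with $p<n$ by $1+\gamma_p$ using $\lambda_n-\lambda_p\ge 1$, and conclude with the trace formula. Your use of $\log(1+t)\le t$ is just the logarithmic form of the paper's bound $\prod_p(1+\gamma_p)\le\exp\bigl(\sum_p\gamma_p\bigr)$, and the tail estimate then follows from $|X_n|^2=\gamma_n\kappa_n$ exactly as in the paper.
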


\begin{proof}
By Corollary~\ref{cor:kappa_L2}, $\kappa_n=\prod_{p\neq n}\frac{\mu_p-\lambda_n}{\lambda_p-\lambda_n}$ with positive factors. For $p>n$, the factor $\frac{\mu_p-\lambda_n}{\lambda_p-\lambda_n}$ lies in $(0,1]$. For $p<n$,
\[
   \frac{\lambda_n-\mu_p}{\lambda_n-\lambda_p}
   =1+\frac{\gamma_p}{\lambda_n-\lambda_p}\le1+\gamma_p .
\]
Hence,  
$$
    \kappa_n\le\prod_{p<n}(1+\gamma_p)\le\prod_{p\ge0}(1+\gamma_p)\le\exp\bigl(\sum_p\gamma_p\bigr) =e^{\|u\|_{L^2}^2}.
$$ 
where the last identity is obtained by Proposition~\ref{prop:isometry}. Then \eqref{eq:X_tail} follows from Corollary~\ref{cor:kappa_L2}: $|X_n|^2=\gamma_n\kappa_n$.
\end{proof}

\subsection{Continuity of the extended Birkhoff map}\label{ss:cont_Phi}

In the transport gauge we define
\begin{equation}\label{eq:zeta_L2}
   \Phi(u):=(\zeta_n(u))_{n\ge0},\qquad \zeta_n(u):=\frac{\langle u\mid f_n\rangle}{\sqrt{\kappa_n}} .
\end{equation}
By Corollary~\ref{cor:kappa_L2}, $\kappa_n\in(0,\infty)$ and $|X_n|^2=\gamma_n\kappa_n$, so
\begin{equation}\label{eq:|zeta[n]|2}
    |\zeta_n(u)|^2=\frac{|X_n|^2}{\kappa_n}=\gamma_n\qquad(n\ge0),
\end{equation}
with the convention $\gamma_0=\lambda_0$. In particular $|\zeta_0|^2=\lambda_0$ and $\zeta_n=0$ whenever
$\gamma_n=0$.

\begin{corollary}\label{cor:ext}
The function $\Phi$ maps $\Lp(\mathbb T)$ into $\ell^2(\N_0)$ and is norm-preserving,
\[
   \|\Phi(u)\|_{\ell^2}^2=\lambda_0+\sum_{n\ge1}\gamma_n=\|u\|_{L^2}^2 .
\]
It extends the finite-gap map of Theorem~\ref{thm:main}: $\Phi(\U_N)=\Omega_N$ for every $N\ge0$ and $\Phi(0)=0$, so $\Phi\bigl(\{0\}\cup\bigcup_{N\ge0}\U_N\bigr)$ is the set of finitely supported sequences, dense in $\ell^2(\N_0)$.
\end{corollary}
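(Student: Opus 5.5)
The proof mostly assembles earlier results; there are four things to check.

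First, $\Phi$ takes values in $\ell^2(\N_0)$. By Corollary~\ref{cor:kappa_L2}, every $\kappa_n(u)$ lies in $(0,\infty)$, so $\zeta_n(u)$ in \eqref{eq:zeta_L2} is well defined. The transport gauge of Convention~\ref{conv:transport} fixes $f_n$ uniquely, so $X_n=\langle u\mid f_n\rangle$ is unambiguous. Identity \eqref{eq:|zeta[n]|2} then gives $|\zeta_n|^2=\gamma_n$, with $\gamma_0=\lambda_0$. Summing over $n$ and applying the trace formula of Proposition~\ref{prop:isometry},
\[
\|\Phi(u)\|_{\ell^2}^2=\lambda_0+\sum_{n\ge1}\gamma_n=\|u\|_{L^2}^2<\infty .
\]
This proves both that $\Phi(u)\in\ell^2(\N_0)$ and that $\Phi$ is norm-preserving.

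Second, the new $\Phi$ agrees with the finite-gap map. For $u\in\U_N$ the active set $\K$ is finite, and every $p\notin\K$ contributes the factor $1$ to the infinite product \eqref{eq:kappa_L2}. So $\kappa_n$ coincides with the finite product \eqref{eq:kappa}, and the definition \eqref{eq:zeta_L2} reduces to \eqref{eq:zeta[n]}. Moreover $\gamma_n=0$ for $n>N$, so $\zeta_n(u)=0$ for $n>N$ and $\Phi(u)$ is the zero-extension of an element of $\C^{N+1}$. Theorem~\ref{thm:main} then gives $\Phi(\U_N)=\Omega_N$. For $u=0$, the norm identity just proved forces $\Phi(0)=0$.

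Third, the image of the finite-gap class is exactly the set of finitely supported sequences. Let $\zeta\neq0$ be finitely supported and set $N:=\max\{n:\zeta_n\neq0\}$. Then $\zeta\in\Omega_N$, so $\zeta=\Phi(u)$ for some $u\in\U_N$. For $N=0$ this is just $\U_0=\C^*$: a constant $u=a$ has $\lambda_0=|a|^2$, which is covered by Theorem~\ref{thm:main} with $N=0$. Conversely, every image $\Phi(u)$ with $u\in\U_N$ lies in $\Omega_N$ and is therefore finitely supported. Together with $\Phi(0)=0$, this shows
\[
\Phi\Bigl(\{0\}\cup\bigcup_{N\ge0}\U_N\Bigr)=\{\text{finitely supported sequences}\}.
\]
Density of finitely supported sequences in $\ell^2(\N_0)$ is standard.

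Everything here is bookkeeping, so I expect no real obstacle. The one point that needs care is the compatibility in the second step: the Convention~\ref{conv:transport} gauge, in which $\kappa_n$ and $X_n$ are computed here, must be the same gauge used in Section~\ref{s:birkhoff}. It is, since the convention was fixed once for all $u$ in Section~\ref{S:2}.
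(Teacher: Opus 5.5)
Your proposal is correct and follows the paper's proof essentially step for step. You obtain the norm identity from $|\zeta_n|^2=\gamma_n$ and the trace formula of Proposition~\ref{prop:isometry}. You check that \eqref{eq:zeta_L2} agrees with \eqref{eq:zeta[n]} on $\U_N$; your remark that inactive indices contribute the factor $1$ to $\kappa_n$ makes explicit what the paper leaves implicit. Finally, you identify the image with the finitely supported sequences through Theorem~\ref{thm:main}.
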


\begin{proof}
By \eqref{eq:|zeta[n]|2} and Proposition~\ref{prop:isometry},
\[
   \|\Phi(u)\|_{\ell^2}^2=\sum_{n\ge0}|\zeta_n(u)|^2=\lambda_0+\sum_{n\ge1}\gamma_n=\|u\|_{L^2}^2<\infty,
\]
so $\Phi(u)\in\ell^2(\N_0)$ and $\Phi$ is norm-preserving.

If $u\in\U_N$ then $\gamma_n=0$ for $n>N$ and $\gamma_N>0$, so $\Phi(u)\in\Omega_N$ and the coordinate \eqref{eq:zeta_L2} coincides with \eqref{eq:Phi}.

Finally, let $c_{00}$ denote the finitely supported sequences in $\ell^2(\N_0)$. First, observe $\Phi\bigl(\{0\}\cup\bigcup_{N\ge0}\U_N\bigr)=c_{00}$: the inclusion $\subseteq$ holds since $\Phi(0)=0\in c_{00}$ and $\Phi(\U_N)=\Omega_N\subseteq c_{00}$, conversely, given $\zeta\in c_{00}$ with $\zeta\neq0$, let $N:=\max\{n:\zeta_n\neq0\}\ge0$, so $\zeta\in\Omega_N$, identifying $\Omega_N\subseteq\C^{N+1}$ with its zero-extension, and $\zeta=\Phi(u)$ for some $u\in\U_N$ by Theorem~\ref{thm:main}, while $\zeta=0=\Phi(0)$. Second, $c_{00}$ is dense in $\ell^2(\N_0)$: for $\zeta\in\ell^2$ the truncations $\zeta^{(M)}:=(\zeta_0,\dots,\zeta_M,0,0,\dots)\in c_{00}$ satisfy $\|\zeta-\zeta^{(M)}\|_{\ell^2}^2=\sum_{n>M}|\zeta_n|^2\to0$ as $M\to\infty$. Hence, $\Phi\bigl(\{0\}\cup\bigcup_{N\ge0}\U_N\bigr)$ is dense in $\ell^2(\N_0)$.
\end{proof}

\begin{remark}
Since $\Phi$ is a homeomorphism and $\Omega_N$ is an open subset of $\C^{N+1}$, then the union $\{0\}\cup\bigcup_{N\ge0}\U_N$ i.e., the finite-gap potentials  are dense, since its image under $\Phi$ is the space $c_{00}$ of finitely supported sequences (Corollary~\ref{cor:ext}).
\end{remark}

\begin{lemma}[Continuity of the eigenbasis]\label{lem:eigen_cont}
Under Convention~\textup{\ref{conv:transport}}, for every $n\ge0$ the map $u\mapsto f_n(\cdot,u)$ is continuous from $\Lp$ to $\Lp$. Consequently
$$
    u\mapsto\langle u\mid f_n(\cdot,u)\rangle
$$ 
is continuous on $\Lp(\T)$.
\end{lemma}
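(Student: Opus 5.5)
The plan is to use Riesz spectral projectors to get continuity of each eigenline, and then an induction on $n$ along the transport gauge to fix the phases.

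\emph{Step 1: continuity of the eigenprojections.} Fix $u_*\in\Lp$ and $n\ge0$. By simplicity (Theorem~\ref{thm:Lax}) the eigenvalue $\lambda_n(u_*)$ is isolated. Choose a small circle $\Gamma$ around $\lambda_n(u_*)$ separating it from the rest of $\sigma(L_{u_*})$. Since $u\mapsto\lambda_p(u)$ is continuous, for $u$ near $u_*$ the circle $\Gamma$ still encloses exactly $\lambda_n(u)$. I would then define
\[
   P_n(u)=-\frac{1}{2\pi i}\oint_\Gamma (L_u-z)^{-1}\,dz = \langle\cdot\mid f_n(u)\rangle f_n(u).
\]
Lemma~\ref{lem:nrc} gives norm convergence $(L_{u_j}-z)^{-1}\to(L_{u_*}-z)^{-1}$ uniformly on $\Gamma$, once it is extended off $\C\setminus[0,\infty)$. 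That extension follows from the same two-sided resolvent identity \eqref{eq:res_two_sided}, with the uniform bound $\|R_w(z)\|\le 1/\operatorname{dist}(z,\sigma(L_w))$, which stays bounded on $\Gamma$ for $w$ near $u_*$. Hence $P_n(u_j)\to P_n(u_*)$ in operator norm.

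\emph{Step 2: fixing the phase by induction on $n$.} Write $g$ for any fixed reference vector. Then $P_n(u)g=\langle g\mid f_n\rangle f_n$, so whenever $\langle g\mid f_n\rangle>0$ we have
\[
   f_n=\frac{P_n(u)g}{\|P_n(u)g\|},
\]
and this depends continuously on $u$ wherever $P_n(u)g\neq0$.
\begin{itemize}
\item[] \emph{Case $n=0$.} Take $g=1$. The gauge gives $\langle1\mid f_0\rangle>0$, so $f_0=P_0(u)1/\|P_0(u)1\|$. We have $\|P_0(u)1\|=\langle1\mid f_0(u)\rangle$, which is nonzero at $u_*$ by Lemma~\ref{lem:geometry}(ii). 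Continuity of $P_0$ then gives continuity of $f_0$ near $u_*$.
\item[] \emph{Induction step.} Assume $f_{n-1}$ is continuous. Take $g=Sf_{n-1}(u)$, which is continuous in $u$ since $S$ is an isometry. Then
\[
   f_n(u)=\frac{P_n(u)Sf_{n-1}(u)}{\|P_n(u)Sf_{n-1}(u)\|},
\]
which is valid because the gauge says $\langle Sf_{n-1}\mid f_n\rangle>0$. The denominator is $\langle Sf_{n-1}(u)\mid f_n(u)\rangle$. It is nonzero at $u_*$: if $\gamma_n(u_*)>0$ this is Lemma~\ref{lem:geometry}(iv), and if $\gamma_n(u_*)=0$ it equals $1$. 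By joint continuity the denominator stays bounded away from $0$ near $u_*$, so the quotient is continuous.
\end{itemize}

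\emph{Step 3: the final claim.} The map $u\mapsto\langle u\mid f_n(u)\rangle$ is continuous because both arguments of the inner product are continuous in $u$.

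The main obstacle is the norm convergence of resolvents on a contour $\Gamma$ that meets $[0,\infty)$, which Lemma~\ref{lem:nrc} does not cover as stated. I would handle it by repeating that lemma's proof with $K=\Gamma$, using the uniform spectral separation that continuity of $\lambda_p$ provides. This requires controlling finitely many eigenvalues near $\Gamma$, and $\lambda_p\ge p$ keeps all other eigenvalues uniformly far away.
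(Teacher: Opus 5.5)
Your proposal is correct and follows essentially the same route as the paper. Both use Riesz projectors on a small circle around $\lambda_n$. Both obtain norm convergence of the resolvents on that circle by re-running the two-sided resolvent identity with a spectral-separation bound. Both then argue by induction along the transport gauge.

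The only difference is cosmetic. You normalize $P_n(u)g$ directly for the gauge reference vector ($g=1$, or $g=Sf_{n-1}(u)$). The paper instead normalizes $P_n(u_j)f_n(u)$ and then shows that the correcting unimodular phase $w_{j,n}$ tends to $1$.
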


\begin{proof} 
Let $u_j\to u$ in $\Lp$. Being convergent, the sequence is bounded, so $u_j,u\in B_r$ for some $r>0$, this is needed to apply Lemma~\ref{lem:nrc}, whose constant depends on such a bound.

Fix $n\ge0$. By \eqref{eq:simplicity_eigenv} the spectrum of $L_u$ is simple. Let 
\[
   \mathscr C:=\bigl\{z\in\C:\ |z-\lambda_n(u)|=\tfrac14\bigr\},\qquad
   P_n(w):=\frac{1}{2\pi i}\oint_{\mathscr C}(z-L_w)^{-1}\,dz .
\] 
 Let us prove that $\mathscr C$ lies in all the resolvent sets of $L_{u}$ and $L_{u_j}$ for $j$ large enough. First, recall from \eqref{eq:gamma[n]} that for any $m\geq 0$, $\la_m-\la_{m-1}-1\geq 0$, so that $\lambda_n(u)$ is the only point of $\sigma(L_u)$ inside $\mathscr C.$ Second, by the continuity of $w\mapsto \la_n(w)$ in Thorem~\ref{thm:Lax}, there exists $j_0\in\N$ such that $|\lambda_n(u_j)-\lambda_n(u)|<\tfrac18$. Therefore
\begin{equation}\label{eq:unif_sep}
   \operatorname{dist}\bigl(\mathscr C,\sigma(L_{u_j})\bigr)\ge\tfrac18\quad(j\ge j_0),
   \qquad
   \operatorname{dist}\bigl(\mathscr C,\sigma(L_{u})\bigr)\ge\tfrac14 ,
\end{equation}
so $\mathscr C$ lies in all the resolvent sets and $\|(L_{u_j}-z)^{-1}\|\le8$, $\|(L_u-z)^{-1}\|\le4$ on $\mathscr C$. Employing dentity \eqref{eq:res_two_sided} with $\kappa=1$, together with the bound \eqref{eq:nrc} and \eqref{eq:unif_sep} in place of \eqref{eq:res_bound_L2}, one obtains
\[
   \sup_{z\in\mathscr C}\bigl\|(L_{u_j}-z)^{-1}-(L_u-z)^{-1}\bigr\|
   \le C\,\|u_j-u\|_{L^2}\xrightarrow[j\to\infty]{}0 ,
\]
$C$ depending only on $\sup_j\|u_j\|_{L^2}$. Integrating over $\mathscr C$ 
gives $\|P_n(u_j)-P_n(u)\|\to0$. Finally $P_n(u)$ is the rank-one orthogonal projection onto $\C f_n(u)$ by simplicity of $\sigma(L_u)$, and two orthogonal projections at distance $<1$ have the same rank. Hence, $\operatorname{rank}P_n(u_j)=1$ for $j$ large, i.e.\ $\lambda_n(u_j)$ is the only eigenvalue of $L_{u_j}$ inside $\mathscr C$ and $\Range P_n(u_j)=\C f_n(u_j)$.
Since $P_n(u)f_n(u)=f_n(u)$, the vectors $P_n(u_j)f_n(u)$ satisfy 
\[
   \|P_n(u_j)f_n(u)-f_n(u)\|\le\|P_n(u_j)-P_n(u)\|\longrightarrow0,
   \quad\text{so}\quad \|P_n(u_j)f_n(u)\|\longrightarrow1 .
\]

Hence, for $j$ large $P_n(u_j)f_n(u)\neq0$ and $P_n(u_j)f_n(u)\in\operatorname{Ran}P_n(u_j)=\C f_n(u_j)$. Thus, $\hat f_{j,n}:=P_n(u_j)f_n(u)/\|P_n(u_j)f_n(u)\|$ is a unit $\lambda_n(u_j)$-eigenvector with
\begin{equation}\label{eq:fj[hat]}
    \hat f_{j,n}\to f_n(u) \text{ in }\Lp \qtq{and} f_n(u_j)=w_{j,n}\hat f_{j,n} \text{ for some }|w_{j,n}|=1.
\end{equation}   

The goal is to prove that for each fixed $n$, $w_{j,n}\to 1$ as $j\to\infty,$ so that $ f_n(u_j)\to f_n(u)$ as $j\to\infty.$ We argue by induction on $n$. For $n=0$, Convention~\ref{conv:transport} requires $\langle1\mid f_0(u_j)\rangle>0$ so $\overline{w_{j,0}}\langle1\mid\hat f_{j,0}\rangle>0$, which means:
$$
    w_{j,0}=\frac{{\langle1\mid\hat f_{j,0}\rangle}}{|\langle1\mid\hat f_{j,0}\rangle|}=\mathrm e^{i \arg \langle 1 \mid \hat{f}_{j,0}\rangle}.
$$
Note that $\langle 1 \mid \hat{f}_{j,0}\rangle$ is nonzero by Lemma~\ref{lem:geometry}(ii) applied to $u_j$. Applying \eqref{eq:fj[hat]} leads to  $\langle1\mid\hat f_{j,0}\rangle\to\langle1\mid f_0(u)\rangle$, which is real and strictly positive. Hence $w_{j,0}\to 1$, so $f_0(u_j)\to f_0(u)$.

Now, let $n\ge1$ and assume $f_{n-1}(u_j)\to f_{n-1}(u)$ in $\Lp$, so that $Sf_{n-1}(u_j)\to Sf_{n-1}(u)$. By Convention~\ref{conv:transport}, $\langle Sf_{n-1}(u_j)\mid f_n(u_j)\rangle=\overline{w_{j,n}}\, \langle Sf_{n-1}(u_j)\mid\hat f_{j,n}\rangle>0$, so, exactly as for $n=0$,
\[
   w_{j,n}=\frac{\langle Sf_{n-1}(u_j)\mid\hat f_j\rangle}
                 {|\langle Sf_{n-1}(u_j)\mid\hat f_j\rangle|} .
\]
By the induction hypothesis and \eqref{eq:fj[hat]}, $\langle Sf_{n-1}(u_j)\mid\hat f_{j,n}\rangle\to\langle Sf_{n-1}(u)\mid f_n(u)\rangle$, which is real and strictly positive by Convention~\ref{conv:transport}. Hence $w_{j,n}\to1$ and $f_n(u_j)=w_{j,n}\hat f_{j,n}\to f_n(u)$, completing the induction.

We conclude that if  $u_j\to u$  then  $f_n(u_j)\to f_n(u)$ in $\Lp$ for any $n\in\N_0$ and so  $\langle u_j\mid f_n(u_j)\rangle\to\langle u\mid f_n(u)\rangle$.
\end{proof}

\begin{remark}[Why Convention~\ref{conv:transport} is used]\label{rem:why_transport}
One might instead normalise the eigenbasis by requiring $\langle u\mid f_n\rangle>0$. On the finite-gap manifolds  $u\in\U_N$ with $\gamma_n>0$ for all $0\le n\le N$, one has $X_n\neq0$ by \eqref{eq:Xn=0}, the phase of $f_n$ is fixed, and the resulting coordinates $\zeta_n=\sqrt{\gamma_n}\,e^{i\arg\langle Sf_{n-1}\mid f_n\rangle}$ ($n\ge1$), $\zeta_0=\sqrt{\gamma_0}\,e^{i\arg\langle1\mid u\rangle}$, again parametrise $\U_N$ bijectively.

This gauge breaks down on all of $\Lp(\T)$, for a structural reason: it is \emph{undefined} precisely at closed gaps. If $\gamma_n=0$ then $X_n=0$ by \eqref{eq:Xn=0}, the condition $\langle u\mid f_n\rangle>0$ is vacuous, and $f_n$ remains undetermined up to a phase. Even when $\gamma_n>0$, letting $\gamma_n\to0$ makes $X_n\to0$ and $\arg X_n$ arbitrarily ill-conditioned, so $f_n$ cannot depend continuously on $u$ near such a configuration.

Convention~\ref{conv:transport} avoids this because the quantities it normalises never vanish: $\langle1\mid f_0\rangle\neq0$ for every $u$ by Lemma~\ref{lem:geometry}(ii), and $\langle Sf_{n-1}\mid f_n\rangle>0$ holds also when $\gamma_n=0$, where $f_n=Sf_{n-1}$ and $\langle Sf_{n-1}\mid f_n\rangle=\|f_{n-1}\|^2=1$. The observable phase is thereby carried by $\langle u\mid f_n\rangle$, which \emph{is} continuous in $u$ (Lemma~\ref{lem:eigen_cont}), and this is what makes Theorem~\ref{thm:continuity} possible.
\end{remark}

\begin{theorem}[Continuity]\label{thm:continuity}
The extended Birkhoff map $\Phi:\Lp(\mathbb T)\to\ell^2(\N_0)$ of \eqref{eq:zeta_L2} is continuous.
\end{theorem}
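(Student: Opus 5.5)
The plan is to combine coordinatewise continuity with a uniform control of the tails in $\ell^2$. Let $u_j\to u$ in $\Lp$. For each fixed $n$, Lemma~\ref{lem:eigen_cont} gives $\langle u_j\mid f_n(u_j)\rangle\to\langle u\mid f_n(u)\rangle$. Corollary~\ref{cor:kappa_L2} gives $\kappa_n(u_j)\to\kappa_n(u)$, and this limit lies in $(0,\infty)$. Hence $\zeta_n(u_j)=X_n(u_j)/\sqrt{\kappa_n(u_j)}\to\zeta_n(u)$ for every $n\ge0$. Coordinatewise convergence alone does not give $\ell^2$ convergence, so we also need a uniform tail estimate.

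For the tail we use the identity \eqref{eq:|zeta[n]|2}, $|\zeta_n|^2=\gamma_n$, which gives
\[
   \sum_{n>M}|\zeta_n(w)|^2=\sum_{n>M}\gamma_n(w)=\|w\|_{L^2}^2-\lambda_0(w)-\sum_{1\le n\le M}\gamma_n(w)
\]
by Proposition~\ref{prop:isometry}. On the compact set $Q=\{u\}\cup\{u_j\}$ the right side is continuous in $w$ and nonincreasing in $M$, since the $\lambda_n$ are continuous by Theorem~\ref{thm:Lax}. It also tends to $0$ pointwise. By Dini's theorem, exactly as in the proof of Corollary~\ref{cor:kappa_L2}, the convergence is uniform on $Q$.

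Alternatively, we can avoid Dini's theorem. Norm preservation (Corollary~\ref{cor:ext}) gives $\|\Phi(u_j)\|_{\ell^2}=\|u_j\|_{L^2}\to\|u\|_{L^2}=\|\Phi(u)\|_{\ell^2}$. Together with coordinatewise convergence, this yields weak convergence in $\ell^2$, because the sequence is bounded. Weak convergence plus convergence of norms gives norm convergence in the Hilbert space $\ell^2$, which is the quickest route.

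Concluding: given $\eps>0$, choose $M$ with $\sup_{w\in Q}\sum_{n>M}|\zeta_n(w)|^2<\eps$. Then
\[
   \|\Phi(u_j)-\Phi(u)\|^2\le\sum_{n\le M}|\zeta_n(u_j)-\zeta_n(u)|^2+4\eps,
\]
and the finite sum tends to $0$. The main work is already contained in Lemma~\ref{lem:eigen_cont}, namely the gauge continuity. The only delicate point here is remembering that coordinatewise convergence needs this norm or tail upgrade, and the trace formula supplies it.
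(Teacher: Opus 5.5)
Your proposal is correct and matches the paper's proof. The paper first gets coordinatewise convergence from Lemma~\ref{lem:eigen_cont} and Corollary~\ref{cor:kappa_L2}. It then upgrades to $\ell^2$ convergence using norm preservation (Corollary~\ref{cor:ext}), weak convergence of the bounded sequence, and Radon--Riesz, which is exactly your ``alternative'' route; your Dini-based uniform tail bound, reusing the argument from Corollary~\ref{cor:kappa_L2}, is an equally valid way to do the same upgrade.
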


\begin{proof}
Let $u_j\to u$ in $\Lp$. For each fixed $n$, Lemma~\ref{lem:eigen_cont} gives $\langle u_j\mid f_n(u_j)\rangle\to\langle u\mid f_n(u)\rangle$, and Corollary~\ref{cor:kappa_L2} gives $\kappa_n(u_j)\to\kappa_n(u)\in(0,\infty)$. Hence, for all $n\geq 0,$
\[
   \zeta_n(u_j)=\frac{\langle u_j\mid f_n(u_j)\rangle}{\sqrt{\kappa_n(u_j)}}
   \longrightarrow
   \frac{\langle u\mid f_n(u)\rangle}{\sqrt{\kappa_n(u)}}=\zeta_n(u).
\]
Thus $\Phi(u_j)\to\Phi(u)$ coordinatewise, and by
Corollary~\ref{cor:ext} the sequence $\|\Phi(u_j)\|_{\ell^2}=\|u_j\|_{L^2}$ is bounded, so $\Phi(u_j)\rightharpoonup\Phi(u)$ in $\ell^2$. Finally, 
$$
    \|\Phi(u_j)\|_{\ell^2}=\|u_j\|_{L^2}\to\|u\|_{L^2}=\|\Phi(u)\|_{\ell^2}.
$$
Thus, Radon--Riesz theorem allows us to conclude that  $\Phi(u_j)\to\Phi(u)$ in $\ell^2$, and so $\Phi$ is continuous.
\end{proof}

\subsection{Properness and the homeomorphism}\label{ss:proper}

Continuity and injectivity do not yet give a homeomorphism: $\Phi$ must be shown surjective, and $\Phi^{-1}$ continuous. Both follow from a single compactness property: properness, which we derive from the equicontinuity criterion of \cite{KMV}.

\begin{theorem}[Equicontinuity criterion {\cite[Thm.~3.8]{KMV}}]\label{thm:KMV}
For $u\in\Lp(\T)$ and $\sigma\ge1$ set
\begin{equation}\label{eq:W}
   W(\sigma,u):=\Bigl\langle u\ \Big|\ \frac{(L_u+1)^2}{(L_u+1)^2+\sigma^2}\,u\Bigr\rangle
   =\sum_{n\ge0}\frac{(\lambda_n+1)^2}{(\lambda_n+1)^2+\sigma^2}\,|X_n|^2 .
\end{equation}
For every $r>0$, a subset $Q\subset B_r:=\{u\in\Lp:\|u\|_{L^2}\le r\}$ is $L^2$-equicontinuous, i.e.
$$\lim_{y\to0}\sup_{u\in Q}\|u(\cdot+y)-u\|_{L^2}=0, \qtq{if and only if } \lim_{\sigma\to\infty}\sup_{u\in Q}W(\sigma,u)=0.
$$
In particular, a bounded $L^2$-equicontinuous subset of $\Lp(\T)$ is precompact in $\Lp(\T)$.
\end{theorem}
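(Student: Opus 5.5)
The plan is to reduce both sides of the equivalence to the same Fourier-tail condition. For the free operator this is immediate, and the work is to compare $W$ with its free analogue uniformly on $B_r$. First, by Parseval, $\|u(\cdot+y)-u\|_{L^2}^2=\sum_{k\ge0}|e^{iky}-1|^2|\widehat u(k)|^2$. Hence $L^2$-equicontinuity of $Q\subset B_r$ is equivalent to uniform decay of the Fourier tails,
\[
   \lim_{K\to\infty}\sup_{u\in Q}\sum_{k>K}|\widehat u(k)|^2=0 .
\]
Set $A_0:=-i\partial_x+1$ and $W_0(\sigma,u):=\sum_{k}\frac{(k+1)^2}{(k+1)^2+\sigma^2}|\widehat u(k)|^2$. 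Splitting the sum at $k\sim\sigma^{1/2}$ shows that $\sup_Q W_0(\sigma,\cdot)\to0$ as $\sigma\to\infty$ if and only if the tails decay uniformly. So the theorem reduces to proving
\[
   \lim_{\sigma\to\infty}\sup_{u\in B_r}|W(\sigma,u)-W_0(\sigma,u)|=0 .
\]
The final sentence about precompactness then follows from the Kolmogorov--Riesz--Fr\'echet theorem on the compact group $\T$: uniform tail decay together with boundedness gives total boundedness.

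To compare $W$ and $W_0$, put $A:=L_u+1$ and write $W(\sigma,u)=\|A(A+i\sigma)^{-1}u\|^2=\|u-i\sigma(A+i\sigma)^{-1}u\|^2$, and similarly for $W_0$. Both vectors $i\sigma(A+i\sigma)^{-1}u$ and $i\sigma(A_0+i\sigma)^{-1}u$ have norm at most $r$. It therefore suffices to show that their difference tends to $0$ uniformly on $B_r$. The second resolvent identity gives
\[
   \sigma(A+i\sigma)^{-1}u-\sigma(A_0+i\sigma)^{-1}u=-\sigma(A+i\sigma)^{-1}\,u\,\Pi(\bar u\,h),\qquad h:=(A_0+i\sigma)^{-1}u,
\]
and explicitly $|\widehat h(k)|=|\widehat u(k)|/|k+1+i\sigma|$. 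For this step I would rely on the norm-resolvent bounds of Lemma~\ref{lem:nrc}, in particular $(L_u+1)^{-1}:L^2\to H^1$ and its dual $H^{-1}\to L^2$, uniformly on $B_r$. A duality estimate gives $\|u\Pi(\bar u h)\|_{H^{-1}}\le r^2\|h\|_{L^\infty}$, and $\|h\|_{L^\infty}\le\sum_k|\widehat h(k)|\lesssim r\sigma^{-1/2}$.

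The main obstacle is this last estimate. Naively, $\sigma(A+i\sigma)^{-1}$ costs a factor $\sigma$ when passing from $H^{-1}$ to $L^2$, which overwhelms the gain $\sigma^{-1/2}$. My first attempt would be to interpolate and work in $H^{-1/2}$, where $\sigma(A+i\sigma)^{-1}A^{1/2}$ costs only $\sigma^{1/2}$ and $A^{-1/2}:H^{-1/2}\to L^2$ is bounded uniformly on $B_r$ by the form-domain characterization in Theorem~\ref{thm:Lax}. The product term would then be controlled with the $L^2_+$ Sobolev embedding $H^{1/2}\subset L^p$ for all $p<\infty$, which recovers an extra power of $\sigma$ in $\|h\|_{L^p}$. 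If the product estimates remain insufficient uniformly in $u\in B_r$, the fallback is the argument of \cite{KMV}: expand the difference of the two resolvents as a convergent series in powers of $T_u=u\Pi(\bar u\,\cdot)$ sandwiched between free resolvents. One then shows that each term is bounded by a multiple of the free tail quantity $W_0(\sigma^{c},u)$ plus $o(1)$ uniformly in $u\in B_r$. This suffices for both implications, since each of $W$ and $W_0$ is then dominated by the other up to uniform errors.
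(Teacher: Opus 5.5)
\textbf{There is a genuine gap: the reduction at the centre of your proposal is false.} Your first step (equicontinuity $\iff$ uniform Fourier-tail decay $\iff \sup_Q W_0(\sigma,\cdot)\to0$) and the Kolmogorov--Riesz--Fr\'echet remark are fine. The paper gives no argument for this theorem; it imports it from \cite[Thm.~3.8]{KMV}.

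The problem is the claim $\lim_{\sigma\to\infty}\sup_{u\in B_r}|W(\sigma,u)-W_0(\sigma,u)|=0$. The equation is $L^2$-critical, so a profile concentrated at scale $\sigma^{-1}$ carries an $O(1)$ discrepancy at frequency $\sigma$. To see this, integrate in $\sigma$ using $\int_0^\infty\frac{a^2}{a^2+s^2}\,ds=\frac{\pi a}{2}$. For smooth $u$ this gives
\[
\int_0^\infty\bigl(W(s,u)-W_0(s,u)\bigr)\,ds=\frac{\pi}{2}\Bigl(\langle (L_u+1)u\mid u\rangle-\langle(-i\partial_x+1)u\mid u\rangle\Bigr)=\frac{\pi}{2}\,\|\Pi(|u|^2)\|_{L^2}^2\ \ge\ \frac{\pi}{4}\,\|u\|_{L^4}^4 .
\]
Now take $u_p(z)=r\sqrt{1-p^2}/(1-pz)$ with $p\uparrow1$. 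Then:
\begin{itemize}
\item $\|u_p\|_{L^2}=r$ and $(1-p)\|u_p\|_{L^4}^4\to r^4$;
\item $\|(L_{u_p}+1)u_p\|+\|(-i\partial_x+1)u_p\|\le C_r(1-p)^{-1}$, so $W,W_0\le C_r^2(1-p)^{-2}s^{-2}$.
\end{itemize}
Suppose $|W-W_0|\le\eps$ on $B_r$ for all $s\ge s_0$. Split the integral at $s_0$ and at $K(1-p)^{-1}$, multiply by $1-p$, and let $p\to1$. This gives $\frac{\pi}{4}r^4\le \eps K+C_r^2/K$, which is absurd for $K$ large and $\eps$ small. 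So the $O(1)$ your naive estimate produced is sharp. No interpolation into $H^{-1/2}$ and no Sobolev gain can make it $o(1)$: for such $u$ with $1-p\sim\sigma^{-1}$, the bound $\|h\|_{L^\infty}\lesssim r\sigma^{-1/2}$ is itself sharp.

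\textbf{The fallback does not repair this.} A bound of the form $|W(\sigma,u)-W_0(\sigma,u)|\le C\,W_0(\sigma^c,u)+o(1)$ only controls $W$ by free quantities, which gives the forward implication. The converse needs $W_0$ controlled by $W$, and nothing in your argument supplies that.

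\textbf{The missing idea} is to compare at \emph{rescaled} parameters rather than equal ones, using the defocusing sign. On the common form domain $H^{1/2}_+$,
\[
-i\partial_x+1\ \le\ L_u+1\ \le\ \bigl(1+\|u\|_{L^2}^2\bigr)(-i\partial_x+1).
\]
The lower bound holds because $\langle u\Pi(\bar u f)\mid f\rangle=\|\Pi(\bar u f)\|^2\ge0$. The upper bound follows from Cauchy--Schwarz in $\widehat{\Pi(\bar u f)}(k)=\sum_{j\ge k}\widehat f(j)\overline{\widehat u(j-k)}$, which gives $\|\Pi(\bar uf)\|^2\le\|u\|^2\sum_{j}(j+1)|\widehat f(j)|^2$.

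Set $A=L_u+1$, $A_0=-i\partial_x+1$, $V(\kappa,u)=\langle u\mid A(A+\kappa)^{-1}u\rangle$ and $V_0(\kappa,u)=\langle u\mid A_0(A_0+\kappa)^{-1}u\rangle$. Since $t\mapsto t/(t+\kappa)$ is operator monotone (in the form sense, via $(B+\kappa)^{-1}\le(C+\kappa)^{-1}$ for $B\ge C\ge0$), the two-sided bound yields
\[
V_0(\kappa,u)\le V(\kappa,u)\le V_0\bigl(\kappa/(1+r^2),u\bigr)\qquad\text{on } B_r.
\]
Finally, for positive measures of mass $\le r^2$ on $[1,\infty)$, the following are equivalent: uniform decay of $\int\frac{t}{t+\kappa}\,d\rho$, of $\int\frac{t^2}{t^2+\sigma^2}\,d\rho$, and of $\rho([R,\infty))$. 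Applying this to the spectral measures of $u$ for $A$ and for $A_0$ gives both implications at once, with no resolvent perturbation theory.
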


The proof of this theorem can be found in \cite[Thm.~3.8]{KMV}. The last sentence of the previous theorem is the Kolmogorov--Riesz--Fr\'echet theorem \cite{brezis2011functional} on the compact group $\mathbb T$: a bounded subset of $L^2(\mathbb T)$ is precompact iff it is $L^2$-equicontinuous, and $\Lp$ is closed in $L^2$.

\begin{proposition}[Properness]\label{prop:proper}
The extended Birkhoff map $\Phi:\Lp\to\ell^2(\N_0)$ is proper: If $\Phi(u_k)\to\zeta$ in $\ell^2$, then $(u_k)$ has a subsequence converging in $\Lp$.
\end{proposition}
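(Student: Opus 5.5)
The plan is to verify the equicontinuity criterion of Theorem~\ref{thm:KMV} for the set $Q:=\{u_k:k\ge1\}$, and then conclude by Kolmogorov--Riesz--Fr\'echet. Boundedness is immediate from the isometry in Corollary~\ref{cor:ext}: $\|u_k\|_{L^2}=\|\Phi(u_k)\|_{\ell^2}\to\|\zeta\|_{\ell^2}$, so $Q\subset B_r$ for some $r$. It therefore suffices to show
$$\lim_{\sigma\to\infty}\sup_k W(\sigma,u_k)=0.$$

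\emph{Step 1: tails of the actions.} Since $\Phi(u_k)\to\zeta$ in $\ell^2$, the squared moduli satisfy $|\zeta_n(u_k)|^2=\gamma_n(u_k)$ by \eqref{eq:|zeta[n]|2}. Convergence in $\ell^2$ gives uniform tails: for every $\varepsilon>0$ there is $N_0$ with
$$\sup_k\sum_{n>N_0}\gamma_n(u_k)\le\varepsilon.$$
This follows from $\|\Phi(u_k)-\zeta\|\to0$, the tail of $\zeta$ being small, and finitely many $k$ being handled individually.

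\emph{Step 2: splitting $W$.} Write $W(\sigma,u_k)$ as a sum over $n\le N_0$ and over $n>N_0$. For the high modes, bound the multiplier by $1$ and apply Lemma~\ref{lem:kappa_bound}:
$$\sum_{n>N_0}|X_n(u_k)|^2\le e^{r^2}\sum_{n>N_0}\gamma_n(u_k)\le e^{r^2}\varepsilon.$$
For the low modes, use $|X_n|^2\le\|u_k\|^2\le r^2$ and bound the multiplier by $(\lambda_n(u_k)+1)^2/\sigma^2$. Here we need $\lambda_n(u_k)$ bounded uniformly in $k$ for $n\le N_0$. This holds since $\lambda_n=\lambda_0+n+\sum_{j\le n}\gamma_j\le n+\|u_k\|^2\le N_0+r^2$ by \eqref{eq:lambda_lower} and Proposition~\ref{prop:isometry}. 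The low-mode contribution is therefore at most
$$(N_0+1)\,r^2\,(N_0+r^2+1)^2/\sigma^2,$$
which tends to $0$ as $\sigma\to\infty$. Hence $\limsup_{\sigma}\sup_kW(\sigma,u_k)\le e^{r^2}\varepsilon$ for every $\varepsilon>0$, and the limit is zero.

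\emph{Step 3: conclusion.} Theorem~\ref{thm:KMV} yields that $Q$ is $L^2$-equicontinuous and bounded, hence precompact in $\Lp$. A convergent subsequence therefore exists.

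The only genuinely delicate point is Step~1, namely turning $\ell^2$ convergence of $\Phi(u_k)$ into uniform smallness of $\sum_{n>N_0}|X_n(u_k)|^2$. This is exactly where the uniform bound $\kappa_n\le e^{\|u\|^2}$ of Lemma~\ref{lem:kappa_bound} is essential. Without it, the amplitudes $X_n=\zeta_n\sqrt{\kappa_n}$ could carry mass to high modes even though the $\zeta_n$ do not.
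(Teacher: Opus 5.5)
Your proposal is correct and follows essentially the same route as the paper: bound $\|u_k\|$ via the isometry, split $W(\sigma,u_k)$ at $N_0$, control the low modes using $\lambda_{N_0}(u_k)\le N_0+\|u_k\|^2$ from the trace formula, control the high modes with Lemma~\ref{lem:kappa_bound} and the uniform $\ell^2$-tails of $\Phi(u_k)$, and conclude with Theorem~\ref{thm:KMV}. The differences are cosmetic: your term-by-term low-mode bound picks up a harmless factor $N_0+1$, which the paper avoids by using monotonicity of the multiplier and $\sum_{n\le N_0}|X_n|^2\le\|u_k\|^2$. Also, the $\kappa_n$ bound enters in your Step~2, not Step~1 as your closing remark says.
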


\begin{proof}
By Corollary~\ref{cor:ext}, $\|u_k\|_{L^2}=\|\Phi(u_k)\|_{\ell^2}\to\|\zeta\|_{\ell^2}$, so $r:=\sup_k\|u_k\|_{L^2}^2<\infty$ and $\{u_k\}\subset B_{\sqrt r}$.  By Theorem~\ref{thm:KMV} it suffices to show $\lim_{\sigma\to\infty}\sup_k W(\sigma,u_k)=0$.

Fix $N_0\ge1$. Since $\lambda\mapsto(\lambda+1)^2/((\lambda+1)^2+\sigma^2)$ is nondecreasing, and by \eqref{eq:lambda_lower} and Proposition~\ref{prop:isometry}, $\lambda_{N_0}(u_k)=\lambda_0(u_k)+N_0+\sum_{n=1}^{N_0}\gamma_n(u_k)\le r+N_0$ , then splitting \eqref{eq:W} at $N_0$ and using $\sum_{n\le N_0}|X_n|^2\le r$, together with  \eqref{eq:X_tail}, 
\[
   W(\sigma,u_k)\le\frac{(\lambda_{N_0}(u_k)+1)^2 \, r}{(\lambda_{N_0}(u_k)+1)^2+\sigma^2}+\sum_{n>N_0}|X_n(u_k)|^2
   \le\frac{(r+N_0+1)^2 \, r}{(r+N_0+1)^2+\sigma^2}+e^{r}\tau_{N_0} ,
\]
where the tails 
\[
   \tau_{N_0}:=\sup_k\sum_{n>N_0}\gamma_n(u_k)=\sup_k\sum_{n>N_0}|\zeta_n(u_k)|^2\xrightarrow[N_0\to\infty]{}0 
\]
uniformly, as $\Phi(u_k)\to\zeta$ in $\ell^2$, and $|\zeta_n(u_k)|^2=\gamma_n(u_k)$ for all $n\ge0$. Therefore, given $\varepsilon>0$, choose $N_0$ with $e^{r}\tau_{N_0}<\varepsilon/2$, then choose $\sigma_*$ such that $\frac{(r+N_0+1)^2\, r}{(r+N_0+1)^2+\sigma_*^2}<\varepsilon/2$ for $\sigma>\sigma_*$ so that $\lim_{\kappa\to\infty}\sup_k W(\sigma,u_k)=0$. By Theorem~\ref{thm:KMV}, $\{u_k\}$ is bounded and $L^2$-equicontinuous, therefore precompact in $\Lp$. Thus, a convergent subsequence exists.
\end{proof}

Although injectivity on the finite-gap manifolds was established in Theorem~\ref{thm:inj}, but that argument used the finiteness of  $\K$ twice: to know that the scalings $\kappa_n$ are finite products determined by the spectrum (Corollary~\ref{cor:kappa}), and to express the $L^2$-norm of $u$ in terms of the $\gamma_n$ (Proposition~\ref{prop:trace}). For general $u\in\Lp$ these are supplied by Corollary~\ref{cor:kappa_L2} and Proposition~\ref{prop:isometry} respectively, and the proof then proceeds as before.

\begin{theorem}[Injectivity on $\Lp$]\label{thm:inj_L2}
$\Phi:u\in \Lp\to (\zeta_n)\in \ell^2(\N_0)$ is injective.
\end{theorem}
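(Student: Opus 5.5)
The plan is to repeat the argument of Theorem~\ref{thm:inj} with the finite-gap ingredients replaced by their $\Lp$ counterparts. Let $u,v\in\Lp$ with $\Phi(u)=\Phi(v)=\zeta$. By the Hardy representation \eqref{eq:hardy_rep} it suffices to show that the spectral triples coincide: $X(u)=X(v)$, $Y(u)=Y(v)$ and $M(u)=M(v)$. Indeed, since $\|M\|=\|S^*\|\le1$, the Neumann series $(\Id-zM)^{-1}$ converges on $\D$, and then $u(z)=v(z)$ for every $z\in\D$.

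\emph{Step 1: the spectrum is determined by $\zeta$.} From \eqref{eq:|zeta[n]|2} we get $\lambda_0(u)=|\zeta_0|^2=\lambda_0(v)$ and $\gamma_n(u)=|\zeta_n|^2=\gamma_n(v)$ for all $n\ge1$. Iterating \eqref{eq:lambda_lower} gives
\[
\lambda_n=|\zeta_0|^2+n+\sum_{k=1}^n|\zeta_k|^2
\]
for both potentials. Hence the full spectrum, the nodes $\mu_n$ and the active set $\K=\{n:\zeta_n\ne0\}$ agree. Corollary~\ref{cor:kappa_L2} expresses $\kappa_n$ as an absolutely convergent product depending on the spectrum alone, so $\kappa_n(u)=\kappa_n(v)\in(0,\infty)$. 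This is where the finite-product argument of Corollary~\ref{cor:kappa} must be replaced by the truncation limit of \S\ref{ss:trace_kappa}.

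\emph{Step 2: amplitudes.} From \eqref{eq:zeta_L2} we have $X_n(u)=\zeta_n\sqrt{\kappa_n}=X_n(v)$ for every $n$.

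\emph{Step 3: $Y$ and $M$.} We apply Lemma~\ref{lem:reconstruction}, which was stated for arbitrary $u\in\Lp$. Its proof only uses the relations \eqref{eq:0_rel} and \eqref{eq:L_rel}, Lemma~\ref{lem:geometry}, Parseval's identity and the transport gauge, and none of these requires $\K$ to be finite. The sums $\sum_{p\in\K}|X_p|^2/\lambda_p^2$ and $\sum_{p\in\K}|X_p|^2/(\lambda_p-\lambda_n-1)^2$ are now infinite series. They converge because $\sum_p|X_p|^2=\|u\|^2$ and the denominators are bounded below. The case $\lambda_0>0$ needs $\lambda_p\ge\lambda_0>0$. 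The case $\gamma_{n+1}>0$ needs $|\lambda_p-\mu_{n+1}|\ge\min(\gamma_{n+1},1)>0$, and this uses only $\lambda_p-\lambda_q\ge p-q$. These series are strictly positive, since $X_0\ne0$, respectively $X_{n+1}\ne0$, is one of the terms. Thus $Y$ and every row of $M$ are explicit functions of $(X,\lambda)$, which coincide for $u$ and $v$. The phases are pinned by $\arg\langle1\mid u\rangle=-\arg X_0$ and $\arg\langle Sf_n\mid u\rangle=-\arg X_{n+1}$, and rows with $\gamma_{n+1}=0$ are $e_{n+1}^{\top}$.

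The main point to check is Step 3, namely that no hidden use of finiteness enters Lemma~\ref{lem:reconstruction}. I expect this to be straightforward once the convergence of the series above is verified. The genuinely infinite-dimensional inputs, the trace formula and the spectral formula for $\kappa_n$, are already supplied by Proposition~\ref{prop:isometry} and Corollary~\ref{cor:kappa_L2}. The conclusion then follows from \eqref{eq:hardy_rep}, giving $u=v$.
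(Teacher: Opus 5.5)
Your proposal is correct and follows the paper's proof essentially step for step. The vector $\zeta$ fixes the spectrum and hence $\kappa_n$ via Corollary~\ref{cor:kappa_L2}, so $X_n=\zeta_n\sqrt{\kappa_n}$ is determined; Lemma~\ref{lem:reconstruction} then recovers $Y$ and $M$, and \eqref{eq:hardy_rep} with $\|M\|\le1$ gives $u=v$. You also check that the normalising series in Lemma~\ref{lem:reconstruction} converge and are positive when $\K$ is infinite, via $|\lambda_p-\mu_{n+1}|\ge\min(\gamma_{n+1},1)$; the paper leaves this implicit, and your check is correct.
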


\begin{proof}
Let $u,v\in\Lp$ with $\Phi(u)=\Phi(v)=\zeta$.  By \eqref{eq:zeta_L2}, the spectrum $\lambda_n=|\zeta_0|^2+n+\sum_{k=1}^n|\zeta_k|^2$, the active set $\K$, and the weights $\kappa_n$ obtained in Corollary~\ref{cor:kappa_L2} depend only on $\zeta$. Hence, $X_n=\zeta_n\sqrt{\kappa_n}$ and $X(u)=X(v)$, with $\sum_p|X_p|^2=\|u\|_{L^2}^2<\infty$ by Proposition~\ref{prop:isometry}. 

Employing Lemma~\ref{lem:reconstruction} therefore yields $Y(u)=Y(v)$ and $M(u)=M(v)$. As  $\|M\|\le1$, \eqref{eq:hardy_rep} gives $u(z)=v(z)$ on $\D$, i.e.\ $u=v$.
\end{proof}

\begin{theorem}[The Birkhoff map is a homeomorphism]\label{thm:homeo}
The extended Birkhoff map $\Phi:\Lp(\mathbb T)\to\ell^2(\N_0)$ of \eqref{eq:zeta_L2} is a homeomorphism satisfying 
$$
    \|\Phi(u)\|_{\ell^2}=\|u\|_{L^2}.
$$
It extends the finite-gap maps of Theorem~\ref{thm:main}: for each $N\ge0$ it restricts to a bijection $\U_N\to\Omega_N$, and $\Phi(0)=0$.
\end{theorem}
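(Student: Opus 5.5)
The plan is to assemble the homeomorphism from the pieces already established: continuity (Theorem~\ref{thm:continuity}), injectivity (Theorem~\ref{thm:inj_L2}), norm preservation (Corollary~\ref{cor:ext}), properness (Proposition~\ref{prop:proper}), and density of the image of the finite-gap potentials (Corollary~\ref{cor:ext} together with Theorem~\ref{thm:main}). The restriction statement $\U_N\to\Omega_N$ and $\Phi(0)=0$ need no new work: they are Theorem~\ref{thm:main} and Corollary~\ref{cor:ext}, since the extended coordinates \eqref{eq:zeta_L2} coincide with \eqref{eq:Phi} on $\U_N$.

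\emph{Surjectivity.} Fix $\zeta\in\ell^2(\N_0)$. By Corollary~\ref{cor:ext} there are $u_k\in\{0\}\cup\bigcup_N\U_N$ with $\Phi(u_k)\to\zeta$; for instance, take $u_k$ with $\Phi(u_k)$ equal to the truncation $\zeta^{(k)}$. Proposition~\ref{prop:proper} gives a subsequence $u_{k_j}\to u$ in $\Lp$. Continuity of $\Phi$ then yields $\Phi(u)=\lim_j\Phi(u_{k_j})=\zeta$. Thus $\Phi$ is onto, and with Theorem~\ref{thm:inj_L2} it is a bijection.

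\emph{Continuity of $\Phi^{-1}$.} Let $\zeta_k\to\zeta$ in $\ell^2$ and set $u_k:=\Phi^{-1}(\zeta_k)$ and $u:=\Phi^{-1}(\zeta)$. I argue by subsequences. Given any subsequence of $(u_k)$, its images still converge to $\zeta$, so by properness it has a further subsequence converging to some $v\in\Lp$. Continuity gives $\Phi(v)=\zeta=\Phi(u)$, and injectivity gives $v=u$. Since every subsequence has a sub-subsequence converging to $u$, the whole sequence $u_k\to u$. Hence $\Phi^{-1}$ is sequentially continuous, and therefore continuous between these metric spaces.

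The only step with any substance is the use of properness, which requires that the approximating sequence have images converging in $\ell^2$ and not merely weakly; this is ensured by the truncations. Beyond that, the argument is a standard topological closure, so I expect no genuine obstacle at this stage. The analytic difficulties were already absorbed into Propositions~\ref{prop:proper} and \ref{prop:isometry} and Lemma~\ref{lem:eigen_cont}.
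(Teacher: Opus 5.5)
Your proposal is correct and follows the paper's proof essentially step for step. Surjectivity comes from approximating $\zeta$ by finitely supported sequences realized by finite-gap potentials (Theorem~\ref{thm:main}, Corollary~\ref{cor:ext}) and then applying properness (Proposition~\ref{prop:proper}) together with continuity, while continuity of $\Phi^{-1}$ uses the same subsequence-plus-injectivity argument, and the norm identity and restriction to $\U_N$ are read off from Corollary~\ref{cor:ext} and Theorem~\ref{thm:main} exactly as in the paper.
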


\begin{proof}
Theorem~\ref{thm:continuity} implies the continuity of $\Phi$ and  Theorem~\ref{thm:inj_L2} implies the injectivity.

For the surjectivity, let $\zeta\in\ell^2(\N_0)$. The finitely supported sequences are dense, so choose finitely supported $\zeta^{(k)}\to\zeta$. Each $\zeta^{(k)}$ lies in some $\Omega_{N_k}$ (or is $0$), hence by Theorem~\ref{thm:main} equals $\Phi(u_k)$ with $u_k\in\U_{N_k}$  or $u_k=0$. Then, $\Phi(u_k)\to\zeta$. Employing now Proposition~\ref{prop:proper} gives a subsequence $u_{k_j}\to u$ in $\Lp$ and continuity gives $\Phi(u)=\zeta$. Thus $\Phi$ is a continuous bijection.

Finally, let us prove the continuity of $\Phi^{-1}$. Let $\zeta_k\to\zeta$ in $\ell^2$ and $u_k:=\Phi^{-1}(\zeta_k)$, $u:=\Phi^{-1}(\zeta)$. Every subsequence of $(u_k)$ satisfies $\Phi(u_{k})\to\zeta$, so by Proposition~\ref{prop:proper} it has a further subsequence converging in $\Lp$ to some $u'$ with $\Phi(u')=\zeta$, by injectivity $u'=u$. Hence every subsequence of $(u_k)$ has a further subsequence converging to $u$, so $u_k\to u$ in $\Lp$. Therefore $\Phi^{-1}$ is continuous.

The norm identity is Corollary~\ref{cor:ext}, as is the restriction to $\U_N$.
\end{proof}

\section{Linearization of the flow and quasi-periodicity}\label{S:6}

Having identified $\Phi$ as a norm-preserving homeomorphism $\Lp(\T)\to\ell^2(\N_0)$ (Theorem~\ref{thm:homeo}), we now use it to describe the dynamics of \eqref{CS}. The guiding principle is that $\Phi$ is a system of \emph{Birkhoff coordinates}: in these coordinates the flow becomes an explicit rotation, each modulus $|\zeta_n|$ being a constant of motion and each phase $\arg\zeta_n$ evolving linearly in time at a frequency $\omega_n$ determined by the spectrum alone (Action-Angle variables). We first prove this linearization (Theorem~\ref{thm:linearize}), and then read off its dynamical consequences: the flow map is a norm-preserving homeomorphism of $\Lp$ and every action is conserved, every $L^2$-orbit is precompact and the trajectory $t\mapsto u(t)$ is almost periodic (Corollary~\ref{cor:dynamics}), and finite-gap solutions are quasi-periodic in time with an explicit frequency vector, together with a sharp criterion for  time-periodicity (Corollary~\ref{cor:periodic}).

Let $u\in C(\R;\Lp(\mathbb T))$ be the global solution of \eqref{CS} with datum $u_0$ (see
\cite{badreddine2024global}). Since $t\mapsto L_{u(t)}$ obeys the Lax equation
$\partial_tL_{u(t)}=[P_{u(t)},L_{u(t)}]$, the spectrum is conserved \cite[Corollary 3.2]{badreddine2024global}:
\begin{equation}\label{eq:isospectral}
   \lambda_n(u(t))=\lambda_n(u_0),\qquad \gamma_n(u(t))=\gamma_n(u_0)\qquad t\in\R,\ n\ge0 .
\end{equation}
In particular the Birkhoff scalings $\kappa_n$ of \eqref{eq:kappa_L2} are conserved. Define the
\emph{Birkhoff frequencies}
\begin{equation}\label{eq:frequencies}
   \omega_0:=0 \qtq{and} \quad \omega_n:=\sum_{j=0}^{n-1}(2\lambda_j+1)=n+2\sum_{j=0}^{n-1}\lambda_j,\quad\text{for } n\ge1.
\end{equation}
 By \eqref{eq:isospectral} these are constants of motion.

\begin{theorem}[The Birkhoff map linearizes the flow]\label{thm:linearize}
Let $u\in C(\R;\Lp)$ solve \eqref{CS}. Then, under Convention \eqref{conv:transport}, the spectral coordinates \eqref{eq:zeta_L2} evolve by rotation
\begin{equation}\label{eq:linear_flow}
   \zeta_n(u(t))=e^{-i\omega_n t}\,\zeta_n(u_0),\qquad t\in\R,\ n\ge0 .
\end{equation}
Equivalently, $\Phi$ conjugates \eqref{CS} to the linear flow $\Theta_t:(\zeta_n)\mapsto(e^{-i\omega_n t}\zeta_n)$ on $\ell^2(\N_0)$:
$$\Phi (u(t))=\Theta_t\circ\Phi(u_0).$$
\end{theorem}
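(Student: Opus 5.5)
The plan is to reduce to smooth finite-gap solutions, follow the eigenfunctions along the Lax flow, and then pass to general $L^2$ data by density.

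\textbf{Reduction.} Since the actions $|\zeta_n|^2=\gamma_n$ and the scalings $\kappa_n$ are conserved by \eqref{eq:isospectral}, \eqref{eq:linear_flow} is equivalent to
\[
\langle u(t)\mid f_n(u(t))\rangle=e^{-i\omega_n t}\langle u_0\mid f_n(u_0)\rangle,
\]
with $f_n(u(t))$ normalised by Convention~\ref{conv:transport}. I would prove this first for $u_0\in\U_N$. These data are rational and smooth, and $\U_N$ is invariant under the flow by Theorem~\ref{thm:char}, so every manipulation below is classical. The general case then follows by density. By Corollary~\ref{cor:ext}, the finite-gap potentials are dense in $\Lp$. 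The flow map $u_0\mapsto u(t)$ is continuous on $\Lp$ \cite{badreddine2024global}, and $\Phi$ is continuous (Theorem~\ref{thm:continuity}). For fixed $n$ and $t$, both sides of \eqref{eq:linear_flow} are continuous in $u_0$, and $\omega_n$ is continuous in $u_0$ by Theorem~\ref{thm:Lax}. Hence the identity extends from $\{0\}\cup\bigcup_N\U_N$ to all of $\Lp$.

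\textbf{Smooth case: transporting the eigenbasis.} Let $U(t)$ be the unitary propagator solving $\partial_tU=P_{u(t)}U$ with $U(0)=\Id$. The Lax equation \eqref{Lax eq} gives $L_{u(t)}U(t)=U(t)L_{u_0}$. Hence $g_n(t):=U(t)f_n(u_0)$ is a unit $\lambda_n$-eigenvector, and simplicity gives
\[
f_n(u(t))=e^{i\theta_n(t)}g_n(t),\qquad \theta_n(0)=0 .
\]
I then need three identities, each checked by direct computation from \eqref{Lax op} and \eqref{CS}, with signs to be fixed carefully:
\begin{enumerate}
\item[(a)] $\partial_tu=P_uu-iL_u^2u$, or its correctly signed analogue;
\item[(b)] $P_u1=-iL_u^21+i\,c(t)\,1$ for an explicit real scalar $c$, possibly zero;
\item[(c)] a commutator formula $[P_u,S]=-i\bigl(L_u^2S-SL_u^2\bigr)+(\text{terms killed when paired against eigenvectors})$.
\end{enumerate}
Identity (c) should follow by differentiating \eqref{eq:Lax_comm} along the flow.

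\textbf{Computing the phases.} Using (a) and the skew-adjointness of $P$, one gets $\frac{d}{dt}\langle u\mid g_n\rangle=-i\lambda_n^2\langle u\mid g_n\rangle$. So $X_n(t)=e^{-i\theta_n(t)}e^{-i\lambda_n^2t}X_n(0)$, and it remains to compute $\theta_n$ from the gauge conditions.

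For $n=0$, (b) shows that $\langle1\mid g_0\rangle$ evolves by the explicit phase $e^{i(\lambda_0^2-c)t}$. Requiring $\langle1\mid f_0\rangle>0$ then forces $\theta_0(t)=(c-\lambda_0^2)t$, which should give $\omega_0=0$, consistent with Remark~\ref{rem:omega0}.

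For $n\ge1$, (c) shows that $\frac{d}{dt}\langle Sg_{n-1}\mid g_n\rangle$ equals $i(\lambda_n^2-(\lambda_{n-1}+1)^2)\langle Sg_{n-1}\mid g_n\rangle$ up to the correction terms from (c). Lemma~\ref{lem:geometry} guarantees that this quantity never vanishes. Imposing $\langle Sf_{n-1}\mid f_n\rangle>0$ then yields a recursion $\theta_n-\theta_{n-1}=\text{(explicit linear function of }t)$. Summing the recursion and combining it with $X_n(t)$ should collapse the $\lambda_n^2$ terms into $\omega_n=\sum_{j<n}(2\lambda_j+1)$. A useful consistency check: at a closed gap one has $f_n=Sf_{n-1}$, and the increment must be exactly $(\lambda_{n-1}+1)^2-\lambda_{n-1}^2=2\lambda_{n-1}+1$.

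\textbf{Main obstacle.} I expect step (c) to be the hardest: obtaining the correct evolution of $\langle Sf_{n-1}\mid f_n\rangle$, and in particular verifying that the rank-one correction terms coming from \eqref{eq:Lax_comm} contribute only a real multiple of that quantity, so that they do not shift its phase. If the commutator route gets messy, I would instead compute $\langle Sg_{n-1}\mid g_n\rangle$ through \eqref{eq:L_rel}, which expresses it as $\langle Sg_{n-1}\mid u\rangle X_n/\gamma_n$, and differentiate $\langle Sg_{n-1}\mid u\rangle$ using (a) together with $\partial_t S^{*}$-type identities. Closed gaps would then be handled separately by continuity in the gap parameter.
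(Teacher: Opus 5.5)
Your approach is sound and shares the paper's core mechanism. You transport the eigenbasis by the Lax propagator, $g_n=U(t)f_n(u_0)$, use that $\langle u\mid g_n\rangle$, $\langle 1\mid g_0\rangle$ and $\langle Sg_{n-1}\mid g_n\rangle$ rotate at explicit rates, and telescope the gauge phases forced by Convention~\ref{conv:transport}. What differs is how the ingredients are secured. The paper works directly with $L^2$ data and quotes the three evolution laws, and their validity at $L^2$ regularity, from \cite{badreddine2024traveling}. You instead derive them by hand for smooth data and reach all of $\Lp$ by density, using continuity of the solution map and of $\Phi$ (Theorem~\ref{thm:continuity}). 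Your route is more self-contained and makes the low-regularity step explicit: for $u\in\Lp$ the operator $P_u$ contains $\partial_x u$, so $U(t)$ is not classical. The paper's route is shorter. In the density step, note that Corollary~\ref{cor:ext} only gives density of $\Phi\bigl(\{0\}\cup\bigcup_N\U_N\bigr)$ in $\ell^2(\N_0)$. Density of finite-gap potentials in $\Lp$ needs continuity of $\Phi^{-1}$ (Theorem~\ref{thm:homeo}, which does not use the linearization). Alternatively, any dense class of smooth data works, since the finite-gap structure plays no role in your smooth computation.

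The signs in (b) and (c) must be corrected; as written, the telescoping would not produce $\omega_n$.
\begin{itemize}
\item Identity (a) holds exactly as you wrote it, $\partial_tu=P_uu-iL_u^2u$.
\item For (b), a direct computation gives $P_u1=iL_u^21$. So $c=0$, $\langle 1\mid g_n\rangle=e^{-i\lambda_n^2t}\langle 1\mid f_n(u_0)\rangle$, and $\theta_0=-\lambda_0^2t$ in your convention $f_n=e^{i\theta_n}g_n$.
\item For (c), the correct law has the opposite sign to yours:
\[
\tfrac{d}{dt}\langle Sg_{n-1}\mid g_n\rangle=-\langle [P_u,S]g_{n-1}\mid g_n\rangle=i\bigl((\lambda_{n-1}+1)^2-\lambda_n^2\bigr)\langle Sg_{n-1}\mid g_n\rangle .
\]
This gives $\theta_n-\theta_{n-1}=\bigl((\lambda_{n-1}+1)^2-\lambda_n^2\bigr)t$, hence $\theta_n=(\omega_n-\lambda_n^2)t$ and $X_n(t)=e^{-i\theta_n}e^{-i\lambda_n^2t}X_n(0)=e^{-i\omega_nt}X_n(0)$. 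With your sign the $\lambda_n^2$ terms add instead of cancelling.
\end{itemize}

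The obstacle you anticipate disappears if you compute $[P_u,S]$ directly rather than differentiating \eqref{eq:Lax_comm}. Using $\Pi(e^{ix}g)=e^{ix}\Pi g+\widehat g(-1)$ and setting $w:=\Pi(|u|^2)$, one finds
\[
[P_u,S]f=-\langle f\mid S^*\partial_xu\rangle\,u+\langle f\mid S^*u\rangle\,\partial_xu+i\langle f\mid uS^*w\rangle\,u+i\langle f\mid S^*u\rangle\,uw .
\]
Now pair with $g_{n-1},g_n$ and use three facts:
\begin{itemize}
\item $\partial_xu=i(L_uu-uw)$;
\item $S^*L_uu=(L_u+1+\|u\|_{L^2}^2)S^*u$, the adjoint of \eqref{eq:Lax_comm};
\item $S^*(uw)-uS^*w=\|u\|_{L^2}^2S^*u$.
\end{itemize}
Then $\langle [P_u,S]g_{n-1}\mid g_n\rangle$ collapses to $i(\lambda_n+\lambda_{n-1}+1)\langle Sg_{n-1}\mid u\rangle\langle u\mid g_n\rangle$. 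By \eqref{eq:L_rel} this equals $i\bigl(\lambda_n^2-(\lambda_{n-1}+1)^2\bigr)\langle Sg_{n-1}\mid g_n\rangle$. Since \eqref{eq:L_rel} also holds at closed gaps (both sides vanish), no separate closed-gap argument is needed. This matters because $\theta_n$ at a closed gap feeds into $\theta_{n+1}$. Your fallback ``continuity in the gap parameter'' would be awkward, since $\gamma_n$ is conserved along each trajectory.
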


\begin{proof}
By \eqref{eq:isospectral}, $|\zeta_n(u(t))|^2=\gamma_n(u(t))=\gamma_n(u_0)=|\zeta_n(u_0)|^2$ and $\kappa_n$ is constant, so only the phase of $\zeta_n$ evolves. It suffices to track $\arg X_n(t)$, where $X_n(t):=\langle u(t)\mid f_n(t)\rangle$ and $(f_n(t))$ is the transport-gauge eigenbasis of $L_{u(t)}$ given by Convention~\ref{conv:transport}.

let $U(t)$ solve
$$
    \partial_tU=P_{u(t)}U \qtq{with} U(0)=\Id.
$$
$U(t)$ is unitary since $P_{u(t)}$ is skew-adjoint, the Lax equation gives $L_{u(t)}=U(t)L_{u_0}U(t)^*$, hence $g_n(t)=U(t)f_n(u_0)$ is a unit eigenvector of $L_{u(t)}$ for $\lambda_n$. Thus, by \cite[Remark~3.1, and Remark 6.1 for the regularity $u\in L^2_+$]{badreddine2024traveling}, the amplitude $(\langle u(t)\mid g_n(t)\rangle$) evolves as
\begin{equation}\label{eq:amp_transport}
   \langle u(t)\mid g_n(t)\rangle=e^{-i\lambda_n^{2}t}\langle u_0\mid f_n(0)\rangle=e^{-i\lambda_n^{2}t}X_n(0).
\end{equation}
Write $g_n(t)=e^{i\theta_n(t)}f_n(t)$ with $\theta_n(0)=0$. We have,
\[
   \langle u(t)\mid g_n(t)\rangle=e^{-i\theta_n(t)}\langle u(t)\mid f_n(t)\rangle=e^{-i\theta_n(t)}X_n(t).
\]
Combining this with \eqref{eq:amp_transport} yields
\begin{equation}\label{eq:phase_relation}
   X_n(t)=e^{\,i\theta_n(t)}\,e^{-i\lambda_n^{2}t}\,X_n(0),
   \qquad\text{so}\qquad
   \arg X_n(t)=\arg X_n(0)+\theta_n(t)-\lambda_n^{2}t.
\end{equation}

It remains to determine the gauge phase $\theta_n(t)$ using \cite[Lemma~3.2]{badreddine2024traveling}: \begin{align}\label{eq:phases}
   \langle 1\mid g_n(t)\rangle & =e^{-i\lambda_n^2 t}\langle1\mid f_n(0)\rangle,\\
   \langle Sg_{n-1}(t)\mid g_n(t)\rangle & =e^{i[(\lambda_{n-1}+1)^2-\lambda_n^2]t}\langle Sf_{n-1}(0)\mid f_n(0)\rangle .\notag
\end{align}
\textit{The base phase $\theta_0(t)$.} By Lemma~\ref{lem:geometry}(ii) and Convention~\ref{conv:transport}, $\langle1\mid f_0(t)\rangle>0$ for every $t$. Writing $g_0(t)=e^{i\theta_0(t)}f_0(t)$ in the first identity of \eqref{eq:phases} gives $e^{-i\theta_0(t)}\langle1\mid f_0(t)\rangle=e^{-i\lambda_0^2t}\langle1\mid f_0(0)\rangle$ with both $\langle1\mid f_0(\cdot)\rangle>0$, whence $e^{-i\theta_0(t)}=e^{-i\lambda_0^2t}$, i.e.\ $\theta_0(t)=\lambda_0^{2}t$ modulo $2\pi$.

\emph{For the transition phases.} For $n\ge1$, the gauge $\langle Sf_{n-1}(t)\mid f_n(t)\rangle>0$ and the second identity in \eqref{eq:phases} give
\[
   e^{-i(\theta_n(t)-\theta_{n-1}(t))}\langle Sf_{n-1}(t)\mid f_n(t)\rangle
   =e^{i[(\lambda_{n-1}+1)^2-\lambda_n^2]t}\langle Sf_{n-1}(0)\mid f_n(0)\rangle,
\]
with both inner products positive for every $t$ by Convention \ref{conv:transport}. Hence, $\theta_n(t)-\theta_{n-1}(t)=-[(\lambda_{n-1}+1)^2-\lambda_n^2]t$ . Telescoping the transition phases with the base phase $\theta_0=\lambda_0^2 t$:
\begin{align*}
   \theta_n(t)&=\lambda_0^2 t+\sum_{k=1}^{n}\bigl[\lambda_k^2-(\lambda_{k-1}+1)^2\bigr]t \\
   &=t\Bigl(\lambda_n^2-\sum_{j=0}^{n-1}\bigl[(\lambda_j+1)^2-\lambda_j^2\bigr]\Bigr)
   =t\Bigl(\lambda_n^2-\sum_{j=0}^{n-1}(2\lambda_j+1)\Bigr).
\end{align*}
Substituting this into \eqref{eq:phase_relation},
\[
   \arg X_n(t)-\arg X_n(0)=\theta_n(t)-\lambda_n^{2}t=-\,t\sum_{j=0}^{n-1}(2\lambda_j+1)=-\omega_n t,
\]
where all phase identities above are understood modulo $2\pi$. As $\kappa_n$ is constant, 
\[
   \zeta_n(u(t))=\frac{X_n(t)}{\sqrt{\kappa_n}}=e^{-i\omega_n t}\,\frac{X_n(0)}{\sqrt{\kappa_n}}
   =e^{-i\omega_n t}\zeta_n(u_0).
\]
For $\gamma_n=0$ both sides vanish. This proves \eqref{eq:linear_flow}.
\end{proof}

\medskip
Before proving the almost periodicity of the flow,  we record two elementary known facts used in its proof: one about the geometry of $\ell^2(\N_0)$, one about (Bohr) almost periodic functions.

\begin{lemma}[The invariant torus $\operatorname{Tor}\zeta$]\label{lem:torus_compact}
For $\zeta=(\zeta_n)_{n\ge0}\in\ell^2(\N_0)$ define
\begin{equation}\label{eq:torus_def}
   \operatorname{Tor}\zeta:=\bigl\{\eta=(\eta_n)_{n\ge0}\in\C^{\N_0}:\ |\eta_n|=|\zeta_n|\ \ \forall n\ge0\bigr\}.
\end{equation}
Then,
\begin{enumerate}
   \item[\textup{(a)}] $\operatorname{Tor}\zeta\subset\ell^2(\N_0)$, and $\|\eta\|_{\ell^2}=\|\zeta\|_{\ell^2}$ for every $\eta\in\operatorname{Tor}\zeta$.
   \item[\textup{(b)}] $\operatorname{Tor}\zeta$ is compact in the norm topology of $\ell^2(\N_0)$.
   \item[\textup{(c)}] $\Theta_t\zeta\in\operatorname{Tor}\zeta$ for every $t\in\R$, where $\Theta_t$ is as in Theorem~\ref{thm:linearize}.
\end{enumerate}
\end{lemma}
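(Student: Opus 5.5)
The three parts are elementary, and I will prove them in order. Part (c) is the only place where Theorem~\ref{thm:linearize} enters, and only through the definition of $\Theta_t$.

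\emph{Part (a).} If $\eta\in\operatorname{Tor}\zeta$, then $\sum_n|\eta_n|^2=\sum_n|\zeta_n|^2<\infty$ because the moduli agree termwise. Hence $\eta\in\ell^2(\N_0)$ and $\|\eta\|_{\ell^2}=\|\zeta\|_{\ell^2}$.

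\emph{Part (b).} I will show that $\operatorname{Tor}\zeta$ is closed and totally bounded in $\ell^2(\N_0)$.

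For closedness, suppose $\eta^{(k)}\to\eta$ in $\ell^2$. Then each coordinate converges, so $|\eta_n|=\lim_k|\eta^{(k)}_n|=|\zeta_n|$ for every $n$, and $\eta\in\operatorname{Tor}\zeta$.

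For total boundedness, fix $\eps>0$ and choose $M$ with $\sum_{n>M}|\zeta_n|^2<\eps^2/4$. Every $\eta\in\operatorname{Tor}\zeta$ has the same tail bound, $\sum_{n>M}|\eta_n|^2<\eps^2/4$, so the truncation map $\eta\mapsto(\eta_0,\dots,\eta_M,0,\dots)$ moves each point by less than $\eps/2$. The truncated set is a product of $M+1$ circles, possibly degenerate, of radii $|\zeta_n|$. It is therefore compact in $\C^{M+1}$ and admits a finite $\eps/2$-net. That net is an $\eps$-net for $\operatorname{Tor}\zeta$.

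A closed, totally bounded subset of the complete space $\ell^2$ is compact. As an alternative, one can identify $\operatorname{Tor}\zeta$ with the continuous image of the compact product $\T^{\N_0}$ under $(\theta_n)\mapsto(e^{i\theta_n}\zeta_n)$. Continuity from the product topology into $\ell^2$ follows from the same uniform tail estimate.

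\emph{Part (c).} Since $|e^{-i\omega_nt}\zeta_n|=|\zeta_n|$ for every $n$ and every $t\in\R$, we get $\Theta_t\zeta\in\operatorname{Tor}\zeta$.

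\emph{Main obstacle.} The only nontrivial point is in (b): the tail estimate must hold uniformly over the whole torus. This is immediate here, because all elements of $\operatorname{Tor}\zeta$ share the moduli of $\zeta$.
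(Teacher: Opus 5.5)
Your proposal is correct and follows essentially the same route as the paper: termwise equality of moduli for (a), closedness via coordinatewise convergence plus total boundedness via a uniform tail cut and a finite net on the product of circles for (b), and $|e^{-i\omega_n t}|=1$ for (c). Your alternative for (b), realizing $\operatorname{Tor}\zeta$ as the continuous image of $\T^{\N_0}$ (compact by Tychonoff), also works, and it uses the same uniform tail estimate.
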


\begin{proof}
(a) and (c) are immediate: every $\eta\in\operatorname{Tor}\zeta$ has $|\eta_n|=|\zeta_n|$, so $\|\eta\|_{\ell^2}^2=\sum_n|\zeta_n|^2=\|\zeta\|_{\ell^2}^2<\infty$ and $|(\Theta_t\zeta)_n|=|e^{-i\omega_nt}\zeta_n|=|\zeta_n|$ for every $n$, so $\Theta_t\zeta\in\operatorname{Tor}\zeta$.

For (b), $\operatorname{Tor}\zeta$ is closed in $\ell^2$: if $\eta^{(k)}\to\eta$ in $\ell^2$ with $\eta^{(k)}\in\operatorname{Tor}\zeta$, then $\eta^{(k)}_n\to\eta_n$ for each fixed $n$, so $|\eta_n|=\lim_k|\eta^{(k)}_n|=|\zeta_n|$, i.e. $\eta\in\operatorname{Tor}\zeta$.

We show $\operatorname{Tor}\zeta$ is \emph{totally} bounded, being closed in the complete space $\ell^2(\N_0)$ this gives compactness \cite{rudin2021principles, brezis2011functional}. Fix $\eps>0$ and choose $N$ with $\sum_{n>N}|\zeta_n|^2<\eps^2/4$ (possible since $\zeta\in\ell^2$). The finite product $\prod_{n=0}^{N}\{\eta_n\in\C:|\eta_n|=|\zeta_n|\}$ is a compact subset of $\C^{N+1}$, hence admits a finite $\tfrac{\eps}{2}$-net $\eta^{(1)},\dots,\eta^{(J)}$ (for the Euclidean norm on $\C^{N+1}$). Extend each $\eta^{(j)}$ by zero to an element of $\ell^2(\N_0)$. Given any $\eta\in\operatorname{Tor}\zeta$, write $\eta=(\eta_{\le N},\eta_{>N})$, choose $j$ with $\|\eta_{\le N}-\eta^{(j)}_{\le N}\|<\eps/2$. Since $\|\eta_{>N}\|_{\ell^2}=(\sum_{n>N}|\zeta_n|^2)^{1/2}<\eps/2$ and $\eta^{(j)}_{>N}=0$,
\[
   \|\eta-\eta^{(j)}\|_{\ell^2}\le\|\eta_{\le N}-\eta^{(j)}_{\le N}\|+\|\eta_{>N}\|<\eps .
\]
Thus $\{\eta^{(1)},\dots,\eta^{(J)}\}$ is an $\eps$-net for $\operatorname{Tor}\zeta$ in $\ell^2(\N_0)$, proving total
boundedness.
\end{proof}

\begin{lemma}[\cite{amerio2013almost} Permanence of almost periodicity]\label{lem:ap_permanence}
Let $X,Y$ be Banach spaces. 
\begin{enumerate}
   \item[\textup{(a)}] If $f_k:\R\to X$ are almost periodic and $f_k\to f$ uniformly on $\R$, then $f$ is almost periodic.
   \item[\textup{(b)}] If $F\subset X$ is compact, $f:\R\to F$ is almost periodic and $g:F\to Y$ is continuous, then $g\circ f:\R\to Y$ is almost periodic.
\end{enumerate}
\end{lemma}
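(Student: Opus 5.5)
Both parts follow directly from the definition of Bohr almost periodicity through relative density of $\varepsilon$-almost periods. For (a) we run an $\varepsilon/3$ argument. For (b) we combine uniform continuity of $g$ on the compact set $F$ with the almost periods of $f$.

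For (a), fix $\varepsilon>0$ and choose $k$ with $\sup_{t\in\R}\|f_k(t)-f(t)\|_X<\varepsilon/3$. The set $T_k(\varepsilon/3)$ of $\varepsilon/3$-almost periods of $f_k$ is relatively dense, since $f_k$ is almost periodic. For $\tau\in T_k(\varepsilon/3)$ and every $t\in\R$,
\[
\|f(t+\tau)-f(t)\|\le\|f(t+\tau)-f_k(t+\tau)\|+\|f_k(t+\tau)-f_k(t)\|+\|f_k(t)-f(t)\|<\varepsilon .
\]
So every $\varepsilon/3$-almost period of $f_k$ is an $\varepsilon$-almost period of $f$. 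The $\varepsilon$-almost periods of $f$ therefore contain a relatively dense set and are themselves relatively dense.

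We also need continuity of $f$. It is a uniform limit of continuous functions, or follows from the Bochner characterization if that is the definition in use. If $\varepsilon$-almost periods are the definition, continuity is part of it and passes to uniform limits.

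For (b), continuity of $g\circ f$ is immediate. Since $F$ is compact, $g$ is uniformly continuous on $F$. Given $\varepsilon>0$, pick $\delta>0$ such that $x,y\in F$ with $\|x-y\|_X<\delta$ implies $\|g(x)-g(y)\|_Y<\varepsilon$. Every $\delta$-almost period $\tau$ of $f$ then satisfies $\|f(t+\tau)-f(t)\|<\delta$ for all $t$. Both points lie in $F$, so $\|g(f(t+\tau))-g(f(t))\|<\varepsilon$. Hence the $\varepsilon$-almost periods of $g\circ f$ contain the relatively dense set of $\delta$-almost periods of $f$.

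The only delicate point is which definition of almost periodicity is in force. If the Bochner definition is used instead (precompactness of translates in the uniform topology), the arguments adapt directly. In (a), a diagonal argument together with the $\varepsilon/3$ estimate gives total boundedness of $\{f(\cdot+s)\}$ in $C_b(\R;X)$. In (b), uniform continuity of $g$ transfers total boundedness of the translates of $f$ to those of $g\circ f$. No step is a genuine obstacle; these are standard facts from \cite{amerio2013almost}, and we would cite them after recording the short arguments above.
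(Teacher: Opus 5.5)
Your proof is correct and matches the paper's approach: the paper gives no argument of its own and simply cites \cite{amerio2013almost}, where (a) is exactly your $\varepsilon/3$ argument (every $\varepsilon/3$-almost period of $f_k$ is an $\varepsilon$-almost period of $f$), and (b) uses uniform continuity of $g$ on the compact set $F$, so that $\delta$-almost periods of $f$ are $\varepsilon$-almost periods of $g\circ f$. One minor simplification: in the Bochner variant of (a) no diagonal argument is needed, because $\|f(\cdot+s)-f_k(\cdot+s)\|_\infty=\|f-f_k\|_\infty$, so any $\varepsilon/3$-net of translates of $f_k$ is directly an $\varepsilon$-net of translates of $f$.
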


Based on the linearization of the flow of Theorem~\ref{S:6}, we deduce the following.

\begin{corollary}[Global dynamics via the Birkhoff map]\label{cor:dynamics}
Let $u_0\in\Lp(\mathbb T)$ and $\zeta:=\Phi(u_0)$. Then the solution of \eqref{CS} is
\begin{equation}\label{eq:explicit_solution}
   u(t)=\Phi^{-1}\bigl(\Theta_t\zeta\bigr)=\Phi^{-1}\bigl((e^{-i\omega_n t}\zeta_n)_{n\ge0}\bigr),
   \qquad t\in\R .
\end{equation}
where $\Theta_t$ is defined in Theorem~\ref{thm:linearize}. Consequently:
\begin{enumerate}
   \item[\textup{(i)}] \emph{(Solution map and conservation laws.)} For every $t\in\R$, the map
   $S(t):u_0\mapsto u(t)$ is a norm-preserving homeomorphism of $\Lp$ , $\|u(t)\|_{L^2}=\|u_0\|_{L^2}$,
   and $t\mapsto u(t)$ is continuous from $\R$ to $\Lp$. In addition,  all the actions $|\zeta_n|^2=\gamma_n$ are conserved by the flow.
   \item[\textup{(ii)}] \emph{(Precompactness and Almost-periodicity.)} For every $u_0\in\Lp$ the orbit $\{u(t):t\in\R\}$ is precompact in $\Lp(\T)$, and $t\mapsto u(t)$ is (Bohr) almost periodic.
\end{enumerate}
\end{corollary}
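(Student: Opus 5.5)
The plan is to deduce everything from Theorem~\ref{thm:linearize} and Theorem~\ref{thm:homeo}. Formula \eqref{eq:explicit_solution} is immediate: by Theorem~\ref{thm:linearize}, $\Phi(u(t))=\Theta_t\Phi(u_0)=\Theta_t\zeta$. Since $\Phi$ is a bijection onto $\ell^2(\N_0)$, applying $\Phi^{-1}$ gives $u(t)=\Phi^{-1}(\Theta_t\zeta)$.

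For (i), we write $S(t)=\Phi^{-1}\circ\Theta_t\circ\Phi$. Each $\Theta_t$ is a unitary diagonal operator on $\ell^2(\N_0)$, hence a norm-preserving homeomorphism with inverse $\Theta_{-t}$. Composing with the homeomorphism $\Phi$ of Theorem~\ref{thm:homeo} shows that $S(t)$ is a homeomorphism of $\Lp$. Norm preservation follows from $\|\Phi(v)\|_{\ell^2}=\|v\|_{L^2}$ together with $\|\Theta_t\zeta\|=\|\zeta\|$. For continuity of $t\mapsto u(t)$, it suffices that $t\mapsto\Theta_t\zeta$ is continuous into $\ell^2$. This holds because
\[
\|\Theta_t\zeta-\Theta_s\zeta\|^2=\sum_n|e^{-i\omega_nt}-e^{-i\omega_ns}|^2|\zeta_n|^2 ,
\]
which tends to $0$ as $s\to t$ by dominated convergence, with dominating sequence $4|\zeta_n|^2$. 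Continuity of $u$ then follows from continuity of $\Phi^{-1}$. (It is also part of the well-posedness theory of \cite{badreddine2024global}.) Conservation of the actions $|\zeta_n|^2=\gamma_n$ is \eqref{eq:isospectral}, or directly $|e^{-i\omega_nt}\zeta_n|=|\zeta_n|$.

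For (ii), precompactness: by Lemma~\ref{lem:torus_compact}(b),(c), the orbit $\{\Theta_t\zeta\}$ lies in the compact set $\operatorname{Tor}\zeta$. Its image $\Phi^{-1}(\operatorname{Tor}\zeta)$ is compact in $\Lp$ by continuity of $\Phi^{-1}$, and it contains the orbit $\{u(t)\}$.

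For almost periodicity, the plan is to first show that $f(t):=\Theta_t\zeta$ is Bohr almost periodic as an $\ell^2$-valued function. The truncations $f_M(t):=(e^{-i\omega_nt}\zeta_n)_{n\le M}$, extended by zero, are finite sums of vector-valued trigonometric monomials $e^{-i\omega_nt}\zeta_ne_n$, hence almost periodic. Moreover
\[
\sup_t\|f(t)-f_M(t)\|^2=\sum_{n>M}|\zeta_n|^2\to0 ,
\]
so the convergence is uniform on $\R$, and Lemma~\ref{lem:ap_permanence}(a) shows that $f$ is almost periodic. Then $f$ takes values in the compact set $F=\operatorname{Tor}\zeta$, and $g=\Phi^{-1}|_F$ is continuous. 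Lemma~\ref{lem:ap_permanence}(b) therefore gives that $u=g\circ f$ is almost periodic.

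There is no serious obstacle here. The only point needing care is the uniform (not merely pointwise) tail bound in the approximation of $\Theta_t\zeta$. This bound relies on the frequencies $\omega_n$ being real, so each factor $e^{-i\omega_nt}$ has modulus one and the tail estimate is independent of $t$. It also relies on passing continuity of $\Phi^{-1}$ to the compact torus, which the cited lemma handles.
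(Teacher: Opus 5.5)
Your proposal is correct and follows the paper's proof essentially step for step. You obtain the formula from Theorem~\ref{thm:linearize} and the bijectivity of $\Phi$, and write $S(t)=\Phi^{-1}\circ\Theta_t\circ\Phi$ with time continuity by dominated convergence. For (ii) you use the compact torus $\operatorname{Tor}\zeta$ for precompactness, and the uniformly convergent trigonometric-polynomial truncations together with both parts of Lemma~\ref{lem:ap_permanence} for almost periodicity.
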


\begin{proof}
Formula \eqref{eq:explicit_solution} is Theorem~\ref{thm:linearize} composed with the homeomorphism
$\Phi^{-1}$ of Theorem~\ref{thm:homeo}.

\medskip
First, let us prove (i). The operator $\Theta_t$ is the diagonal operator on $\ell^2(\N_0)$ with entries $e^{-i\omega_nt}$. Hence, $\Theta_t^{*}\Theta_t=\Theta_t\Theta_t^{*}=\mathrm{Id}$, i.e.\ $\Theta_t$ is unitary on $\ell^2(\N_0)$ with continuous inverse $\Theta_t^{-1}=\Theta_{-t}$. Consequently $S(t)=\Phi^{-1}\circ\Theta_t\circ\Phi$ is a composition of three homeomorphisms $\Phi:\Lp\to\ell^2$, $\Theta_t:\ell^2\to\ell^2$, $\Phi^{-1}:\ell^2\to\Lp$, hence is itself a homeomorphism of $\Lp$, with inverse $S(t)^{-1}=\Phi^{-1}\circ\Theta_{-t}\circ\Phi=S(-t)$. Moreover, each of the three maps is norm-preserving by Theorem~\ref{thm:homeo}, so
\[
   \|u(t)\|_{L^2}=\|\Phi^{-1}(\Theta_t\Phi(u_0))\|_{L^2}=\|\Theta_t\Phi(u_0)\|_{\ell^2}
   =\|\Phi(u_0)\|_{\ell^2}=\|u_0\|_{L^2}.
\]
The group property $S(t+s)=S(t)S(s)$ follows from $\Theta_{t+s}=\Theta_t\Theta_s$  and $S(0)=\mathrm{Id}$ from $\Theta_0=\mathrm{Id}$.

For continuity of $t\mapsto u(t)=\Phi^{-1}(\Theta_t\zeta)$, fix $t_0\in\R$ and let $t\to t_0$. For each $n$, $e^{-i\omega_nt}\to e^{-i\omega_nt_0}$, and
\[
   \|\Theta_t\zeta-\Theta_{t_0}\zeta\|_{\ell^2}^2=\sum_{n\ge0}|\zeta_n|^2\,\bigl|e^{-i\omega_nt}-e^{-i\omega_nt_0}\bigr|^2 .
\]
Each summand is bounded by $4|\zeta_n|^2$ (summable) and tends to $0$ as $t\to t_0$. By dominated convergence $\|\Theta_t\zeta-\Theta_{t_0}\zeta\|_{\ell^2}\to0$, i.e.\ $t\mapsto\Theta_t\zeta$ is continuous $\R\to\ell^2$. Composing with the continuous map $\Phi^{-1}$ gives $t\mapsto u(t)$ continuous.

\medskip
Now, let us prove (ii). By Lemma~\ref{lem:torus_compact}, the whole curve $\{\Theta_t\zeta:t\in\R\}$ is contained in the  set $\operatorname{Tor}\zeta\subset\ell^2(\N_0)$ where $\operatorname{Tor}\zeta$ is a compact subset of $\ell^2(\N_0)$ by Lemma~\ref{lem:torus_compact}(b). Since $\Phi^{-1} : \ell^2(\N_0) \to \Lp$ is continuous, its image $\Phi^{-1}(\operatorname{Tor}\zeta)$ is compact in $\Lp$. The orbit $\{u(t) : t \in \R\} = \Phi^{-1}(\{\Theta_t\zeta : t \in \R\})$ is contained in $\Phi^{-1}(\operatorname{Tor}\zeta)$, which directly proves its precompactness in $\Lp$.

For almost periodicity, first note that for each $M\ge0$ the truncated curve
\[
   t\longmapsto\Theta_t^{[M]}\zeta:=\sum_{n=0}^{M}\zeta_n\,e^{-i\omega_nt}\,e_n \in\ell^2(\N_0)
\]
 is an $\ell^2(\N_0)$--valued trigonometric polynomial, hence (Bohr) almost periodic. Moreover,
\[
   \sup_{t\in\R}\bigl\|\Theta_t\zeta-\Theta_t^{[M]}\zeta\bigr\|_{\ell^2}^2
   =\sup_{t\in\R}\sum_{n>M}|\zeta_n|^2=\sum_{n>M}|\zeta_n|^2\xrightarrow[M\to\infty]{}0,
\]
since $\zeta\in\ell^2$. That is, $\Theta_t^{[M]}\zeta\to\Theta_t\zeta$ uniformly in $t$. By Lemma~\ref{lem:ap_permanence}(a), $t\mapsto\Theta_t\zeta$ is almost periodic in $\ell^2(\N_0)$.

Finally, $t\mapsto\Theta_t\zeta$ takes values in the compact set $\operatorname{Tor}\zeta$, and $\Phi^{-1}:\ell^2\to\Lp$ is continuous by Theorem~\ref{thm:homeo}. Thus, by Lemma~\ref{lem:ap_permanence}(b) applied with $F=\operatorname{Tor}\zeta$, $f(t)=\Theta_t\zeta$, $g=\Phi^{-1}|_{\operatorname{Tor}\zeta}$, the composition $u(t)=\Phi^{-1}(\Theta_t\zeta)$ is almost periodic.
\end{proof}

\begin{corollary}[Quasi-periodicity and Periodicity criterion for finite-gap data]\label{cor:periodic} 
Let $u_0\in\U_N$, $\zeta=\Phi(u_0)$ and set $\Ks:=\{1\le n\le N:\zeta_n\neq0\}$ with $d:=|\Ks|$. Then,
\begin{enumerate}
   \item[\textup{(i)}] There is a continuous map $F:\T^{d}\to\U_N\subset L^2_+(\T)$ with
   \[
      u(t)=F\bigl((-\omega_n t\ \mathrm{mod}\ 2\pi)_{n\in\Ks}\bigr),\qquad t\in\R.
   \]
   In particular $u$ is quasi-periodic with frequency vector $(\omega_n)_{n\in\Ks}$ and $u\equiv u_0$ if $d=0$ (i.e.\ $N=0$).
   \item[\textup{(ii)}] The solution $t\mapsto u(t)$ is periodic if and only if the frequencies $(\omega_n)_{n\in\Ks}$ satisfy 
   $$
    \omega_n/\omega_m\in\mathbb Q\qtq{ for all }n,m\in\Ks.
    $$
    In particular, if $\Ks=\{n_0\}$ is a
   singleton, then $u$ is $\tfrac{2\pi}{\omega_{n_0}}$-periodic in time. This is the one-gap
   (traveling-wave) case of \cite{badreddine2024traveling}.
   \end{enumerate}
\end{corollary}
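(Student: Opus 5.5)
The plan is to read everything off the explicit solution formula \eqref{eq:explicit_solution} of Corollary~\ref{cor:dynamics}, namely $u(t)=\Phi^{-1}(\Theta_t\zeta)$, using the homeomorphism property of $\Phi$ (Theorem~\ref{thm:homeo}) and the finite-gap bijection $\Phi:\U_N\to\Omega_N$ (Theorem~\ref{thm:main}). Since $u_0\in\U_N$, we have $\zeta_n=0$ for $n>N$ and $\zeta_N\neq0$. By Theorem~\ref{thm:linearize} the coordinate $\zeta_0$ does not move, since $\omega_0=0$, and the coordinates with $\zeta_n=0$ stay zero. Hence only the indices in $\Ks$ rotate. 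Note that $N\in\Ks$ whenever $N\ge1$, so $d=0$ if and only if $N=0$.

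For (i), I define, for $\theta=(\theta_n)_{n\in\Ks}\in\T^d$, the sequence $\eta(\theta)\in\C^{N+1}$ by $\eta_0=\zeta_0$, $\eta_n=e^{i\theta_n}\zeta_n$ for $n\in\Ks$, and $\eta_n=0$ otherwise. Then I set $F(\theta):=\Phi^{-1}(\eta(\theta))$. Because $|\eta_N|=|\zeta_N|\neq0$ when $N\ge1$, we have $\eta(\theta)\in\Omega_N$, and Theorem~\ref{thm:main} gives $F(\theta)\in\U_N$. When $N=0$, $\eta=\zeta\in\Omega_0$. The map $\theta\mapsto\eta(\theta)$ is continuous and $2\pi$-periodic in each variable into $\ell^2(\N_0)$, and $\Phi^{-1}$ is continuous by Theorem~\ref{thm:homeo}, so $F$ is continuous on $\T^d$. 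Comparing with Theorem~\ref{thm:linearize} gives $\Theta_t\zeta=\eta\bigl((-\omega_nt)_{n\in\Ks}\bigr)$, hence $u(t)=F\bigl((-\omega_n t \bmod 2\pi)_{n\in\Ks}\bigr)$. If $d=0$, then $\Theta_t\zeta=\zeta$ and $u\equiv u_0$, consistent with Remark~\ref{rem:omega0}.

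For (ii), I first record that $\omega_n\ge n>0$ for $n\ge1$, since $\lambda_j\ge0$. Suppose first that all ratios $\omega_n/\omega_m$, $n,m\in\Ks$, are rational. Fix $m\in\Ks$ and write $\omega_n/\omega_m=a_n/b_n$ with positive integers $a_n,b_n$. Let $B$ be a common multiple of the $b_n$ and put $T:=2\pi B/\omega_m>0$. Then $\omega_nT=2\pi a_nB/b_n\in2\pi\Z$ for every $n\in\Ks$, so $\Theta_T\zeta=\zeta$ and $u(T)=u_0$. The group property of the flow (Corollary~\ref{cor:dynamics}(i)) then gives $T$-periodicity. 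For the converse, suppose $u(T)=u(0)$ for some $T>0$. Injectivity of $\Phi$ yields $e^{-i\omega_nT}\zeta_n=\zeta_n$, and hence $\omega_nT=2\pi k_n$ with $k_n\in\Z$ for each $n\in\Ks$, using $\zeta_n\neq0$. Since $\omega_n>0$, each $k_n\ge1$, and therefore $\omega_n/\omega_m=k_n/k_m\in\mathbb Q$.

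In the singleton case $\Ks=\{n_0\}$, the condition is vacuous, and $T=2\pi/\omega_{n_0}$ works directly. The only delicate points are ensuring that $F$ takes values in $\U_N$, which needs $\zeta_N\neq0$ to be preserved under rotation, and the positivity $\omega_n>0$ in the converse. Neither is a real obstacle.
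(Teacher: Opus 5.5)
Your proposal is correct and follows essentially the same route as the paper. Part (i) uses $F=\Phi^{-1}\circ\eta$, where $\eta$ rotates only the coordinates indexed by $\Ks$, with values in $\U_N$ ensured by Theorem~\ref{thm:main} and continuity by Theorem~\ref{thm:homeo}. Part (ii) uses injectivity of $\Phi$ to reduce periodicity to $\omega_nT\in2\pi\Z$ for $n\in\Ks$, then the same rational-ratio and common-multiple argument with $\omega_n>0$.

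Your treatment of the degenerate case $N=0$ (where $\Ks=\emptyset$) is actually more careful than the paper's, which asserts $d\ge1$ in its proof of (i). In (ii), state explicitly that $d=0$ gives a constant, hence periodic, solution by (i) before you fix $m\in\Ks$, since that step requires $\Ks\neq\emptyset$.
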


\begin{proof}
(i). As $u_0\in\U_N$, Theorem~\ref{thm:main} gives $\zeta=\Phi(u_0)\in\Omega_N$, i.e. $\zeta$ is supported in $\{0,\dots,N\}$ with $\zeta_N\neq0$. Thus, $\K\subseteq\{0,\dots,N\}$ is finite and nonempty, so $d:=|\Ks|\ge1$. Identify $\T^d=(\R/2\pi\Z)^{\Ks}$ and define the parametrization
\[
    \Gamma:\T^d\to\Omega_N,\qquad
   \Gamma(\varphi)_n:=
   \begin{cases}
      \zeta_n\,e^{i\varphi_n} & n\in\Ks\\
      \zeta_0 & n=0\\
      0 & \text{otherwise}
   \end{cases},
\]
which is well defined and continuous. Set
\[
   F:=\Phi^{-1}\circ\Gamma:\T^d\longrightarrow\U_N ,
\]
continuous by Theorem~\ref{thm:homeo}. By definition of $\Theta_t$ and since $\zeta_n=0$ for $n\notin\K$,
\[
   (\Theta_t\zeta)_n=
   \begin{cases}
      \zeta_n\,e^{-i\omega_nt}, & n\in\K,\\
      0, & n\notin\K,
   \end{cases}
   \qquad\text{i.e.}\qquad
   \Theta_t\zeta=\Gamma\bigl((-\omega_nt\ \mathrm{mod}\ 2\pi)_{n\in\Ks}\bigr).
\]
Hence, by \eqref{eq:explicit_solution},
\[
   u(t)=\Phi^{-1}(\Theta_t\zeta)=F\bigl((-\omega_nt\ \mathrm{mod}\ 2\pi)_{n\in\Ks}\bigr),
\]
which is precisely the statement that $u$ is quasi-periodic with frequencies $(\omega_n)_{n\in\Ks}$.

\medskip
For (ii).
If $d=0$ then $\Ks=\emptyset$, $u\equiv u_0$ is constant, hence periodic. Assume therefore $d\ge1$. Since $\Phi$ is a homeomorphism by Theorem~\ref{thm:homeo}  and, by \eqref{eq:explicit_solution}, $\Phi(u(t))=\Theta_t\zeta$ for any $T>0$, the following  equivalences hold
\begin{align*}
    u(t+T)=u(t)\ \ \forall t\in\R
   &\iff \Theta_{t+T}\zeta=\Theta_t\zeta\ \ \forall t\in\R\\
   &\iff e^{-i(t+T)\cdot\,\omega_n}\zeta_n=e^{-i\omega_n t}\zeta_n\ \ \forall t,\ \forall n .
\end{align*}
For $n\notin\K$ we have $\zeta_n=0$ and the last identity is automatic. For $n=0$, $\omega_0=0$ and it is again automatic. Dividing by $\zeta_n\neq0$ for $n\in\Ks$, the factor $e^{-i\omega_n t}$ cancels, and we obtain
\begin{equation}\label{eq:period_condition}
   u\ \text{is $T$-periodic}\iff e^{-i\omega_n T}=1\ \ \forall n\in\Ks
   \iff \omega_n T\in2\pi\Z\ \ \forall n\in\Ks .
\end{equation}
Thus $u$ is periodic if and only if there exists $T>0$ with $\omega_n T\in2\pi\Z$ for every $n\in\Ks$. We show that such a $T$ exists precisely when the frequencies $(\omega_n)_{n\in\Ks}$ satisfy the condition: $\frac{\omega_n}{\omega_m}\in\mathbb{Q}$ pairwise.

$(\Rightarrow)$ Suppose $T>0$ satisfies \eqref{eq:period_condition}, say $\omega_n T=2\pi k_n$ with $k_n\in\Z$. As $\omega_n>0$ and $T>0$, each $k_n\in\N$. Hence, for all $n,m\in\Ks$,
\[
   \frac{\omega_n}{\omega_m}=\frac{k_n}{k_m}\in\mathbb Q .
\]

$(\Leftarrow)$ Conversely, suppose $\omega_n/\omega_m\in\mathbb Q$ for all $n,m\in\Ks$. Fix $m_0\in\Ks$ and write, for each $n\in\Ks$, $\omega_n/\omega_{m_0}=p_n/q_n$ in lowest terms with $p_n,q_n\in\N$. Since $\Ks$ is finite, $Q:=\operatorname{lcm}\{q_n:n\in\Ks\}$ is a well-defined positive integer. Set
\[
   T:=\frac{2\pi Q}{\omega_{m_0}}>0 .
\]
Then, for every $n\in\Ks$,
\[
   \omega_n T=2\pi Q\,\frac{\omega_n}{\omega_{m_0}}=2\pi Q\,\frac{p_n}{q_n}
   =2\pi\,p_n\,\frac{Q}{q_n}\in2\pi\Z ,
\]
because $q_n\mid Q$. By \eqref{eq:period_condition}, $u$ is $T$-periodic.

Finally, if $\Ks=\{n_0\}$ is a singleton the pairwise condition is vacuous, so $u$ is periodic. taking $Q=1$, $m_0=n_0$ above gives the period $T=2\pi/\omega_{n_0}$. This is moreover the fundamental period: the curve $t\mapsto\Theta_t\zeta$ has only the single nonconstant coordinate $\zeta_{n_0}e^{-i\omega_{n_0}t}$ (with $\zeta_{n_0}\neq0$), whose minimal period is $2\pi/\omega_{n_0}$, and $\Phi^{-1}$ is injective. This is the one-gap (traveling-wave) regime of \cite{badreddine2024traveling}.
\end{proof}
\bibliographystyle{amsplain}
\bibliography{CMref}

\end{document}